\documentclass[11pt]{article}
\usepackage{amsfonts}

\usepackage{graphics}
\usepackage{indentfirst}
\usepackage{cite}
\usepackage{latexsym}
\usepackage{amsmath,amsthm}
\usepackage{amssymb}
\usepackage{amscd}
\usepackage{mathrsfs}

\usepackage{color}
\usepackage[a4paper,margin=24mm]{geometry}
\usepackage[unicode,hypertexnames=false,colorlinks=true,linkcolor=blue,citecolor=blue,urlcolor=blue]{hyperref}
\allowdisplaybreaks

\newtheorem{theorem}{Theorem}[section]
\newtheorem{remark}{Remark}[section]

\newtheorem{definition}{Definition}[section]
\newtheorem{lemma}[theorem]{Lemma}

\newtheorem{proposition}[theorem]{Proposition}

\newcommand{\n}{\rho}

\renewcommand{\div}{ {\rm div }  }

\newcommand{\na}{\nabla }

\newcommand{\bt}{\begin{theorem}}
\newcommand{\bl}{\begin{lemma}}
\newcommand{\el}{\end{lemma}}
\newcommand{\et}{\end{theorem}}
\newcommand{\ga}{\gamma}

\newcommand{\OM}{\Omega}

\newcommand{\curl}{{\rm curl} }

\newcommand{\la}{\label}
\newcommand{\si}{\sigma}

\newcommand{\ol}{\overline}

\newcommand{\bn}{\begin{eqnarray}}
\newcommand{\en}{\end{eqnarray}}
\newcommand{\bnn}{\begin{eqnarray*}}
\newcommand{\enn}{\end{eqnarray*}}

\newcommand{\bnnn}{\begin{eqnarray*}}
\newcommand{\ennn}{\end{eqnarray*}}
\newcommand{\ben}{\begin{enumerate}}
\newcommand{\een}{\end{enumerate}}

\newcommand{\rs}{\rho^*}

\newcommand{\ba}{\begin{aligned}}
\newcommand{\ea}{\end{aligned}}
\newcommand{\be}{\begin{equation}}
\newcommand{\ee}{\end{equation}}

\def\p{\partial}
\def\norm[#1]#2{\|#2\|_{#1}}

\def\lam{\lambda}
\def\ep{\varepsilon}

\def\rr{\mathbb{R}^2}

\makeatletter      
\@addtoreset{equation}{section}
\makeatother       

\title{Global Existence and Large-Time Asymptotic Behavior of Strong Solutions to the Two-Dimensional Nematic Liquid Crystal Flows with Large Initial Data and Vacuum}

\date{}

\author{$\text{Qinghao L{\small EI}}^{a,b}, \text{Lu W{\small{ANG}}}^{b}\thanks{Email addresses:  leiqinghao22@mails.ucas.ac.cn (Q. H. Lei), wlu1130@163.com (L. Wang) }$\\
a. School of Mathematical Sciences,\\ University of Chinese Academy of Sciences,
Beijing 100049, P. R. China;\\
b. Institute of Applied Mathematics,\\ Academy of Mathematics and Systems Science, \\
Chinese Academy of Sciences, Beijing 100190, P. R. China}

\begin{document}
\maketitle

\begin{abstract}
This paper studies the two-dimensional nonhomogeneous incompressible nematic liquid crystal flows with planar orientation fields taking values in $\mathbb{S}^1$.
For the initial-boundary value problem in bounded domains with density-dependent viscosity, we establish the global existence and exponential decay of strong solutions with initial density allowing vacuum, provided that $\|\nabla \mu(\rho_0)\|_{L^q}$ is sufficiently small for some $q>2$.
The smallness condition is automatically satisfied when $\mu$ is constant, and hence the result yields the global existence of strong solutions for arbitrarily large initial data in the constant viscosity case.
Furthermore, for the Cauchy problem with constant viscosity and either vacuum or non-vacuum far-field density, we establish the global existence and large-time decay rates of strong solutions for arbitrarily large initial data.
These results are obtained without imposing any geometric angle conditions on the initial orientation field. \\
\par\textbf{Keywords:} Nonhomogeneous incompressible nematic liquid crystal flows;
Global strong solutions; Large-time behavior; Large initial data; Vacuum
\par\textbf{2020 Mathematics Subject Classification:} 76A15, 35Q35, 35B40.
\end{abstract}

\section{Introduction and main results}

Liquid crystals are a state of matter that exhibits properties intermediate between those of conventional liquids and solids.
They are typically classified according to molecular arrangement, with each phase defined by a distinct structural configuration.
The simplest of these is the nematic phase, notable for its stability at room temperature and wide applicability.
In the 1960s, Ericksen \cite{E1961,E1962} and Leslie \cite{Les1968} formulated a mathematical model for nematic liquid crystal flows, now known as the Ericksen-Leslie system.

In this paper, we investigate the nonhomogeneous incompressible nematic liquid crystal flows with density-dependent viscosity, described by the following equations:
\be\ba\la{nlc}
\begin{cases}
\rho_t + \div(\rho u) = 0,\\
(\n u)_t + \div(\n u\otimes u) -\div(2\mu(\rho) Du) + \na P = - \lam \div (\na d \odot \na d),\\
d_t + u \cdot \na d = \ga (\Delta d + |\na d|^2 d),\\
\div u = 0,  \quad |d|=1,
\end{cases}
\ea\ee
where $\n=\n(x,t)$, $u(x,t)=(u^1(x,t),u^2(x,t))$, and $P=P(x,t)$ denote the 
density, velocity, and pressure of the fluid, respectively.
The vector field $d=(d^1(x,t),d^2(x,t))$ represents the macroscopic molecular orientation of the liquid crystal material. The deformation tensor is defined
by
\be\nonumber\ba
Du=\frac{1}{2} \left[ \na u + (\na u)^{T} \right],
\ea\ee
and the viscosity $\mu(\n)$ satisfies the following hypothesis:
\be\la{vc}\ba
\mu \in C^1([0,\infty)), \quad \mu(\n)>0.
\ea\ee
The positive constants $\lam$ and $\ga$ denote the elastic coupling coefficient and the relaxation coefficient for the molecular orientation field, respectively.
The notation $\na d \odot \na d$ denotes the $2 \times 2$ matrix whose
$(i,j)$-th entry is given by $\p_i d \cdot \p_j d \ (1 \le i,j \le 2)$.

For simplicity, we assume that $\lam=\ga=1$.
We supplement \eqref{nlc} with the initial data
\be\la{cztj}\ba
\n(x,0)=\n_0(x), \ \n u(x,0)=\n_0 u_0(x), \ d(x,0)=d_0(x), \ x \in \OM,
\ea\ee
and consider the following two types of boundary conditions:

(1) Dirichlet and Neumann boundary conditions:
$\OM$ is a simply connected bounded smooth domain in $\rr$ and
\be\la{bjtj1}\ba
u=0,\quad \frac{\p d}{\p n}=0 \ \text{ on } \ \p \OM,
\ea\ee
where $n=(n_1,n_2)$ denotes the unit outer normal vector of the boundary $\partial \Omega$.

(2) Cauchy problem: $\OM=\rr$ with the far-field behavior
\be\la{bjtj3}\ba
(\n,u,d)(x,t) \to (\tilde{\n},0,\mathbf{1}) \ \text{ as } |x| \to \infty, \  t>0,
\ea\ee
where $\tilde{\n} \ge 0$ is a constant and $\mathbf{1}$ is a given unit vector.

This system couples the nonhomogeneous incompressible Navier–Stokes equations, incorporating the density-dependent viscous stress and the elastic stress induced by the orientation field, with an evolution equation for the orientation field that accounts for convective effects.
The resulting strong coupling poses substantial analytical challenges.
For further background on system \eqref{nlc}, we refer the reader to the monograph by Stewart \cite{Ste2004} and the references therein.

There is an extensive literature on the homogeneous case, i.e., $\rho$ is constant.
For initial data satisfying $u_0 \in L^2$ and $\nabla d_0 \in L^2$, Lin-Liu \cite{LL1995,LL1996} proved the global existence of weak solutions to an approximate model in dimensions $N=2,3$ via a Ginzburg-Landau approximation, in which the term $|\nabla d|^2 d$ in the director equation is replaced by $\frac{1-|d|^2}{\varepsilon^2}d$.
Hong \cite{H2011} and Hong-Xin \cite{H2012} obtained the global existence of Leray-Hopf type weak solutions to the two-dimensional Cauchy problems for the simplified system and the system with a general Oseen-Frank energy, respectively, by passing to the limit as $\varepsilon \to 0$ in the Ginzburg-Landau approximation.
Without resorting to this approximation, Lin et al. \cite{LLW2010} directly established the global existence of weak solutions to the original system.
Lin-Wang \cite{LW2010} further proved the uniqueness of such weak solutions, and this result was later extended by Xu-Zhang \cite{XZ2012}.
Subsequently, Wang \cite{W2011} obtained global strong solutions with higher regularity for the two-dimensional system, provided that $\left\|u_0\right\|_{\mathcal{BMO}^{-1}}$ and $\left\|d_0\right\|_{\mathcal{BMO}}$ are sufficiently small.
Under a natural geometric angle condition, Lei-Li-Zhang \cite{LLZ2014} provided a new proof of the global well-posedness of smooth solutions for a class of large initial data in the energy space.
For further results on general liquid crystal systems, we refer the reader to \cite{HW2012,LW2016}.

The well-posedness theory for the nonhomogeneous incompressible nematic liquid crystal system with constant viscosity has also been established under various assumptions.
For example, Wen-Ding \cite{WD2011} proved the local existence and uniqueness of strong solutions in two-dimensional bounded domains for general initial data allowing vacuum and satisfying the compatibility condition
\be\nonumber\ba
-\mu \Delta u_0+\nabla P_0+\operatorname{div}\left(\nabla d_0 \odot \nabla d_0\right)=\sqrt{\rho_0} g, \quad \text { for some }\ \left(P_0, g\right) \in H^1(\Omega) \times L^2(\Omega).
\ea\ee
Under the additional assumptions that the initial density has a positive lower bound and that both $\|\nabla d_0\|_{L^2}$ and $\|\sqrt{\n_0} u_0\|_{L^2}$ are sufficiently small, they also established the global existence and uniqueness in two-dimensional bounded domains.
Subsequently, Li-Liu-Zhong \cite{LLZ2017} extended the global existence result to the two-dimensional whole space with vacuum far-field density under analogous smallness conditions.
Li \cite{L2014} obtained the global existence of strong solutions under the assumptions that the initial density is away from vacuum and that the initial orientation field satisfies a natural geometric angle condition.
Later, Liu et al. \cite{LLTZ} proved the global existence of strong solutions for the two-dimensional Cauchy problem, allowing the initial density to vanish and assuming that the initial orientation field satisfies a geometric angle condition.
Further results on the well-posedness of nonhomogeneous incompressible nematic liquid crystal flows with constant viscosity can be found in \cite{L2015,Li2017,LZ2016} and the references therein.

The main purpose of this paper is to establish the global existence and large-time asymptotic behavior of strong solutions to the problem \eqref{nlc}--\eqref{cztj} with large initial data.
More precisely, we consider both the bounded domain problem with density-dependent viscosity and the Cauchy problem with constant viscosity.

Before stating the main results, we first explain the notations and conventions used throughout this paper.
We denote
\be\ba\nonumber
\int f dx \triangleq \int_{\OM} f dx.
\ea\ee
When $|\OM|<\infty$, we further write
\be\ba\nonumber
\ol{f} \triangleq \frac{1}{|\OM|}\int f dx.
\ea\ee
For $R>0$, we set
\be\ba\nonumber
B_R \triangleq \{x \in \rr: |x|<R \}.
\ea\ee
Moreover, the material derivative is given by
\be\ba\nonumber
\frac{d}{dt} f = \dot{f} \triangleq \frac{\p}{\p t} f + u \cdot \na f.
\ea\ee
For a positive integer $k$ and $1\leq r\leq\infty$, we define the standard homogeneous and inhomogeneous Sobolev
spaces as follows:
\be\ba\nonumber
{\left\{\begin{array}{ll}
L^r=L^r(\OM),\quad W^{k,r} = W^{k,r}(\OM), \quad H^k = W^{k,2}, \\
\| \cdot \|_{B_1 \cap B_2} = \| \cdot \|_{B_1} + \| \cdot \|_{B_2}, \mbox{ for two Banach spaces } B_1 \mbox{ and } B_2, \\
D^{k,r}=D^{k,r}(\rr )=\{v\in L^1_\mathrm{loc}(\rr )| \nabla ^k v\in L^r(\rr )\}, \quad D^1=D^{1,2}, \\
C_{0,\sigma}^\infty=\{f\in C_0^\infty(\OM)~|~\div f=0\},\quad H_{0,\sigma}^1=\overline{C_{0,\sigma}^\infty}~\mbox{closure~in~the~norm~of}~H^{1}, \\
D_{0,\sigma}^1=\overline{C_{0,\sigma}^\infty}~\mbox{closure~in~the~norm~of}~D^{1}.
\end{array}\right.}
\ea\ee

We now give the definition of strong solutions to \eqref{nlc}.
\begin{definition}
If all derivatives involved in \eqref{nlc} for $(\n,u,P,d)$ are regular distributions,
and equations \eqref{nlc} hold almost everywhere in 
$\OM \times(0,T)$, then $(\n,u,P,d)$ is called a strong solution.
\end{definition}

Our first result concerns the global existence and exponential decay of strong solutions with density-dependent viscosity in bounded domains.

\begin{theorem}\la{thnp1}
For constants $\hat{\n}>0$ and $q\in(2,\infty)$, assume that the initial data $(\n_0,u_0,d_0)$ satisfy
\be\la{npsol1}\ba
0 \le \n_0 \le \hat{\n},\ \n_{0}\in W^{1,q}, \ u_0 \in H_{0,\sigma}^{1},\ \nabla d_{0} \in H^{1},\  |d_{0}|=1,
\ (n \cdot \na d_0)|_{\p \OM}=0.
\ea\ee
Define
\be\nonumber\ba
\underline{\mu} \triangleq \min_{ \n \in [0,\hat{\n}] } \mu(\n), \quad
\bar{\mu} \triangleq \max_{ \n \in [0,\hat{\n}] } \mu(\n).
\ea\ee
Then there exists a small positive constant $\varepsilon_0$ depending only on $q$, $\underline{\mu}$, $\bar{\mu}$, $\hat{\rho}$, $\| u_0 \|_{H^1}$, $\|\na d_0 \|_{H^1}$, and $\OM$ such that if
\begin{equation}\label{npsol01}
\left\|\nabla \mu\left(\rho_0\right)\right\|_{L^q} \leq \varepsilon_0,
\end{equation}
the problem \eqref{nlc}--\eqref{bjtj1} admits a unique global strong solution
$(\n,u,P,d)$ in $\OM \times (0,\infty)$ satisfying for any $0<T<\infty$ and $r\in [1,q)$,
\be\la{npsol2}\ba
\begin{cases}
\rho\in C([0,T];W^{1,q} ), \quad \n_t\in L^\infty(0,T;L^2), \\
\nabla u \in L^\infty(0,T;L^2) \cap L^2(0,T;H^1), \quad P \in L^2(0,T;H^1), \\
\sqrt{t}\nabla u,\ \sqrt{t} P \in  L^\infty(0,T;H^1)\cap L^2(0,T; W^{1,r}), \\
\nabla d \in L^\infty (0,T;H^1)\cap L^{2}(0,T;H^2),\\
\sqrt{t} \nabla d \in L^\infty(0,T;H^2) \cap L^2(0,T;H^3), \\
\sqrt{t}u_t,\ \sqrt{t} \na d_t \in L^2(0,T;H^1), \\
\n u\in C([0,T];L^2), \quad \sqrt{\n} u_t\in L^2(\OM \times(0,T)),
\end{cases}
\ea\ee
with the pressure normalized by $\ol{P} = 0$.
Moreover, there exists a positive constant $\eta_0$ depending only on 
$\underline{\mu}$, $\hat{\n}$, and $\OM$ such that, for all $t \ge 1$,
\be\la{npsol3}\ba
\|\nabla u(\cdot,t)\|^2_{H^{1}} + \|P(\cdot,t)\|^2_{H^{1}}
+ \|\nabla d(\cdot, t)\|^2_{H^{2}} + \|\nabla d_t(\cdot, t)\|^2_{L^{2}}
\leq C e^{-\eta_0 t},
\ea\ee
where $C$ depends only on $q$, $\underline{\mu}$, $\bar{\mu}$, $\hat{\n}$, $\| u_0 \|_{H^1}$, $\| \na d_0 \|_{H^1}$, and $\OM$.
\end{theorem}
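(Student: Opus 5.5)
The plan is to exploit the planar structure $d\in\mathbb{S}^1$ to turn the director equation into a \emph{linear} scalar parabolic equation. Since $\OM$ is simply connected and $\na d_0\in H^1$, I lift $d_0=(\cos\theta_0,\sin\theta_0)$ with $\theta_0\in H^2$. As long as the solution is smooth, I set $d=(\cos\theta,\sin\theta)$, where $\theta$ solves
\begin{equation*}
\theta_t+u\cdot\na\theta=\Delta\theta, \qquad \frac{\p\theta}{\p n}\Big|_{\p\OM}=0 .
\end{equation*}
With this choice, $|\na d|^2=|\na\theta|^2$ and $\na d\odot\na d=\na\theta\otimes\na\theta$, so
\begin{equation*}
\div(\na\theta\otimes\na\theta)=\Delta\theta\,\na\theta+\tfrac12\na|\na\theta|^2 ,
\end{equation*}
and the gradient part is absorbed into the pressure. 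Local existence of a strong solution is taken from the standard theory (as in Wen--Ding). Global existence then reduces to a priori bounds on $[0,T]$ that are uniform in $T$, followed by a continuation argument. These bounds are closed under the bootstrap hypothesis $\|\na\mu(\n)\|_{L^\infty(0,T;L^q)}\le 2\varepsilon_0$ (or a comparable threshold), which is recovered at the end.

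\textbf{Step 1 (basic bounds; no smallness needed).}
\begin{itemize}
\item The maximum principle gives $\|\theta\|_{L^\infty}\le\|\theta_0\|_{L^\infty}$, which is finite since $\theta_0\in H^2$. Also $0\le\n\le\hat\n$.
\item Test the momentum equation with $u$ and the $\theta$-equation with $-\Delta\theta$. The coupling terms $\int \Delta\theta\,\na\theta\cdot u$ cancel, which gives
\begin{equation*}
\sup_t\Big(\|\sqrt\n u\|_{L^2}^2+\|\na\theta\|_{L^2}^2\Big)+\int_0^T\Big(\|\na u\|_{L^2}^2+\|\Delta\theta\|_{L^2}^2\Big)dt\le C .
\end{equation*}
\item The point that makes large data possible is the Ladyzhenskaya-type inequality $\|\na\theta\|_{L^4}^2\le C\|\theta\|_{L^\infty}\|\na^2\theta\|_{L^2}$. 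It shows that the dangerous quantity $\|\na\theta\|_{L^4}^4$ is time-integrable with bounds depending only on the initial data. Elliptic Neumann regularity gives $\|\na^2\theta\|_{L^2}\le C\|\Delta\theta\|_{L^2}$ on the smooth domain $\OM$.
\end{itemize}

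\textbf{Step 2 (first-order estimates).}
\begin{itemize}
\item Test the momentum equation with $u_t$ and differentiate the $\theta$-equation in space, testing with $\na\Delta\theta$ (or test with $\Delta\theta_t$).
\item Control $\|\na^2u\|_{L^2}+\|\na P\|_{L^2}$ through the Stokes system with variable viscosity:
\begin{equation*}
-\div(2\mu(\n)Du)+\na P=-\n\dot u-\Delta\theta\,\na\theta .
\end{equation*}
Here the commutator $\na\mu\cdot\na u$ is bounded by $\|\na\mu\|_{L^q}\|\na u\|_{L^{2q/(q-2)}}$ and absorbed thanks to the bootstrap smallness; this is where $\varepsilon_0$ enters.
\item Estimate the nonlinear terms $\|\na\theta\|_{L^4}^4$, $\|u\|_{L^4}^4$, etc.\ via Gagliardo--Nirenberg. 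Because the coefficients $\|\na u\|_{L^2}^2+\|\na^2\theta\|_{L^2}^2$ are integrable by Step 1, Gronwall yields
\begin{equation*}
\sup_t\Big(\|\na u\|_{L^2}^2+\|\na^2\theta\|_{L^2}^2\Big)+\int_0^T\Big(\|\sqrt\n u_t\|_{L^2}^2+\|\na u\|_{H^1}^2+\|\na\theta\|_{H^2}^2\Big)dt\le C .
\end{equation*}
\end{itemize}

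\textbf{Step 3 ($t$-weighted estimates and closing the bootstrap).}
\begin{itemize}
\item Apply the material derivative to the momentum equation and test with $\dot u$, weighted by $t$. In parallel, estimate $\sqrt t\,\na\theta_t$. This gives the weighted bounds in \eqref{npsol2}, including $\sqrt t\,\na u\in L^2(0,T;W^{1,r})$ via $W^{1,r}$ Stokes estimates.
\item From these, derive $\int_0^T\|\na u\|_{L^\infty}dt\le C$ (uniformly in $T$ once the decay below is built in), using $\|\na u\|_{L^\infty}\le C\|\na u\|_{W^{1,r}}$ for $r>2$.
\item Since $\mu(\n)$ is transported, $\p_t\na\mu+u\cdot\na\na\mu=-\na u\cdot\na\mu$, so
\begin{equation*}
\|\na\mu(\n)(t)\|_{L^q}\le\|\na\mu(\n_0)\|_{L^q}\exp\Big(\int_0^t\|\na u\|_{L^\infty}ds\Big)\le C\varepsilon_0 .
\end{equation*}
This is $\le\varepsilon_0$-type small for $\varepsilon_0$ small, which closes the bootstrap.
\end{itemize}

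\textbf{Main obstacle.} I expect the hardest step to be making $\int_0^\infty\|\na u\|_{L^\infty}dt$ bounded independently of $T$. On bounded intervals this follows from the weighted estimates, but uniformity requires exponential decay first. For the decay:
\begin{itemize}
\item Poincaré ($u|_{\p\OM}=0$) gives $\|\sqrt\n u\|_{L^2}^2\le\hat\n C\|\na u\|_{L^2}^2$.
\item Poincaré–Wirtinger applied to $\na\theta$, which has zero normal component on the boundary, gives $\|\na\theta\|_{L^2}\le C\|\Delta\theta\|_{L^2}$.
\item Together these turn the Step 1 energy identity into $E'+\eta_0E\le0$. Notably $\eta_0$ depends only on $\underline\mu,\hat\n,\OM$, consistent with the statement.
\end{itemize}
Repeating the higher-order estimates with weight $e^{\eta t}$ then gives \eqref{npsol3} and the uniform integrability of $\|\na u\|_{L^\infty}$.

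\textbf{Final steps.} Uniqueness follows by a standard Lagrangian or $H^{-1}$-type density-difference argument. The regularity of $\n$ in $W^{1,q}$ and of $\n u$ in $C([0,T];L^2)$ follows from transport theory.
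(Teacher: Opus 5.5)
\textbf{Gap: your Step 2 does not close.} You say that after testing the momentum equation with $u_t$, every nonlinear term can be handled by Gagliardo--Nirenberg and a \emph{linear} Gronwall argument with the integrable weight $\|\nabla u\|_{L^2}^2+\|\nabla^2\theta\|_{L^2}^2$. With vacuum this is false.

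\emph{Momentum convection.} The term $\int\rho u\cdot\nabla u\cdot u_t$ leaves $\|\sqrt{\rho}\,u\cdot\nabla u\|_{L^2}^2$. After inserting the Stokes bound for $\|\nabla u\|_{H^1}$, you are left with $\|\sqrt\rho u\|_{L^4}^4\|\nabla u\|_{L^2}^2$. Because $\rho$ may vanish, the energy controls $\|\sqrt\rho u\|_{L^2}$ but not $\|u\|_{L^2}$. So Gagliardo--Nirenberg plus Poincar\'e give only $\|\sqrt\rho u\|_{L^4}^4\le C\|\nabla u\|_{L^2}^4$. The resulting inequality is of Riccati type, $A'\le C\|\nabla u\|_{L^2}^2A^2$ with $A\simeq\|\nabla u\|_{L^2}^2+\|\Delta\theta\|_{L^2}^2$, and it gives no global bound for large data.

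\emph{Director transport.} The director estimate has the same defect. To handle the coupling term $\int(\nabla\theta\otimes\nabla\theta):\nabla u_t$ you must control $\nabla\theta_t$. Through the equation this brings in $u\cdot\nabla\nabla\theta$, i.e. $\|u\,\nabla^2\theta\|_{L^2}^2$. With only $\|\theta\|_{L^\infty}$ and the energy, the natural bound $\|u\|_{L^4}^2\|\nabla^2\theta\|_{L^4}^2$ produces $\|\nabla u\|_{L^2}^4\|\nabla^2\theta\|_{L^2}^2$, again superlinear.

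\textbf{How the paper closes it.} Lemma~\ref{Nl3} uses two ingredients your plan lacks.
\begin{enumerate}
\item Lemma~\ref{Nl2} gives the uniform-in-time bound $\sup_t\|\nabla d\|_{L^4}\le C$. It comes from testing the differentiated director equation with $|\nabla d|^2\nabla d$; the boundary term is controlled through the identity $\nabla d^i\cdot\nabla(n\cdot\nabla d^i)=0$ on $\partial\Omega$. This bound makes $\|u\nabla^2 d\|_{L^2}^2\lesssim\|\nabla u\|_{L^2}^2\|\nabla d\|_{L^4}\|\nabla d\|_{H^2}$ linear after Young's inequality.
\item Desjardins' logarithmic inequality (Lemma~\ref{logbd}) gives $\|\sqrt\rho u\|_{L^4}^4\lesssim\|\nabla u\|_{L^2}^2\log(2+\|\nabla u\|_{L^2}^2)$. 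This yields $A_2'\le C\|\nabla u\|_{L^2}^2A_2\log(2+A_2)+\dots$, which is closed by applying Gronwall to $\log(2+A_2)$.
\end{enumerate}
Both ingredients transfer to your $\theta$-formulation; the $L^4$ argument works verbatim for the linear $\theta$-equation.

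\textbf{What stands.} Lifting $d=(\cos\theta,\sin\theta)$ with $\theta$ solving a linear advection--diffusion equation under the Neumann condition is legitimate. It is a cleaner alternative to the paper's fixed-time use of Lemma~\ref{jzb}. Your exponential-decay and bootstrap steps match the paper's.

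\textbf{What does not.} The maximum principle combined with Ladyzhenskaya, which you single out as what makes large data possible, yields $\|\nabla\theta\|_{L^4}^2\lesssim\|\nabla^2\theta\|_{L^2}$. That is exactly what Gagliardo--Nirenberg plus the energy bound already give. It is not what closes the $H^1$-level estimate.
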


The second result establishes the global existence and large-time asymptotic behavior of strong solutions to the Cauchy problem \eqref{nlc}--\eqref{cztj}, \eqref{bjtj3} with constant viscosity and vacuum far-field density.

\begin{theorem}\la{thcp1}
Let $\OM=\rr$ and $\mu(\n) \equiv \mu > 0$.
Assume that the initial data $(\n_0,u_0,d_0)$ satisfy, for some $a>1$ and $q\in(2,\infty),$
\be\la{cpsol1}\ba
\begin{cases}
\n_{0}\geq 0, \quad \bar{x}^{a}\n_{0}\in L^{1}\cap H^{1}\cap W^{1,q}, \quad M_0 \triangleq \| \n_0 \|_{L^1(\rr)}>0, \quad \sqrt{\n_0} u_{0}\in L^{2}, \\ u_0 \in D_{0,\sigma}^{1}, \quad \bar{x}^{\frac{a}{2}} \nabla d_{0} \in L^{2},\quad \nabla^{2} d_{0}\in L^{2},
\quad d_0-\mathbf{1}\in L^2,\quad |d_{0}|=1,
\end{cases}
\ea\ee
where
\be\la{cpsol2}\ba
{\bar{x}}\triangleq (e+|x|^2)^{\frac{1}{2}} \log ^2 (e+|x|^2).
\ea\ee
Then the problem \eqref{nlc}--\eqref{cztj}, \eqref{bjtj3} with $\tilde{\n}=0$ admits a unique global strong solution $(\n,u,P,d)$ in $\rr \times (0,\infty)$ satisfying for any $0<T<\infty$,
\be\la{cpsol3}\ba
\begin{cases}
\rho \in C([0,T];L^1 \cap H^1\cap W^{1,q} ), \quad d-\mathbf{1} \in C([0,T];L^2) \\
{\bar{x}}^a\rho \in L^\infty ( 0,T ;L^1\cap H^1\cap W^{1,q} ), \\
\sqrt{\rho} u,\,\nabla u,\, {\bar{x}}^{-1}u, \, \sqrt{t} \sqrt{\rho } u_t,\ \sqrt{t} \na P,\ \sqrt{t} \na^2 u \in L^\infty (0,T;L^2), \\
\nabla d,\  \nabla d \bar{x}^{\frac{a}{2}},\  \nabla^{2} d,\  \sqrt{t} \nabla d_{t},\  \sqrt{t} \nabla^{3}d,\ 
\sqrt{t} \na^2 d \bar{x}^{\frac{a}{2}} \in L^\infty (0,T;L^2), \\
\nabla u\in L^2(0,T;H^1)\cap L^{(q+1)/q}(0,T; W^{1,q}), \\
\nabla P \in L^2(0,T;L^2)\cap L^{(q+1)/q}(0,T; L^q), \\
\nabla^{3} d,\, \nabla d_{t},\, \nabla^{2} d \bar{x}^{\frac{a}{2}}\in L^{2}(\mathbb{R}^{2}\times (0,T)), \\
\sqrt{t}\nabla u\in L^2(0,T; W^{1,q} ), \\
\sqrt{\rho } u_t, \, \sqrt{t} \nabla u_t,\ \sqrt{t} \na^2 d_t, \, \sqrt{t} {\bar{x}}^{-1}u_t\in L^2({\mathbb {R}^2 }\times (0,T)),
\end{cases}
\ea\ee
and
	\be\la{cpsol4}\ba
	\inf_{0 \le t \le T} \int _{B_{N_1}}\rho (x,t) dx \ge \frac{1}{4}M_0,
	\ea\ee
	for some positive constant $N_1$ depending only on
	$T$, $\mu$, $M_0$, $\|\bar{x}^{a}\rho_0\|_{L^1}$,
	$\| \sqrt{\n_0} u_0 \|_{L^2}$, $\| \na d_0 \|_{L^2}$, and some  $N_0$ satisfying $\int_{B_{N_0}}\rho_0 dx\geq \frac{M_0}{2}$. Moreover, the following decay estimates hold for all $t \ge 1$:
\be\la{cpsol5}\ba
\begin{cases}
\|\nabla u(\cdot,t)\|_{L^{2}} + \|\nabla^{2} d(\cdot, t)\|_{L^{2}}\leq C t^{-\frac{1}{2}}, \\
\|\nabla^{2} u(\cdot,t)\|_{L^{2}} + \|\nabla P(\cdot,t)\|_{L^{2}}
+ \||\nabla d||\nabla^{2} d|(\cdot, t)\|_{L^{2}} \leq C t^{-1},
\end{cases}
\ea\ee
where $C$ depends only on $\mu$, $\|\rho_0\|_{L^1\cap L^\infty}$,
	$\| \sqrt{\n_0} u_0 \|_{L^2}$, $\| \na u_0 \|_{L^2}$, and $\| \na d_0 \|_{H^1}$.
\end{theorem}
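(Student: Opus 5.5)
The key idea is the planar structure: since $|d|=1$ and $d$ takes values in $\mathbb{S}^1$, I write locally $d=(\cos\theta,\sin\theta)$. Then the director equation becomes a scalar transport–diffusion equation
\[
\theta_t+u\cdot\na\theta=\Delta\theta ,
\]
and $\na d\odot\na d=\na\theta\otimes\na\theta$, with $|\na d|=|\na\theta|$ and $|\na^2 d|^2=|\na^2\theta|^2+|\na\theta|^4$. Because $\rr$ is simply connected and $d_0-\mathbf 1\in L^2$, a global lifting $\theta_0$ with $\theta_0\to\theta_\infty$ at infinity exists. At the level of gradients one can work directly with the identity
\[
\na d\cdot\na d_t\text{-type quantities}=\na\theta\cdot\na\theta_t .
\]
The gain is that $\theta$ satisfies a linear parabolic equation with divergence-free drift. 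This yields a maximum principle and, crucially, the estimate
\[
\|\na\theta\|_{L^4}^4\text{ control with no smallness}.
\]
This replaces the geometric angle condition used in \cite{L2014,LLTZ}.

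\textbf{Step 1: basic energy and higher a priori estimates.} First, multiplying by $u$ and $\Delta d+|\na d|^2d$ gives
\[
\tfrac12\tfrac{d}{dt}\Big(\|\sqrt\n u\|_{L^2}^2+\|\na d\|_{L^2}^2\Big)+\mu\|\na u\|_{L^2}^2+\|\Delta d+|\na d|^2d\|_{L^2}^2=0 .
\]
In $\theta$ variables the last dissipation term is $\|\Delta\theta\|_{L^2}^2$. Next I test the $\theta$ equation with $-\Delta\theta$; using $\div u=0$ in 2D, the convection term is
\[
\int \na u:\na\theta\otimes\na\theta\le \|\na u\|_{L^2}\|\na\theta\|_{L^4}^2 .
\]
Combined with Ladyzhenskaya's inequality $\|\na\theta\|_{L^4}^2\le C\|\na\theta\|_{L^2}\|\na^2\theta\|_{L^2}$, this closes via Gronwall with $\int_0^T\|\na u\|_{L^2}^2\le C$. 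The result is
\[
\sup_t\|\na^2 d\|_{L^2}^2+\int_0^T\|\na\theta_t\|_{L^2}^2+\|\na^3\theta\|_{L^2}^2\,dt\le C(T),
\]
and in fact $C$ is uniform in time because $\int_0^\infty\|\na u\|_{L^2}^2<\infty$.

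\textbf{Step 2: velocity estimates.} I then test the momentum equation by $u_t$, writing the elastic forcing as
\[
-\div(\na\theta\otimes\na\theta)=-\Delta\theta\,\na\theta-\na\tfrac{|\na\theta|^2}{2},
\]
where the gradient part is absorbed into the pressure. This gives
\[
\tfrac{d}{dt}\mu\|\na u\|_{L^2}^2+\|\sqrt\n u_t\|_{L^2}^2\le C\|\n u\cdot\na u\|_{L^2}^2+C\|\Delta\theta\na\theta\|_{L^2}^2 .
\]
The vacuum whole-space case needs the weighted estimates of Lü–Shi–Zhong. These are: the lower bound \eqref{cpsol4} via the localized $L^1$ argument; $\|\n^{1/2}v\|_{L^2}$ control of $\|\bar x^{-1}v\|$; and the Hardy-type embedding $\|\n^\eta v\|_{L^{r}}\le C\|\sqrt\n v\|_{L^2}+C\|\na v\|_{L^2}$. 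With these, the Stokes estimate
\[
\|\na^2u\|_{L^2}+\|\na P\|_{L^2}\le C\|\n\dot u\|_{L^2}+C\|\Delta\theta\na\theta\|_{L^2}
\]
closes Gronwall. Then I take the $\sqrt t$-weighted estimate from testing $\dot u$ (or $u_t$) and $\na\theta_t$, followed by the $\bar x^a$ weights for $\n$ and $\na d$ exactly as in \cite{LLTZ}, and $W^{1,q}$ bounds for $\n$ via $\int_0^T\|\na u\|_{L^\infty}dt$ from the Beale–Kato–Majda/Brezis–Wainger-type inequality with the $L^{(q+1)/q}(W^{1,q})$ bound. Existence then follows from the local theory and the standard continuation argument. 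Uniqueness follows from the known argument.

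\textbf{Step 3: decay.} For the rates \eqref{cpsol5}, I multiply the inequalities of Steps 1–2 by $t$ and $t^2$. The energy is nonincreasing and integrable: $\int_0^\infty(\|\na u\|^2+\|\Delta\theta\|^2)dt<\infty$. Hence $t(\|\na u\|^2+\|\na^2d\|^2)$ is bounded after controlling the derivative of this functional, which gives the $t^{-1/2}$ rate. Then $t^2(\|\sqrt\n\dot u\|^2+\|\na\theta_t\|^2)$ bounded gives $\|\na^2u\|+\|\na P\|\le Ct^{-1}$ through the Stokes estimate. The term $\||\na d||\na^2d|\|_{L^2}$ is handled by Gagliardo–Nirenberg, $\le C\|\na d\|_{L^2}^{1/2}\|\na^2 d\|_{L^2}\|\na^3 d\|_{L^2}^{1/2}$, together with the elliptic estimate on $\Delta\theta=\theta_t+u\cdot\na\theta$.

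\textbf{Main obstacle.} I expect the main obstacle to be the time-uniform higher-order estimates in the presence of vacuum on $\rr$. The $\|\n u\cdot\na u\|$ term cannot use the Poincaré inequality, so the constants must depend only on the listed norms, without $T$, $N_1$, or the weights, for the decay statement. I would first try to use $\|\n^{1/2}u\|_{L^4}$ bounded by the Gagliardo–Nirenberg-type inequality $\|\sqrt\n u\|_{L^4}^2\le C\|\sqrt\n u\|_{L^2}\|\na u\|_{L^2}$ valid for $\n\in L^\infty$ with divergence-free $u$ in the vacuum setting. This yields $T$-independent constants in Step 2 and Step 3.
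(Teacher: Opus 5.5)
Your starting point, the lifting $d=(\cos\theta,\sin\theta)$ with $\|\Delta d+|\na d|^2d\|_{L^2}=\|\Delta\theta\|_{L^2}$ and $\na d\odot\na d=\na\theta\otimes\na\theta$, is the same device the paper uses to replace the angle condition. However, the higher-order steps fail in the presence of vacuum.

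\textbf{Step~2.} Testing the momentum equation by $u_t$ leaves $\int\Delta\theta\,\na\theta\cdot u_t\,dx$. This term carries no density weight, so it cannot be absorbed into $\|\sqrt\rho u_t\|_{L^2}^2$. Your inequality for $\frac{d}{dt}\mu\|\na u\|_{L^2}^2+\|\sqrt\rho u_t\|_{L^2}^2$ therefore holds only when $\rho$ is bounded below. One must pull out $\frac{d}{dt}\int(\na d\odot\na d):\na u\,dx$ and substitute the director equation for $\na d_t$. This produces $\int u^k\p_k\p_id\cdot\p_jd\,\p_ju^i\,dx$ and a companion term, which on $\rr$ with vacuum can be controlled only through $T$-dependent weighted bounds. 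The paper removes them by testing with $\dot u$ instead of $u_t$, so that they cancel against the extra term $-\int\div(\na d\odot\na d)\cdot(u\cdot\na u)\,dx$ (see \eqref{cp26}--\eqref{cp28}).

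Testing with $\dot u$ also absorbs $\rho u\cdot\na u$, and this is not optional. The inequality $\|\sqrt\rho u\|_{L^4}^2\le C\|\sqrt\rho u\|_{L^2}\|\na u\|_{L^2}$ that your last paragraph relies on is false. Concentrate $\rho$ on $B_\delta$ and take a divergence-free $u$ of size $\sqrt{\log(1/\delta)}$ there with $\|\na u\|_{L^2}\sim1$; the ratio of the two sides then grows like $\sqrt{\log(1/\delta)}$. This is why Lemma~\ref{logbd} carries a logarithm, and the paper uses it only on bounded domains. In the paper the resulting pressure term is handled instead by $\int P\,\p_iu^j\p_ju^i\,dx\le C\|\na P\|_{L^2}\|\na u\|_{L^2}^2$, using Lemma~\ref{hm} and \eqref{qkjste}.

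\textbf{Step~1.} It does not yield $\sup_t\|\na^2 d\|_{L^2}$. Testing with $-\Delta\theta$ only reproduces Lemma~\ref{cl1}. The $\dot H^2$ estimate for $\theta$ contains $\int\p_ku^i\p_i\theta\,\p_k\Delta\theta\,dx$, which cannot be closed with $\int_0^T\|\na u\|_{L^2}^2dt$ alone. It must be coupled to the velocity estimate, as in the functional $A(t)+(1+C_0)\|\Delta d\|_{L^2}^2$ of \eqref{cp215}; this coupling is what makes Lemma~\ref{cl2} time-independent.

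\textbf{Step~3.} It cannot give $T$-independent rates. You control $\||\na d||\na^2d|\|_{L^2}$ through $\|\na^3 d\|_{L^2}$ and the elliptic estimate for $\Delta\theta=\theta_t+u\cdot\na\theta$, or through $t^2\|\na\theta_t\|_{L^2}^2$. Both routes bring in undifferentiated $|u||\na^2\theta|$ and $u_t\cdot\na\theta$. With vacuum at infinity these are bounded only via Lemma~\ref{jqgj} and the weighted estimates of Lemmas~\ref{cl7}--\ref{cl8}, whose constants depend on $T$ and $N_1$, so no decay follows.

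The paper never estimates undifferentiated $u$ at this stage. It applies $\p_t+u\cdot\na$ to the momentum equation and couples it with the evolution of $B_1(t)$ from \eqref{cp310}, the directional-derivative device of \cite{LLTZ}. There $u$ appears only through $u\cdot\na|\widetilde{\nabla}d|^2$ and is integrated by parts onto $\na u$. The polar representation gives $B_1(t)\simeq\||\na^2 d||\na d|\|_{L^2}^2$ (see \eqref{ch4}--\eqref{ch6}). The $t^m$-weighted Gr\"onwall argument ($m=1,2$) of Lemma~\ref{cl3}, together with \eqref{qkjste}, then yields \eqref{cpsol5} with constants depending only on the listed initial norms.
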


Finally, for the Cauchy problem \eqref{nlc}--\eqref{cztj}, \eqref{bjtj3} with constant viscosity and non-vacuum far-field density, we have the following global existence result.

\begin{theorem}\la{th2cp1}
Let $\OM=\rr$ and $\mu(\n) \equiv \mu > 0$.
Assume that the initial data $(\n_0,u_0,d_0)$ satisfy, for some $s \in [1,\infty)$ and $q \in (2,\infty),$
\be\la{2cpsol1}\ba
	\n_0 \ge 0, \ \n_{0} - \tilde{\n} \in L^{s} \cap L^\infty \cap D^{1,q}, \
	u_0 \in H_{0,\sigma}^{1}, \
	\nabla d_{0} \in H^{1}, \ d_0-\mathbf{1}\in L^2, \ |d_{0}|=1.
\ea\ee
Then the problem \eqref{nlc}--\eqref{cztj}, \eqref{bjtj3} with $\tilde{\n}>0$ admits a unique global strong solution $(\n,u,P,d)$ in $\rr \times (0,\infty)$ satisfying, for any $0<T<\infty$,
\be\la{2cpsol3}\ba
\begin{cases}
\rho - \tilde{\rho} \in C([0,T];L^s \cap L^\infty \cap D^{1,q} ), \\
u,\ \na d \in L^\infty(0,T; H^1) \cap L^{(q+1)/q}(0,T; W^{2,q}), \\
\sqrt{t}u,\ \sqrt{t} \na d \in L^2(0,T; W^{2,q}) \cap L^\infty(0,T;H^2), \\
\nabla P \in L^2(0,T;L^2), \quad \sqrt{t} \nabla P \in L^\infty(0,T;L^2) \cap L^{(q+1)/q}(0,T;L^q), \\
d-\mathbf{1} \in C([0,T];L^2),\ \sqrt{t}u_t,\ \sqrt{t} \na d_t \in L^2(0,T;H^1), \\
\n u\in C([0,T];L^2), \quad \sqrt{\n} u_t\in L^2(\rr \times(0,T)).
\end{cases}
\ea\ee
Moreover, the following decay estimates hold for all $t \ge 1$:
\be\la{2cpsol5}\ba
\begin{cases}
\|\nabla u(\cdot,t)\|_{L^{2}} + \|\nabla^{2} d(\cdot, t)\|_{L^{2}}\leq C t^{-\frac{1}{2}}, \\
\|\nabla^{2} u(\cdot,t)\|_{L^{2}} + \|\nabla P(\cdot,t)\|_{L^{2}}
+ \||\nabla d||\nabla^{2} d|(\cdot, t)\|_{L^{2}} \leq C t^{-1},
\end{cases}
\ea\ee
	where $C$ depends only on $\mu$, $\tilde{\n}$, $s$,
	$\|\n_0-\tilde{\n}\|_{L^s\cap L^\infty}$,
	$\| u_0 \|_{H^1}$, and $\| \na d_0 \|_{H^1}$.
\end{theorem}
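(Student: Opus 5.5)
The plan is the standard route: local existence plus a continuation argument driven by a priori bounds that are uniform in $T$ and independent of any lower bound on $\rho$. Local existence of a strong solution with vacuum comes from \cite{WD2011}-type arguments, adapted to the whole space with $\tilde\rho>0$: approximate $\rho_0$ by $\rho_0+\delta$ on $B_R$, then let $R\to\infty$ and $\delta\to0$. Everything then reduces to bounding, for any $T$, the quantity $\sup_t(\|\nabla u\|_{L^2}^2+\|\nabla^2 d\|_{L^2}^2)+\int_0^T(\|\sqrt\rho u_t\|_{L^2}^2+\|\nabla d_t\|_{L^2}^2+\|\nabla^2u\|_{L^2}^2+\|\nabla^3 d\|_{L^2}^2)dt$ by a constant depending only on the data.

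The first step is the basic energy identity
\[
\tfrac{d}{dt}\Big(\tfrac12\|\sqrt\rho u\|_{L^2}^2+\tfrac12\|\nabla d\|_{L^2}^2\Big)+\mu\|\nabla u\|_{L^2}^2+\|\Delta d+|\nabla d|^2d\|_{L^2}^2=0,
\]
together with $0\le\rho\le\|\rho_0\|_{L^\infty}$ and $\|\rho-\tilde\rho\|_{L^s}=\|\rho_0-\tilde\rho\|_{L^s}$ from transport.

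The key step, and the place where large data without an angle condition must be handled, exploits $d\in\mathbb S^1$. Write locally $d=(\cos\theta,\sin\theta)$, or equivalently use $|\nabla d|^2=-d\cdot\Delta d$ and $d\cdot\nabla d=0$. Then $\Delta d+|\nabla d|^2d=(I-d\otimes d)\Delta d$, and the director equation becomes a scalar heat-type equation for the phase $\theta$ (up to $2\pi$), which has no nonlinearity. More invariantly, I would test the $d$-equation with $-\nabla\Delta d$ and the momentum equation with $u_t$. The dangerous term $\int|\nabla d|^2|\nabla^2 d|^2$ can then be rewritten, using the $\mathbb S^1$ structure, via $\nabla\cdot(d^\perp\nabla d\cdot d)$: in 2D, $\omega=d^1\nabla d^2-d^2\nabla d^1$ satisfies $|\nabla d|=|\omega|$ and $\omega_t+\nabla(u\cdot\omega)\cdots=\nabla\operatorname{div}\omega$ with $\operatorname{curl}\omega=0$. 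So $\nabla d$ behaves like a gradient of a scalar solving a linear transported heat equation, and no smallness is needed. Concretely I would derive
\[
\tfrac{d}{dt}\|\nabla^2 d\|_{L^2}^2+\|\nabla^3 d\|_{L^2}^2\le C\|\nabla u\|_{L^2}^2\|\nabla^2d\|_{L^2}^2\cdots
\]
with coefficients that are time-integrable by the energy identity. The $L^4$-type terms are handled by Ladyzhenskaya's inequality, and the pressure/Stokes estimate $\|\nabla^2u\|_{L^2}+\|\nabla P\|_{L^2}\le C(\|\rho\dot u\|_{L^2}+\||\nabla d||\nabla^2 d|\|_{L^2})$ holds for constant $\mu$ on $\mathbb R^2$. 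Closing these estimates by Gronwall gives the $H^1$ bound for arbitrary data. I expect this to be the main obstacle: the critical 2D quartic term must be absorbed without smallness, and the $\mathbb S^1$ phase structure is what must make it work. If it fails to absorb directly, I would first try a Gronwall argument using $\int_0^\infty\|\nabla d\|_{L^4}^4dt<\infty$, obtained from the transported phase equation.

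Next come time-weighted estimates. Test the $\dot u$-equation with $\dot u$ and the $\nabla d_t$-equation with the weight $t$ to get $\sqrt t\sqrt\rho u_t,\sqrt t\nabla^2u,\sqrt t\nabla^3 d\in L^\infty L^2$. Then bound $\nabla u\in L^{(q+1)/q}W^{1,q}$ by Gagliardo–Nirenberg and the $W^{2,q}$ Stokes estimate, so that $\int_0^T\|\nabla u\|_{L^\infty}dt<\infty$ and hence $\nabla\rho\in L^\infty L^q$. This yields the regularity \eqref{2cpsol3}; uniqueness follows by a standard $L^2$ weighted stability argument. For the decay \eqref{2cpsol5}, multiply the differential inequalities by $t$ and $t^2$. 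The finiteness of $\int_0^\infty(\|\nabla u\|_{L^2}^2+\|\nabla^2d\|_{L^2}^2)dt$ combined with the monotone-type inequality then gives $t\|\nabla u\|_{L^2}^2\le C$, and similarly $t^2\|\sqrt\rho\dot u\|_{L^2}^2\le C$. The Stokes estimate finally converts these into the $t^{-1}$ rates for $\nabla^2u$ and $\nabla P$.
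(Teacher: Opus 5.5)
You correctly identify the key idea, and it is the one the paper uses. The global lift $d=(\cos\theta,\sin\theta)$ on $\mathbb{R}^2$ (your curl-free $\omega$ is $\nabla\theta$) turns the energy dissipation into $\|\Delta\theta\|_{L^2}^2=\|\nabla^2\theta\|_{L^2}^2$. Then $\|\nabla^2 d\|_{L^2}^2\le C(\|\nabla^2\theta\|_{L^2}^2+\|\nabla\theta\|_{L^4}^4)\le C\|\nabla^2\theta\|_{L^2}^2$ gives $\int_0^\infty\|\nabla^2 d\|_{L^2}^2\,dt\le C$. You should state this explicitly. Once you have it, the quartic $d$-terms only produce $\|\nabla^2 d\|_{L^2}^4$ and are harmless.

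\textbf{The step that fails.} The gap is in the velocity part of your $H^1$ estimate. If you test the momentum equation with $u_t$ and treat $\rho u\cdot\nabla u$ by Ladyzhenskaya, you get
\[
\|\sqrt{\rho}\,u\cdot\nabla u\|_{L^2}^2\le C\|u\|_{L^2}\|\nabla u\|_{L^2}^2\|\nabla^2u\|_{L^2}\le \varepsilon\|\nabla^2 u\|_{L^2}^2+C\|u\|_{L^2}^2\|\nabla u\|_{L^2}^4 .
\]
Vacuum is allowed in the interior, so the only control of $\|u\|_{L^2}$ is $\|u\|_{L^2}\le C(\|\sqrt\rho u\|_{L^2}+\|\nabla u\|_{L^2})$, cf.\ \eqref{b6}. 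This leaves a term $\|\nabla u\|_{L^2}^6$. For $X=\|\nabla u\|_{L^2}^2$ you get the Riccati-type inequality $\frac{d}{dt}X\le C\,X\,(X+X^2)+\cdots$, whose Gronwall factor $\int_0^T\|\nabla u\|_{L^2}^4\,dt$ is not a priori finite. Other H\"older or Gagliardo--Nirenberg splittings still leave a superlinear power. So your argument does not give a global $H^1$ bound for large data.

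\textbf{How the paper closes it.} The paper multiplies by $\dot u$ instead, so $\rho u\cdot\nabla u$ never appears. Two new terms must then be handled:
\begin{itemize}
\item $\int\Delta u\cdot(u\cdot\nabla u)\,dx\le C\|\nabla u\|_{L^3}^3\le C\|\nabla u\|_{L^2}^2\|\nabla^2u\|_{L^2}$;
\item the pressure term $\int P\,\partial_iu^j\partial_ju^i\,dx$, which is bounded by $C\|\nabla P\|_{L^2}\|\nabla u\|_{L^2}^2$ through the div--curl Hardy--$\mathcal{BMO}$ estimate of Lemma~\ref{hm}.
\end{itemize}
Combined with the Stokes bound and the $\frac{d}{dt}$-trick for $\int(\nabla d\odot\nabla d):\nabla u$, this gives $\frac{d}{dt}E\le C(\|\nabla u\|_{L^2}^2+\|\nabla^2d\|_{L^2}^2)E$, where $E\sim\|\nabla u\|_{L^2}^2+\|\nabla^2 d\|_{L^2}^2$. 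The coefficient is integrable uniformly in $T$, and this uniformity is also what makes the $t$- and $t^2$-weighted decay arguments work. A Desjardins-type logarithmic interpolation, as in Section~3, is the other possible repair of your $u_t$ route.

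\textbf{Secondary gap: decay of $\||\nabla d||\nabla^2 d|\|_{L^2}$.} In the $\dot u$-estimate the coupling term produces $\||\Delta\nabla d||\nabla d|\|_{L^2}\|\nabla\dot u\|_{L^2}$. The paper absorbs this using the dissipation of the functional $B_1(t)$ built on $|\widetilde{\nabla}d|^2$, which is equivalent to $\||\nabla^2 d||\nabla d|\|_{L^2}^2$ via the phase. Your sketch of testing the $\nabla d_t$-equation does not obviously supply this. Done as in Lemma~\ref{bl4}, it yields only $T$-dependent bounds. Those suffice for the regularity \eqref{2cpsol3} but not for the uniform rates \eqref{2cpsol5}.
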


A few remarks are in order.

\begin{remark}
We emphasize that Theorem~\ref{thnp1} requires only that $\| \na \mu(\n_0) \|_{L^q}$ be sufficiently small.
Since this condition is automatically satisfied when $\mu(\n)$ is constant, Theorem~\ref{thnp1} also yields the global existence of strong solutions in the constant-viscosity case.
\end{remark}

\begin{remark}
For the Cauchy problem, the pressure $P$ is unique up to an additive function of time.
\end{remark}

\begin{remark}
It is worth noting that Theorems \ref{thnp1}--\ref{th2cp1} impose no restrictions on the size of the initial data in the constant-viscosity case.
These results therefore establish the global existence of strong solutions to the two-dimensional nonhomogeneous incompressible nematic liquid crystal flows with constant viscosity.
\end{remark}

\begin{remark}
When $d$ is a constant unit vector, the system \eqref{nlc} reduces to the nonhomogeneous incompressible Navier-Stokes system.
In this case, Theorem~\ref{thnp1} is consistent with the result of Huang-Wang \cite{HW} and, in addition, provides exponential decay estimates.
Moreover, Theorem~\ref{thcp1} coincides with the result of L\"u-Shi-Zhong \cite{LSZ}.
Thus, Theorems \ref{thnp1} and \ref{thcp1} generalize the previous results \cite{HW,LSZ} to nonhomogeneous incompressible nematic liquid crystal flows.
\end{remark}

\begin{remark}
It should be mentioned that, for the Cauchy problem with density-dependent viscosity in the presence of vacuum, the crucial Stokes estimates in Lemma~\ref{ste} (see also \cite[Lemma 2.1]{HW}) are no longer available.
This problem is left for future work.
\end{remark}

We now make some comments on the analysis in this paper.
Note that the local existence of strong solutions is guaranteed by Lemma~\ref{lct}.
To extend these solutions globally in time, it suffices to establish global a priori estimates in suitable higher-order norms.
We first derive the standard energy estimate \eqref{np11}.
Although this provides an $L^2(\OM \times (0,T))$ bound on $\Delta d + |\na d|^2 d$, it does not directly yield an estimate for $\na^2 d$.
However, such an estimate is crucial for handling the strong coupling term $u \cdot \na d$ and the nonlinear terms $\na d \cdot \Delta d$ and $|\na d|^2 d$.
To overcome this difficulty, we make full use of the geometric constraint $|d|=1$ by representing $d$ in polar coordinates as $d=(\cos \theta, \sin \theta )$ (see Lemma~\ref{jzb}).
A direct computation gives
\be\nonumber\ba
|\Delta d + |\na d|^2 d|^2 = |\Delta \theta|^2, \quad  |\na d|^2 = |\na \theta|^2.
\ea\ee
Combining these identities with the boundary condition \eqref{bjtj1} and Poincar\'e's inequality, we deduce that $\|\na^2 d\|_{L^2}$ can be controlled by $\|\Delta \theta\|_{L^2}$ (see \eqref{np113} and \eqref{np115}).
Together with the energy estimate \eqref{np11}, this yields the desired $L^2(\OM \times (0,T))$ estimate for $\na^2 d$.
Moreover, to obtain time-independent estimates in bounded domains, we use the fact that $d$ satisfies the parabolic equation $\eqref{nlc}_3$ and observe that the boundary condition $n \cdot \na d = 0$ on $\p \OM$ implies that, for $j =1,2$,
\be\nonumber
\na d^j \cdot \na (n \cdot \na d^j) = 0 \quad \text{ on } \p \OM.
\ee
This, together with the standard parabolic estimates, yields a uniform $L^4$-bound for $\na d$ (see \eqref{np02}), which is essential for controlling the strong nonlinear coupling between the fluid velocity and the orientation field.

For the Cauchy problem with vacuum far-field density, the presence of vacuum at infinity leads to the loss of integrability of $u$ and $u_t$.
To address this difficulty, we introduce weighted estimates in Lemma~\ref{jqgj}, inspired by \cite{LLL,LX}.
Building on these estimates and following arguments analogous to those in \cite{LSZ,LLTZ,LXZ}, we derive the required a priori estimates and establish the large-time decay rates for the gradients of the velocity field, the orientation field, and the pressure.
For the case of non-vacuum far-field density, we exploit the integrability of $\n-\tilde{\n}$ together with the Gagliardo-Nirenberg inequality to establish the Poincar\'e-type inequality \eqref{b6}.
By suitably modifying the arguments used in the vacuum case, we derive the required a priori estimates and decay rates.

The rest of this paper is organized as follows: In Section 2, we state some known results and inequalities required for our later analysis.
Sections 3, 4, and 5 are devoted to deriving the necessary a priori estimates for the bounded-domain problem, the Cauchy problem with vacuum far-field density, and the Cauchy problem with non-vacuum far-field density, respectively.
Finally, Theorems \ref{thnp1}--\ref{th2cp1} are proved in Section~6.

\section{Preliminaries}

In this section, we recall some known results and elementary inequalities
that will be used frequently in the subsequent analysis.

We first present the following local existence result for strong solutions, 
which can be proved using methods similar to those in \cite{LJK,LLTZ,LS}.
\begin{lemma}\la{lct}
Let the initial data $\left(\n _0,u_0,d_0\right)$ satisfy \eqref{npsol1}, \eqref{cpsol1}, or \eqref{2cpsol1}, respectively.
Then there exist a time $T>0$ and a unique strong solution $(\n,u,P,d)$ to the corresponding initial-boundary value problem or Cauchy problem in $\OM \times (0,T)$ satisfying \eqref{npsol2}, \eqref{cpsol3}, or \eqref{2cpsol3}, respectively. 
\end{lemma}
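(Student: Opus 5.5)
The local existence result in Lemma~\ref{lct} will be proved by linearization and a contraction (or compactness) argument, first for initial density bounded below and then for general data through an approximation that allows vacuum. The aim is that all bounds depend only on the norms appearing in \eqref{npsol1}, \eqref{cpsol1} or \eqref{2cpsol1}, and not on $\inf\n_0$.

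\textbf{Regularization.} Replace $\n_0$ by $\n_0^\delta=\n_0+\delta$, which is bounded below by $\delta>0$. For the vacuum Cauchy case use instead a smooth, positive, suitably decaying approximation that keeps the weighted norms of $\bar{x}^a\n_0$ bounded. Regularize $d_0$ by mollification, then renormalize to $|d_0^\delta|=1$ (possible because $\nabla d_0\in H^1\subset L^p$). In bounded domains, keep the Neumann compatibility condition.

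\textbf{Iteration.} Given $(v,e)$ in the solution class on $[0,T]$:
\begin{enumerate}
\item Solve the transport equation $\n_t+v\cdot\nabla\n=0$ by characteristics. Since $\nabla v\in L^1(0,T;L^\infty)$, this gives $W^{1,q}$ bounds on $\n$, and also weighted bounds in the Cauchy case as in \cite{LLTZ}.
\item Solve the linear Stokes-type system $\n u_t+\n v\cdot\nabla u-\div(2\mu(\n)Du)+\nabla P=-\div(\nabla e\odot\nabla e)$. Use the Stokes estimates with variable viscosity from \cite{HW}, which the paper later records as Lemma~\ref{ste}.
\item Solve the linear parabolic equation $d_t-\Delta d=-v\cdot\nabla e+|\nabla e|^2e$ with Neumann data or far-field value $\mathbf{1}$.
\end{enumerate}

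\textbf{Uniform bounds in $\delta$.} The key step is to derive bounds independent of $\delta$ on a short time $T_*$. Test with $u_t$ and with $\nabla\Delta d$, and use time weights $\sqrt t$ for $\sqrt\n u_t$ and $\nabla^2 u$. These give $\sqrt t$-weighted estimates without needing the compatibility condition. Closing the resulting Gronwall-type inequality for
\[
\|\nabla u\|_{L^2}^2+\|\nabla d\|_{H^1}^2+\|\n\|_{W^{1,q}}
\]
yields a time $T_*$ depending only on the data norms.

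\textbf{Contraction and limit.} Prove contraction in a lower norm, namely $\n$ in $L^2$ (or $H^{-1}$-type), $\sqrt\n u$ in $L^2$, and $\nabla d$ in $L^2$, on a possibly shorter interval. Then pass to the limit $\delta\to0$ using compactness (Aubin--Lions). The constraint $|d|=1$ is recovered from the maximum principle for $|d|^2-1$, which satisfies a linear parabolic equation with zero initial data. Uniqueness follows from the same $L^2$ difference estimates, with vacuum handled as in \cite{LS}.

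\textbf{Main obstacle.} I expect the difficulty to lie in controlling the momentum difference when vacuum is present, in particular bounding $\|u_1-u_2\|$ with only $\sqrt\n(u_1-u_2)$ available. In the vacuum Cauchy case the additional problem is that $u$ itself has no integrability. For this I would first try the weighted estimate $\|\bar x^{-1}u\|_{L^2}\lesssim\|\sqrt\n u\|_{L^2(B_N)}+\|\nabla u\|_{L^2}$, valid once the mass localization \eqref{cpsol4} holds on the short interval, combined with Hardy-type bounds as in \cite{LLTZ,LSZ}.
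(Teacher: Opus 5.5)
Your outline matches what the paper intends: the paper gives no proof of Lemma~\ref{lct} beyond a pointer to the methods of \cite{LJK,LLTZ,LS}, and your scheme is the standard argument of that type, namely non-degenerate approximation, $\sqrt{t}$-weighted bounds uniform in the approximation parameter (so no compatibility condition on $u_0$ is needed), compactness, difference estimates for uniqueness, and recovery of $|d|=1$ from the parabolic equation for $|d|^2-1$. The one step to adjust is the vacuum Cauchy case: a positive but decaying $\rho_0^\delta$ is not bounded below on $\mathbb{R}^2$, so the non-degenerate problem is usually posed on balls $B_R$ (e.g.\ with $\rho_0+R^{-1}e^{-|x|^2}$), and the limit $R\to\infty$ is taken using Lemma~\ref{jqgj} with the mass localization \eqref{cpsol4}, as you anticipate, together with weighted bounds on $\bar{x}^{a/2}\nabla d$ (needed to control $u\cdot\nabla d$ when $u$ has no integrability), which are missing from your list of controlled quantities.
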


Next, we will frequently use the following Gagliardo-Nirenberg inequalities (see \cite{NI}).
\begin{lemma}\la{gn1}
Assume that $\OM=\rr$ or that $\OM$ is a bounded smooth domain in $\rr$.
For $1 \le r < p < \infty$, there exists a positive constant $C$ depending only on $p$, $r$, and $\OM$
such that if $f \in H_0^1(\OM)$, or $f \in H^1(\OM)$ with $\ol{f}=0$, or $f \in (H^1(\OM))^2$ with $f \cdot n|_{\p \OM}=0$, or $f \in L^r(\rr) \cap D^1(\rr)$, then
\be\ba\la{gn11}
\| f \|_{L^p} \le C \| f \|^{\frac{r}{p}}_{L^r} \| \na f \|^{1-\frac{r}{p}}_{L^2}.
\ea\ee
\end{lemma}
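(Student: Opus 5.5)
The plan is to reduce every case of Lemma~\ref{gn1} to the classical Gagliardo--Nirenberg inequality on a domain where a Poincaré/Sobolev-type inequality holds for $f$. The base point is the two-dimensional Ladyzhenskaya-type estimate
\[
\|f\|_{L^p}\le C\|f\|_{L^r}^{\theta}\|\na f\|_{L^2}^{1-\theta},
\]
where scaling fixes $\theta$. In two dimensions the scaling relation $\frac1p=\frac{\theta}{r}+(1-\theta)\bigl(\frac12-\frac12\bigr)$ gives exactly $\theta=\frac rp$. This explains the exponents in \eqref{gn11} and shows that the inequality is scale invariant.

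\textbf{Whole space.} For $f\in L^r(\rr)\cap D^1(\rr)$, argue first for $f\in C_0^\infty$ and then pass to the limit by density. Apply the Sobolev embedding $W^{1,1}(\rr)\hookrightarrow L^2$, in the form $\|g\|_{L^2}\le C\|\na g\|_{L^1}$, to $g=|f|^{s}$. Cauchy--Schwarz then gives
\[
\|f\|_{L^{2s}}^{s}\le Cs\,\| |f|^{s-1}\|_{L^2}\|\na f\|_{L^2}.
\]
Choosing $s$ appropriately and iterating, in the style of Nirenberg's argument, or interpolating between $L^r$ and the resulting $L^{2s}$ bound, yields \eqref{gn11}. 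Alternatively one may simply cite \cite{NI}.

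\textbf{Bounded domain.} Use an extension operator $E:H^1(\OM)\to H^1(\rr)$ that is also bounded $L^r(\OM)\to L^r(\rr)$; the standard reflection extension for smooth domains has this property. This gives
\[
\|f\|_{L^p}\le C\|f\|_{L^r}^{r/p}\bigl(\|\na f\|_{L^2}+\|f\|_{L^2}\bigr)^{1-r/p}.
\]
The lower-order term $\|f\|_{L^2}$ must then be removed in each of the three listed cases.
\begin{itemize}
\item For $f\in H_0^1$, Poincaré's inequality gives $\|f\|_{L^2}\le C\|\na f\|_{L^2}$; one could also extend by zero, avoiding $E$ entirely.
\item For $\ol f=0$, the Poincaré--Wirtinger inequality gives the same bound.
\item For vector fields with $f\cdot n|_{\p\OM}=0$, use a Poincaré inequality valid on the simply connected domain $\OM$. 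The field cannot be constant unless it is zero, since a nonzero constant vector cannot be tangent along the entire closed boundary curve. The standard compactness/contradiction argument via Rellich then gives $\|f\|_{L^2}\le C\|\na f\|_{L^2}$.
\end{itemize}

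\textbf{Main obstacle.} The difficulty is that the $L^2$ norm must be absorbed without losing the multiplicative structure when $r<2$, since $\|f\|_{L^2}$ is not directly controlled by $\|f\|_{L^r}$. I would handle this by interpolation:
\[
\|f\|_{L^2}\le \|f\|_{L^r}^{\alpha}\|f\|_{L^p}^{1-\alpha}
\]
for $r<2<p$, combined with Poincaré to bound the remaining factor through $\|\na f\|_{L^2}$. Young's inequality then absorbs the $\|f\|_{L^p}$ term into the left-hand side.

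If $p\le 2$, then $\|f\|_{L^p}\le C\|f\|_{L^r}^{\beta}\|f\|_{L^2}^{1-\beta}$ by interpolation. Poincaré then reduces this case to the form above. One checks by scaling that the exponents match.
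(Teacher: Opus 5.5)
The paper gives no proof of this lemma; it only cites Nirenberg \cite{NI}. Your route spells out what that citation leaves implicit. Nirenberg's inequality is purely multiplicative on $\rr$, but its bounded-domain version carries an additive lower-order term. So for the three boundary cases some reduction like your extension-plus-Poincar\'e step is genuinely needed, and that step is correct. Applying the whole-space inequality to $Ef$ gives
\[
\|f\|_{L^p}\le C\|f\|_{L^r}^{r/p}\bigl(\|\na f\|_{L^2}+\|f\|_{L^2}\bigr)^{1-r/p}.
\]
In each case Poincar\'e's inequality $\|f\|_{L^2}\le C\|\na f\|_{L^2}$ bounds the bracket by $C\|\na f\|_{L^2}$, which finishes the proof for every $1\le r<p<\infty$. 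For tangential vector fields, only connectedness of $\OM$ is used, not simple connectivity. The limiting constant vector vanishes because the outward normal takes every direction $e$, e.g.\ at a boundary point maximizing $x\cdot e$.

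You should, however, delete your ``main obstacle'' paragraph. There is no obstacle: nothing requires comparing $\|f\|_{L^2}$ with $\|f\|_{L^r}$, because Poincar\'e bounds $\|f\|_{L^2}$ by the gradient directly. Moreover, your argument there for $p\le 2$ is wrong. Interpolating $\|f\|_{L^p}\le\|f\|_{L^r}^{\beta}\|f\|_{L^2}^{1-\beta}$ forces $\beta=\frac{r(2-p)}{p(2-r)}<\frac rp$. After Poincar\'e you then obtain only $\|f\|_{L^p}\le C\|f\|_{L^r}^{\beta}\|\na f\|_{L^2}^{1-\beta}$. This is strictly weaker than \eqref{gn11}, and upgrading it would require $\|\na f\|_{L^2}\le C\|f\|_{L^r}$, which is false. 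Scaling cannot fix the exponents, since neither $\OM$ nor Poincar\'e's inequality is scale invariant. The range $p\le 2$ needs no separate treatment anyway. Your whole-space step already covers all $p\in(r,r+2)$ by interpolating between $L^r$ and $L^{r+2}$, and there the exponents match exactly.
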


The following regularity results for the Stokes equations are crucial for deriving higher-order a priori estimates in bounded domains; see \cite{HW} for the proof.
\begin{lemma}\la{ste}
Let $\OM$ be a bounded smooth domain in $\rr$,
and let $\hat{\n}$, $\underline{\mu}$, and $\bar{\mu}$ be positive constants with $p \in (2,\infty)$.
Assume that $\rho \in W^{1, p}$ with $0 \leq \rho \leq \hat{\rho}$,
and that $\mu(\n)$ satisfies \eqref{vc} with $\underline{\mu} \leq \mu(\rho) \leq \bar{\mu}$ on $[0, \hat{\rho}]$.
Let $(u, P) \in H_0^1 \times L^2$ be the unique weak solution to the boundary value problem
\begin{equation}\label{sts}
-\operatorname{div}(2 \mu(\rho) Du)+\nabla P=F, \quad
\operatorname{div} u=0 \textnormal { in } \Omega, \quad
\int P d x=0.
\end{equation}
Then the following regularity estimates hold.

$(1)$ If $F \in L^2$, then $(u, P) \in H^2 \times H^1$ and there exists a positive constant $C$ depending only on $\Omega$, $\hat{\rho}$, $\underline{\mu}$, $\bar{\mu}$, and $p$ such that
\begin{equation}\label{S1}
	\begin{aligned}
		\|u\|_{H^2} & \leq C\|F\|_{L^2}\left(1+\|\nabla \mu(\rho)\|_{L^p}\right)^{\frac{p}{p-2}}, \\
		\|P\|_{H^1} & \leq C\|F\|_{L^2}\left(1+\|\nabla \mu(\rho)\|_{L^p}\right)^{\frac{2 p-2}{p-2}}.
	\end{aligned}
\end{equation}

$(2)$ If $F \in L^r$ for some $r \in(2, p)$, then $(u, P) \in W^{2, r} \times W^{1, r}$ and
\begin{equation}\label{stegj}
	\begin{aligned}
		\|u\|_{W^{2, r}} &\leq C\|F\|_{L^r}\left(1+\|\nabla \mu(\rho)\|_{L^p}\right)^{\frac{p r}{2(p-r)}}, \\
		\|P\|_{W^{1, r}} &\leq C\|F\|_{L^r}\left(1+\|\nabla \mu(\rho)\|_{L^p}\right)^{1+\frac{p r}{2(p-r)}},
	\end{aligned}
\end{equation}
where the constant $C$ depends only on $\Omega$, $\hat{\rho}$, $\underline{\mu}$, $\bar{\mu}$, $p$, and $r$.
\end{lemma}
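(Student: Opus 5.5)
We would prove Lemma~\ref{ste} by treating the variable-viscosity operator as a perturbation of the constant-coefficient Stokes operator and closing with interpolation. Write $\mu=\mu(\rho)$ and set $\Lambda \triangleq \|\nabla\mu(\rho)\|_{L^p}$.

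\textbf{Step 1: basic $H^1$ and $L^2$ bounds.} Existence and uniqueness of the weak solution $(u,P)\in H_0^1\times L^2$ follow from Lax--Milgram on $H^1_{0,\sigma}$, using $\underline{\mu}\le\mu\le\bar{\mu}$ and Korn's identity $2\|Du\|_{L^2}^2=\|\nabla u\|_{L^2}^2$ for divergence-free $u\in H_0^1$. The pressure is recovered by de Rham's theorem together with the Bogovskii operator. Testing the equation by $u$ and using Poincar\'e gives
\[
\|\nabla u\|_{L^2}\le C\|F\|_{L^2}.
\]
Testing by the Bogovskii lift $\mathcal{B}[P]$, which satisfies $\operatorname{div}\mathcal{B}[P]=P$ and $\|\nabla \mathcal{B}[P]\|_{L^2}\le C\|P\|_{L^2}$ because $\overline{P}=0$, gives $\|P\|_{L^2}\le C\|F\|_{L^2}$.

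\textbf{Step 2: rewrite as a perturbed Stokes system.} Since $\operatorname{div}u=0$, we have $\operatorname{div}(2\mu Du)=\mu\Delta u+2Du\,\nabla\mu$. Dividing by $\mu$ yields the constant-coefficient Stokes system
\[
-\Delta u+\nabla\Big(\frac{P}{\mu}\Big)=\frac{F}{\mu}+\frac{2Du\,\nabla\mu}{\mu}-\frac{P\,\nabla\mu}{\mu^2},\qquad \operatorname{div}u=0,\quad u|_{\partial\Omega}=0.
\]
The classical Cattabriga/ADN estimates for the Stokes system then give
\[
\|u\|_{W^{2,s}}+\|\nabla (P/\mu)\|_{L^s}\le C\big(\|F\|_{L^s}+\Lambda\,\|\nabla u\|_{L^{\frac{sp}{p-s}}}+\Lambda\,\|P\|_{L^{\frac{sp}{p-s}}}\big)
\]
for $s\in\{2,r\}$. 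Note that $P/\mu$ need not have zero mean, which is harmless for the gradient estimate. From $\nabla P=\mu\nabla(P/\mu)+(P/\mu)\nabla\mu$ we then recover $\nabla P$.

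\textbf{Step 3: absorb the lower-order terms by interpolation (the main obstacle).} The hard part is obtaining the precise polynomial dependence on $\Lambda$. For $s=2$, put $\tilde p=2p/(p-2)$ and use Gagliardo--Nirenberg (Lemma~\ref{gn1} and its gradient version) to write
\[
\|\nabla u\|_{L^{\tilde p}}\le C\|\nabla u\|_{L^2}^{1-\frac{2}{p}}\|u\|_{H^2}^{\frac{2}{p}},\qquad \|P\|_{L^{\tilde p}}\le C\|P\|_{L^2}^{1-\frac{2}{p}}\|P\|_{H^1}^{\frac{2}{p}}.
\]
Young's inequality then gives
\[
\Lambda X^{\frac{2}{p}}Y^{1-\frac{2}{p}}\le \tfrac12 X+C\Lambda^{\frac{p}{p-2}}Y .
\]
Combining this with Step 1, we first close the estimate for $\|u\|_{H^2}+\|\nabla(P/\mu)\|_{L^2}$. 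The weighting matters here: we should bound the combination of $u$ and $P/\mu$ first and only then pass to $P$, since $\nabla P$ picks up one extra factor $(1+\Lambda)$ through the term $(P/\mu)\nabla\mu$. That extra factor is what produces the exponent $\frac{2p-2}{p-2}=1+\frac{p}{p-2}$ in \eqref{S1}. Some care is needed with $P$ appearing on both sides; we handle it by estimating $\|P/\mu\|_{L^{\tilde p}}$ via the interpolation with $\nabla(P/\mu)$ and $\|P\|_{L^2}$.

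\textbf{Step 4: the $W^{2,r}$ case.} For $F\in L^r$ with $2<r<p$, we take $s=r$ and interpolate $\|\nabla u\|_{L^{rp/(p-r)}}$ between $\|\nabla u\|_{L^2}$ (or the already-established $H^2$ bound) and $\|u\|_{W^{2,r}}$. In two dimensions the relevant interpolation exponent is $\theta$ with $\Lambda^{1/(1-\theta)}=\Lambda^{\frac{pr}{2(p-r)}}$. Young's inequality again absorbs the top-order term, and we use $\|F\|_{L^2}\le C\|F\|_{L^r}$ on bounded $\Omega$. The pressure bound in \eqref{stegj} follows as before, with one extra factor $(1+\Lambda)$.
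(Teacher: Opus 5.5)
Your proof is correct, and it follows essentially the same perturbative argument the paper relies on. The paper gives no proof of Lemma~\ref{ste} and simply cites \cite{HW}, whose argument is of the same type: rewrite the system as a constant-coefficient Stokes problem for $(u,P/\mu(\rho))$, absorb the $\nabla\mu(\rho)$-terms by interpolation and Young's inequality, then recover $\nabla P$ at the cost of one extra factor $(1+\|\nabla\mu(\rho)\|_{L^p})$, which gives exactly the stated exponents.

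Two small points to address when you write it out. First, the absorption steps presuppose $u\in H^2$ (resp.\ $W^{2,r}$). You should justify this by a short bootstrap from $u\in H_0^1$, $P\in L^2$, or by smoothing $\rho$ and using uniqueness. Second, in part (2) the power $\frac{pr}{2(p-r)}$ comes from interpolating $\nabla u$ between $L^2$ (with $\|\nabla u\|_{L^2}\le C\|F\|_{L^2}$) and $L^\infty$, using $W^{1,r}\hookrightarrow L^\infty$. Your parenthetical alternative via the $H^2$ bound would give a worse power when $r$ is close to $2$.
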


The following elliptic estimates can be found in \cite{ADN,GT}.
\begin{lemma}\la{tygj}
Let $k \ge 0$ be an integer and $\OM$ be a bounded smooth domain in $\rr$.
Then for any $f\in H^{k+2}(\OM)$ satisfying $\frac{\p f}{\p n}=0$ on $\p \OM$,
there exists a positive constant $C$ depending only on $\OM$ and $k$ such that
\be\la{tygj1}\ba
\| \na^{k+2} f \|_{L^2} \le C \| \Delta f \|_{W^{k,2}}.
\ea\ee
\end{lemma}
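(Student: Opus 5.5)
The estimate \eqref{tygj1} is the classical $H^{k+2}$-regularity for the Neumann problem, so the plan is to reduce to it. The one point to handle is that the right-hand side contains only $\Delta f$, and no lower-order norm of $f$ itself. Set $g \triangleq \Delta f \in H^k(\OM)$. Since $\frac{\p f}{\p n}=0$ on $\p\OM$, the divergence theorem gives $\int g\,dx=\int_{\p\OM}\frac{\p f}{\p n}\,dS=0$, so $g$ satisfies the compatibility condition of the Neumann problem.

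Next, replace $f$ by $\tilde f \triangleq f-\ol{f}$. This changes neither $\na^{k+2} f$ nor $\Delta f$, and it preserves the Neumann condition. Then $\tilde f$ is the unique solution with zero mean of $\Delta \tilde f = g$ in $\OM$ and $\frac{\p \tilde f}{\p n}=0$ on $\p\OM$.

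By the Agmon--Douglis--Nirenberg estimates for this elliptic problem on a smooth bounded domain \cite{ADN,GT}, we have
\[
\|\tilde f\|_{H^{k+2}} \le C\big(\|g\|_{H^k}+\|\tilde f\|_{L^2}\big).
\]
The main step is to absorb the lower-order term $\|\tilde f\|_{L^2}$. Multiplying $\Delta\tilde f=g$ by $\tilde f$ and integrating by parts, the boundary term vanishes by the Neumann condition, so $\|\na \tilde f\|_{L^2}^2=-\int g\tilde f\,dx$. Poincar\'e's inequality for zero-mean functions gives $\|\tilde f\|_{L^2}\le C\|\na\tilde f\|_{L^2}$. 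Combining the two,
\[
\|\na\tilde f\|_{L^2}^2 \le \|g\|_{L^2}\|\tilde f\|_{L^2}\le C\|g\|_{L^2}\|\na \tilde f\|_{L^2},
\]
hence $\|\tilde f\|_{L^2}\le C\|g\|_{L^2}$. (Equivalently, one can argue by contradiction using the uniqueness of zero-mean Neumann solutions.)

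Inserting this into the ADN estimate yields $\|\na^{k+2}f\|_{L^2}\le\|\tilde f\|_{H^{k+2}}\le C\|\Delta f\|_{W^{k,2}}$, where $C$ depends only on $\OM$ and $k$, which is \eqref{tygj1}. Note that if the higher-order ADN estimate is only available in the form $\|\tilde f\|_{H^{k+2}} \le C(\|g\|_{H^k}+\|\tilde f\|_{H^{k+1}})$, an induction on $k$ reduces it to the form used above. The only genuinely nontrivial input is the boundary regularity theory, which we cite rather than reprove.
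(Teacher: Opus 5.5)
Your argument is correct and is the same approach as the paper. The paper gives no proof and simply cites the classical Agmon--Douglis--Nirenberg and Gilbarg--Trudinger theory, and your reduction (subtract the mean, apply the Neumann $H^{k+2}$ estimate, absorb the $L^2$ term via the energy identity and the Poincar\'e--Wirtinger inequality, which implicitly uses that $\OM$ is connected) is exactly how that cited theory gives \eqref{tygj1}.
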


By zero extension of $u$ outside $\OM$, we can obtain the following lemma, which was first established in \cite{D}.
\begin{lemma}\la{logbd}
Let $\OM \subset \rr$ be a bounded domain.
For $\n \in L^\infty(\OM)$ with $0 \le \n \le \rs $, and $u \in H_0^1(\OM)$,
there exists a positive constant $C$ depending only on $\rs$ and $\OM$ such that
\be\la{logbd1}\ba
\| \sqrt{\n} u \|^2_{L^4} \le C \left( 1+\| \sqrt{\n} u \|_{L^2} \right)
\| \na u \|_{L^2} \log^{\frac{1}{2}} \left( 2+\| \na u \|^2_{L^2} \right).
\ea\ee
\end{lemma}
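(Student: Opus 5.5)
We reduce Lemma~\ref{logbd} to the whole-plane version due to Desjardins \cite{D}. Extend $u$ and $\n$ by zero outside $\OM$ and call the extensions $\tilde u$ and $\tilde\n$. Since $u\in H_0^1(\OM)$, we have $\tilde u\in H^1(\rr)$, $\operatorname{supp}\tilde u\subset\ol{\OM}$, and $\|\na\tilde u\|_{L^2(\rr)}=\|\na u\|_{L^2}$. Moreover $0\le\tilde\n\le\rs$, so every norm of $\sqrt{\tilde\n}\tilde u$ on $\rr$ equals the corresponding norm of $\sqrt{\n}u$ on $\OM$. It therefore suffices to prove \eqref{logbd1} for $\tilde u\in H^1(\rr)$ supported in a fixed ball $B_R\supset\ol{\OM}$, with a constant depending on $R$ (hence on $\OM$) and $\rs$.

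For the whole-plane estimate, split $\tilde u$ into low and high frequencies at a threshold $k$ to be chosen: $\tilde u=u_1+u_2$, with $\hat u_1=\mathbf{1}_{|\xi|\le k}\hat{\tilde u}$. Then
\[
\|\sqrt{\tilde\n}\tilde u\|_{L^4}^2\le 2\|\sqrt{\tilde\n}u_1\|_{L^4}^2+2\|\sqrt{\tilde\n}u_2\|_{L^4}^2 .
\]
\textbf{High frequencies.} Since $\tilde\n\le\rs$, we have $\|\sqrt{\tilde\n}u_2\|_{L^4}^2\le\rs\|u_2\|_{L^4}^2$. Sobolev embedding $H^{1/2}\hookrightarrow L^4$ in two dimensions gives $\|u_2\|_{L^4}^2\le C\|u_2\|_{\dot H^{1/2}}^2\le Ck^{-1}\|\na u\|_{L^2}^2$.

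\textbf{Low frequencies.} By H\"older, $\|\sqrt{\tilde\n}u_1\|_{L^4}^2\le\|\sqrt{\tilde\n}u_1\|_{L^2}\,\|\sqrt{\tilde\n}u_1\|_{L^\infty}$. For the second factor, $\|u_1\|_{L^\infty}\le\|\hat u_1\|_{L^1}$, and splitting $\int_{|\xi|\le k}$ into $|\xi|\le1$ and $1<|\xi|\le k$ with Cauchy--Schwarz gives
\[
\|u_1\|_{L^\infty}\le C\|\tilde u\|_{L^2}+C\|\na u\|_{L^2}\log^{1/2}(2+k).
\]
Since $\tilde u$ is supported in $B_R$, Poincar\'e gives $\|\tilde u\|_{L^2}\le C\|\na u\|_{L^2}$. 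For the first factor, write $u_1=\tilde u-u_2$:
\[
\|\sqrt{\tilde\n}u_1\|_{L^2}\le\|\sqrt{\n}u\|_{L^2}+\sqrt{\rs}\,\|u_2\|_{L^2}\le\|\sqrt{\n}u\|_{L^2}+Ck^{-1}\|\na u\|_{L^2}.
\]
Combining, the low-frequency part is at most $C\bigl(\|\sqrt\n u\|_{L^2}+k^{-1}\|\na u\|_{L^2}\bigr)\|\na u\|_{L^2}\log^{1/2}(2+k)$. The high-frequency part is at most $Ck^{-1}\|\na u\|^2_{L^2}$.

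\textbf{Choice of $k$.} Take $k=2+\|\na u\|_{L^2}^2$. Then $k^{-1}\|\na u\|_{L^2}\le 1$, so both the high-frequency term $Ck^{-1}\|\na u\|_{L^2}^2$ and the cross term $Ck^{-1}\|\na u\|_{L^2}^2\log^{1/2}(2+k)$ are bounded by $C\|\na u\|_{L^2}\log^{1/2}(2+k)$. Finally, $\log(2+k)\le C\log(2+\|\na u\|_{L^2}^2)$, which yields \eqref{logbd1}.

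The delicate point is the low-frequency $L^\infty$ bound. There the logarithm has to come out exactly with power $\tfrac12$, and we must avoid picking up any factor of $\|\sqrt\n u\|_{L^2}$ on the $\|\na u\|_{L^2}$ term; the Poincar\'e inequality on the bounded support is what handles the $|\xi|\le1$ piece. An alternative is simply to invoke Desjardins' whole-space lemma directly. Its extra term $\|\tilde u\|_{L^2}$ is again absorbed by Poincar\'e, which explains why the constant depends on $\OM$.
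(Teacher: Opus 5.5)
Your proof is correct and takes the same route as the paper, which simply extends $u$ by zero outside $\Omega$ and invokes Desjardins' inequality. Your low/high frequency split at $k=2+\|\nabla u\|_{L^2}^2$ is the standard argument behind that cited inequality, carried out on $\mathbb{R}^2$ via the Fourier transform, and Poincar\'e's inequality on the bounded support replaces $\|u\|_{H^1}$ by $\|\nabla u\|_{L^2}$ with a constant depending on $\Omega$. The only difference is that you prove the cited lemma rather than quote it.
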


The following weighted $L^p$ estimate can be found in \cite[Theorem B.1]{L1}.
\begin{lemma}\la{WPE}
For $m \in [2,\infty)$ and $\theta \in (1+\frac{m}{2},\infty)$,
there exists a generic positive constant $C$
such that for any $v \in D^1(\rr)$,
\be\la{WPE1}\ba
\left( \int _{{\mathbb {R}^2 }} \frac{|v|^m}{e+|x|^2}(\log (e+|x|^2))^{-\theta }dx \right) ^{1/m}
\le C\Vert v\Vert _{L^2(B_1)}+C\Vert \nabla v\Vert _{L^2({\mathbb {R}^2 }) }.
\ea\ee
\end{lemma}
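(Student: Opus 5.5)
We will prove the estimate by a dyadic decomposition of $\mathbb{R}^2$ combined with scale-invariant Poincaré--Sobolev inequalities on annuli. By density it suffices to take $v \in C^\infty(\mathbb{R}^2)$ with $\nabla v \in L^2$. Set $A_0 = B_2$ and $A_k = \{2^{k-1} < |x| < 2^{k+1}\}$ for $k \ge 1$; these overlapping annuli cover $\mathbb{R}^2$ with bounded overlap. On $A_k$ the weight is comparable to $4^{-k} (1+k)^{-\theta}$:
\begin{equation*}
\frac{1}{e+|x|^2}\bigl(\log(e+|x|^2)\bigr)^{-\theta} \le C\, 4^{-k} (1+k)^{-\theta}.
\end{equation*}
Let $v_k$ denote the average of $v$ over $A_k$. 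Then on $A_k$ we split
\begin{equation*}
|v| \le |v - v_k| + |v_k - v_0| + |v_0|,
\end{equation*}
and estimate each of the three pieces against the weight separately.

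\textbf{The oscillation term.} Since $A_k$ is a dilate by $2^k$ of a fixed annulus, the Poincaré--Sobolev inequality in two dimensions together with scaling gives
\begin{equation*}
\|v - v_k\|_{L^m(A_k)} \le C\, 2^{2k/m} \|\nabla v\|_{L^2(A_k)}.
\end{equation*}
The weighted contribution of $A_k$ is therefore bounded by $C (1+k)^{-\theta} \|\nabla v\|_{L^2}^m$, which is summable in $k$ because $\theta > 1$.

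\textbf{The constant term.} The constant $v_0$ satisfies $|v_0| \le C\|v\|_{L^2(B_2)} \le C\|v\|_{L^2(B_1)} + C\|\nabla v\|_{L^2}$, where the last step again uses Poincaré. The weight is integrable, since
\begin{equation*}
\int_{\mathbb{R}^2} \frac{(\log(e+|x|^2))^{-\theta}}{e+|x|^2}\,dx \approx \sum_k (1+k)^{-\theta} < \infty.
\end{equation*}

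\textbf{The mean drift $|v_k - v_0|$ (main obstacle).} This is the step I expect to require care, since it is where the sharp threshold $\theta > 1 + m/2$ enters. A crude bound $|v_k - v_0| \le Ck\|\nabla v\|_{L^2}$ would only give convergence for $\theta > m+1$. Instead, consecutive annuli overlap in a set of comparable size, so the scale-invariant Poincaré inequality on $A_j \cup A_{j+1}$ gives
\begin{equation*}
|v_{j+1} - v_j| \le C\|\nabla v\|_{L^2(A_j\cup A_{j+1})}.
\end{equation*}
Since the annuli have bounded overlap, $\sum_j |v_{j+1} - v_j|^2 \le C\|\nabla v\|_{L^2}^2$. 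Cauchy--Schwarz then yields
\begin{equation*}
|v_k - v_0| \le C\sqrt{k}\,\|\nabla v\|_{L^2},
\end{equation*}
which is the two-dimensional logarithmic (BMO-type) growth. The weighted contribution of $A_k$ is then at most
\begin{equation*}
C\, 4^{-k} (1+k)^{-\theta}\, 4^{k}\, k^{m/2} \|\nabla v\|_{L^2}^m = C\, k^{m/2-\theta}\|\nabla v\|_{L^2}^m,
\end{equation*}
which is summable exactly when $\theta > 1 + m/2$.

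Summing the three contributions over $k$ and taking $m$-th roots gives the weighted estimate \eqref{WPE1}.
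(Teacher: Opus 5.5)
The paper gives no proof of this lemma; it imports it from \cite[Theorem B.1]{L1}. There is therefore no in-paper route to compare with, and your dyadic argument is a correct, self-contained proof of the statement.

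Here is how the hypotheses enter your argument. The oscillation piece and the constant piece each need only $\theta>1$: the first uses the scale invariance of $\|\nabla v\|_{L^2}$ in two dimensions, and the second uses the integrability of the weight. The threshold $\theta>1+\frac m2$ enters only through the square-summability of the increments $|v_{j+1}-v_j|\le C\|\nabla v\|_{L^2(A_j\cup A_{j+1})}$. This gives the $\sqrt{k}$ (that is, $\log^{1/2}|x|$) growth of the dyadic means, which is exactly the mechanism behind the lemma. Nothing is lost in that step: the test functions $v=(\log(e+|x|^2))^{\alpha}$ with $\alpha<\frac12$ lie in $D^1(\mathbb{R}^2)$ and show that the range of $\theta$ cannot be pushed below $1+\frac m2$.

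Two small points to tidy up.

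\begin{itemize}
\item The density reduction is unnecessary. Since $D^1(\mathbb{R}^2)\subset H^1_{\rm loc}$, every Poincar\'e and Poincar\'e--Sobolev inequality you use already holds for $H^1$ functions on bounded connected Lipschitz sets (balls and annuli).
\item The step $\|v\|_{L^2(B_2)}\le C\|v\|_{L^2(B_1)}+C\|\nabla v\|_{L^2(B_2)}$ should be justified explicitly. It is Poincar\'e on $B_2$ with the mean taken over the subset $B_1$, namely $\|v-\bar v_{B_1}\|_{L^2(B_2)}\le C\|\nabla v\|_{L^2(B_2)}$, combined with $|\bar v_{B_1}|\le C\|v\|_{L^2(B_1)}$.
\end{itemize}

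The final assembly, via $|v|^m\le 3^{m-1}\left(|v-v_k|^m+|v_k-v_0|^m+|v_0|^m\right)$ on $A_k$ and $\int_{\mathbb{R}^2}\le\sum_k\int_{A_k}$, is fine as written.
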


As a consequence of Lemma~\ref{WPE}, we have the following weighted $L^p$-estimates
for functions in $D^1(\rr)$, whose proof can be found in \cite[Lemma 2.4]{LLL}.
\begin{lemma}\la{jqgj}
Assume that $\n \in L^1(\rr) \cap L^\infty(\rr)$ is a non-negative function such that
\be\la{jqgj1}\ba
\int_{B_{N_1}} \n dx \ge M_1, \quad \| \n \|_{ L^1(\rr) \cap L^\infty(\rr) } \le M_2,
\ea\ee
for positive constants $M_1$, $M_2$, and $N_1 \ge 1$ with $B_{N_1} \subset \rr$.
Then for $\ep>0$ and $\eta>0$, there is a positive constant $C$ depending only on
$\ep$, $\eta$, $M_1$, $M_2$, and $N_1$ such that every $v \in D^1(\rr)$
satisfies
\be\la{jqgj2}\ba
\| v \bar{x}^{-\eta} \|_{ L^{(2+\ep)/\tilde{\eta}} (\rr) }
\le C \| \sqrt{\n} v \|_{L^2(\rr)} + C \| \na v \|_{L^2(\rr)},
\ea\ee
where $\tilde{\eta}=\min\{1,\eta\}$.
\end{lemma}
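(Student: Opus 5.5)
The plan is to reduce \eqref{jqgj2} to Lemma~\ref{WPE}. Two ingredients are needed: a comparison of the weights, and control of the local term $\|v\|_{L^2(B_1)}$ by $\|\sqrt{\n}v\|_{L^2}+\|\na v\|_{L^2}$, which is where the hypotheses \eqref{jqgj1} enter. Throughout, set $p\triangleq(2+\ep)/\tilde{\eta}\ge 2+\ep>2$, so Lemma~\ref{WPE} applies with $m=p$.

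\emph{Step 1 (weight comparison).} I will show that for a suitable $\theta>1+\frac p2$,
\be\nonumber
\bar{x}^{-\eta p}\le C(\ep,\eta)\,(e+|x|^2)^{-1}\log^{-\theta}(e+|x|^2).
\ee
By \eqref{cpsol2}, $\bar{x}^{-\eta p}=(e+|x|^2)^{-\eta p/2}\log^{-2\eta p}(e+|x|^2)$.
\begin{itemize}
\item If $\eta\ge1$, then $\tilde\eta=1$ and $\eta p\ge p=2+\ep$. Hence $(e+|x|^2)^{-\eta p/2}\le (e+|x|^2)^{-1}$ and $\log^{-2\eta p}\le\log^{-2p}$, using $\log(e+|x|^2)\ge1$. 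It then suffices to pick $\theta\in(1+\frac p2,2p]$, and this interval is nonempty since $p>2/3$.
\item If $\eta<1$, then $\eta p=2+\ep$. The weight is $(e+|x|^2)^{-1}\cdot(e+|x|^2)^{-\ep/2}\log^{-2(2+\ep)}(e+|x|^2)$. Here $\theta=2+\frac p2$ may be large, but the extra polynomial factor $(e+|x|^2)^{-\ep/2}$ dominates any power of the logarithm. This gives the bound with a constant depending on $\ep$ and $\eta$.
\end{itemize}
Combining either case with \eqref{WPE1} yields
\be\nonumber
\|v\bar x^{-\eta}\|_{L^p}\le C\|v\|_{L^2(B_1)}+C\|\na v\|_{L^2}.
\ee

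\emph{Step 2 (local $L^2$ control).} Since $N_1\ge1$, it suffices to bound $\|v\|_{L^2(B_{N_1})}$. Let $\tilde v$ denote the average of $v$ over $B_{N_1}$. By \eqref{jqgj1},
\be\nonumber
M_1|\tilde v|\le\int_{B_{N_1}}\n|v|\,dx+\int_{B_{N_1}}\n|v-\tilde v|\,dx\le M_2^{1/2}\|\sqrt{\n}v\|_{L^2}+\|\n\|_{L^2}\|v-\tilde v\|_{L^2(B_{N_1})}.
\ee
By interpolation, $\|\n\|_{L^2}\le M_2$. By the Poincaré inequality on $B_{N_1}$, $\|v-\tilde v\|_{L^2(B_{N_1})}\le CN_1\|\na v\|_{L^2}$. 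Therefore $|\tilde v|\le C(M_1,M_2,N_1)\big(\|\sqrt{\n}v\|_{L^2}+\|\na v\|_{L^2}\big)$. Finally,
\be\nonumber
\|v\|_{L^2(B_{N_1})}\le|B_{N_1}|^{1/2}|\tilde v|+CN_1\|\na v\|_{L^2},
\ee
and inserting this into Step 1 gives \eqref{jqgj2}.

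The main obstacle is Step 2. The only information linking $v$ to a quantity controlled at low order is $\sqrt{\n}v$, and $\n$ may vanish on large sets. The lower mass bound on $B_{N_1}$ is exactly what lets the mean of $v$ be recovered, and this is why $C$ depends on $M_1$, $M_2$, and $N_1$. The weight comparison in Step 1 is elementary, but it must be done carefully in the case $\eta<1$, where the constraint $\theta>1+\frac p2$ becomes large as $\eta\to0$.
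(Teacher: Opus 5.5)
Your proof is correct and takes essentially the same route as the argument the paper relies on: the paper presents the lemma as a consequence of Lemma~\ref{WPE} and defers to \cite[Lemma 2.4]{LLL}, which likewise compares $\bar{x}^{-\eta p}$ with the weight in \eqref{WPE1} and then controls the local $L^2$ term on $B_{N_1}$ by a Poincar\'e-type bound using the mass lower bound. Your case split $\eta\ge1$ versus $\eta<1$ in the weight comparison and your averaging argument in Step 2 (via $\|\n\|_{L^2}\le M_2$ and Poincar\'e on $B_{N_1}$) are both carried out correctly, with constants depending only on $\ep$, $\eta$, $M_1$, $M_2$, $N_1$.
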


Let $\mathcal{H}^1(\rr)$ and $\mathcal{BMO}(\rr)$ denote the standard Hardy and $\mathcal{BMO}$ spaces (see \cite[Chapter IV]{SEM}).
Then the following well-known facts play an important role in our analysis of the Cauchy problem; proofs can be found in \cite{CLMS}.
\begin{lemma}\la{hm}
$(i)$ There exists a positive constant $C$ such that
\be\ba\nonumber
\| E \cdot B \|_{\mathcal{H}^1(\rr)} \le C \| E \|_{L^2(\rr)} \| B \|_{L^2(\rr)},
\ea\ee
for all $E \in L^2(\rr)$ and $B \in L^2(\rr)$ satisfying
\be\ba\nonumber
\div E =0, \quad \na^{\bot} \cdot B=0 \ \text{ in } D'(\rr).
\ea\ee
$(ii)$ There exists a positive constant $C$ such that for all $v \in D^1(\rr)$,
\be\ba\nonumber
\| v \|_{\mathcal{BMO}(\rr)} \le C \| \na v \|_{L^2(\rr)}.
\ea\ee
\end{lemma}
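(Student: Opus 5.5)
Both parts are classical facts from \cite{CLMS}. I would reprove them directly, using only the Fefferman--Stein maximal characterization of $\mathcal{H}^1(\rr)$ and the Poincar\'e/Sobolev--Poincar\'e inequalities on balls.

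\emph{Part (ii)} is a scaling argument. Let $v\in D^1(\rr)$ and let $B=B(x_0,r)$ be any ball, with $v_B$ the average of $v$ over $B$. By Cauchy--Schwarz and Poincar\'e's inequality on the ball (whose constant scales like $r$),
\begin{equation*}
\frac{1}{|B|}\int_B |v-v_B|\,dx \le \Big(\frac{1}{|B|}\int_B |v-v_B|^2dx\Big)^{1/2} \le C r\Big(\frac{1}{|B|}\int_B|\na v|^2dx\Big)^{1/2} \le C\|\na v\|_{L^2(\rr)}.
\end{equation*}
The last step uses $|B|=\pi r^2$, so the factor $r\,|B|^{-1/2}$ is dimensionless in two dimensions. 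Taking the supremum over all balls gives the claim.

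\emph{Part (i).} By density it suffices to take $E,B$ smooth with compact support, keeping the constraints $\div E=0$ and $\na^\bot\cdot B=0$. Since $B$ is curl-free in $\rr$, write $B=\na\phi$ with $\phi\in D^1$. Then $\div E=0$ gives $E\cdot B=\div(E\phi)$, and $\phi$ may be replaced by $\phi-c$ for any constant $c$.

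Fix a radial bump $h\in C_0^\infty(B_1)$ and set $h_t(x)=t^{-2}h(x/t)$. Integrating by parts and choosing $c=\phi_{B(x,t)}$,
\begin{equation*}
|h_t*(E\cdot B)(x)| = \Big|\int \na h_t(x-y)\cdot E(y)\,\big(\phi(y)-\phi_{B(x,t)}\big)\,dy\Big| \le \frac{C}{t^3}\int_{B(x,t)}|E|\,|\phi-\phi_{B(x,t)}|\,dy .
\end{equation*}
Apply H\"older with exponents $4/3$ and $4$, then the Sobolev--Poincar\'e inequality $\big(\frac{1}{|B|}\int_B|\phi-\phi_B|^4\big)^{1/4}\le Ct\big(\frac{1}{|B|}\int_B|\na\phi|^{4/3}\big)^{3/4}$. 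This is valid since $\frac14=\frac34-\frac12$ in two dimensions. The result is
\begin{equation*}
\sup_{t>0}|h_t*(E\cdot B)(x)| \le C\,\big(M|E|^{4/3}\big)^{3/4}(x)\,\big(M|B|^{4/3}\big)^{3/4}(x),
\end{equation*}
where $M$ is the Hardy--Littlewood maximal operator.

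Now $|E|^{4/3}\in L^{3/2}$, so the maximal theorem on $L^{3/2}$ gives $\|(M|E|^{4/3})^{3/4}\|_{L^2}\le C\|E\|_{L^2}$, and likewise for $B$. By Cauchy--Schwarz, the maximal function of $E\cdot B$ is in $L^1$ with norm at most $C\|E\|_{L^2}\|B\|_{L^2}$. The Fefferman--Stein theorem (the maximal characterization in \cite[Chapter IV]{SEM}) then gives the $\mathcal{H}^1$ bound. A standard limiting argument removes the smoothness assumption.

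\emph{Main obstacle.} The delicate step is choosing the exponents so that both maximal functions are applied strictly below $L^2$ (here $4/3<2$); otherwise the maximal theorem fails at the $L^1$ endpoint. The choice $4/3,4,4/3$ satisfies $\frac34+\frac14=1$ for H\"older and the 2D Sobolev--Poincar\'e relation simultaneously. The cancellation from $\div E=0$ is what allows subtracting the mean of $\phi$.
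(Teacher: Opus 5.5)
Your proposal is correct and takes essentially the same approach as the paper, which gives no proof of its own but cites \cite{CLMS}. There, (i) is proved exactly as you do: write $B=\na\phi$, use $\div E=0$ to subtract $\phi_{B(x,t)}$, apply H\"older and Sobolev--Poincar\'e with exponents $4/3<2$ so the maximal theorem applies, and conclude by the Fefferman--Stein maximal characterization; (ii) is the standard scaled Poincar\'e inequality on balls.
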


Finally, we show that any unit vector field in a two-dimensional simply connected domain admits a polar representation.
\begin{lemma}\la{jzb}
Let $\OM \subset \rr$ be a simply connected domain.
Assume that $d=(d^1,d^2) \in H^2_{\textnormal{loc}}(\OM;\rr)$ with $|d|=1$.
Then there exists a function $\theta \in H^2_{\textnormal{loc}}(\OM)$ such that
\be\la{jzb1}\ba
d=(\cos \theta, \sin \theta).
\ea\ee
\begin{proof}
Define the vector field
\be\ba\nonumber
Q = (Q_1,Q_2),
\ea\ee
where
\be\ba\la{jzb101}
Q_1 \triangleq -d^2 \frac{\p d^1}{\p x_1} + d^1 \frac{\p d^2}{\p x_1}, \quad
Q_2 \triangleq -d^2 \frac{\p d^1}{\p x_2} + d^1 \frac{\p d^2}{\p x_2}.
\ea\ee
A straightforward computation shows that
\be\ba\la{jzb11}
\frac{\p Q_2}{\p x_1} - \frac{\p Q_1}{\p x_2}
= 2 \left( \frac{\p d^1}{\p x_1} \frac{\p d^2}{\p x_2} - \frac{\p d^1}{\p x_2} \frac{\p d^2}{\p x_1} \right).
\ea\ee
The condition $|d|=1$ implies
\be\ba\la{jzb12}
d^1\frac{\p d^1}{\p x_1} + d^2\frac{\p d^2}{\p x_1} = 0, \quad
d^1\frac{\p d^1}{\p x_2} + d^2\frac{\p d^2}{\p x_2} = 0.
\ea\ee
Using \eqref{jzb11}, \eqref{jzb12}, and the fact that $|d|=1$, we arrive at
\be\ba\nonumber
\curl Q = \frac{\p Q_2}{\p x_1} - \frac{\p Q_1}{\p x_2} = 0.
\ea\ee

Since $\OM$ is simply connected, Poincar\'e's theorem (see \cite{Ma}) together with $d\in H^2_{\textnormal{loc}}(\OM;\rr)$ and (\ref{jzb101}) implies that there exists $\theta \in H^2_{\textnormal{loc}}(\OM)$ such that
\be\ba\la{jzb13}
\na \theta = -d^2 \na d^1 + d^1 \na d^2.
\ea\ee

It remains to show that $\theta$ may be chosen such that \eqref{jzb1} holds.
The Sobolev embedding theorem ensures that $d$ and $\theta$ have continuous representatives.
Moreover, since $\theta$ is determined by \eqref{jzb13} up to an additive constant, we may choose the constant such that for some $x_0 \in \OM$,
\be\ba\nonumber
d(x_0)=(\cos \theta(x_0), \sin \theta(x_0)).
\ea\ee
Set
\be\ba\la{jzb14}
U_1 \triangleq d^1 - \cos \theta, \quad U_2 \triangleq d^2 - \sin \theta.
\ea\ee
A straightforward calculation combined with \eqref{jzb13} leads to
\be\ba\nonumber
\na U_1 = \left( 1 - d^2 \sin \theta \right) \na d^1 + d^1 \sin \theta \na d^2, \quad
\na U_2 = d^2 \cos \theta \na d^1 + \left( 1 - d^1 \cos \theta \right) \na d^2,
\ea\ee
which together with \eqref{jzb14} yields
\be\ba\la{jzb15}
U_1 \na U_1 + U_2 \na U_2
= \left( 1 - d^2 \sin \theta - d^1 \cos \theta \right) \left( d^1 \na d^1 + d^2 \na d^2 \right).
\ea\ee
In view of \eqref{jzb12}, we have
\be\ba\nonumber
\na \left( U^2_1 + U^2_2 \right)=0.
\ea\ee
Since $U_1(x_0)=U_2(x_0)=0$, we conclude that $U_1(x)=U_2(x)=0$ for all $x \in \OM$, which together with \eqref{jzb14} gives \eqref{jzb1} and completes the proof.
\end{proof}
\end{lemma}

\section{A priori estimates on bounded domains}
In this section, we will establish some necessary a priori bounds for local strong solutions $(\n,u,P,d)$ to the problem \eqref{nlc}--\eqref{bjtj1}
whose existence is guaranteed by Lemma~\ref{lct}.
Thus, let $T>0$ be a fixed time and $(\n,u,P,d)$ be a strong solution to \eqref{nlc}--\eqref{bjtj1} on $\OM \times (0,T]$ with initial data $(\n_0,u_0,d_0)$ satisfying \eqref{npsol1}.

We first state the standard energy estimates.

\begin{lemma}\la{Nl1}
Let $(\n,u,P,d)$ be a strong solution of \eqref{nlc}--\eqref{bjtj1} on $\OM\times (0,T]$. Then there exists a positive constant $C$ depending only on $\underline{\mu}$,
$\hat{\n}$, $\| u_0 \|_{H^1}$, $\| \na d_0 \|_{L^2}$, and $\OM$ such that 
\be\ba\la{np01}
\sup_{0\leq t\leq T} \left( \| \n \|_{L^\infty}
+ \| \sqrt{\n} u \|^2_{L^2} + \| \na d \|^2_{L^2} \right)
+ \int_0^T \left( \| \na u \|^2_{L^2} + \| \na^2 d \|^2_{L^2} \right) dt
\leq C,
\ea\ee
and
\be\la{np001}\ba
\sup_{0 \le t \le T} e^{\eta_0 t} \left( \| \sqrt{\n} u \|^2_{L^2} + \| \na d \|^2_{L^2} \right)
+ \int_0^T e^{\eta_0 t} \left( \| \na u \|^2_{L^2} + \| \na^2 d \|^2_{L^2} \right) dt \le C,
\ea\ee
where $\eta_0$ is a positive constant depending only on $\underline{\mu}$, $\hat{\n}$, and $\OM$.
\end{lemma}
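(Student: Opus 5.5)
We first record the $L^\infty$ bound on $\n$. Since $\div u=0$, the transport equation $\eqref{nlc}_1$ gives $\|\n(t)\|_{L^\infty}=\|\n_0\|_{L^\infty}\le\hat\n$, and in particular $\underline{\mu}\le\mu(\n)\le\bar\mu$.

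For the basic energy identity, we multiply $\eqref{nlc}_2$ by $u$, and $\eqref{nlc}_3$ by $-(\Delta d+|\na d|^2d)$, and integrate over $\OM$. We use $u|_{\p\OM}=0$, $\p_n d|_{\p\OM}=0$, $|d|=1$ (so that $d\cdot d_t=0$ and $d\cdot\na d=0$), and the identity $\div(\na d\odot\na d)=\na(|\na d|^2/2)+\na d^T\Delta d$. The coupling terms $\int u\cdot\na d\cdot\Delta d$ then cancel, and we obtain
\be\nonumber
\frac{d}{dt}\Big(\frac12\|\sqrt\n u\|_{L^2}^2+\frac12\|\na d\|_{L^2}^2\Big)+2\int\mu(\n)|Du|^2dx+\|\Delta d+|\na d|^2d\|_{L^2}^2=0 .
\ee
Since $\div u=0$ and $u\in H^1_0$, we have $2\int|Du|^2=\|\na u\|_{L^2}^2$, so the dissipation controls $\underline\mu\|\na u\|_{L^2}^2$. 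The main point is to convert $\|\Delta d+|\na d|^2 d\|_{L^2}$ into $\|\na^2 d\|_{L^2}$. Because $\OM$ is simply connected and $d\in H^2$, Lemma~\ref{jzb} gives $d=(\cos\theta,\sin\theta)$, with $|\Delta d+|\na d|^2d|=|\Delta\theta|$ and $|\na d|=|\na\theta|$. Moreover $\p_n\theta=-d^2\p_nd^1+d^1\p_nd^2=0$ on $\p\OM$. Lemma~\ref{tygj} (with $k=0$) and Poincar\'e's inequality for $\theta-\ol\theta$ then give $\|\na\theta\|_{L^2}+\|\na^2\theta\|_{L^2}\le C\|\Delta\theta\|_{L^2}$; the Poincaré step uses $\int\na\theta\cdot\na\theta=-\int(\theta-\ol\theta)\Delta\theta$.

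To pass from $\theta$ back to $d$, we use $\na^2 d=\na^2\theta\,(-\sin\theta,\cos\theta)-\na\theta\otimes\na\theta\,(\cos\theta,\sin\theta)$, so $\|\na^2 d\|_{L^2}\le\|\na^2\theta\|_{L^2}+\|\na\theta\|_{L^4}^2$. The quartic term is the main obstacle. We apply Gagliardo–Nirenberg (Lemma~\ref{gn1} is not directly applicable to $\na\theta$, but the standard bounded-domain version suffices), giving $\|\na\theta\|_{L^4}^2\le C\|\na\theta\|_{L^2}(\|\na^2\theta\|_{L^2}+\|\na\theta\|_{L^2})\le C\|\na d\|_{L^2}\|\Delta\theta\|_{L^2}$. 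Since $\|\na d(t)\|_{L^2}\le C$ by the energy identity, this yields $\|\na^2 d\|_{L^2}\le C\|\Delta\theta\|_{L^2}$, and integrating in time proves \eqref{np01}. The constant depends only on the initial energy, which is bounded by $\hat\n$, $\|u_0\|_{H^1}$, and $\|\na d_0\|_{L^2}$.

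For the exponential decay \eqref{np001}, we bound the energy $E(t)=\frac12\|\sqrt\n u\|_{L^2}^2+\frac12\|\na d\|_{L^2}^2$ by the dissipation. Poincaré's inequality for $u\in H^1_0$ gives $\|\sqrt\n u\|_{L^2}^2\le\hat\n C_\OM\|\na u\|_{L^2}^2$, and the bound above gives $\|\na d\|_{L^2}^2=\|\na\theta\|_{L^2}^2\le C_\OM\|\Delta\theta\|_{L^2}^2$, where the constant depends only on $\OM$. Hence $E'+2\eta_0E+c(\|\na u\|_{L^2}^2+\|\Delta\theta\|_{L^2}^2)\le0$ with $\eta_0$ depending only on $\underline\mu,\hat\n,\OM$. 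Multiplying by $e^{\eta_0t}$ and integrating gives the weighted bounds for $E$ and for $\int e^{\eta_0t}(\|\na u\|^2+\|\Delta\theta\|^2)$. Finally, since $\|\na d\|_{L^2}\le C$, we have $\|\na^2d\|_{L^2}^2\le C\|\Delta\theta\|_{L^2}^2$, which converts the $\Delta\theta$ integral into the $\na^2 d$ term of \eqref{np001}.
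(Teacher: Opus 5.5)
Your proposal is correct and follows essentially the same route as the paper. It uses the energy identity, the polar lift $d=(\cos\theta,\sin\theta)$ with $\partial_n\theta=0$, the Neumann elliptic estimate plus Poincar\'e to control $\|\nabla\theta\|_{L^2}+\|\nabla^2\theta\|_{L^2}$ by $\|\Delta\theta\|_{L^2}$, Gagliardo--Nirenberg for the quartic term, and the same kind of $\eta_0$ for the exponential weight.

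One small correction: Lemma~\ref{gn1} does apply directly to $f=\nabla\theta$, because its third case covers vector fields with $f\cdot n=0$ on $\partial\Omega$. This is exactly how the paper bounds $\|\nabla\theta\|_{L^4}^4$ in \eqref{np115}, although your inhomogeneous bounded-domain version works equally well.
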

\begin{proof}
First, combining the continuity equation $\eqref{nlc}_1$ with the divergence-free condition $\div u=0$
and following the argument in \cite{L1}, we obtain
\be\la{np0011}\ba
\| \n(t) \|_{L^p} = \| \n_0 \|_{L^p} \quad \text{ for all } p \in [1,\infty]
\text{ and } t \in [0,T].
\ea\ee
Next, we multiply $\eqref{nlc}_2$ by $u$, integrate by parts over $\OM$, and apply $\eqref{nlc}_1$ to derive
\be\la{np011}\ba
\frac{1}{2}\frac{d}{dt} \int \n |u|^2 dx + 2\int\mu(\n) |Du|^2 dx
= - \int u \cdot \na d \cdot \Delta d dx.
\ea\ee
On the other hand, multiplying $\eqref{nlc}_3$ by $-(\Delta d + |\na d|^2 d)$,
integrating by parts over $\OM$, and using $|d|=1$, we have
\be\la{np012}\ba
\frac{1}{2}\frac{d}{dt} \int |\na d|^2 dx + \int |\Delta d + |\na d|^2 d|^2 dx
=\int u \cdot \na d \cdot \Delta d dx.
\ea\ee
Adding \eqref{np011} and \eqref{np012}, integrating the resulting equation over $(0,T)$, and using $\mu(\rho)\geq \underline{\mu}>0$, we obtain
\be\la{np11}\ba
	&\sup_{0 \le t \le T} \left( \|\sqrt{\rho} u\|_{L^2}^2+\|\nabla d\|_{L^2}^2 \right)+\int_0^T\left(\|\nabla u\|_2^2+\left\|\Delta d+|\nabla d|^2 d\right\|_{L^2}^2\right) d t \leq C.
\ea\ee

To obtain an $L^2(\OM \times (0,T))$ estimate for $\na^2 d$, we exploit the condition $|d|=1$ by introducing a polar coordinate representation for $d$.
By Lemma~\ref{jzb}, there exists a function $\theta \in H^2(\OM)$ such that
\be\la{np13}\ba
d=(\cos \theta, \sin \theta).
\ea\ee
A direct calculation shows that for $i,j=1,2$,
\be\la{np103}\ba
\p_i d = (-\sin \theta, \cos \theta) \p_i \theta
= (-d^2,d^1) \p_i \theta,
\ea\ee
and
\be\la{np14}\ba
\p_{ij} d = (-\sin \theta, \cos \theta) \p_{ij} \theta -(\cos \theta, \sin \theta) \p_{i} \theta \p_{j} \theta.
\ea\ee
Combining \eqref{np13} and \eqref{np14}, we derive
\be\la{np16}\ba
\Delta d = (-\sin \theta, \cos \theta) \Delta \theta - d |\na \theta|^2 = (-\sin \theta, \cos \theta) \Delta \theta - d |\na d|^2,
\ea\ee
where we have used
\be\la{np17}\ba
|\na d|^2 = |\na \theta|^2,
\ea\ee
due to \eqref{np103} and $|d|=1$.

Hence, we obtain
\be\la{np18}\ba
\Delta d + |\na d|^2 d = (-\sin \theta, \cos \theta) \Delta \theta,
\ea\ee
which gives
\be\la{np19}\ba
|\Delta d + |\na d|^2 d|^2 = |\Delta \theta|^2.
\ea\ee
In view of \eqref{np11} and \eqref{np19}, we have
\be\la{np110}\ba
\int_0^T \| \Delta \theta \|^2_{L^2} dt \leq C.
\ea\ee
Moreover, we deduce from \eqref{np103} that
\be\la{np112}\ba
\na \theta = - d^2 \na d^1 + d^1 \na d^2.
\ea\ee
Combining this with the boundary condition $n \cdot \na d = 0$ on $\p \OM$,
we find that $n \cdot \na \theta = 0$ on $\p \OM$.
This, together with Lemma~\ref{tygj}, yields
\be\la{np113}\ba
\| \na^2 \theta \|^2_{L^2} \le C \| \Delta \theta \|^2_{L^2},
\ea\ee
which along with \eqref{np110} leads to
\be\la{np114}\ba
\int_0^T \| \na^2 \theta \|^2_{L^2} dt
\leq C \int_0^T \| \Delta \theta \|^2_{L^2} dt \leq C.
\ea\ee
In addition, it follows from \eqref{gn11}, \eqref{np11}, \eqref{np14}, and \eqref{np17} that
\be\la{np115}\ba
\| \na^2 d \|^2_{L^2} & \le C \left( \| \na \theta \|^4_{L^4} + \| \na^2 \theta \|^2_{L^2} \right) \\
& \le C \left( \| \na^2 \theta \|^2_{L^2} \| \na \theta \|^2_{L^2} + \| \na^2 \theta \|^2_{L^2} \right)
\le C \| \na^2 \theta \|^2_{L^2},
\ea\ee
which together with \eqref{np114} gives
\be\la{np116}\ba
\int_0^T \| \na^2 d \|^2_{L^2} dt
\leq C \int_0^T \| \na^2 \theta \|^2_{L^2} dt \leq C.
\ea\ee
From \eqref{np0011}, \eqref{np11}, and \eqref{np116}, we deduce that \eqref{np01} holds.
It remains to prove \eqref{np001}.
Adding \eqref{np011} and \eqref{np012}, we obtain
\be\la{npe11}\ba
\frac{1}{2}\frac{d}{dt} \int \left( \n |u|^2 + |\na d|^2 \right) dx
+ \int \left(2 \mu(\rho)|D u|^2 + |\Delta d + |\na d|^2 d|^2 \right) dx = 0,
\ea\ee
which along with \eqref{np19} yields
\be\la{npe12}\ba
\frac{1}{2}\frac{d}{dt} \int \left( \n |u|^2 + |\na d|^2 \right) dx
+ \underline{\mu} \| \na u \|^2_{L^2} + \| \Delta \theta \|^2_{L^2} \leq 0.
\ea\ee
	Using \eqref{np0011}, \eqref{np17}, and Poincar\'e's inequality,  we deduce that for some positive constants $C_{P}$ and $C_{N}$,
	\be\la{npe13}\ba
	\| \sqrt{\n} u \|^2_{L^2} + \| \na d \|^2_{L^2}
	&\leq \hat{\n}\|u\|^2_{L^2}+\|\nabla \theta\|^2_{L^2}\\
	&\leq C_{P} \hat{\n}\|\na u\|_{L^2}^2
	+C_{N} \|\Delta\theta\|_{L^2}^2 \\
	&\leq \eta_0^{-1}\left(
	\underline{\mu}\|\na u\|_{L^2}^2+\|\Delta\theta\|_{L^2}^2\right),
	\ea\ee
	where in the second inequality we have used $\na \theta \cdot n =0$ on $\p \OM$, and $\eta_0$ is chosen as $$\eta_0\triangleq\min\left\{\frac{\underline{\mu}}{C_{P}\hat{\rho}},\frac{1}{C_{N}}\right\}.$$ 
	Combining \eqref{npe12} and \eqref{npe13} leads to
\be\la{npe14}\ba
\frac{d}{dt} \left( \| \sqrt{\n} u \|^2_{L^2} + \| \na d \|^2_{L^2} \right)
+ \eta_0 \left( \| \sqrt{\n} u \|^2_{L^2} + \| \na d \|^2_{L^2} \right)
+  \underline{\mu} \| \na u \|^2_{L^2} + \| \Delta \theta \|^2_{L^2}  \le 0,
\ea\ee
which implies that
\be\la{npe15}\ba
\frac{d}{dt} \left( e^{\eta_0 t} \left( \| \sqrt{\n} u \|^2_{L^2} + \| \na d \|^2_{L^2} \right) \right)
+ e^{\eta_0 t} \left( \underline{\mu} \| \na u \|^2_{L^2} + \| \Delta \theta \|^2_{L^2} \right) \le 0.
\ea\ee
Integrating the above inequality over $(0,T)$ and applying \eqref{np113} and \eqref{np115},
we obtain \eqref{np001} and complete the proof of Lemma~\ref{Nl1}.
\end{proof}

\begin{lemma}\la{Nl2}
There exists a positive constant $C$ depending only on $\underline{\mu}$, $\hat{\n}$, $\| u_0 \|_{H^1}$, $\| \na d_0 \|_{H^1}$, and $\OM$ such that
\be\ba\la{np02}
\sup_{0\leq t\leq T} \| \na d \|^4_{L^4} \leq C.
\ea\ee
\end{lemma}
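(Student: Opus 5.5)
The plan is to derive an $L^4$-energy identity for $\na d$ directly from the parabolic equation $\eqref{nlc}_3$, treating the convective coupling $u\cdot\na d$ as a perturbation controlled by the energy bounds \eqref{np01}. I will use the boundary observation mentioned in the introduction: since $n\cdot\na d=0$ on $\p\OM$, the boundary term $\int_{\p\OM}|\na d|^2\,\na d^j\cdot\na(n\cdot\na d^j)$ arising from the integration by parts vanishes. Alternatively, I would prefer to work with the polar angle $\theta$ from Lemma~\ref{jzb}, because then $|\na d|=|\na\theta|$ by \eqref{np17}. Applying $\eqref{nlc}_3$ and \eqref{np18}, the equation becomes the scalar transport--heat equation
\[
\theta_t+u\cdot\na\theta=\Delta\theta,\qquad n\cdot\na\theta=0\ \text{on}\ \p\OM .
\]
It therefore suffices to bound $\sup_t\|\na\theta\|_{L^4}$. (One must justify that $\theta$ can be chosen continuously in time, e.g.\ by $\theta_t=-d^2d^1_t+d^1d^2_t$.)

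\textbf{Step 1 (an $H^1$-bound for $\na\theta$, i.e.\ $\na^2 d\in L^\infty L^2$).} Multiply the $\theta$-equation by $-\Delta\theta_t$, or more simply by $\Delta\theta$ after differentiation, to get
\[
\tfrac12\tfrac{d}{dt}\|\na\theta\|_{L^2}^2+\|\Delta\theta\|_{L^2}^2=\int u\cdot\na\theta\,\Delta\theta .
\]
This is only the energy identity again, so I will go one order higher: apply $\na$ and test with $\na\Delta\theta$. Since $n\cdot\na\theta=0$ on the boundary, $\int\na\theta_t\cdot\na\Delta\theta=-\frac12\frac d{dt}\|\Delta\theta\|_{L^2}^2$ holds with no boundary terms. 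This gives
\[
\tfrac12\tfrac{d}{dt}\|\Delta\theta\|_{L^2}^2+\|\na\Delta\theta\|_{L^2}^2\le\int|\na(u\cdot\na\theta)||\na\Delta\theta|.
\]
The right side is bounded by $\|\na u\|_{L^2}\|\na\theta\|_{L^\infty}+\|u\|_{L^4}\|\na^2\theta\|_{L^4}$, each multiplied by $\|\na\Delta\theta\|_{L^2}$. Lemma~\ref{tygj} gives $\|\na^3\theta\|_{L^2}\le C\|\na\Delta\theta\|_{L^2}$ (using that $\na\theta$ has zero mean of Laplacian). Gagliardo--Nirenberg then gives $\|\na^2\theta\|_{L^4}^2\le C\|\na^2\theta\|_{L^2}\|\na^3\theta\|_{L^2}$ and a similar bound for $\|\na\theta\|_{L^\infty}$. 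Absorbing terms, we reach
\[
\tfrac{d}{dt}\|\Delta\theta\|_{L^2}^2+\|\na\Delta\theta\|_{L^2}^2\le C\big(\|\na u\|_{L^2}^2+\|\na u\|_{L^2}^4\big)\|\Delta\theta\|_{L^2}^2 .
\]

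\textbf{Main obstacle.} The factor $\|\na u\|_{L^2}^4$ is not known to be integrable in time at this stage; only $\int_0^T\|\na u\|_{L^2}^2\,dt\le C$ is available. I will therefore avoid higher derivatives and test directly at the $L^4$ level. Multiply the $\na$-differentiated equation by $|\na\theta|^2\na\theta$ to obtain
\[
\tfrac14\tfrac d{dt}\|\na\theta\|_{L^4}^4+\int|\na\theta|^2|\na^2\theta|^2+\tfrac12\int|\na|\na\theta|^2|^2=-\int\na u\,\na\theta\cdot\na\theta|\na\theta|^2+\text{bdry}.
\]
Here the transport part $u\cdot\na\na\theta$ integrates to zero since $\div u=0$ and $u|_{\p\OM}=0$. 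The boundary term is $\int_{\p\OM}|\na\theta|^2\p_n(|\na\theta|^2)/2$. Because $\p_n\theta=0$, it is bounded by $C\int_{\p\OM}|\na\theta|^4$ via the curvature identity $\p_n|\na\theta|^2=-2\kappa|\na\theta|^2$. A trace inequality then controls it by $\varepsilon\|\na|\na\theta|^2\|_{L^2}^2+C\|\na\theta\|_{L^4}^4$. The forcing term is at most $\|\na u\|_{L^2}\||\na\theta|^2\|_{L^4}^2\le C\|\na u\|_{L^2}\||\na\theta|^2\|_{L^2}\|\,|\na\theta|^2\|_{H^1}$. This yields
\[
\tfrac d{dt}\|\na\theta\|_{L^4}^4\le C(1+\|\na u\|_{L^2}^2)\|\na\theta\|_{L^4}^4 .
\]
Gronwall combined with \eqref{np01} closes the bound on $[0,T]$. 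To make the constant independent of $T$, I will add $\|\na\theta\|_{L^4}^4\le C\|\na\theta\|_{L^2}^2\|\na^2\theta\|_{L^2}^2$ and use the time-integrability of $\|\na^2 d\|_{L^2}^2$ from \eqref{np01} in place of the crude $C$ term. Finally, \eqref{np17} converts the estimate back to \eqref{np02}, with the initial value controlled by $\|\na d_0\|_{H^1}$ via Sobolev embedding.
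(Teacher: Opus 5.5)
Your final argument is correct and is essentially the paper's proof carried out for $\nabla\theta$ instead of $\nabla d$. It uses the same test of the differentiated equation against $|\cdot|^2\cdot$, the same Neumann boundary identity (the paper's \eqref{np24}) plus a trace inequality, and the same Gr\"onwall closure via $\|\cdot\|_{L^4}^4\le C\|\cdot\|_{L^2}^2\|\nabla\cdot\|_{L^2}^2$ and $\int_0^T(\|\nabla u\|_{L^2}^2+\|\nabla^2 d\|_{L^2}^2)\,dt\le C$. Passing to $\theta$ only removes the $|\nabla d|^6$ term, which the paper absorbs using $|d|=1$, and your discarded Step 1 is not needed.
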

\begin{proof}
First, applying $\na$ to $\eqref{nlc}_3$ yields
\be\la{np21}\ba
\na d_t-\Delta \na d=-\na (u\cdot\na d)+\na(|\na d|^2 d).
\ea\ee
Multiplying \eqref{np21} by $4 |\na d|^2 \na d$ and integrating by parts over $\OM$ lead to
\be\la{np22}\ba
& \frac{d}{dt} \int |\na d|^4 dx
-4 \int |\na d|^2 \p_j d^i \p_k \p_k \p_j d^i dx \\
& \le -4 \int |\na d|^2 \p_j d^i u^k \p_k \p_j d^i dx
+ C \int |\na d|^4 |\na u| + |\na d|^6 dx \\
& \quad + 4 \int |\na d|^2 \p_j d^i \p_j(|\na d|^2) d^i dx \\
& \le C \| \na d \|^6_{L^6} + C \| |\na d|^2 \|^2_{L^4} \| \na u \|_{L^2},
\ea\ee
where in the second inequality we have used $|d|=1$.

Integration by parts over $\OM$ shows that
\be\la{np23}\ba
& -4 \int |\na d|^2 \p_j d^i \p_k \p_k \p_j d^i dx \\
& = -4 \int_{\p \OM} |\na d|^2 \p_j d^i n_k \p_k\p_j d^i ds
+ 4 \int \p_k( |\na d|^2 \p_j d^i ) \p_k (\p_j d^i) dx \\
& = 4 \int_{\p \OM} |\na d|^2 \p_j d^i \p_j n_k \p_k d^i ds
+ 4 \int |\na d|^2 |\na^2 d|^2 dx + 2 \int |\na (|\na d|^2)|^2 dx,
\ea\ee
where in the second equality we have used the following fact:
\be\la{np24}\ba
\p_j d^i \p_j n_k \p_k d^i + \p_j d^i n_k \p_k\p_j d^i 
= \na d^i \cdot \na (n \cdot \na d^i) = 0 \quad \text{ on } \p \OM,
\ea\ee
due to $n \cdot \na d = 0$ on $\p \OM$.

Substituting \eqref{np23} into \eqref{np22} and using \eqref{gn11}, \eqref{np01}, and Young's inequality, we derive
\be\ba\nonumber
& \frac{d}{dt} \int |\na d|^4 dx + 4 \int |\na d|^2 |\na^2 d|^2 dx \\
& \le C \| |\na d|^4 \|_{L^1(\p \OM)} + C \| \na d \|^6_{L^6}
+ C \| |\na d|^2 \|^2_{L^4} \| \na u \|_{L^2} \\
& \le C \| |\na d|^4 \|_{W^{1,1}(\OM)} + C \| \na d \|^4_{L^4} \| \na^2 d \|^2_{L^2}
+ C \| |\na d|^2 \|_{L^2} \| |\na d|^2 \|_{H^1} \| \na u \|_{L^2} \\
& \le C \| \na d \|^4_{L^4} + C \int |\na d|^3 |\na^2 d| dx
+ C \| \na d \|^4_{L^4} \| \na^2 d \|^2_{L^2} \\
& \quad + C \| \na d \|^4_{L^4} \| \na u \|^2_{L^2}
+ \int |\na d|^2 |\na^2 d|^2 dx \\
& \le 2 \int |\na d|^2 |\na^2 d|^2 dx + C \| \na d \|^4_{L^4}
+ C \| \na d \|^4_{L^4} ( \| \na^2 d \|^2_{L^2} + \| \na u \|^2_{L^2} ) \\
& \le 2 \int |\na d|^2 |\na^2 d|^2 dx + C \| \na^2 d \|^2_{L^2}
+ C \| \na d \|^4_{L^4} ( \| \na^2 d \|^2_{L^2} + \| \na u \|^2_{L^2} ),
\ea\ee
which implies that
\be\la{np25}\ba
\frac{d}{dt} \| \na d \|^4_{L^4}
\le C \| \na^2 d \|^2_{L^2}
+ C \| \na d \|^4_{L^4} ( \| \na^2 d \|^2_{L^2} + \| \na u \|^2_{L^2} ).
\ea\ee
Applying Gr\"onwall's inequality to \eqref{np25} and using \eqref{np01},
we obtain \eqref{np02} and complete the proof of Lemma~\ref{Nl2}.
\end{proof}

Next, we establish the following key a priori estimates on $(\rho,u,P,d)$.
\begin{proposition}\label{n1}
	There exists some positive constant $\varepsilon_0$ depending only on $q$, $\underline{\mu}$, $\bar{\mu}$, $\hat{\n}$, $\|u_0\|_{H^1}$, $\|\na d_0\|_{H^1}$, and $\OM$ such that if $(\rho,u,P,d)$ is a strong solution of \eqref{nlc}--\eqref{bjtj1} on $\OM\times(0, T]$ satisfying
	\begin{equation}\label{n2}
		\sup _{t \in[0, T]}\|\nabla \mu(\rho)\|_{L^q} \leq 1,
	\end{equation}
	the following estimate holds:
	\begin{equation}\label{np552}
		\sup _{t \in[0, T]}\|\nabla \mu(\rho)\|_{L^q} \leq \frac{1}{2},
	\end{equation}
	provided that $\left\|\na \mu(\n_0)\right\|_{L^q} \leq \varepsilon_0$.
\end{proposition}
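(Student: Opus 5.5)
Since $\mu(\rho)$ is transported along the divergence-free flow, the plan is to estimate its gradient along trajectories and reduce everything to a time-integrability bound on $\|\nabla u\|_{L^\infty}$. From $\eqref{nlc}_1$ and $\div u=0$ we get $(\mu(\rho))_t+u\cdot\nabla\mu(\rho)=0$. Differentiating, multiplying by $q|\nabla\mu(\rho)|^{q-2}\nabla\mu(\rho)$ and integrating (the convective term vanishes since $\div u=0$ and $u=0$ on $\p\OM$) gives
\[
\frac{d}{dt}\|\nabla\mu(\rho)\|_{L^q}\le \|\nabla u\|_{L^\infty}\|\nabla\mu(\rho)\|_{L^q},
\]
and therefore
\[
\sup_{t\le T}\|\nabla\mu(\rho)\|_{L^q}\le \|\nabla\mu(\rho_0)\|_{L^q}\exp\Big(\int_0^T\|\nabla u\|_{L^\infty}dt\Big).
\]
It thus suffices to show $\int_0^T\|\nabla u\|_{L^\infty}dt\le C$ with $C$ independent of $T$ and of $\ep_0$, under \eqref{n2}. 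We then pick $\ep_0\le \tfrac12 e^{-C}$.

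For the time-uniform bound, we use the Stokes estimates of Lemma~\ref{ste} with $F=-\rho\dot u-\div(\nabla d\odot\nabla d)$. Under \eqref{n2} the constants in \eqref{S1}--\eqref{stegj} are uniform. The steps, in order, are the following.

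\textbf{(i)} Estimate $\sup_t\|\nabla u\|_{L^2}^2+\int_0^T\|\sqrt\rho\dot u\|_{L^2}^2dt$ by testing $\eqref{nlc}_2$ with $\dot u$. This is coupled with a bound on $\sup_t\|\nabla^2 d\|_{L^2}^2+\int_0^T\|\nabla d_t\|_{L^2}^2+\|\nabla^3 d\|_{L^2}^2$, obtained by testing \eqref{np21} with $-\nabla\Delta d$ (or $\Delta d_t$-type multipliers) and using Lemma~\ref{tygj} for the Neumann problem. The nonlinear terms $u\cdot\nabla d$, $|\nabla d|^2d$ and $\nabla d\odot\nabla d$ are handled with the uniform $L^4$ bound \eqref{np02}, Gagliardo--Nirenberg \eqref{gn11}, and the $H^2$ Stokes bound $\|\nabla^2u\|_{L^2}\le C(\|\sqrt\rho\dot u\|_{L^2}+\||\nabla d||\nabla^2d|\|_{L^2})$. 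Gr\"onwall with the integrable weights from \eqref{np01} closes this step. Multiplying by $e^{\eta_0 t}$ and using \eqref{np001} should make these time-integrated quantities exponentially weighted.

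\textbf{(ii)} Estimate $\sup_t t\|\sqrt\rho\dot u\|_{L^2}^2+\int_0^T t\|\nabla\dot u\|_{L^2}^2dt$ by applying $\dot{}$ to the momentum equation. In parallel we get $t$-weighted bounds on $\|\nabla d_t\|_{L^2}$ and $\int t\|\nabla^2 d_t\|^2$. This again uses the exponential decay from (i) to avoid growth in $T$.

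\textbf{(iii)} Bound $\|\nabla u\|_{L^\infty}\le C\|u\|_{W^{2,r}}$ for some $r\in(2,q)$. By \eqref{stegj}, this is at most $C(\|\rho\dot u\|_{L^r}+\||\nabla d||\nabla^2 d|\|_{L^r})$. Then $\|\rho\dot u\|_{L^r}\le C\|\sqrt\rho\dot u\|_{L^2}^{2/r}\|\nabla\dot u\|_{L^2}^{1-2/r}$ plus lower-order terms. We split $\int_0^T$ into $(0,1)$ and $(1,T)$. On $(0,1)$, Hölder with the weight $t^{-1/2}$ is integrable. On $(1,T)$, the exponential weights from (i)--(ii) give $\int_1^T\|\nabla u\|_{L^\infty}\le C\int_1^Te^{-\eta t/2}(\cdots)\le C$. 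The $d$-term is treated the same way, using $\sqrt t\nabla^3d$ bounds.

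\textbf{Main obstacle.} I expect the main difficulty to be step (i)--(ii) with \emph{time-independent} constants and the full coupling. In particular, the boundary terms arising from $\nabla\Delta d$ under the Neumann condition need care; as in \eqref{np23}--\eqref{np24}, they should be handled by trace estimates and \eqref{np24}-type identities. The second difficulty is converting \eqref{np001} into exponential decay of the higher norms so that $\int_1^\infty\|\nabla u\|_{L^\infty}dt$ is finite. My first attempt is to multiply each differential inequality by $e^{\eta t}$ with $\eta<\eta_0$ and absorb the extra term using the Poincar\'e-type bound \eqref{npe13}.
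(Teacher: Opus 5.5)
Your overall architecture matches the paper's. You use the transport inequality for $\nabla\mu(\rho)$, Gr\"onwall, and a $T$-independent bound on $\int_0^T\|\nabla u\|_{L^\infty}dt$. That bound comes from the $W^{2,r}$ Stokes estimate \eqref{stegj} together with $\sigma e^{\eta_0 t}$-weighted bounds on the acceleration and on $\nabla d_t$; this is exactly Lemmas~\ref{Nl3}--\ref{Nl4} and \eqref{np52}--\eqref{np53}. Splitting $(0,1)\cup(1,T)$ instead of using one Young inequality with weight $\sigma e^{\eta_0 t}$ is cosmetic.

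\textbf{The gap is in step (i).} In a bounded domain, testing $\eqref{nlc}_2$ with $\dot u$ produces the pressure term $-\int\nabla P\cdot\dot u\,dx=\int P\,\partial_iu^j\partial_ju^i\,dx$. With only \eqref{np02}, \eqref{gn11} and the Stokes bounds, you get at best $C\|P\|_{L^4}\|\nabla u\|_{L^{8/3}}^2$. Moreover, $\|P\|_{L^2}$ is itself controlled only by $\|\sqrt\rho\dot u\|_{L^2}$ plus lower-order terms. So Young's inequality leaves a contribution of size $\|\nabla u\|_{L^2}^6$, i.e.\ a superlinear inequality $\frac{d}{dt}A\le C\|\nabla u\|_{L^2}^2A^2$, which gives nothing for large data.

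The paper sidesteps this by testing with $u_t$, so that $\int\nabla P\cdot u_t\,dx=0$. The price is the convective term $\int\rho u\cdot\nabla u\cdot u_t\,dx$, which it controls through Desjardins' inequality (Lemma~\ref{logbd}) and a logarithmic Gr\"onwall argument, \eqref{np315}--\eqref{np316}.

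If you want to keep $\dot u$, you must import the idea of \eqref{cp23}:
\begin{enumerate}
\item Extend $u$ by zero. Since $u\in H^1_0$ and $\operatorname{div}u=0$, each $\partial_i\tilde u$ is divergence free and each $\nabla\tilde u^i$ is curl free on $\mathbb{R}^2$.
\item Extend $P$ by an $H^1$-extension.
\item Apply Lemma~\ref{hm} to get $|\int P\,\partial_iu^j\partial_ju^i\,dx|\le C\|\nabla P\|_{L^2}\|\nabla u\|_{L^2}^2$.
\end{enumerate}
Then (i) closes with a linear Gr\"onwall inequality. Your route even gains something: with the $\dot u$ multiplier, the $\mu_t|Du|^2$ term cancels against the convective part of the viscous term, because $\mu(\rho)$ is transported. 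So no analogue of the $\nabla\mu(\rho)$ term in \eqref{np31} appears.

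\textbf{Smaller points.}
\begin{itemize}
\item The interpolation $\|\rho\dot u\|_{L^r}\le C\|\sqrt\rho\dot u\|_{L^2}^{2/r}\|\nabla\dot u\|_{L^2}^{1-2/r}$ is $L^2$--$L^\infty$ interpolation. It is unavailable because $H^1_0\not\hookrightarrow L^\infty$ in two dimensions. Interpolate between $L^2$ and $L^6$ with $r<6$, as in \eqref{np52}; this is also what makes $\sigma^{-2r/(r+6)}$ integrable near $t=0$.
\item Step (ii) must carry the weight $\sigma e^{\eta_0 t}$ as in \eqref{np004}, as your step (iii) implicitly assumes. Bounds on $\sup_t t\|\sqrt\rho\dot u\|_{L^2}^2$ and $\int_0^T t\|\nabla\dot u\|_{L^2}^2dt$ alone do not yield a $T$-independent bound on $\int_1^T\|\nabla u\|_{L^\infty}dt$.
\item The obstacle you anticipate is milder than you expect. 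In the $H^2$ and $\nabla d_t$ estimates, all boundary terms vanish by $\partial_n d_t=0$ (see \eqref{np34}, \eqref{np48}). The trace argument \eqref{np24} is needed only for Lemma~\ref{Nl2}. The exponential weights come simply from multiplying by $e^{\eta_0 t}$ and bounding the extra term $\eta_0 e^{\eta_0 t}A_2(t)$ by \eqref{np001}.
\end{itemize}
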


Before the proof of Proposition~\ref{n1}, we derive some necessary a priori estimates stated in Lemmas \ref{Nl3} and \ref{Nl4}.
Hereafter, let $\eta_0$ denote the positive constant given by Lemma~\ref{Nl1}.
\begin{lemma}\la{Nl3}
	Let $(\n,u,P,d)$ be a strong solution to \eqref{nlc}--\eqref{bjtj1} satisfying $\eqref{n2}$. Then there exists a positive constant $C$ depending only on $q$, $\underline{\mu}$, $\bar{\mu}$, $\hat{\n}$, $\| u_0 \|_{H^1}$, $\| \na d_0 \|_{H^1}$, and $\OM$ such that
\be\ba\la{np03}
\sup_{0\leq t\leq T} \left( \| \na u \|^2_{L^2} + \| \na^2 d \|^2_{L^2}  \right)
& + \int_0^T \left( \|\sqrt{\n} u_t \|^2_{L^2} + \| \na^3 d \|^2_{L^2}
+ \| \na d_t \|^2_{L^2} \right) dt \\
& + \int_0^T \left( \| \na^2 u \|^2_{L^2} + \| P \|^2_{H^1} \right) dt
\leq C,
\ea\ee
and
\be\ba\la{np003}
\sup_{0\leq t\leq T} e^{\eta_0 t} \left( \| \na u \|^2_{L^2} + \| \na^2 d \|^2_{L^2}  \right)
& + \int_0^T e^{\eta_0 t} \left( \|\sqrt{\n} u_t \|^2_{L^2} + \| \na^3 d \|^2_{L^2}
+ \| \na d_t \|^2_{L^2} \right) dt \\
& + \int_0^T e^{\eta_0 t} \left( \| \na^2 u \|^2_{L^2} + \| P \|^2_{H^1} \right) dt
\leq C.
\ea\ee
\end{lemma}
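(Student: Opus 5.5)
The plan is a standard $H^1$-level energy estimate. I would test the momentum equation with $u_t$ and the differentiated director equation with $\na\Delta d$ (equivalently, $\Delta d_t$), absorb the nonlinear terms using Lemmas \ref{Nl1}--\ref{Nl2}, and then close the argument with Gr\"onwall's inequality.

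\textbf{Step 1: momentum equation.} Multiply $\eqref{nlc}_2$ by $u_t$ and integrate. Since $\mu(\n)_t+u\cdot\na\mu(\n)=0$, we get
\[
\frac{d}{dt}\int \mu(\n)|Du|^2dx+\|\sqrt{\n}u_t\|_{L^2}^2
\le C\int \n|u||\na u||u_t|dx+\Big|\int \mu(\n)\, u\cdot\na |Du|^2 dx\Big|+\Big|\int \div(\na d\odot\na d)\cdot u_t\,dx\Big|.
\]
For the last term, write $\div(\na d\odot\na d)=\na d\cdot\Delta d+\frac12\na|\na d|^2$. The gradient part is killed by $\div u_t=0$. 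For the remaining part, move the time derivative out, $\frac{d}{dt}\int \na d\cdot\Delta d\cdot u\,dx$, and pay $\int(|\na d_t||\na^2 d|+|\na d||\na^2d_t|)|u|$; to avoid $\na^2 d_t$, integrate by parts once more.

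Next apply Lemma \ref{ste}(1) with $F=-\n u_t-\n u\cdot\na u-\div(\na d\odot\na d)$. Under \eqref{n2} it gives
\[
\|\na^2u\|_{L^2}+\|P\|_{H^1}\le C\big(\|\sqrt{\n}u_t\|_{L^2}+\|\n u\cdot\na u\|_{L^2}+\||\na d||\na^2d|\|_{L^2}\big),
\]
with constants uniform in $T$.

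\textbf{Step 2: director equation.} Apply $\na$ to $\eqref{nlc}_3$ and multiply by $-\na\Delta d$; alternatively, differentiate \eqref{np21} and test with $\na d_t$ as well. Lemma \ref{tygj} with $k=1$ applies, since $n\cdot\na d=0$ gives $\p_n d=0$, so $\|\na^3 d\|_{L^2}\le C\|\na\Delta d\|_{L^2}+C\|\na d\|_{L^2}$. The boundary terms from $\int\na d_t\cdot\na\Delta d$ vanish because $\p_n d_t=0$. This yields
\[
\frac{d}{dt}\|\Delta d\|_{L^2}^2+\|\na^3d\|_{L^2}^2+\|\na d_t\|_{L^2}^2\le C\|\na(u\cdot\na d)\|_{L^2}^2+C\|\na(|\na d|^2d)\|_{L^2}^2 .
\]
Also $\|\na^2 d\|_{L^2}\le C\|\Delta d\|_{L^2}+C\|\na d\|_{L^2}$ by Lemma \ref{tygj}.

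\textbf{Step 3: bounding the nonlinear terms (main obstacle).} Every term must be bounded by $\epsilon\times(\text{dissipation})+C\,(\|\na u\|^2_{L^2}+\|\na^2 d\|^2_{L^2})\times(\text{integrable factor})$.
\begin{itemize}
\item Convection: use Lemma \ref{logbd} together with $\|\na u\|_{L^4}\le C\|\na u\|_{L^2}^{1/2}\|\na u\|_{H^1}^{1/2}$. This gives $\int\n|u|^2|\na u|^2\le C\|\sqrt{\n}u\|_{L^4}^2\|\na u\|_{L^4}^2$, which is bounded by $\epsilon\|\na^2u\|^2_{L^2}+C(1+\|\na u\|_{L^2}^2)\|\na u\|^2_{L^2}\log(2+\|\na u\|_{L^2}^2)$.
\item Director-type terms: $\||\na d||\na^2 d|\|_{L^2}^2\le\|\na d\|_{L^4}^2\|\na^2 d\|_{L^4}^2$. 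By the uniform $L^4$ bound \eqref{np02} and Gagliardo--Nirenberg, this is at most $C\|\na^2d\|_{L^2}\|\na^2d\|_{H^1}\le\epsilon\|\na^3d\|^2_{L^2}+C\|\na^2d\|_{L^2}^2$. Likewise $\|\na d\|_{L^6}^6$ is controlled.
\item Coupling: $\|u\cdot\na^2 d\|_{L^2}+\||\na u||\na d|\|_{L^2}$ is handled with $\|u\|_{L^4}\le C\|\na u\|_{L^2}$ (Poincar\'e, since $u\in H_0^1$) and $\|\na u\|_{L^4}$ as above.
\end{itemize}
The logarithm yields an inequality of the form $f'\le C g\, f\log(2+f)$ with $g=\|\na u\|^2_{L^2}+\|\na^2d\|^2_{L^2}\in L^1(0,T)$ by \eqref{np01}, uniformly in $T$. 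Here $f=\|\na u\|^2_{L^2}+\|\Delta d\|^2_{L^2}$ plus the lower-order correction terms, which are kept positive-definite using \eqref{np01}. A log-Gr\"onwall argument then gives \eqref{np03}, and the $P$ and $\na^2 u$ integrals follow from the Stokes estimate.

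\textbf{Step 4: exponential weight.} Multiply the differential inequality by $e^{\eta_0 t}$. The extra term $\eta_0e^{\eta_0 t}(\|\na u\|^2_{L^2}+\|\na^2d\|^2_{L^2})$ is integrable by \eqref{np001}. Since the now-bounded coefficient $g$ is integrable, Gr\"onwall gives \eqref{np003}.
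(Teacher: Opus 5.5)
Your architecture is the paper's: test $\eqref{nlc}_2$ with $u_t$, test the gradient of the director equation against $\na\Delta d$ using $\p_n d_t=0$, apply the Stokes bound under \eqref{n2}, use Lemma~\ref{logbd} with a log-Gr\"onwall argument, then add the weight $e^{\eta_0 t}$. However, the step you yourself flag as the main obstacle is not resolved by the estimates you propose.

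\textbf{The coupling term $u\cdot\na^2 d$.} Since $\|u_0\|_{H^1}$ and $\|\na d_0\|_{H^1}$ are arbitrary, every right-hand-side term must be at most $g f\log(2+f)$ with $g\in L^1(0,T)$. A term of order $g f^2$ closes only for small data. This term appears in $\|\na(u\cdot\na d)\|_{L^2}^2$, and also in your Step~1, paired with $\na d_t$. You bound it by $\|u\|_{L^4}\|\na^2 d\|_{L^4}$, which gives $\|\na u\|_{L^2}^2\|\na^2 d\|_{L^2}\|\na^3 d\|_{L^2}$. After Young this becomes $\|\na u\|_{L^2}^4\|\na^2 d\|_{L^2}^2\sim g f^2$. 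Interpolating $\|\na^2 d\|_{L^4}$ against \eqref{np02} instead yields $\|\na u\|_{L^2}^6$, which is equally superlinear. The missing idea is to put more integrability on $u$ and interpolate $\na^2 d$ between the uniform bound \eqref{np02} and $\na^3 d$:
$\|u\|_{L^8}^2\|\na^2 d\|_{L^{8/3}}^2\le C\|\na u\|_{L^2}^2\|\na d\|_{L^4}\big(\|\na^2 d\|_{L^2}+\|\na^3 d\|_{L^2}\big)\le \ep\|\na^3 d\|_{L^2}^2+C\|\na u\|_{L^2}^4+C\|\na^2 d\|_{L^2}^2$.
This is exactly \eqref{np34}. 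In the momentum estimate the paper avoids $u\cdot\na^2 d$ altogether. It keeps the coupling as $\int(\na d\odot\na d)\cdot\na u_t$ and integrates by parts in time, which leaves only $\|\na d_t\|_{L^2}\|\na d\|_{L^4}\|\na u\|_{L^4}$.

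\textbf{The viscosity-variation term.} Your Step~3 leaves this term untreated. After your rewriting, $\int(\mu(\n))_t|Du|^2=\int\mu(\n)\,u\cdot\na|Du|^2$ carries $\na^2 u$. The H\"older--Gagliardo--Nirenberg bounds $\|u\|_{L^p}\|\na u\|_{L^{2p/(p-2)}}\|\na^2 u\|_{L^2}$, after absorbing $\|\na u\|_{H^1}^2$, leave $\|\na u\|_{L^2}^{4(p-1)/(p-2)}$. This exponent is strictly larger than $4$, so the term is again superlinear. The paper uses hypothesis \eqref{n2} here, not only in the Stokes estimate. It keeps the term as $-\int u\cdot\na\mu(\n)|Du|^2$ and bounds it by
$C\|\na\mu(\n)\|_{L^q}\|u\|_{L^{2q/(q-2)}}\|\na u\|_{L^4}^2\le C\|\na u\|_{L^2}^2\|\na u\|_{H^1}$.
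By Young and \eqref{np38}, this is of the admissible form $\ep(\text{dissipation})+C\|\na u\|_{L^2}^4$. With these two changes the rest of your plan, including Step~4, goes through.
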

\begin{proof}
First, multiplying $\eqref{nlc}_2$ by $u_t$ and integrating by parts over $\Omega$, we derive after using \eqref{gn11} and $\eqref{nlc}_1$ that
\be\la{np31}\ba
& \frac{d}{dt} \int \mu(\rho) |D u|^2 dx + \int \rho|u_t|^2 dx \\
& = -\int\n u\cdot \na u\cdot u_t dx + \int (\mu(\rho))_t |D u|^2 dx -\int \div (\na d \odot \na d)\cdot u_t dx  \\
& = -\int\n u\cdot \na u\cdot u_t dx - \int u \cdot \nabla \mu(\rho)| D u|^2 d x  +\int  (\na d \odot \na d)\cdot\na u_t dx  \\
&\leq C\left\|\sqrt{\rho} u_t\right\|_{L^2} \| \sqrt{\rho} u\|_{L^4}\|\nabla u\|_{L^4}+C\|\nabla \mu(\rho)\|_{L^q}\|u\|_{L^{\frac{2 q}{q-2}}}\|\nabla u\|_{L^4}^2+\int  (\na d \odot \na d)\cdot\na u_t dx \\
&\leq \frac{1}{8}\left\|\sqrt{\rho} u_t\right\|_{L^2}^2 + C(\| \sqrt{\rho} u\|_{L^4}^2+\|\nabla u\|_{L^2})\|\nabla u\|_{L^2}\|\nabla u\|_{H^1} +\int  (\na d \odot \na d)\cdot\na u_t dx.
\ea\ee
Moreover, it follows from \eqref{np02} and H\"older's inequality that
\be\la{np32}\ba
\int (\na d \odot \na d)\cdot \na u_t dx
& = \frac{d}{dt}\int (\na d \odot \na d)\cdot \na u dx
- \int (\na d_t \odot \na d)\cdot\na u dx \\
& \quad - \int(\na d \odot \na d_t)\cdot\na u dx \\
& \le \frac{d}{dt} \int(\nabla d \odot \nabla d) \cdot \nabla u dx
+ C \| \nabla d_t \|_{L^2} \| \na d \|_{L^4} \| \na u \|_{L^4} \\
& \le \frac{d}{dt} \int(\nabla d \odot \nabla d) \cdot \nabla u dx
+ \frac{1}{8} \| \nabla d_t \|_{L^2}^2
+ C \|\nabla u\|_{L^2}\|\na u\|_{H^1}.
\ea\ee
Next, applying $\na$ to $\eqref{nlc}_3$ yields
\be\la{np33}\ba
\na d_t-\Delta \na d=-\na (u\cdot\na d)+\na(|\na d|^2 d).
\ea\ee

Multiplying \eqref{np33} by $\na d_t - \Delta \na d$, integrating over $\OM$, and using \eqref{tygj1},
\eqref{np02}, \eqref{gn11}, and Young's inequality, we arrive at
\be\la{np34}\ba
& \frac{d}{dt} \| \Delta d \|^2_{L^2} + \| \na d_t \|^2_{L^2} + \| \na \Delta d \|^2_{L^2} \\
& \le C \int |\na (u \cdot \na d)|^2 + |\na (|\na d|^2 d)|^2 dx \\
& \le C \int |\na u|^2 |\na d|^2 + |u|^2 |\na^2 d|^2
+ |\na^2 d|^2 |\na d|^2 + |\na d|^6 dx \\
& \le C \| \na u \|^2_{L^4} \| \na d \|^2_{L^4}
+ C \| u \|^2_{L^8} \| \na^2 d \|^2_{L^\frac{8}{3}}
+ C \| \na^2 d \|^2_{L^4} \| \na d \|^2_{L^4} + \| \na d \|^6_{L^6} \\
& \le C \| \na u \|^2_{L^4}
+ C \| \na u \|^2_{L^2} \| \na d\|_{L^4} \left( \| \na^2 d\|_{L^2} + \| \na^3 d\|_{L^2} \right) \\
& \quad + C \| \na^2 d \|_{L^2} \left( \| \na^2 d \|_{L^2} + \| \na^3 d \|_{L^2} \right)
+ C \| \na d \|^4_{L^4} \| \na^2 d \|^2_{L^2} \\
& \le \frac{1}{8} \| \na \Delta d \|^2_{L^2}
+ C \| \na u \|_{L^2}\|\na u\|_{H^1} + C \| \na u \|^4_{L^2} + C \| \na^2 d \|^2_{L^2}.
\ea\ee
The combination of \eqref{np31}, \eqref{np32}, and \eqref{np34} implies
\be\la{np35}\ba
& \frac{d}{dt} ( A_1(t) )
+ \frac{1}{2} \| \sqrt{\n} u_t \|^2_{L^2} + \frac{1}{2} \| \na d_t \|^2_{L^2} + \frac{1}{2} \| \na \Delta d \|^2_{L^2} \\
& \le C(1+\| \sqrt{\n} u \|^2_{L^4}+\|\na u\|_{L^2}) \| \na u \|_{L^2}\|\na u\|_{H^1}+C\| \na u \|^4_{L^2} + C \| \na^2 d \|^2_{L^2},
\ea\ee
where
\be\la{np36}\ba
A_1(t) \triangleq \int \left( \mu(\n) |D u|^2 + |\Delta d|^2 - (\nabla d \odot \nabla d) \cdot \nabla u \right) dx.
\ea\ee
Furthermore, from $\eqref{nlc}_2$, \eqref{np01}, \eqref{np02}, and Lemma~\ref{ste}, we conclude that
\be\nonumber\ba
& \|u \|_{H^2} + \|P\|_{H^1} \\
& \le C \left( \| \n u_t \|_{L^2} + \| \n u \cdot \na u \|_{L^2}
+ \| |\na d| |\na^2 d| \|_{L^2} \right) \\
& \le C \left( \| \sqrt{\n} u_t \|_{L^2} + \| \sqrt{\n} u \|_{L^4} \| \na u \|_{L^4} + \| \na d \|_{L^4} \| \na^2 d \|_{L^4} \right) \\
& \le C \left( \| \sqrt{\n} u_t \|_{L^2} + \| \sqrt{\n} u \|_{L^4} 
\| \na u \|^{\frac{1}{2}}_{L^2}\| \na u \|^{\frac{1}{2}}_{H^1}
+ \| \na^2 d \|_{H^1} \right) \\
& \le \frac{1}{2} \|u \|_{H^2} 
+ C \left( \| \sqrt{\n} u_t \|_{L^2} + \| \sqrt{\n} u \|^2_{L^4} \| \na u \|_{L^2}
+ \| \na^2 d \|_{L^2} + \| \na \Delta d \|_{L^2} \right),
\ea\ee
which gives
\be\la{np38}\ba
\| \na u \|_{H^1}
\le C \left( \| \sqrt{\n} u_t \|_{L^2} + \| \sqrt{\n} u \|^2_{L^4} \| \na u \|_{L^2}
+ \| \na^2 d \|_{L^2} + \| \na \Delta d \|_{L^2} \right).
\ea\ee
Substituting \eqref{np38} into \eqref{np35} and applying Young's inequality leads to
\be\la{np310}\ba
& \frac{d}{dt} ( A_1(t) )
+ \frac{1}{2} \| \sqrt{\n} u_t \|^2_{L^2} + \frac{1}{2} \| \na d_t \|^2_{L^2} + \frac{1}{2} \| \na \Delta d \|^2_{L^2} \\
& \le \frac{1}{4} \left( \| \sqrt{\n} u_t \|^2_{L^2} + \| \na \Delta d \|^2_{L^2} \right)
+ C \| \na u \|^2_{L^2} \left( 1 + \| \sqrt{\n} u \|^4_{L^4} + \| \na u \|^2_{L^2} \right) + C \| \na^2 d \|^2_{L^2}.
\ea\ee
Combining this with \eqref{logbd1} yields
\be\la{np311}\ba
& \frac{d}{dt} ( A_1(t) )
+ \frac{1}{4} \| \sqrt{\n} u_t \|^2_{L^2} + \frac{1}{4} \| \na d_t \|^2_{L^2} + \frac{1}{4} \| \na \Delta d \|^2_{L^2} \\
& \le C \| \na u \|^4_{L^2} \log(2+\| \na u \|^2_{L^2})
+ C \| \na u \|^2_{L^2} + C \| \na^2 d \|^2_{L^2}.
\ea\ee
On the other hand, Young's inequality and \eqref{np02} ensure that
\be\la{np312}\ba
\left| \int (\nabla d \odot \nabla d) \cdot \nabla u dx \right|
\le \| \na d \|^2_{L^4} \| \na u \|_{L^2}
\le \frac{1}{2} \int \mu(\n) |D u|^2 dx + C_0\|\nabla d\|^4_{L^4}.
\ea\ee
We set 
\be\la{np313}\ba
A_2(t) \triangleq A_1(t) + C_0\|\nabla d\|^4_{L^4}.
\ea\ee
By virtue of \eqref{gn11}, \eqref{tygj1}, \eqref{np01}, \eqref{np36}, \eqref{np312}, and \eqref{np313}, it holds that
\be\la{np314}\ba
\frac{1}{4} \underline{\mu} \| \na u \|^2_{L^2} + \| \Delta d \|^2_{L^2} \le  A_2(t)
\le C \left( \| \na u \|^2_{L^2} + \| \Delta d \|^2_{L^2} \right).
\ea\ee
Therefore, we deduce from \eqref{np25}, \eqref{np311}, and \eqref{np314} that
\be\la{np315}\ba
& \frac{d}{dt} ( A_2(t) )
+ \frac{1}{4} \| \sqrt{\n} u_t \|^2_{L^2} + \frac{1}{4} \| \na d_t \|^2_{L^2} + \frac{1}{4} \| \na \Delta d \|^2_{L^2} \\
& \le C \| \na u \|^2_{L^2} A_2(t) \log(2+A_2(t))
+ C \| \na u \|^2_{L^2} + C \| \na^2 d \|^2_{L^2}.
\ea\ee
Dividing \eqref{np315} by $2+A_2(t)$ yields
\be\la{np316}\ba
\frac{d}{dt} \log( 2 + A_2(t) )
\le C \| \na u \|^2_{L^2} \log(2+A_2(t))
+ C \| \na u \|^2_{L^2} + C \| \na^2 d \|^2_{L^2},
\ea\ee
which together with \eqref{tygj1}, \eqref{np01}, \eqref{np314}, \eqref{np316}, and Gr\"onwall's inequality implies that
\be\la{np317}\ba
\sup_{0\leq t\leq T} \left( \| \na u \|^2_{L^2}
+ \| \na^2 d \|^2_{L^2}  \right)
+ \int_0^T \left( \|\sqrt{\n} u_t \|^2_{L^2} + \| \na^3 d \|^2_{L^2}
+ \| \na d_t \|^2_{L^2} \right) dt
\leq C.
\ea\ee
Furthermore, in view of \eqref{tygj1}, \eqref{logbd1}, \eqref{np38}, and \eqref{np317}, we have
\be\la{np318}\ba
\int_0^T \left( \| \na^2 u \|^2_{L^2} + \| P \|^2_{H^1} \right) dt \le C.
\ea\ee

Finally, multiplying \eqref{np315} by $e^{\eta_0 t}$ and integrating over $(0,T)$,
we derive \eqref{np003} after using \eqref{np314}, \eqref{np317}, \eqref{np001}, \eqref{tygj1}, and \eqref{np38}.
The proof of Lemma~\ref{Nl3} is completed.
\end{proof}

\begin{lemma}\la{Nl4}
	Let $(\n,u,P,d)$ be a strong solution to \eqref{nlc}--\eqref{bjtj1} satisfying $\eqref{n2}$. Then there exists a positive constant $C$ depending only on $q$, $\underline{\mu}$, $\bar{\mu}$, $\hat{\n}$, $\| u_0 \|_{H^1}$, $\| \na d_0 \|_{H^1}$, and $\OM$ such that
\be\ba\la{np04}
& \sup_{0\leq t\leq T} \si \left( \| \sqrt{\n} u_{t} \|_{L^{2}}^{2}
+ \|\na d_{t}\|_{L^{2}}^{2} + \| \na^3 d \|^2_{L^2} + \| \na^2 u \|^2_{L^2} + \| P \|^2_{H^1} \right) \\
& + \int_{0}^{T} \si \left( \|\nabla u_{t}\|_{L^{2}}^{2} + \|\nabla^2 d_{t}\|_{L^{2}}^{2} \right) dt
\leq C,
\ea\ee
and
\be\ba\la{np004}
& \sup_{0 \leq t\leq T} \si e^{\eta_0 t} \left( \| \sqrt{\n} u_{t} \|_{L^{2}}^{2}
+ \|\na d_{t}\|_{L^{2}}^{2} + \| \na^3 d \|^2_{L^2} + \| \na^2 u \|^2_{L^2} + \| P \|^2_{H^1} \right) \\
& + \int_{0}^{T} \si e^{\eta_0 t} \left( \|\nabla u_{t}\|_{L^{2}}^{2} + \|\nabla^2 d_{t}\|_{L^{2}}^{2} \right) dt
\leq C,
\ea\ee
with $\si=\min\{1,t\}$.
\end{lemma}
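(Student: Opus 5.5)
Our plan is the standard time-weighted estimate for the time derivatives, run in parallel for the momentum and director equations. First we differentiate $\eqref{nlc}_2$ in time, using $\eqref{nlc}_1$ in the form $\n_t=-u\cdot\na\n$:
\be\nonumber\ba
\n u_{tt}+\n u\cdot\na u_t-\div(2\mu(\n)Du_t)+\na P_t
=-\n_t(u_t+u\cdot\na u)-\n u_t\cdot\na u+\div(2(\mu(\n))_tDu)-\div(\na d\odot\na d)_t .
\ea\ee
We multiply by $u_t$ and integrate. The term $\int\n_t|u_t|^2$ and $\int\n_t u\cdot\na u\cdot u_t$ are rewritten as $\int\n u\cdot\na(\cdots)$ after integrating by parts. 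The viscosity term is
\[
\int(\mu(\n))_tDu:\na u_t=-\int u\cdot\na\mu(\n)\,Du:\na u_t .
\]
We bound it by $\|\na\mu(\n)\|_{L^q}\|u\|_{L^\infty}\|\na u\|_{L^{2q/(q-2)}}\|\na u_t\|_{L^2}$, using \eqref{n2} and the Stokes bound \eqref{np38}. The coupling term becomes $\int(\na d_t\odot\na d+\na d\odot\na d_t):\na u_t$, controlled by $\|\na d_t\|_{L^4}\|\na d\|_{L^4}\|\na u_t\|_{L^2}$, and \eqref{np02} gives the uniform $L^4$ bound on $\na d$. The remaining terms are handled by Hölder, \eqref{gn11}, \eqref{logbd1} and Lemma~\ref{Nl3}. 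This should yield
\[
\frac{d}{dt}\|\sqrt{\n}u_t\|_{L^2}^2+\underline{\mu}\|\na u_t\|_{L^2}^2\le \tfrac{1}{4}\|\na u_t\|^2_{L^2}+\ep\|\na^2 d_t\|^2_{L^2}+C\,(\cdots),
\]
where the right side is integrable in time after combining with \eqref{np03}.

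Second, we differentiate \eqref{np33} in time:
\[
\na d_{tt}-\Delta\na d_t=-\na(u_t\cdot\na d+u\cdot\na d_t)+\na(|\na d|^2d)_t .
\]
We test with $-\Delta\na d_t$, or equivalently test $d_{tt}-\Delta d_t=\cdots$ with $\Delta^2 d_t$. The boundary condition $n\cdot\na d_t=0$ on $\p\OM$ and Lemma~\ref{tygj} give $\frac{d}{dt}\|\Delta d_t\|$, and hence control of $\|\na^2 d_t\|^2_{L^2}$. If boundary terms in this route become awkward, we will instead test with $\na d_t$ and $\Delta d_t$ at the level of $d_t$, which gives $\frac{d}{dt}\|\na d_t\|^2_{L^2}+\|\na^2 d_t\|^2_{L^2}$ directly via \eqref{tygj1}. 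The nonlinear terms $u_t\cdot\na d$, $u\cdot\na d_t$, $|\na d|^2d_t$ and $\na d\cdot\na d_t\,d$ are bounded using \eqref{np02}, $\|u_t\|_{L^4}\le C\|\na u_t\|_{L^2}$ (since $u_t\in H^1_0$), and $\|\na d_t\|_{L^4}\le C\|\na d_t\|^{1/2}_{L^2}\|\na d_t\|^{1/2}_{H^1}$. Adding the two inequalities gives
\[
\frac{d}{dt}\Phi+\tfrac12\left(\|\na u_t\|_{L^2}^2+\|\na^2d_t\|_{L^2}^2\right)\le C(1+\|\na u\|_{H^1}^2+\|\na^3d\|^2_{L^2})\Phi+C(\|\na u\|_{H^1}^2+\|\na^3 d\|^2_{L^2}+\|\na d_t\|^2_{L^2}),
\]
with $\Phi=\|\sqrt\n u_t\|^2_{L^2}+\|\na d_t\|^2_{L^2}$.

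Next we multiply by $\si$. The extra term $\si'\Phi$ is integrable by \eqref{np03}, so Grönwall gives the $\sup\si\Phi$ bound and the integral of $\si(\|\na u_t\|^2+\|\na^2d_t\|^2)$. We then recover $\si\|\na^2u\|^2_{L^2}+\si\|P\|_{H^1}^2$ from \eqref{np38}. For $\si\|\na^3d\|^2_{L^2}$, we use $\eqref{nlc}_3$ and \eqref{tygj1}: $\|\na\Delta d\|_{L^2}\le\|\na d_t\|_{L^2}+\|\na(u\cdot\na d)\|_{L^2}+\|\na(|\na d|^2d)\|_{L^2}$, absorbing the lower-order contributions by interpolation and Lemma~\ref{Nl3}. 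For \eqref{np004}, we multiply the same differential inequality by $\si e^{\eta_0 t}$. The extra term $\eta_0\si e^{\eta_0 t}\Phi$ is integrable by \eqref{np003}, which already controls $\int e^{\eta_0t}(\|\sqrt\n u_t\|^2+\|\na d_t\|^2+\|\na^2u\|^2+\|\na^3d\|^2)$. The coefficient of $\Phi$ has bounded time integral, so Grönwall closes the argument.

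The main obstacle is the variable-viscosity term $\int u\cdot\na\mu(\n)Du:\na u_t$, together with the need for $L^\infty$/$L^{2q/(q-2)}$ control on $u$ and $\na u$ that keeps the coefficient integrable in time. We plan to bound $\|u\|_{L^\infty}\|\na u\|_{L^{2q/(q-2)}}\le C\|\na u\|_{H^1}^2$ and invoke \eqref{np38}. The resulting term $C\|\na u\|^4_{H^1}\le C\|\na u\|_{H^1}^2(\Phi+1)$ then enters the Grönwall coefficient, and its time integral is finite by \eqref{np03}.
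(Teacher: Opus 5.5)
Your plan is essentially the paper's proof. You time-differentiate the momentum equation and test with $u_t$, time-differentiate \eqref{np33} and test with $\na d_t$ (your fallback; the $-\Delta\na d_t$ route is one derivative too high and is not needed). You then combine the two, multiply by $\si$ and by $\si e^{\eta_0 t}$, and recover $\na^3 d$ from \eqref{tygj1} and $\na^2 u$, $P$ from the Stokes estimate. Two steps need fixing as written.

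\emph{Weighted combination.} Plain addition of the two inequalities does not close. The $u_t\cdot\na d$ term in the director estimate costs $C_1\|\na u_t\|_{L^2}^2$ with $C_1$ not small, because with vacuum $u_t$ is controlled only through $\|\na u_t\|_{L^2}$. So the momentum inequality must be multiplied by a constant $K$ with $K\underline{\mu}-C_1\ge 1$, and $\ep$ chosen afterwards, as the paper does.

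\emph{$T$-independence.} The constant part of your Gr\"onwall coefficient $C(1+\cdots)$ does not have time integral bounded independently of $T$. The term $C\Phi$ must instead be treated as forcing, using $\int_0^T\Phi\,dt\le C$ from \eqref{np03} and $\int_0^T e^{\eta_0 t}\Phi\,dt\le C$ from \eqref{np003}. Otherwise you pick up a factor $e^{CT}$ and lose the $T$-independence on which Proposition~\ref{n1} and the exponential decay rely.

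\emph{A simplification.} The paper bounds the viscosity term by $\|u\|_{L^{4q/(q-2)}}\|\na\mu(\n)\|_{L^q}\|\na u\|_{L^{4q/(q-2)}}\|\na u_t\|_{L^2}$, with $\|u\|_{L^{4q/(q-2)}}\le C\|\na u\|_{L^2}\le C$. This produces only $C\|\na u\|_{H^1}^2$ instead of your $C\|\na u\|_{H^1}^4$. Then every right-hand term is already integrable by \eqref{np01}, \eqref{np001}, \eqref{np03}, \eqref{np003}, and one simply integrates. No Gr\"onwall step is required, and neither is your bound $\|\na u\|_{H^1}^2\le C(\Phi+1)$, which would itself need the elliptic control of $\|\na^3 d\|_{L^2}$ by $\|\na d_t\|_{L^2}$ and $\|\sqrt{\n}u_t\|_{L^2}$.
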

\begin{proof}
First, differentiating $\eqref{nlc}_2$ with respect to $t$ yields
\be\la{np41}\ba
&\n u_{tt} +\n u\cdot\nabla u_{t}-\div(2\mu(\rho) Du_t)+\nabla P_{t} \\
&= -\n_{t}(u_{t}+u\cdot\nabla u)-\n u_{t}\cdot\nabla u+\div(2\mu_t(\rho)Du) -\operatorname{div}(\nabla d\odot \nabla d)_{t}.
\ea\ee
Multiplying \eqref{np41} by $u_t$ and integrating by parts over $\Omega$ lead to
\begin{equation}\label{np42}
	\begin{aligned}
		& \frac{1}{2} \frac{d}{d t} \int \rho\left|u_t\right|^2 d x+\underline{\mu}\int \left|\nabla u_t\right|^2 d x \\
		& \leq-\int \rho_t\left|u_t\right|^2 d x-\int \rho_t u \cdot\nabla u\cdot u_{t}dx-\int \rho u_t\cdot\nabla u\cdot u_{t}dx \\
		& \quad - 2 \int \mu_t(\rho) D u : D u_t d x +\int (\partial_{i} d \cdot \partial_j d)_t\partial_j u^{i}_{t} d x\triangleq\sum_{i=1}^5 I_i.
	\end{aligned}
\end{equation}

\par Now, we estimate each term on the right-hand side of \eqref{np42}.
First, with the help of $\eqref{nlc}_1$, \eqref{np03}, and Poincar\'e's inequality, we have
\begin{equation}\label{np43}
	\begin{aligned}
		I_1=-2\int \rho u\cdot\nabla u_t\cdot u_{t} dx & \leq C \|u\|_{L^8} \|\nabla u_t\|_{L^2} \|\sqrt{\n} u_t\|^{\frac{1}{2}}_{L^2} \| u_t \|^{\frac{1}{2}}_{L^4} \\
		&\leq \frac{1}{16} \underline{\mu} \|\nabla u_t\|^{2}_{L^2}+C\|\sqrt{\n} u_t\|^{2}_{L^2}.
	\end{aligned}
\end{equation} 
Next, combining $\eqref{nlc}_1$ with \eqref{gn11}, \eqref{np01}, \eqref{np03}, and Poincar\'e's inequality, we obtain
\begin{equation}\label{n61}
	\begin{aligned}
		I_{2}&\leq \int \rho|u| (|\nabla u|^2 |u_t| +|u||\nabla^2 u||u_t|+|u||\nabla u| |\nabla u_t| )dx\\
		& \leq C\|\sqrt{\n} u_t\|_{L^3} (\|\na u\|^2_{L^4} \|u\|_{L^6}+\|\na^2 u\|_{L^2} \|u\|_{L^{12}}^2) + C \|\nabla u_t\|_{L^2} \|\nabla u\|_{L^6} \|u\|_{L^6}^2 \\
		& \leq C\|\sqrt{\n} u_t\|^{\frac{1}{2}}_{L^2} \| u_t\|^{\frac{1}{2}}_{L^6} \|\nabla u\|_{H^1} + C \|\nabla u_t\|_{L^2} \|\nabla u\|_{H^1} \\
		& \leq C\|\sqrt{\n} u_t\|^{\frac{1}{2}}_{L^2} \|\nabla u_t\|^{\frac{1}{2}}_{L^2} \|\nabla u\|_{H^1} + C \|\nabla u_t\|_{L^2} \|\nabla u\|_{H^1} \\
		& \leq \frac{1}{16} \underline{\mu}\|\nabla u_t\|_{L^2}^2+C\|\nabla u\|_{H^1}^2+C\|\sqrt{\n} u_t\|_{L^2}^2.
	\end{aligned}
\end{equation}
Similarly, it holds that
\begin{equation}\label{np44}
	\begin{aligned}
		I_3&\leq C\|\sqrt{\n} u_t\|^2_{L^4} \|\nabla u\|_{L^2} \leq  C\|\sqrt{\n} u_t\|^{\frac{1}{2}}_{L^2} \|u_t\|^{\frac{3}{2}}_{L^6}
        \leq \frac{1}{16} \underline{\mu} \|\nabla u_t\|_{L^2}^2+C\|\sqrt{\n}u_t\|^2_{L^2},
	\end{aligned}
\end{equation}
and
\begin{equation}\label{np444}
	\begin{aligned}
		I_4 \leq C \int|u||\nabla \mu(\rho)||\nabla u|\left|\nabla u_t\right| d x \leq & C\|u\|_{L^{\frac{4 q}{q-2}}}\|\nabla \mu(\rho)\|_{L^q}\|\nabla u\|_{L^{\frac{4 q}{q-2}}}\left\|\nabla u_t\right\|_{L^2} \\
		\leq & \frac{1}{16} \underline{\mu} \|\nabla u_t\|_{L^2}^2+C\|\nabla u\|_{H^1}^2.
	\end{aligned}
\end{equation}
Moreover, we deduce from \eqref{gn11}, \eqref{np02}, and Young's inequality that for any $\ep>0$,
\be\la{np45}\ba
I_{5}&\leq C \|\nabla d\|_{L^{4}} \|\nabla d_{t}\|_{L^{4}} \|\nabla u_{t}\|_{L^{2}}\\
& \leq \frac{1}{16} \underline{\mu} \|\nabla u_{t}\|_{L^{2}}^{2}
+ C \|\nabla d_{t}\|_{L^{2}}\|\nabla^{2} d_{t}\|_{L^{2}} \\
& \leq \frac{1}{16} \underline{\mu} \|\nabla u_{t}\|_{L^{2}}^{2}
+ \ep \|\nabla^{2} d_{t}\|_{L^{2}}^{2} + C(\ep) \|\nabla d_{t}\|_{L^{2}}^{2}.
\ea\ee
Substituting \eqref{np43}, \eqref{n61}, \eqref{np44}, \eqref{np444}, and \eqref{np45} into \eqref{np42}, we arrive at
\be\la{np46}\ba
\frac{d}{dt} \|\sqrt{\n} u_{t}\|_{L^{2}}^{2} +\underline{\mu} \|\nabla u_{t}\|_{L^{2}}^{2}
& \leq 2 \ep \|\nabla^{2} d_{t}\|_{L^{2}}^{2}
+ C \left( \|\sqrt{\n} u_{t}\|_{L^{2}}^{2} + C \|\nabla u\|_{H^1}^2 \right)
+ C(\ep) \| \na d_t \|^2_{L^2}.
\ea\ee
Next, differentiating \eqref{np33} with respect to $t$ gives
\be\la{np47}\ba
\nabla d_{tt}-\Delta\nabla d_{t} =-\nabla (u\cdot \nabla d)_{t}+\nabla (|\nabla d|^{2}d)_{t}.
\ea\ee

Multiplying \eqref{np47} by $\na d_t$, integrating over $\OM$, and using \eqref{np01}, \eqref{np03}, and Young's inequality, we derive
\be\la{np48}\ba
& \frac{1}{2}\frac{d}{dt}\|\nabla d_{t}\|_{L^{2}}^{2} + \|\Delta d_{t}\|_{L^{2}}^{2} \\
& \leq C \int |\nabla u_{t}||\nabla d||\nabla d_{t}| dx
+ C \int |\nabla u||\nabla d_{t}|^{2} dx
+ C \int |u_{t}||\nabla d| |\nabla^{2} d_{t}| dx \\
& \quad + C \int |\nabla d|^{2}|d_{t}||\nabla^{2} d_{t}| dx
+ C \int |\nabla d||\nabla d_{t}| |\nabla^{2} d_{t}| dx \\
& \le C \|\nabla u_{t}\|_{L^{2}}\|\nabla d_{t}\|_{L^{4}}\|\nabla d\|_{L^{4}}
+ C \|\nabla u\|_{L^{2}}\|\nabla d_{t}\|_{L^{4}}^{2}
+ C \| u_t \|_{L^4} \| \na d \|_{L^4} \|\nabla^{2} d_{t}\|_{L^{2}} \\
& \quad + C \| \na^2 d_t \|_{L^2} \| \na d \|^2_{L^8} \| d_t \|_{L^4}
+ C \| \na^2 d_t \|_{L^2} \| \na d \|_{L^4} \| \na d_t \|_{L^4} \\
& \le \frac{1}{4} \|\Delta d_{t}\|_{L^{2}}^{2}
+ C \| \na d_t \|^2_{L^4} + C \| \na d \|^4_{L^8} \| d_t \|^2_{L^4}
+ \frac{C_1}{2} \|\nabla u_{t}\|_{L^{2}}^{2} \\
& \le \frac{1}{4} \|\Delta d_{t}\|_{L^{2}}^{2}
+ C \| \na d_t \|_{L^2} \| \na^2 d_t \|_{L^2} + C \| d_t \|^2_{L^4}
+ \frac{C_1}{2} \|\nabla u_{t}\|_{L^{2}}^{2} \\
& \le \frac{1}{2} \|\Delta d_{t}\|_{L^{2}}^{2}
+ C \| \na d_t \|^2_{L^2} + C \| \na^2 d \|^2_{H^1}
+ \frac{C_1}{2} \|\nabla u_{t}\|_{L^{2}}^{2},
\ea\ee
where in the last inequality we have used the following estimate:
\be\la{np49}\ba
\| d_t \|^2_{L^4} & \le C \left( \| u \cdot \na d \|^2_{L^4}
+ \| \na^2 d \|^2_{L^4} + \| |\na d|^2 \|^2_{L^4} \right) \\
& \le C \left( \| u \|^2_{L^8} \| \na d \|^2_{L^8} + \| \na^2 d \|^2_{H^1}
+ \| \na d \|^4_{L^8} \right)
\le C \| \na^2 d \|^2_{H^1},
\ea\ee
due to \eqref{gn11} and \eqref{np03}.
Thus, we have
\be\la{np410}\ba
\frac{d}{dt}\|\nabla d_{t}\|_{L^{2}}^{2} + \|\Delta d_{t}\|_{L^{2}}^{2}
\le C_1 \|\nabla u_{t}\|_{L^{2}}^{2}
+ C \| \na d_t \|^2_{L^2} + C \| \na^2 d \|^2_{H^1}.
\ea\ee

Choose a positive constant $K$ such that
$K\underline{\mu}-C_1\geq1$.
Multiplying \eqref{np46} by $K$, adding the result to \eqref{np410}, and then choosing $\ep>0$ sufficiently small yield
\be\la{np411}\ba
& \frac{d}{dt} \left( K \|\sqrt{\n} u_{t}\|_{L^{2}}^{2}
+ \| \na d_t \|^2_{L^2} \right)
+ \|\nabla u_{t}\|_{L^{2}}^{2} + \frac{1}{2}\| \Delta d_t \|^2_{L^2} \\
& \leq C \left( \|\sqrt{\n} u_{t}\|_{L^{2}}^{2}
+ \| \na d_t \|^2_{L^2}+ \|\nabla u\|_{H^1}^2
+ \| \na^2 d \|^2_{H^1} \right).
\ea\ee
Multiplying by $\si$ and integrating
over $(0,T)$, we obtain from \eqref{np01} and \eqref{np03} that
\be\ba\la{np412}
\sup_{0\leq t\leq T} \si \left( \| \sqrt{\n} u_{t} \|_{L^{2}}^{2}
+ \|\na d_{t}\|_{L^{2}}^{2} \right)
+ \int_{0}^{T} \si \left( \|\nabla u_{t}\|_{L^{2}}^{2} + \|\nabla^2 d_{t}\|_{L^{2}}^{2} \right) dt
\leq C.
\ea\ee
Furthermore, by virtue of \eqref{np33}, \eqref{gn11}, \eqref{tygj1}, \eqref{np38}, and Young's inequality, it holds that
\be\ba\nonumber
\| \nabla \Delta d \|_{L^{2}}^{2}
\leq & C \left( \|\nabla d_{t}\|_{L^{2}}^{2} + \||\nabla u||\nabla d|\|_{L^{2}}^{2}
+ \||u||\nabla^{2}d|\|_{L^{2}}^{2} + \||\nabla d|^{3}\|_{L^{2}}^{2}
+ \||\nabla^{2}d||\nabla d|\|_{L^{2}}^{2} \right) \\
\leq & C \left( \|\nabla d_{t}\|_{L^{2}}^{2} + \|\nabla u\|_{L^{4}}^{2} \|\nabla d\|_{L^{4}}^{2}
+ \| u \|_{L^{4}}^{2} \| \nabla^{2} d \|^2_{L^{4}}
+ \|\nabla d\|_{L^{6}}^{6} + \| \na d \|^2_{L^4} \| \na^2 d \|^2_{L^4} \right) \\
\leq & C \left( \|\nabla d_{t}\|_{L^{2}}^{2}
+ \| \nabla u \|_{L^{2}} \| \nabla u \|_{H^1}
+ \|\nabla^{2}d\|_{L^{4}}^{2} + \| \na d \|^2_{H^1} \right) \\
\leq & C \left( \|\nabla d_{t}\|_{L^{2}}^{2}
+ \| \sqrt{\n} u_t \|^2_{L^2} + \| \na u \|^2_{L^2}
+ \| \na^3 d \|_{L^2} ( \| \na u \|_{L^2} + \| \na^2 d \|_{L^2} )
+ \| \na d \|^2_{H^1} \right) \\
\leq & \frac{1}{2} \| \nabla \Delta d \|_{L^{2}}^{2}
+ C \left( \|\nabla d_{t}\|_{L^{2}}^{2} + \| \sqrt{\n} u_t \|^2_{L^2}
+ \| \na u \|^2_{L^2} + \| \na d \|^2_{H^1} \right),
\ea\ee
which together with \eqref{tygj1} leads to
\be\la{np413}\ba
\|\nabla^{3}d\|_{L^{2}}^{2}
\le C \left( \|\nabla d_{t}\|_{L^{2}}^{2} + \| \sqrt{\n} u_t \|^2_{L^2}
+ \| \na u \|^2_{L^2} + \| \na d \|^2_{H^1} \right).
\ea\ee
In addition, it follows from \eqref{logbd1}, \eqref{np38}, \eqref{np03}, and \eqref{np413} that
\be\la{np414}\ba
\| u \|^2_{H^2} + \| P \|^2_{H^1}
\le C \left( \|\nabla d_{t}\|_{L^{2}}^{2} + \| \sqrt{\n} u_t \|^2_{L^2}
+ \| \na u \|^2_{L^2} + \| \na d \|^2_{H^1} \right).
\ea\ee
This combined with \eqref{np412}, \eqref{np413} and \eqref{np414} gives \eqref{np04}.

On the other hand, multiplying \eqref{np411} by $\si e^{\eta_0 t}$, choosing $\ep$ suitably small, integrating over $(0,T)$, and using \eqref{np001}, \eqref{np003}, \eqref{np413}, and \eqref{np414}, we obtain \eqref{np004} and complete the proof of Lemma~\ref{Nl4}.
\end{proof}

With Lemmas \ref{Nl3} and \ref{Nl4}, we are in a position to prove Proposition~\ref{n1}.

\begin{proof}[Proof of Proposition $\ref{n1}$]
	  Note that $\eqref{nlc}_1$ implies $\mu(\rho)$ satisfies
	$$
	\partial_{t}(\mu(\rho))+u\cdot\nabla \mu(\rho)=0.
	$$
	The standard calculations show that
	\begin{equation}\label{np51}
		\frac{d}{d t}\|\nabla \mu(\rho)\|_{L^q}^q \leq C\|\nabla u\|_{L^{\infty}}\|\nabla \mu(\rho)\|_{L^q}^q.
	\end{equation}
	Moreover, using \eqref{gn11}, \eqref{stegj}, \eqref{np01}, \eqref{np03}, \eqref{np004}, and H\"older's inequality, we derive for $r\in(2,\min\{6,q\})$ that
	\be\la{np52}\ba
	\|\nabla u\|_{L^{\infty}} & \leq C\|u\|_{W^{2,r}} \\
	& \leq C \left( \|\rho u_t\|_{L^r} + \|\rho u \cdot \nabla u\|_{L^r} + \| |\nabla d| |\nabla^2 d| \|_{L^r} \right) \\
	& \leq C \|\rho u_t\|_{L^2}^{\frac{6-r}{2r}} \|\rho u_t\|_{L^6}^{\frac{3r-6}{2r}}
	+ C \| u \|_{L^{2r}} \|\nabla u\|_{L^{2r}} + C \|\nabla d\|_{L^{2r}} \|\nabla^2 d\|_{L^{2r}} \\
	& \leq C \|\sqrt{\rho} u_t\|_{L^2}^{\frac{6-r}{2r}} \|\nabla u_t\|_{L^2}^{\frac{3r-6}{2r}}
	+ C \|\nabla u\|_{H^1}^2 + C \| \nabla^2 d \|^2_{H^1} \\
	& \leq C \left(\si e^{\eta_0 t} \|\nabla u_t\|_{L^2}^2\right)
	+ C (\si e^{\eta_0 t})^{-\frac{2r}{r+6}} (\si e^{\eta_0 t} \|\sqrt{\rho} u_t\|^2_{L^2})^{\frac{6-r}{r+6}}\\
	&~~+ C \|\nabla u\|_{H^1}^2 + C \| \nabla^2 d \|^2_{H^1}.
	\ea\ee
    Integrating \eqref{np52} over $(0,T)$ and using \eqref{np01}, \eqref{np03}, and \eqref{np004}, we obtain
	\be\la{np53}\ba
	\int_0^T \|\nabla u\|_{L^{\infty}} dt \leq C.
	\ea\ee
	Combining this with \eqref{np51} and Gr\"onwall's inequality, we obtain
	$$
	\|\nabla \mu(\rho)(t)\|_{L^q} \leq C\left\|\nabla \mu\left(\rho_0\right)\right\|_{L^q} \cdot \exp \left\{\int_0^T\|\nabla u\|_{L^{\infty}} d t\right\} \leq C_2\left\|\nabla \mu\left(\rho_0\right)\right\|_{L^q},
	$$
	where $C_2$ is independent of $T$.
	
	Setting $\varepsilon_0=1/(2 C_2)$, we conclude that \eqref{np552} holds, which completes the proof of Proposition~\ref{n1}.
\end{proof}

\begin{lemma}\label{Nl5}
Let $(\n,u,P,d)$ be a strong solution to \eqref{nlc}--\eqref{bjtj1} satisfying $\eqref{n2}$. For $r\in[2,q)$, there exists a positive constant $C$ depending only on $q$, $r$, $\underline{\mu}$, $\bar{\mu}$, $\| \n_0 \|_{W^{1,q}}$, $\| u_0 \|_{H^1}$, $\| \na d_0 \|_{H^1}$, and $\OM$ such that
\be\la{np05}\ba
\sup _{0 \leq t \leq T} \|\rho\|_{W^{1, q}}
+ \int_0^T \si e^{\eta_0 t} \left( \| \nabla^2 u \|_{L^r}^2 + \| P \|_{W^{1,r}}^2 + \| \na^4 d \|^2_{L^2} \right) dt \leq C.
\ea\ee
\end{lemma}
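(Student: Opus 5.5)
We plan to prove Lemma~\ref{Nl5} by first handling the weighted higher-order integrals for $u$, $P$, and $d$, and only then the density norm. The density bound is essentially a transport estimate, and it reuses \eqref{np53}.

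\emph{Step 1: the $W^{2,r}$ bound for $u$ and $P$.} By \eqref{n2}, we have $\|\na\mu(\n)\|_{L^q}\le 1$, so the Stokes estimate \eqref{stegj} with $p=q$ gives
\be\nonumber
\|\na^2 u\|_{L^r}+\|P\|_{W^{1,r}}\le C\left(\|\n u_t\|_{L^r}+\|\n u\cdot\na u\|_{L^r}+\||\na d||\na^2 d|\|_{L^r}\right).
\ee
For the first term we use $\n\le\hat\n$, interpolate $\|\sqrt{\n}u_t\|_{L^r}$ between $\|\sqrt{\n}u_t\|_{L^2}$ and $\|u_t\|_{L^{s}}$ for large $s$, and bound $\|u_t\|_{L^s}\le C\|\na u_t\|_{L^2}$ by Poincar\'e and Sobolev, since $u_t\in H_0^1$. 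This gives $\|\n u_t\|_{L^r}^2\le C(\|\sqrt{\n}u_t\|_{L^2}^2+\|\na u_t\|_{L^2}^2)$. The convective term is bounded by $C\|u\|_{H^2}^2$ through Sobolev embedding. The elastic term is bounded by $\|\na d\|_{L^{2r}}\|\na^2 d\|_{L^{2r}}\le C\|\na d\|_{H^2}^2$. Multiplying by $\si e^{\eta_0 t}$ and integrating in time, the first term is controlled by \eqref{np003}--\eqref{np004}. The other two are controlled by $\int_0^T e^{\eta_0 t}(\|\na^2u\|_{L^2}^2+\|\na^3 d\|_{L^2}^2)dt\le C$ from \eqref{np003}, together with the sup bounds on $\|\na u\|_{L^2}$ and $\|\na^2d\|_{L^2}$.

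\emph{Step 2: the $\na^4 d$ term.} Apply $\na$ to \eqref{np33} again, or equivalently use Lemma~\ref{tygj} with $k=2$ on $\Delta d=d_t+u\cdot\na d-|\na d|^2d$. This yields
\be\nonumber
\|\na^4 d\|_{L^2}\le C\|\Delta d\|_{H^2}\le C\left(\|\na^2 d_t\|_{L^2}+\|d_t\|_{L^2}+\|\na^2(u\cdot\na d)\|_{L^2}+\|\na^2(|\na d|^2d)\|_{L^2}+\dots\right).
\ee
The boundary condition needed is $n\cdot\na d=0$ on $\p\OM$, which is inherited at every order as required by Lemma~\ref{tygj}. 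The nonlinear terms are estimated by Gagliardo--Nirenberg using the uniform bounds on $\|\na u\|_{L^2}$, $\|\na^2d\|_{L^2}$ and \eqref{np02}. Products such as $|u||\na^3 d|$ are bounded by $\|u\|_{L^\infty}\|\na^3 d\|_{L^2}\le C\|u\|_{H^2}\|\na^3d\|_{L^2}$, and the top-order $\na^3 d$ contributions are absorbed with the interpolation $\|\na^3 d\|_{L^4}\le C\|\na^3d\|_{L^2}^{1/2}\|\na^3 d\|_{H^1}^{1/2}$. The time-weighted integral then follows from \eqref{np004} for $\si e^{\eta_0 t}\|\na^2d_t\|_{L^2}^2$ and from \eqref{np003}--\eqref{np004} for the remaining terms.

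\emph{Step 3: the density.} Since $\div u=0$, $\|\n\|_{L^q}$ is conserved. Differentiating $\n_t+u\cdot\na\n=0$ gives
\be\nonumber
\frac{d}{dt}\|\na\n\|_{L^q}\le C\|\na u\|_{L^\infty}\|\na\n\|_{L^q}.
\ee
By \eqref{np53}, which was derived under \eqref{n2} with $T$-independent constants, Gr\"onwall's inequality yields $\sup_t\|\na\n\|_{L^q}\le C\|\na\n_0\|_{L^q}$. This is why the constant depends on $\|\n_0\|_{W^{1,q}}$.

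The main obstacle is the $\na^4 d$ estimate. There we must check that Lemma~\ref{tygj} applies, i.e.\ that $\Delta d$ has the right regularity and the Neumann boundary structure carries over, and we must carefully bound the highest-order nonlinear products without losing the $e^{\eta_0 t}$ weight. If the direct elliptic route is awkward at the boundary, the fallback is to estimate $\|\na\Delta d\|_{H^1}$ via $\na(d_t+u\cdot\na d-|\na d|^2d)$ in $H^1$, using \eqref{np413}-type absorption.
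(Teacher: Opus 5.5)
Your proposal is correct and follows essentially the paper's proof. Both use the transport estimate for $\nabla\rho$ closed with \eqref{np53}, the Stokes estimates \eqref{stegj} (with \eqref{S1} covering $r=2$) under the $\sigma e^{\eta_0 t}$ weight, and Lemma~\ref{tygj} with $k=2$ applied to $\Delta d=d_t+u\cdot\nabla d-|\nabla d|^2d$. The one term you leave implicit is $|\nabla^2 u||\nabla d|$ in $\nabla^2(u\cdot\nabla d)$. The paper bounds it by $\|\nabla d\|_{L^{2r_*/(r_*-2)}}\|\nabla^2 u\|_{L^{r_*}}$ with $r_*=(q+2)/2$, which is exactly where the result of your Step 1 is fed into Step 2.
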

\begin{proof}
First, by virtue of $\eqref{nlc}_1$ and \eqref{np53}, we obtain
\be\la{np54}\ba
\sup _{0 \leq t \leq T} \|\nabla \rho\|_{L^q} \leq C.
\ea\ee
Furthermore, we deduce from \eqref{S1}, \eqref{stegj}, \eqref{np01} and \eqref{np03} that for any $r\in[2,q),$
\be\la{np57}\ba
\|\nabla^2 u\|_{L^r} + \| P \|_{W^{1,r}}
& \leq C \left( \|\rho u_t\|_{L^r} + \|\rho u \cdot \nabla u\|_{L^r}
+ \||\nabla d| |\nabla^2 d| \|_{L^r} \right) \\
& \leq C \left( \|\nabla u_t\|_{L^2} + \| u \|_{L^{2r}} \|\nabla u\|_{L^{2r}}
+ \|\nabla d\|_{L^{2r}} \|\nabla^2 d\|_{L^{2r}} \right) \\
& \leq C \left( \|\nabla u_t\|_{L^2} + \|\nabla u\|_{H^1} + \|\nabla^2 d\|_{L^2} + \|\nabla^3 d\|_{L^2} \right),
\ea\ee
which along with \eqref{np001}, \eqref{np003} and \eqref{np004} implies
\be\la{np58}\ba
\int_0^T \si e^{\eta_0 t} \left( \|\nabla^2 u\|_{L^r}^2 + \| P \|_{W^{1,r}}^2 \right) dt \leq C.
\ea\ee
Then, applying the $\na$ operator to \eqref{np33} leads to
\be\la{np59}\ba
\nabla^2 \Delta d = \nabla^2( d_t + u \cdot \na d - |\nabla d|^2 d).
\ea\ee
Combining this with \eqref{gn11}, \eqref{tygj1}, \eqref{np03}, and Young's inequality, we obtain, for $r_*=(q+2)/2\in(2,q)$, 

\be\la{np510}\ba
\|\nabla^2 \Delta d\|_{L^2}
& \leq C \left( \|\nabla^2 d_t\|_{L^2} + \|\nabla^2 (u \cdot \nabla d)\|_{L^2}
+ \|\nabla^2 (|\nabla d|^2 d) \|_{L^2} \right) \\
& \leq C \|\nabla^2 d_t\|_{L^2} + C \|\nabla d\|_{L^{\frac{2r_*}{r_*-2}}} \|\nabla^2 u\|_{L^{r_*}}
+ C \|\nabla u\|_{L^4} \|\nabla^2 d\|_{L^4} + C \|u\|_{L^4} \|\nabla^3 d\|_{L^4} \\
& \quad + C \|\nabla^2 d\|_{L^4}^2 + C \|\nabla d\|_{L^4} \|\nabla^3 d\|_{L^4}
+ C \|\nabla d\|_{L^8}^2 \|\nabla^2 d\|_{L^4} \\
& \leq C \|\nabla^2 d_t\|_{L^2} + C \|\nabla^2 u\|_{L^{r_*}}
+ C \|\nabla u\|_{L^2}^{\frac{1}{2}} \|\nabla u\|_{H^1}^{\frac{1}{2}} \|\nabla^2 d\|_{L^2}^{\frac{1}{2}} \|\nabla^2 d\|_{H^1}^{\frac{1}{2}} \\
& \quad + C \|\nabla^3 d\|_{L^2}^{\frac{1}{2}} \|\nabla^3 d\|_{H^1}^{\frac{1}{2}} + C \| \na^2 d \|_{L^2} \| \na^2 d \|_{H^1} + C \| \na^2 d \|_{H^1} \\
& \leq \frac{1}{2} \| \nabla^2 \Delta d \|_{L^2} + C \|\nabla^2 d_t\|_{L^2}
+ C \|\nabla u\|_{H^1} + C \|\nabla^2 u\|_{L^{r_*}} + C \| \na^2 d \|_{H^1},
\ea\ee
which yields
\be\la{np511}\ba
\|\nabla^2 \Delta d\|_{L^2} \leq C \|\nabla^2 d_t\|_{L^2}
+ C \|\nabla u\|_{H^1} + C \|\nabla^2 u\|_{L^{r_*}} + C \| \na^2 d \|_{H^1}.
\ea\ee
Therefore, we conclude from \eqref{np511}, \eqref{np001}, \eqref{np003}, \eqref{np004}, and \eqref{np58} with $r=r_*$ that
\be\la{np512}\ba
\int_0^T \si e^{\eta_0 t} \| \na^4 d \|^2_{L^2} dt
& \le C \int_0^T \si e^{\eta_0 t} \left( \| \na^2 \Delta d \|^2_{L^2} + \| \na^2 d \|^2_{H^1} \right) dt \le C.
\ea\ee
Combining this with \eqref{np01}, \eqref{np54}, and \eqref{np58}, we obtain
\eqref{np05} and complete the proof of Lemma~\ref{Nl5}.
\end{proof}

\section{A priori estimates for the Cauchy problem with vacuum far-field density}

In this section, we establish some necessary a priori estimates for local strong solutions $(\n,u,P,d)$ to the Cauchy problem \eqref{nlc}, \eqref{cztj}, \eqref{bjtj3} with $\tilde{\n}=0$.
The existence of such solutions is guaranteed by Lemma~\ref{lct}.
Let $T>0$ be a fixed time, and let $(\n,u,P,d)$ be a local strong solution to
\eqref{nlc}, \eqref{cztj}, \eqref{bjtj3} with $\tilde{\n}=0$ in $\rr \times (0,T]$, corresponding to initial data $(\n_0,u_0,d_0)$ satisfying \eqref{cpsol1}.
For simplicity, we assume that $\mu=1$.

Since $\rho_0\in L^1$ and
$\int_{\mathbb{R}^2} \rho_0\, dx=M_0>0,$
there exists a positive constant $N_0$ such that
\be\la{rho00}\ba
\int_{B_{N_0}} \rho_0 d x \geq \frac{1}{2}\int_{\mathbb{R}^2} \rho_0\, dx=\frac{M_0}{2}.
\ea\ee

We start with the following standard energy estimate for $(\n,u,d).$ 

\begin{lemma}\la{cl1}
There exists a positive constant $C$ depending only on
$\| \sqrt{\n_0} u_0 \|_{L^2}$, $\| \n_0 \|_{L^1 \cap L^\infty}$, $\|d_0-\mathbf{1}\|_{L^2}$, and $\| \na d_0 \|_{L^2}$ such that
\be\ba\la{cp01}
\sup_{0\leq t\leq T} \left( \| \n \|_{L^1 \cap L^\infty}
+ \| \sqrt{\n} u \|^2_{L^2} +\| \na d \|^2_{L^2} \right)+ \int_0^T \left( \| \na u \|^2_{L^2} +\| \na^2 d \|^2_{L^2} \right) dt
\leq C,
\ea\ee
and
\be\ba\la{cpd}
\sup_{0\leq t\leq T}\|d-\mathbf{1}\|_{L^2}^2
+\int_0^T\|\nabla d\|_{L^2}^2dt\leq C.
\ea\ee
\end{lemma}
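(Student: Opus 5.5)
The estimate \eqref{cp01} follows the same route as Lemma~\ref{Nl1}, now on $\rr$; the new ingredient is \eqref{cpd}, which needs the equation for $d-\mathbf{1}$.

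\textbf{Mass and energy.} Since $\div u=0$, the transport equation $\eqref{nlc}_1$ preserves every $L^p$ norm of $\n$, exactly as in \eqref{np0011}. This gives the bound on $\sup_t\|\n\|_{L^1\cap L^\infty}$.

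Next, multiply $\eqref{nlc}_2$ by $u$ and $\eqref{nlc}_3$ by $-(\Delta d+|\na d|^2d)$, integrate over $\rr$, and add. The coupling terms $\pm\int u\cdot\na d\cdot\Delta d\,dx$ cancel, just as in \eqref{np011}--\eqref{np012}. The far-field conditions \eqref{bjtj3}, together with the regularity class \eqref{cpsol3}, justify dropping the boundary terms at infinity (by a cutoff argument if needed). Since $\div u=0$ and $\mu=1$, $2\int|Du|^2\,dx=\int|\na u|^2\,dx$. Integrating in time then bounds $\sup_t(\|\sqrt\n u\|_{L^2}^2+\|\na d\|_{L^2}^2)$ and $\int_0^T(\|\na u\|_{L^2}^2+\|\Delta d+|\na d|^2d\|_{L^2}^2)\,dt$.

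\textbf{Controlling $\na^2 d$.} Since $\rr$ is simply connected, Lemma~\ref{jzb} gives $d=(\cos\theta,\sin\theta)$. Then \eqref{np19} gives $|\Delta d+|\na d|^2 d|=|\Delta\theta|$, and on the whole plane $\|\na^2\theta\|_{L^2}=\|\Delta\theta\|_{L^2}$ by Fourier transform or integration by parts; this replaces the Neumann estimate of Lemma~\ref{tygj}. For \eqref{np115}, apply \eqref{gn11} on $\rr$ with $r=2$, $p=4$ to $f=\na\theta$, using $\na\theta\in L^2\cap D^1$. This gives $\|\na\theta\|_{L^4}^4\le C\|\na\theta\|_{L^2}^2\|\na^2\theta\|_{L^2}^2$, hence $\|\na^2 d\|_{L^2}^2\le C\|\Delta\theta\|_{L^2}^2$, and \eqref{cp01} follows.

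\textbf{The estimate \eqref{cpd}.} Write $\tilde d=d-\mathbf{1}$, so $\tilde d_t+u\cdot\na\tilde d-\Delta\tilde d=|\na d|^2 d$. Multiply by $\tilde d$ and integrate; the convection term vanishes by $\div u=0$. Because $|d|=1$, we have $d\cdot\tilde d=1-d\cdot\mathbf{1}=\tfrac12|\tilde d|^2$. Therefore
\[
\frac12\frac{d}{dt}\|\tilde d\|_{L^2}^2+\|\na d\|_{L^2}^2=\frac12\int|\na d|^2|\tilde d|^2\,dx\le 2\|\na d\|_{L^2}^2,
\]
using $|\tilde d|\le 2$. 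This controls $\|\tilde d\|_{L^2}$ but gives no dissipation.

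\textbf{Main obstacle.} The difficulty is obtaining $\int_0^T\|\na d\|_{L^2}^2\,dt$ with a constant independent of $T$, since there is no Poincar\'e inequality on $\rr$.

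My first attempt is to estimate the right-hand side more carefully: $\int|\na d|^2|\tilde d|^2\,dx\le\|\na d\|_{L^4}^2\|\tilde d\|_{L^4}^2$, then apply Gagliardo–Nirenberg and the bound $\int_0^T\|\na^2 d\|_{L^2}^2\,dt\le C$ from \eqref{cp01}, followed by Gr\"onwall. This may still produce a $T$-dependence.

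Failing that, I would use the weighted energy (mass) of $1-d\cdot\mathbf{1}=\tfrac12|\tilde d|^2\ge0$. It satisfies $\p_t w+u\cdot\na w-\Delta w=-|\na d|^2 d\cdot\mathbf{1}$ with $w=1-d\cdot\mathbf{1}$. Integrating gives $\frac{d}{dt}\int w\,dx+\int|\na d|^2\,dx=\int|\na d|^2 w\,dx$. Absorbing the last term where $w$ is small and handling the region where it is large via $\|\na d\|_{L^4}$ should yield both parts of \eqref{cpd}.

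The constant may depend on $T$ if the statement permits, since in this section $T$ is fixed.
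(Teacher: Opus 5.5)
Your treatment of \eqref{cp01} is fine and matches the paper's ``same argument as in Lemma~\ref{Nl1}''; on $\rr$ the identity $\|\na^2\theta\|_{L^2}=\|\Delta\theta\|_{L^2}$ replaces Lemma~\ref{tygj}. The gap is in \eqref{cpd}. You derive the correct identity $\frac12\frac{d}{dt}\|d-\mathbf{1}\|_{L^2}^2+\|\na d\|_{L^2}^2=\frac12\int|\na d|^2|d-\mathbf{1}|^2\,dx$, but you never close it.

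\begin{itemize}
\item You drop your ``first attempt'' on the suspicion that it yields $T$-dependence.
\item Your fallback is not an argument, and it is not a different route either: $\int w\,dx=\frac12\|d-\mathbf{1}\|_{L^2}^2$, so the $w$-equation is the same identity.
\item You finally suggest letting $C$ depend on $T$. The statement forbids this, since $C$ depends only on the four listed norms. A $T$-dependent version would also be empty: $\int_0^T\|\na d\|_{L^2}^2\,dt\le T\sup_t\|\na d\|_{L^2}^2$ is already given by \eqref{cp01}, and your crude bound gives $\|d-\mathbf{1}\|_{L^2}^2\le\|d_0-\mathbf{1}\|_{L^2}^2+Ct$.
\end{itemize}

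The missing observation is that the right-hand side carries a factor $\|\na d\|_{L^4}^2$. By Gagliardo--Nirenberg this factor contains $\|\na^2 d\|_{L^2}$, which is square-integrable on $(0,\infty)$ by \eqref{cp01}; that replaces the absent Poincar\'e inequality. Concretely, since $|d-\mathbf{1}|\le 2$ we have $\frac12|d-\mathbf{1}|^2\le|d-\mathbf{1}|$, and so
\[
\frac12\int|\na d|^2|d-\mathbf{1}|^2\,dx\le\|\na d\|_{L^4}^2\|d-\mathbf{1}\|_{L^2}\le C\|\na d\|_{L^2}\|\na^2 d\|_{L^2}\|d-\mathbf{1}\|_{L^2}\le\frac12\|\na d\|_{L^2}^2+C\|\na^2 d\|_{L^2}^2\|d-\mathbf{1}\|_{L^2}^2 .
\]
Hence $\frac{d}{dt}\|d-\mathbf{1}\|_{L^2}^2+\|\na d\|_{L^2}^2\le C\|\na^2 d\|_{L^2}^2\|d-\mathbf{1}\|_{L^2}^2$. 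Gr\"onwall then gives both parts of \eqref{cpd}, with constants controlled by $\|d_0-\mathbf{1}\|_{L^2}$ and $\exp\bigl(C\int_0^T\|\na^2 d\|_{L^2}^2\,dt\bigr)$, hence independent of $T$. This is exactly the paper's proof.

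Your own $L^4\times L^4$ split also closes. It gives $C\|\na d\|_{L^2}^2\|\na^2 d\|_{L^2}\|d-\mathbf{1}\|_{L^2}$, and one factor of $\|\na d\|_{L^2}$ is bounded uniformly by \eqref{cp01}; the same Young and Gr\"onwall step then applies. So your first attempt was correct, and the proposal simply stops before verifying it.
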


\begin{proof}
	First, the estimate \eqref{cp01} follows from the same argument as in Lemma~\ref{Nl1}. It therefore remains to prove
	\eqref{cpd}.
	
	Next, taking the $L^2$ inner product of $\eqref{nlc}_3$ with
	$d-\mathbf{1}$, integrating over $\mathbb R^2$, we obtain
\be\ba\nonumber
\frac{1}{2}\frac{d}{dt}\|d-\mathbf{1}\|_{L^2}^2+\|\nabla d\|_{L^2}^2
&=\int |\nabla d|^2d\cdot(d-\mathbf{1}) dx \\
&\leq \|\nabla d\|_{L^4}^2\|d-\mathbf{1}\|_{L^2}\\
&\leq \|\nabla d\|_{L^2}\|\na^2 d\|_{L^2}\|d-\mathbf{1}\|_{L^2}\\
&\leq \frac{1}{2}\|\nabla d\|_{L^2}^2
+C\|\nabla^2 d\|_{L^2}^2\|d-\mathbf{1}\|_{L^2}^2,
\ea\ee
where we have used the condition that $\operatorname{div}u=0$ and $|d|=1.$ Consequently,
$$\frac{d}{dt}\|d-\mathbf{1}\|_{L^2}^2+\|\nabla d\|_{L^2}^2
\leq C\|\nabla^2 d\|_{L^2}^2\|d-\mathbf{1}\|_{L^2}^2.$$
Combining this inequality with \eqref{cp01} and applying Gr\"onwall's inequality, we obtain \eqref{cpd}.
This completes the proof of Lemma \ref{cl1}.
\end{proof}

\begin{lemma}\la{cl2}
There exists a positive constant $C$ depending only on
$\| \sqrt{\n_0} u_0 \|_{L^2}$, $\|\n_0\|_{L^1\cap L^\infty}$,
$\| \na u_0 \|_{L^2}$, and $\| \na d_0 \|_{H^1}$ such that for $m=0,1$,
\be\ba\la{cp02}
\sup_{0\leq t\leq T} t^m \left( \| \na u \|^2_{L^2} + \| \na^2 d \|^2_{L^2} \right)
+ \int_0^T t^m \left( \| \sqrt{\n} \dot{u} \|^2_{L^2} + \| \na^3 d \|^2_{L^2} \right) dt
\leq C.
\ea\ee
\end{lemma}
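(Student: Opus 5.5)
We follow the scheme of Lemma~\ref{Nl3}, adapted to $\rr$ with $\mu=1$ and vacuum at infinity. Since $u$ itself is no longer controlled in $L^p$, we work with the material derivative $\dot u$ and use the Hardy--BMO structure of Lemma~\ref{hm} in place of Poincar\'e.

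\textbf{Velocity identity.} Multiply $\eqref{nlc}_2$ by $\dot u = u_t + u\cdot\na u$ and integrate over $\rr$. This gives
\[
\frac12\frac{d}{dt}\|\na u\|_{L^2}^2+\|\sqrt{\n}\dot u\|_{L^2}^2=\int \Delta u\cdot(u\cdot\na u)\,dx-\int \na P\cdot (u\cdot \na u)\,dx-\int \div(\na d\odot\na d)\cdot\dot u\,dx .
\]
We handle the three terms as follows.
\begin{itemize}
\item The viscous convective term satisfies $\int\Delta u\cdot(u\cdot\na u)dx=-\int \p_i u^j\p_i u^k\p_k u^j dx$. Writing this as a product of a divergence-free field and a curl-free field, Lemma~\ref{hm} bounds it by $C\|\na u\|_{L^2}^2\|\na u\|_{\mathcal{BMO}}$-type quantities. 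More simply, it is bounded by $C\|\na u\|_{L^3}^3\le C\|\na u\|_{L^2}^2\|\na^2u\|_{L^2}$.
\item The pressure term equals $\int P\,\p_i u^j\p_j u^i dx$. Since $\p_iu^j\p_ju^i=\div(u\cdot\na u)$ is of Hardy type with $\|\cdot\|_{\mathcal H^1}\le C\|\na u\|_{L^2}^2$, and $\|P\|_{\mathcal{BMO}}\le C\|\na P\|_{L^2}$, the term is bounded by $C\|\na P\|_{L^2}\|\na u\|_{L^2}^2$.
\item The elastic term is written as $\int(\na d\odot\na d):\na u_t\,dx$ plus a convective part. The time derivative is moved out as $\frac{d}{dt}\int(\na d\odot\na d):\na u\,dx$, exactly as in \eqref{np32}. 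This leaves terms bounded by $\|\na d_t\|_{L^2}\|\na d\|_{L^4}\|\na u\|_{L^4}$ and $\|\na u\|_{L^3}^{\,}\||\na d|^2\|\cdots$. Each is controlled using \eqref{gn11} and $\|\na d\|_{L^4}^2\le C\|\na d\|_{L^2}\|\na^2d\|_{L^2}$.
\end{itemize}

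\textbf{Stokes estimate and director estimate.} The Stokes system on $\rr$ gives
\[
\|\na^2u\|_{L^2}+\|\na P\|_{L^2}\le C(\|\sqrt{\n}\dot u\|_{L^2}+\||\na d||\na^2 d|\|_{L^2}),
\]
with $\||\na d||\na^2d|\|_{L^2}\le C\|\na d\|_{L^2}^{1/2}\|\na^2 d\|_{L^2}\|\na^3 d\|_{L^2}^{1/2}$. For the director, multiply \eqref{np33} by $-\na\Delta d$, as in \eqref{np34}. On $\rr$ we have $\|\na^2 d\|_{L^2}=\|\Delta d\|_{L^2}$, so no boundary terms arise. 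The bad term $\int|u|^2|\na^2d|^2dx$ cannot use $\|u\|_{L^p}$. Instead we rewrite $u\cdot\na d = \div(u\otimes d)$-type terms and integrate by parts, so that only $\na u$ appears: $\int \na(u\cdot\na d)\cdot\na\Delta d\,dx$ is bounded by
\[
\int|\na u||\na d||\na^3 d|\,dx+\Big|\int u\cdot\na\na d\cdot\na\Delta d\,dx\Big|.
\]
The last term, after integration by parts using $\div u=0$, becomes $\int \na u\,\na^2 d\,\na^2 d\,dx$. This is bounded by $\|\na u\|_{L^2}\|\na^2d\|_{L^4}^2\le C\|\na u\|_{L^2}\|\na^2 d\|_{L^2}\|\na^3 d\|_{L^2}$.

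\textbf{Closing the estimate.} Collecting these bounds, set
\[
B(t)=\|\na u\|_{L^2}^2+\|\Delta d\|_{L^2}^2-2\int(\na d\odot\na d):\na u\,dx+C_0\|\na d\|_{L^4}^4 .
\]
$B(t)$ is equivalent to $\|\na u\|^2_{L^2}+\|\na^2 d\|^2_{L^2}$; here we need an $L^4$ bound for $\na d$ analogous to Lemma~\ref{Nl2}, which on $\rr$ is easier because there is no boundary term. We then expect
\[
B'+\tfrac12\big(\|\sqrt\n\dot u\|_{L^2}^2+\|\na^3 d\|_{L^2}^2\big)\le C(\|\na u\|_{L^2}^2+\|\na^2d\|_{L^2}^2)\,B .
\]
By \eqref{cp01} the coefficient is integrable in time, so Gr\"onwall's inequality gives the case $m=0$. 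For $m=1$, multiply by $t$. The extra term $\int_0^T B\,dt$ is bounded by \eqref{cp01}, and Gr\"onwall's inequality again gives the claim.

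\textbf{Main obstacle.} The main obstacle is the first velocity step: controlling the cubic terms $\int P\,\p_iu^j\p_ju^i dx$ and the $u$-dependent convection without any integrability of $u$. Our first attempt there is the $\mathcal H^1$--$\mathcal{BMO}$ duality of Lemma~\ref{hm} combined with the Stokes bound. If that fails, we fall back to the $\|\na u\|_{L^4}$ Gagliardo--Nirenberg route as in \cite{LSZ}.
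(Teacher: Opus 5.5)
\textbf{There is a genuine gap in your treatment of the elastic stress term.} The pressure and viscous terms are fine (Hardy--$\mathcal{BMO}$ for $\int P\,\p_iu^j\p_ju^i\,dx$, and $\|\na u\|_{L^3}^3\le C\|\na u\|_{L^2}^2\|\na^2u\|_{L^2}$), and you handle them as the paper does. The vacuum at infinity causes trouble in $-\int\div(\na d\odot\na d)\cdot\dot u\,dx$.

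\emph{The $\na d_t$ terms.} Pulling out $\frac{d}{dt}\int(\na d\odot\na d):\na u\,dx$ as in \eqref{np32} leaves $-\int(\na d_t\odot\na d+\na d\odot\na d_t):\na u\,dx$. You bound this by $\|\na d_t\|_{L^2}\|\na d\|_{L^4}\|\na u\|_{L^4}$, but nothing in your scheme controls $\|\na d_t\|_{L^2}$. Your director estimate uses the multiplier $-\na\Delta d$, which yields only $\|\na^3 d\|_{L^2}^2$ on the left. In the bounded domain, $\|\na d_t\|_{L^2}^2$ came from the multiplier $\na d_t-\Delta\na d$ in \eqref{np34}. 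Its right-hand side contains $\|u\cdot\na^2 d\|_{L^2}^2$, which was handled through $\|u\|_{L^8}$ and Poincar\'e. On $\rr$ with $\tilde\n=0$ that quantity is unavailable, and since it is a square, your integration-by-parts trick does not apply.

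\emph{The convective part.} $-\int\div(\na d\odot\na d)\cdot(u\cdot\na u)\,dx$ is not bounded by $\|\na u\|_{L^3}$ times powers of $\na d$. After integration by parts it equals
$\int\p_id\cdot\p_jd\,\p_ju^k\p_ku^i\,dx-\int u^k\p_k(\p_id\cdot\p_jd)\,\p_ju^i\,dx$,
and the last integral contains an undifferentiated $u$.

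\emph{Why weights are not a way out.} Using the weighted bounds of Lemma~\ref{jqgj} would make $C$ depend on $T$ (through $N_1$) and on weighted norms of $d_0$. That contradicts the stated dependence of $C$ and destroys the $T$-independent $m=1$ bound needed later for the decay rates.

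\textbf{The missing idea is an exact cancellation.} Substitute
$\p_i d_t=\Delta\p_i d-\p_iu^k\p_kd-u^k\p_k\p_id+\p_i(|\na d|^2d)$ (from \eqref{cp25}) into the two $\na d_t$ terms. This produces
$\int u^k\p_k\p_id\cdot\p_jd\,\p_ju^i\,dx+\int u^k\p_id\cdot\p_k\p_jd\,\p_ju^i\,dx=\int u^k\p_k(\p_id\cdot\p_jd)\,\p_ju^i\,dx$,
which cancels precisely the bad term from the convective part; see \eqref{cp26}--\eqref{cp28}.

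Structurally, $\na\dot u$ is the material derivative of $\na u$ plus $\na u\,\na u$. The material derivative of $\na d$ is $\na(\Delta d+|\na d|^2d)-\na u\,\na d$. Hence only $\na u$ survives. The remainder is bounded by
$\varepsilon\|\na^3d\|_{L^2}^2+C(\|\na u\|_{L^2}^2\|\na^2u\|_{L^2}+\|\na d\|_{L^2}^2\|\na^2 d\|_{L^2}^4)$, as in \eqref{cp29}.

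With this in place, your Stokes bound, your director estimate and your Gr\"onwall closing go through. The extra $C_0\|\na d\|_{L^4}^4$ in $B(t)$ is unnecessary, because $\|\na d\|_{L^4}^2\le C\|\na d\|_{L^2}\|\na^2d\|_{L^2}$ together with the energy bound already lets $\|\Delta d\|_{L^2}^2$ absorb the cross term.
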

\begin{proof}
First, multiplying $\eqref{nlc}_2$ by $\dot{u}$, we obtain
\be\la{cp21}\ba
\int \rho |\dot{u}|^{2} dx
= & \int \Delta u \cdot \dot{u} dx
-\int \nabla P\cdot\dot{u} dx
-\int \div(\nabla d\odot\nabla d) \cdot\dot{u} dx \\
\triangleq & I_1+I_2+I_3.
\ea\ee
Integrating by parts and using \eqref{gn11} yields
\be\la{cp22}\ba
I_1
= & \int \Delta u \cdot (u_{t}+u\cdot\nabla u) dx
=-\int \nabla u : (\nabla u_{t}+\nabla (u\cdot\nabla u)) dx \\
\leq & -\frac{1}{2}\frac{d}{dt} \|\nabla u\|_{L^{2}}^{2}
-\int \partial_{i} u^{j}\partial_{i}u^{k}\partial_{k}u^{j} dx \\
\leq & -\frac{1}{2}\frac{d}{dt} \|\nabla u\|_{L^{2}}^{2} + C \|\nabla u\|_{L^{3}}^{3}
\leq - \frac{1}{2}\frac{d}{dt} \|\nabla u\|_{L^{2}}^{2} + C\|\nabla u\|_{L^{2}}^{2} \|\nabla^{2} u\|_{L^{2}}.
\ea\ee
Since $\mathcal{BMO}$ is the dual space of $\mathcal{H}^1$ (see \cite{SEM}), we apply Lemma~\ref{hm} to derive
\be\la{cp23}\ba
I_{2} = & -\int \nabla P \cdot(u_{t}+u\cdot\nabla u) dx
= \int P \nabla \cdot (u\cdot\nabla u) dx \\
= & \int P \partial_{i} u^{j} \partial_{j} u^{i} dx
\leq C \|P\|_{\mathcal{BMO}} \|\partial_{i}u\cdot\nabla u^{i}\|_{\mathcal{H}^{1}}
\le C \| \na P \|_{L^2} \| \na u \|^2_{L^2},
\ea\ee
where we have used $\div(\p_i u)=0$ and $\na^{\bot} \cdot (\na u^i)=0$ for $i=1,2$.

For $I_3$, integration by parts gives
\be\la{cp24}\ba
I_{3} = &\int (\nabla d\odot\nabla d)\cdot\nabla u_{t} dx
-\int \operatorname{div}(\nabla d\odot\nabla d)\cdot(u\cdot\nabla u) dx \\
= & \frac{d}{dt}\int (\nabla d\odot\nabla d)\cdot\nabla u dx
-\int (\nabla d_{t}\odot\nabla d)\cdot\nabla u dx
-\int (\nabla d\odot\nabla d_{t})\cdot\nabla u dx \\
& -\int\operatorname{div}(\nabla d\odot\nabla d)\cdot(u\cdot\nabla u)dx \\
\triangleq & \frac{d}{dt}\int (\nabla d\odot\nabla d)\cdot\nabla udx
+I_{31} + I_{32} + I_{33}.
\ea\ee
Applying $\na$ to $\eqref{nlc}_3$ shows that
\be\la{cp25}\ba
\na d_t - \Delta \na d = - \na(u \cdot \na d) + \na (|\na d|^2 d).
\ea\ee
Combining this with H\"older's inequality implies
\be\la{cp26}\ba
I_{31} & = - \int \left[ \left( \Delta\nabla d-\nabla(u\cdot \nabla d)
+\nabla(|\nabla d|^{2}d) \right) \odot\nabla d \right] \cdot \nabla udx \\
& = -\int [(\Delta\nabla d-\nabla u\cdot \nabla d
+ \nabla(|\nabla d|^{2}d))\odot \nabla d]\cdot\nabla udx 
+ \int u^{k}\partial_{k}\partial_{i}d\cdot \partial_{j}d \partial_{j}u^{i}dx \\
& \le C\|\nabla^{3} d\|_{L^{2}} \|\nabla d\|_{L^{6}} \|\nabla u\|_{L^{3}}
+ C\|\nabla d\|_{L^{6}}^{2}\|\nabla u\|_{L^{3}}^{2}
+ C\|\nabla d\|_{L^{6}}^{4} \|\nabla u\|_{L^{3}} \\
& \quad + C\|\nabla^{2}d\|_{L^{3}}\|\nabla d\|_{L^{6}}^{2} \|\nabla u\|_{L^{3}}
+ \int u^{k}\partial_{k}\partial_{i}d\cdot \partial_{j}d \partial_{j}u^{i}dx.
\ea\ee
Similarly, we also have
\be\la{cp27}\ba
I_{32} & \le C\|\nabla^{3} d\|_{L^{2}} \|\nabla d\|_{L^{6}} \|\nabla u\|_{L^{3}}
+ C\|\nabla d\|_{L^{6}}^{2}\|\nabla u\|_{L^{3}}^{2}
+ C\|\nabla d\|_{L^{6}}^{4} \|\nabla u\|_{L^{3}} \\
& \quad + C\|\nabla^{2}d\|_{L^{3}}\|\nabla d\|_{L^{6}}^{2} \|\nabla u\|_{L^{3}}
+ \int u^{k}\partial_{i}d\cdot \partial_{k} \partial_{j}d \partial_{j} u^{i}dx.
\ea\ee
In addition, integration by parts and the condition $\div u=0$ yield
\be\la{cp28}\ba
I_{33} & = \int\partial_{i} d \cdot\partial_{j} d \partial_{j}(u^{k}\partial_{k} u^{i})dx \\
& = \int\partial_{i} d \cdot\partial_{j} d \partial_{j} u^{k}\partial_{k} u^{i} dx
+ \int\partial_{i} d \cdot\partial_{j} d u^{k} \partial_{k} \partial_{j} u^{i} dx \\
& \le C \|\nabla d\|_{L^{6}}^{2}\|\nabla u\|_{L^{3}}^{2}
- \int u^{k}\partial_{k}\partial_{i}d\cdot \partial_{j}d \partial_{j}u^{i}dx
- \int u^{k}\partial_{i}d\cdot \partial_{k} \partial_{j}d \partial_{j} u^{i}dx.
\ea\ee
Putting \eqref{cp26}--\eqref{cp28} into \eqref{cp24} and using \eqref{gn11} and Young's inequality, we conclude that
\be\la{cp29}\ba
I_{3} \le & \frac{d}{dt}\int (\nabla d\odot\nabla d)\cdot\nabla udx
+ C\|\nabla^{3} d\|_{L^{2}} \|\nabla d\|_{L^{6}} \|\nabla u\|_{L^{3}}
+ C\|\nabla d\|_{L^{6}}^{2}\|\nabla u\|_{L^{3}}^{2} \\
& + C\|\nabla d\|_{L^{6}}^{4} \|\nabla u\|_{L^{3}}
+ C\|\nabla^{2}d\|_{L^{3}}\|\nabla d\|_{L^{6}}^{2} \|\nabla u\|_{L^{3}} \\
\leq & \frac{d}{dt}\int (\nabla d\odot\nabla d)\cdot\nabla u dx
+ \frac{\varepsilon}{4} \|\nabla^{3} d\|_{L^{2}}^{2}
+ C(\ep) \left( \|\nabla u\|_{L^{3}}^{3} + \|\nabla d\|_{L^{6}}^{6}
+ \|\nabla^{2} d\|_{L^{3}}^{\frac{3}{2}} \|\nabla d\|_{L^{6}}^{3} \right) \\
\leq & \frac{d}{dt}\int (\nabla d\odot\nabla d)\cdot\nabla udx +\frac{\varepsilon}{4} \|\nabla^{3} d\|_{L^{2}}^{2}
+ C(\ep) \left( \|\nabla u\|_{L^{3}}^{3} + \|\nabla d\|_{L^{6}}^{6}
+ \|\nabla^{3} d\|_{L^{2}}^{\frac{3}{4}} \|\nabla d\|_{L^{6}}^{\frac{15}{4}} \right) \\
\leq & \frac{d}{dt} \int (\nabla d\odot\nabla d)\cdot\nabla udx +\frac{\varepsilon}{2} \|\nabla^{3} d\|_{L^{2}}^{2}
+ C(\ep) \left( \|\nabla u\|_{L^{3}}^{3} + \|\nabla d\|_{L^{6}}^{6} \right) \\
\leq & \frac{d}{dt}\int (\nabla d \odot \nabla d)\cdot\nabla udx +\frac{\varepsilon}{2} \|\nabla^{3} d\|_{L^{2}}^{2}
+ C(\ep) \left( \|\nabla u\|_{L^{2}}^{2} \|\nabla^{2} u\|_{L^{2}}
+ \|\nabla d\|_{L^{2}}^{2}\|\nabla^{2} d\|_{L^{2}}^{4} \right).
\ea\ee
Furthermore, note that $(\n,u,P,d)$ satisfies the following Stokes system
\be\nonumber\ba
\begin{cases}
- \Delta u + \na P = -\n \dot{u} - \div(\nabla d \odot \nabla d), \quad & x \in \rr, \\
\div u = 0, \quad & x \in \rr, \\
u(x) \to 0, \quad & |x| \to \infty.
\end{cases}
\ea\ee
The standard $L^p$-estimate for the Stokes equation (see \cite{TR}) shows that for any $p \in (1,\infty)$,
\be\la{qkjste}\ba
\| \na^2 u \|_{L^p} + \| \na P \|_{L^p} \le C ( \| \n \dot{u} \|_{L^p} + \| |\na d| |\na^2 d| \|_{L^p} ).
\ea\ee
Substituting \eqref{cp22}, \eqref{cp23}, and \eqref{cp29} into \eqref{cp21}
and applying \eqref{gn11}, \eqref{qkjste}, and Young's inequality, we obtain
\be\la{cp210}\ba
& \frac{d}{dt} A(t) + \| \sqrt{\n} \dot{u} \|_{L^{2}}^{2} \\
& \leq \frac{\varepsilon}{2}\|\nabla^{3} d\|_{L^{2}}^{2}
+ C(\ep) \|\nabla^{2} d\|_{L^{2}}^{4}
+C(\ep) (\|\nabla^{2} u\|_{L^{2}}+\|\nabla P\|_{L^{2}})\|\nabla u\|_{L^{2}}^{2} \\
& \leq \frac{\varepsilon}{2} \|\nabla^{3} d\|_{L^{2}}^{2}
+ \ep \left( \| \sqrt{\n} \dot{u}\|_{L^{2}}^{2}+\||\nabla d||\nabla^{2}d|\|_{L^{2}}^{2} \right)
+ C(\ep) \left( \|\nabla^{2} d\|_{L^{2}}^{4} + \|\nabla u\|_{L^{2}}^{4} \right) \\
& \leq \frac{\varepsilon}{2} \|\nabla^{3} d\|_{L^{2}}^{2}
+ \ep \| \sqrt{\n} \dot{u}\|_{L^{2}}^{2}
+ C(\ep) \left( \|\nabla d\|_{L^{4}}^{2} \|\nabla^{2}d \|_{L^{4}}^{2}
+ \|\nabla^{2} d\|_{L^{2}}^{4} + \|\nabla u\|_{L^{2}}^{4} \right) \\
& \leq \frac{\varepsilon}{2}\|\nabla^{3} d\|_{L^{2}}^{2}
+ \ep \| \sqrt{\n} \dot{u}\|_{L^{2}}^{2}
+ C(\ep) \|\nabla d\|_{L^{2}} \|\nabla^{2}d\|_{L^{2}}^{2}\|\nabla^{3}d \|_{L^{2}} \\
& \quad + C(\ep) \left( \|\nabla^{2} d\|_{L^{2}}^{4} + \|\nabla u\|_{L^{2}}^{4} \right) \\
& \leq \varepsilon \|\nabla^{3} d\|_{L^{2}}^{2}
+ \varepsilon \| \sqrt{\n} \dot{u} \|_{L^{2}}^{2}
+ C(\ep) \left( \|\nabla^{2} d\|_{L^{2}}^{4} + \|\nabla u\|_{L^{2}}^{4} \right),
\ea\ee
where
\be\la{cp211}\ba
A(t) = \frac{1}{2} \|\nabla u\|_{L^{2}}^{2}-\int (\nabla d \odot \nabla d)\cdot\nabla u dx.
\ea\ee
It follows from \eqref{gn11}, \eqref{cp01} and Young's inequality that
\be\la{cp212}\ba
\left| \int (\nabla d\odot\nabla d)\cdot\nabla u dx \right|
\leq & C \|\nabla u\|_{L^{2}} \|\nabla d\|_{L^{4}}^{2}
\le C \|\nabla u\|_{L^{2}} \|\nabla d\|_{L^{2}} \|\nabla^2 d\|_{L^{2}} \\
\leq & C \|\nabla u\|_{L^{2}} \| \Delta d \|_{L^{2}}
\le \frac{1}{4} \|\nabla u\|_{L^{2}}^{2} + C_0 \|\Delta d\|_{L^{2}}^{2},
\ea\ee
which implies that
\be\la{cp213}\ba
\frac{1}{4} \|\nabla u\|_{L^{2}}^{2}
\leq A(t) + C_0 \| \Delta d \|_{L^{2}}^{2}
\leq \|\nabla u\|_{L^{2}}^{2} + 2 C_0 \| \Delta d \|_{L^{2}}^{2}.
\ea\ee
On the other hand, multiplying \eqref{cp25} by $-\na \Delta d$ and 
integrating by parts over $\rr$,
we derive after using \eqref{gn11} and Young's inequality that
\be\la{cp214}\ba
& \frac{1}{2}\frac{d}{dt} \|\Delta d\|_{L^{2}}^{2} + \|\nabla \Delta d\|_{L^{2}}^{2} \\
& =\int \nabla(u\cdot\nabla d)\cdot\nabla\Delta d dx
-\int \nabla (|\nabla d|^{2} d)\cdot\nabla\Delta d dx \\
& = \int (\nabla u\cdot\nabla d)\cdot \nabla\Delta d dx
+\int u^{i}\partial_{i}\partial_{j} d\cdot\partial_{j}\partial_{kk} d dx
- \int \nabla (|\nabla d|^{2} d)\cdot\nabla\Delta d\,dx \\
& = \int (\nabla u\cdot\nabla d)\cdot \nabla\Delta d dx
- \int \partial_{k} u^{i} \partial_{i}\partial_{j} d\cdot
\partial_{j} \partial_{k} d dx-\int \nabla(|\nabla d|^{2}d)\cdot\nabla\Delta d dx \\
& \leq C \|\nabla^{3} d\|_{L^{2}} \|\nabla u\|_{L^{3}} \|\nabla d\|_{L^{6}}
+ C \|\nabla u\|_{L^{3}}\|\nabla^{2} d\|_{L^{3}}^{2}
+ C \|\nabla^{3} d\|_{L^{2}} \|\nabla d\|_{L^{6}}^{3} \\
& \quad + C \|\nabla^{3} d\|_{L^{2}}\|\nabla^{2}d\|_{L^{3}} \|\nabla d\|_{L^{6}} \\
& \leq \frac{\varepsilon}{4} \|\nabla^{3} d\|_{L^{2}}^{2}
+ C(\ep) \left( \|\nabla u\|_{L^{3}}^{3} + \|\nabla d\|_{L^{6}}^{6}
+ \|\nabla^{2}d\|_{L^{3}}^{3} \right) \\
& \leq \frac{\varepsilon}{4} \|\nabla^{3} d\|_{L^{2}}^{2}
+ C(\ep) \left( \|\nabla u\|_{L^{3}}^{3}
+ \|\nabla d\|_{L^{6}}^{6} + \|\nabla^{3}d\|_{L^{2}}\|\nabla^{2} d\|_{L^{2}}^{2} \right) \\
& \leq \frac{\varepsilon}{2} \|\nabla^{3} d\|_{L^{2}}^{2}
+ C(\ep) \left( \|\nabla u\|_{L^{3}}^{3}
+ \|\nabla d\|_{L^{6}}^{6} + \|\nabla^{2} d\|_{L^{2}}^{4} \right) \\
& \leq \frac{\varepsilon}{2} \|\nabla^{3} d\|_{L^{2}}^{2}
+ C(\ep) \left( \|\nabla u\|_{L^{2}}^{2}\|\nabla^{2} u\|_{L^{2}}
+ \|\nabla d\|_{L^{2}}^{2}\|\nabla^{2}d \|_{L^{2}}^{4}
+ \|\nabla^{2} d\|_{L^{2}}^{4} \right) \\
& \leq \frac{\varepsilon}{2} \|\nabla^{3} d\|_{L^{2}}^{2}
+ \ep \left( \| \sqrt{\n} \dot{u}\|_{L^{2}}^{2}
+ \||\nabla d||\nabla^{2}d|\|_{L^{2}}^{2} \right)
+ C(\ep) \left( \|\nabla^{2} d\|_{L^{2}}^{4} + \|\nabla u\|_{L^{2}}^{4} \right) \\
& \leq \varepsilon\|\nabla^{3} d\|_{L^{2}}^{2}
+ \varepsilon \| \sqrt{\n} \dot{u}\|_{L^{2}}^{2}
+ C(\ep) \left( \|\nabla^{2} d\|_{L^{2}}^{4} + \|\nabla u\|_{L^{2}}^{4} \right).
\ea\ee
Combining \eqref{cp210}, \eqref{cp214} and \eqref{cp213} and choosing $\ep$ suitably small, we obtain
\be\la{cp215}\ba
& \frac{d}{dt} \left( A(t) + (1 + C_0) \| \Delta d \|_{L^{2}}^{2} \right)
+ \frac{1}{2} \left( \| \sqrt{\n} \dot{u} \|_{L^{2}}^{2} + \|\nabla \Delta d\|_{L^{2}}^{2} \right) \\
& \leq C \|\nabla^{2} d\|_{L^{2}}^{4} + C \|\nabla u\|_{L^{2}}^{4} \\
& \le C \left( \|\nabla^{2} d\|_{L^{2}}^{2} + \|\nabla u\|_{L^{2}}^{2} \right)
\left( A(t) + (1 + C_0) \| \Delta d \|_{L^{2}}^{2} \right),
\ea\ee
which together with \eqref{cp01} and Gr\"onwall's inequality yields
\be\ba\la{cp216}
\sup_{0\leq t\leq T} \left( \| \na u \|^2_{L^2} + \| \na^2 d \|^2_{L^2} \right)
+ \int_0^T \left( \| \sqrt{\n} \dot{u} \|^2_{L^2} + \| \na^3 d \|^2_{L^2} \right) dt
\leq C.
\ea\ee
Moreover, multiplying \eqref{cp215} by $t$ leads to
\be\ba\nonumber
& \frac{d}{dt} \left( t \left( A(t) + (1 + C_0) \| \Delta d \|_{L^{2}}^{2} \right) \right)
+ \frac{t}{2} \left( \| \sqrt{\n} \dot{u} \|_{L^{2}}^{2} + \|\nabla \Delta d\|_{L^{2}}^{2} \right) \\
& \le \left( A(t) + (1 + C_0) \| \Delta d \|_{L^{2}}^{2} \right)
+ C t \left( \|\nabla^{2} d\|_{L^{2}}^{2} + \|\nabla u\|_{L^{2}}^{2} \right)
\left( A(t) + (1 + C_0) \| \Delta d \|_{L^{2}}^{2} \right),
\ea\ee
which along with \eqref{cp01} and Gr\"onwall's inequality shows
\be\ba\la{cp217}
\sup_{0\leq t\leq T} t \left( \| \na u \|^2_{L^2} + \| \na^2 d \|^2_{L^2} \right)
+ \int_0^T t \left( \| \sqrt{\n} \dot{u} \|^2_{L^2} + \| \na^3 d \|^2_{L^2} \right) dt
\leq C.
\ea\ee
This combined with \eqref{cp216} gives \eqref{cp02} and completes the proof of Lemma~\ref{cl2}.
\end{proof}

\begin{lemma}\la{cl3}
There exists a positive constant $C$ depending only on
$\| \sqrt{\n_0} u_0 \|_{L^2}$, $\|\n_0\|_{L^1\cap L^\infty}$,
$\| \na u_0 \|_{L^2}$, and $\| \na d_0 \|_{H^1}$ such that for $m=1,2$,
\be\ba\la{cp03}
\sup_{0\leq t\leq T} t^m \left( \| \sqrt{\n} \dot{u} \|^2_{L^2}
+ \| |\na^2 d| |\na d| \|^2_{L^2} \right)
+ \int_0^T t^m \left( \| \na \dot{u} \|^2_{L^2} + \| |\Delta \na d| |\na d| \|^2_{L^2} \right) dt
\leq C,
\ea\ee
and
\be\ba\la{cp003}
\sup_{0\leq t\leq T} t^m \left( \| \na^2 u \|^2_{L^2}
+ \| \na P \|^2_{L^2} \right) \leq C.
\ea\ee
\end{lemma}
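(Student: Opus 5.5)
We plan to follow the standard Hoff-type material-derivative argument (as in L\"u--Shi--Zhong) combined with a higher-order estimate for the orientation field, and then close with a time-weighted Gr\"onwall argument using Lemma~\ref{cl2}.

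\textbf{Step 1: the $\dot u$ estimate.} We apply $\dot{}^j \triangleq \partial_t(\cdot) + \div(u\,\cdot)$ to $\eqref{nlc}_2$ written as $\rho\dot u - \Delta u + \nabla P = -\div(\nabla d\odot\nabla d)$, multiply by $\dot u$ and integrate. Using $\eqref{nlc}_1$ and $\div u=0$, this gives
\[
\frac12\frac{d}{dt}\|\sqrt\rho\dot u\|_{L^2}^2+\|\nabla\dot u\|_{L^2}^2
\le C\|\nabla u\|_{L^4}^4 + C\|\nabla u\|_{L^2}^2\|\nabla P\|_{L^4}^2 ( \text{or }\|P\|_{L^4}^2)
+ J,
\]
where $J$ collects the terms $\int(\partial_t+u\cdot\nabla)(\nabla d\odot\nabla d):\nabla\dot u$ and the commutator terms $\int \partial_k u^k\ldots$. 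The pressure commutator $\int(\partial_j P\,\partial_j u^i\ldots)$ is handled as in \cite{LSZ} by writing it as $\int P\,\partial_iu^j\partial_j\dot u^i$-type terms and bounding them via $\|P\|_{L^4}\|\nabla u\|_{L^4}\|\nabla\dot u\|_{L^2}$. The norms $\|\nabla u\|_{L^4}$ and $\|P\|_{L^4}$ are controlled by Gagliardo--Nirenberg together with the Stokes estimate \eqref{qkjste} by $\|\sqrt\rho\dot u\|_{L^2}+\||\nabla d||\nabla^2 d|\|_{L^2}$ and lower-order quantities. For $J$, we write $(\nabla d)^{\cdot}=\nabla \dot d-\nabla u\cdot\nabla d$ with $\dot d=\Delta d+|\nabla d|^2d$, so that the worst term is $\int |\nabla d||\nabla\Delta d||\nabla \dot u|$. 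This motivates the quantity $\||\Delta\nabla d||\nabla d|\|_{L^2}$ appearing in \eqref{cp03}.

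\textbf{Step 2: the weighted director estimate.} We multiply \eqref{cp25} by $|\nabla d|^2\Delta\nabla d$ (equivalently, differentiate $\int|\nabla d|^2|\nabla^2 d|^2$-type quantities). We expect this step to be the main obstacle. The cleanest route we would try first is to test the equation for $\nabla d$ against $-\div(|\nabla d|^2\nabla^2 d)$, which produces
\[
\frac{d}{dt}\int|\nabla d|^2|\nabla^2 d|^2 + \int|\nabla d|^2|\Delta\nabla d|^2
\]
plus terms with $\partial_t|\nabla d|^2$. These extra terms are bounded using $\|\nabla d_t\|$, $\nabla u$, and sextic/octic products of $\nabla d,\nabla^2 d$, interpolated through Gagliardo--Nirenberg and Lemma~\ref{cl2}. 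If the time derivative of the weight is too singular, we would instead estimate $\|\nabla \dot d\|_{L^2}$, since $\||\nabla d||\nabla^2 d|\|$ is controlled by $\|\nabla\dot d\|$ up to lower-order terms. We then add a large multiple of this inequality to Step 1, so that the $|\nabla d||\nabla\Delta d|$ term in $J$ is absorbed by Young's inequality.

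\textbf{Step 3: closing the argument.} The outcome is
\[
\frac{d}{dt}B(t)+\|\nabla\dot u\|^2+\||\nabla d||\Delta\nabla d|\|^2\le C(\|\nabla u\|_{L^2}^2+\|\nabla^2 d\|_{L^2}^2)B(t)+C(\|\nabla u\|^2_{L^2}+\|\nabla^2d\|_{L^2}^2)^2+\ldots,
\]
where $B\sim\|\sqrt\rho\dot u\|^2+\||\nabla d||\nabla^2 d|\|^2$. We multiply by $t^m$ for $m=1,2$ and use Gr\"onwall. The term $mt^{m-1}B$ is integrable: for $m=1$ this follows from \eqref{cp02} with $m=0$ (noting that $\||\nabla d||\nabla^2 d|\|^2\le C\|\nabla^2 d\|^2\|\nabla^3 d\|\ldots$ is controlled by $\|\nabla^3 d\|^2$ plus lower-order terms), and for $m=2$ from \eqref{cp02} with $m=1$. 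The quartic remainders are handled with $t^m$ weights via \eqref{cp02}; for instance, $t^2\|\nabla u\|^4\le (t\|\nabla u\|^2)^2$ is integrable against $\|\nabla u\|^2$ dt in the appropriate form.

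Finally, \eqref{cp003} follows directly from \eqref{qkjste} with $p=2$, using $\|\rho\dot u\|\le C\|\sqrt\rho\dot u\|$ together with \eqref{cp03}.
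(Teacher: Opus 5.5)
Your Steps 1 and 3 coincide with the paper's argument: the material derivative of the momentum equation is handled as in \cite{LSZ}, the bad term $\int|\Delta\na d||\na d||\na\dot u|\,dx$ is absorbed by a director estimate, and a $t^m$-weighted Gr\"onwall argument is closed with \eqref{cp02}. The real difference is in Step 2.

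The paper, following \cite{LLTZ}, never differentiates $\int|\na d|^2|\na^2 d|^2dx$ directly. It introduces the directional derivatives $\widetilde{\na} d(a_1,a_2)$ and tests their equation against $-4\widetilde{\na} d\,\Delta|\widetilde{\na} d|^2$, which gives the scalar inequality \eqref{cp38}. It then recovers the quantities in \eqref{cp03} by polarizing over $(a_1,a_2)\in\{(1,0),(0,1),(1,1),(1,-1)\}$, as in \eqref{cp39}. It also uses the lifting $d=(\cos\theta,\sin\theta)$ of Lemma~\ref{jzb} together with $\int|\na\theta|^6dx\le C\|\na d\|_{L^2}^2B_1(t)$, as in \eqref{cp311} and \eqref{ch4}--\eqref{ch6}. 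The gain is that each $f=|\widetilde{\na} d|^2$ is a scalar obeying a transported heat-type equation, so the convection term becomes $-\int\p_k u^i\,\p_i f\,\p_k f\,dx$ after one integration by parts.

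Your functional $\||\na d||\na^2 d|\|_{L^2}^2$, with dissipation $\||\na d||\Delta\na d|\|_{L^2}^2$, yields the quantities of \eqref{cp03} directly. It needs no equivalence lemma and no lifting at this step. It does close, with the same right-hand side $C(\|\na u\|_{L^4}^4+\|\na d\|_{L^8}^8+\|\na^2 d\|_{L^4}^4)$ as \eqref{cp38}. The price is that the transport cancellation must be tracked by hand.

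That cancellation is exactly what your sketch misses, and the route you name would fail. Two families of terms carry an undifferentiated $u$:
\begin{enumerate}
\item the terms from $\p_t|\na d|^2$, which you propose to bound via $\|\na d_t\|_{L^2}$, although $\na d_t$ contains $u\cdot\na\na d$;
\item the terms from $-\na(u\cdot\na d)$ tested against $\div(|\na d|^2\na^2 d)$, which contain $\int|u||\na d||\na^2 d|^3dx$.
\end{enumerate}
In the vacuum far-field case there is no $T$-independent $L^p$ bound on $u$: the weighted bounds of Lemma~\ref{jqgj} depend on $N_1$, hence on $T$. The constant in Lemma~\ref{cl3}, however, must not depend on $T$. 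So these terms must cancel rather than be estimated. The way to see this:
\begin{enumerate}
\item Using $\div u=0$, write $\frac{d}{dt}\int|\na d|^2|\na^2 d|^2dx=\int(\p_t+u\cdot\na)\left(|\na d|^2|\na^2 d|^2\right)dx$.
\item Insert $(\p_t+u\cdot\na)\p_j d=\p_j(\Delta d+|\na d|^2 d)-\p_j u^l\p_l d$ and $(\p_t+u\cdot\na)\p_k\p_j d=\p_k[(\p_t+u\cdot\na)\p_j d]-\p_k u^l\p_l\p_j d$.
\item Integrate by parts once in $x_k$ in the second-order terms; this also avoids $\na^2 u$.
\end{enumerate}
No undifferentiated $u$ then survives, and H\"older and Young give the bound above.

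Separately, your Step 1 inequality omits the total time derivative produced by $\int\dot u^j\p_t\p_j P\,dx$. That term cannot be bounded by $\|P\|_{L^4}\|\na u\|_{L^4}\|\na\dot u\|_{L^2}$ alone. As in \eqref{cp35} and \eqref{cp313}--\eqref{cp315}:
\begin{enumerate}
\item $-\int P\p_j u^i\p_i u^j dx$ must be included in your functional $B$;
\item it is controlled by $C\|\na P\|_{L^2}\|\na u\|_{L^2}^2$ through the $\mathcal{H}^1$--$\mathcal{BMO}$ duality of Lemma~\ref{hm};
\item hence $B$ is equivalent to $\|\sqrt{\n}\dot u\|_{L^2}^2+\||\na d||\na^2 d|\|_{L^2}^2$ only up to $C\|\na u\|_{L^2}^4$, which is harmless since $t^m\|\na u\|_{L^2}^4\le C$ for $m\le 2$ by \eqref{cp02}.
\end{enumerate}
Finally, $\|P\|_{L^4}^4$ is bounded through $\|\na P\|_{L^{4/3}}$ and $\|\n\dot u\|_{L^{4/3}}\le\|\n\|_{L^2}^{1/2}\|\sqrt{\n}\dot u\|_{L^2}$. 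So the Gr\"onwall coefficient is $\|\sqrt{\n}\dot u\|_{L^2}^2+\|\na^2 d\|_{L^2}^2$ rather than $\|\na u\|_{L^2}^2+\|\na^2 d\|_{L^2}^2$. It is still integrable on $(0,T)$ uniformly in $T$ by \eqref{cp02}.
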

\begin{proof}
First, applying the operator $\p_t + u \cdot \na$ to $\eqref{nlc}^{j}_{2}$ yields
\be\la{cp31}\ba
\partial_{t} (\rho \dot{u}^{j}) + \div(\rho u \dot{u}^{j}) -\Delta \dot{u}^{j}
& =  -\partial_{i} (\partial_{i} u \cdot \nabla u^{j})
-\operatorname{div} (\partial_{i} u\partial_{i}u^{j})
- \partial_{t} \partial_{j} P -u\cdot\nabla\partial_{j} P \\
& \quad - \partial_{t}\partial_{i}(\partial_{i}d \cdot\partial_{j} d)
- u \cdot \nabla \partial_{i}(\partial_{i}d \cdot \partial_{j} d).
\ea\ee
Multiplying \eqref{cp31} by $\dot{u}^j$ and integrating over $\rr$ gives
\be\la{cp32}\ba
& \frac{1}{2}\frac{d}{dt} \| \sqrt{\n} \dot{u} \|_{L^{2}}^{2}
+ \|\nabla \dot{u}\|_{L^{2}}^{2} \\
& = - \int \left[\partial_{i}(\partial_{i} u\cdot\nabla u^{j})
+ \operatorname{div} (\partial_{i} u\partial_{i} u^{j})\right]\dot{u}^{j} dx
- \int (\partial_{t}\partial_{j} P+u\cdot\nabla\partial_{j} P)\dot{u}^{j} dx \\
& \quad - \left(\int \partial_{t}\partial_{i}(\partial_{i}d\cdot\partial_{j}d)\dot{u}^{j} dx
+\int u\cdot\nabla\partial_{i}(\partial_{i}d\cdot\partial_{j}d) \dot{u}^{j} dx \right) \\
& \triangleq N_1 + N_2 + N_3.
\ea\ee
Following the same arguments as in \cite{LSZ}, we have
\be\la{cp33}\ba
N_1+N_2 \leq \frac{d}{dt} \int P\partial_{j} u^{i} \partial_{i}u^{j} dx
+ C(\|P\|_{L^{4}}^{4} +\|\nabla u\|_{L^{4}}^{4}) +\frac{1}{4} \|\nabla \dot{u}\|_{L^{2}}^{2}.
\ea\ee
Integrating by parts and using \eqref{cp25} and Young's inequality, we arrive at
\be\la{cp34}\ba
N_3 = & \int \partial_{i} d_{t} \cdot \partial_{j}d \partial_{i} \dot{u}^{j} dx
+ \int \partial_{i}d \cdot \partial_{j}d_{t} \partial_{i} \dot{u}^{j} dx
- \int u^{k}\partial_{k}\partial_{i}(\partial_{i}d\cdot\partial_{j}d)\dot{u}^{j} dx \\
= & \int \partial_{i} (\Delta d-u\cdot \nabla d+|\nabla d|^{2}d) \cdot \partial_{j}d \partial_{i}\dot{u}^{j} dx \\
& + \int \partial_{i}d \cdot \partial_{j}(\Delta d-u\cdot \nabla d+|\nabla d|^{2}d) \partial_{i}\dot{u}^{j} dx
-\int u^{k}\partial_{k}\partial_{i}(\partial_{i}d\cdot\partial_{j}d)\dot{u}^{j} dx \\
=& \int (\Delta \partial_{i} d-\partial_{i}u\cdot \nabla d+\partial_{i}(|\nabla d|^{2}d))
\cdot \partial_{j}d \partial_{i}\dot{u}^{j} dx
-\int u^{k} \partial_{k}\partial_{i}d \cdot \partial_{j}d  \partial_{i}\dot{u}^{j} dx \\
& + \int \partial_{i}d\cdot(\Delta\partial_{j} d
-\partial_{j} u\cdot \nabla d+\partial_{j}(|\nabla d|^{2}d)) \partial_{i}\dot{u}^{j} dx
-\int u^{k} \partial_{i}d\cdot \partial_{k}\partial_{j}d \partial_{i}\dot{u}^{j} dx \\
& + \int \partial_{i} u^{k}\partial_{k}(\partial_{i}d\cdot\partial_{j}d)\dot{u}^{j} dx
+ \int u^{k}\partial_{k}(\partial_{i}d\cdot\partial_{j}d)\partial_{i} \dot{u}^{j} dx \\
=& \int (\Delta \partial_{i} d-\partial_{i}u\cdot \nabla d+\partial_{i}(|\nabla d|^{2}d))
\cdot \partial_{j}d \partial_{i}\dot{u}^{j} dx \\
& + \int \partial_{i}d\cdot(\Delta\partial_{j} d
-\partial_{j} u\cdot \nabla d+\partial_{j}(|\nabla d|^{2}d)) \partial_{i}\dot{u}^{j} dx
- \int \partial_{i} u^{k}\partial_{i}d\cdot\partial_{j}d \partial_{k}\dot{u}^{j} dx \\
\leq & C \|\nabla\dot{u}\|_{L^{2}} \left( \| |\Delta \nabla d| |\nabla d| \|_{L^{2}} + \|\nabla u\|_{L^{4}} \|\nabla d\|^2_{L^{8}}
+ \| \na d \|^4_{L^8} + \|\nabla d\|^2_{L^{8}} \| \na^2 d \|_{L^4} \right) \\
\leq & \frac{1}{4} \|\nabla \dot{u}\|_{L^{2}}^{2}
+ \hat{C}_1 \| |\Delta \nabla d| |\nabla d| \|^2_{L^{2}}
+ C \|\nabla u\|_{L^{4}}^{4} + C \| \nabla d \|_{L^{8}}^{8}
+ C \| \na^2 d \|^4_{L^4}.
\ea\ee
Putting \eqref{cp33} and \eqref{cp34} into \eqref{cp32} implies
\be\la{cp35}\ba
& \frac{d}{dt} \left( \frac{1}{2} \| \sqrt{\rho} \dot{u} \|_{L^{2}}^{2} 
- \int P\partial_{j} u^{i}\partial_{i}u^{j} dx \right)
+ \frac{1}{2} \|\nabla \dot{u}\|_{L^{2}}^{2}
- \hat{C}_1 \| |\Delta \nabla d| |\nabla d| \|^2_{L^{2}} \\
& \le C \left( \|P\|_{L^{4}}^{4}+\|\nabla u\|_{L^{4}}^{4}
+ \| \nabla d \|_{L^{8}}^{8} + \| \na^2 d \|^4_{L^4} \right).
\ea\ee

Next, following the approach in \cite{LLTZ}, for $a_{1},a_{2}\in \{-1,0,1\}$, we define
\be\la{cp36}\ba
(\widetilde{\nabla} d) (a_{1},a_{2})=a_{1}\partial_{1}d+a_{2}\partial_{2}d,
\quad (\widetilde{\nabla} u)(a_{1},a_{2})=a_{1}\partial_{1} u+a_{2}\partial_{2} u,
\ea\ee
which together with \eqref{cp25} yields
\be\la{cp37}\ba
\widetilde{\nabla }d_{t} -\Delta\widetilde{\nabla } d
= - \widetilde{\nabla}u\cdot\nabla d - u\cdot\nabla\widetilde{\nabla}d 
+ |\nabla d|^{2}\widetilde{\nabla}d
+ \widetilde{\nabla} |\na d|^2 d.
\ea\ee
Multiplying \eqref{cp37} by
$-4\widetilde{\nabla}d\Delta|\widetilde{\nabla} d|^{2}$
and integrating by parts over $\rr$,
we derive after using Young's inequality that
\be\ba\nonumber
&\frac{d}{dt} \|\nabla |\widetilde{\nabla }d|^{2}\|_{L^{2}}^{2}+2\|\Delta |\widetilde{\nabla}d|^{2}\|_{L^{2}}^{2}\\
=& 4 \int |\na \widetilde{\nabla} d|^2 \Delta |\widetilde{\nabla} d|^2 dx
+ 4\int (\widetilde{\nabla} u\cdot\nabla d)\cdot\widetilde{\nabla} d\Delta|\widetilde{\nabla}d|^{2} dx
+ 4\int (u\cdot\nabla \widetilde{\nabla}d)\cdot\widetilde{\nabla}d\Delta|\widetilde{\nabla }d|^{2} dx \\
&- 4\int |\nabla d|^{2}|\widetilde{\nabla}d|^{2} \Delta|\widetilde{\nabla}d|^{2} dx
- 4 \int \widetilde{\nabla} |\na d|^2 d \cdot \widetilde{\nabla}d\Delta|\widetilde{\nabla}d|^{2} dx \\
\leq & C \int \left( |\na^2 d|^2 + |\na u| |\na d|^2 \right) |\Delta|\widetilde{\nabla}d|^{2}| dx
+2\int (u\cdot\nabla |\widetilde{\nabla }d|^{2})\Delta|\widetilde{\nabla}d|^{2} dx \\
& + C \int \left( |\na d|^4 + |\na^2 d| |\na d|^2 \right) |\Delta|\widetilde{\nabla}d|^{2}| dx \\
\leq & \|\Delta|\widetilde{\nabla}d|^{2}\|_{L^{2}}^{2}
+ C( \|\nabla u\|_{L^{4}}^{4} + \| \nabla d \|_{L^{8}}^{8} + \| \na^2 d \|^4_{L^4} )
+ C \int |\na u| |\na^2 d|^2 |\na d|^2 dx \\
\leq & \|\Delta|\widetilde{\nabla}d|^{2}\|_{L^{2}}^{2}
+ C( \|\nabla u\|_{L^{4}}^{4} + \| \nabla d \|_{L^{8}}^{8} + \| \na^2 d \|^4_{L^4} ),
\ea\ee
which implies that
\be\la{cp38}\ba
&\frac{d}{dt} \|\nabla |\widetilde{\nabla }d|^{2}\|_{L^{2}}^{2}
+ \|\Delta |\widetilde{\nabla}d|^{2}\|_{L^{2}}^{2}
\leq C( \|\nabla u\|_{L^{4}}^{4} + \| \nabla d \|_{L^{8}}^{8} + \| \na^2 d \|^4_{L^4} ).
\ea\ee
Note that
\be\la{cp39}\ba
\| |\Delta \nabla d| |\nabla d| \|^2_{L^{2}}
& \le C \| \na^2 d \|^4_{L^4}
+ \| \Delta |\widetilde{\nabla}d(1,0)|^{2} \|_{L^{2}}^{2}
+ \| \Delta |\widetilde{\nabla}d(0,1)|^{2} \|_{L^{2}}^{2} \\
& \quad + \| \Delta |\widetilde{\nabla}d(1,1)|^{2} \|_{L^{2}}^{2}
+ \| \Delta |\widetilde{\nabla}d(1,-1)|^{2} \|_{L^{2}}^{2}.
\ea\ee
Next, we establish the following estimate
\be\la{cp311}\ba
C^{-1}\| |\na^2 d| |\na d| \|^2_{L^2}
\le B_1(t) \le C \| |\na^2 d| |\na d| \|^2_{L^2},
\ea\ee
where
\be\la{cp310}\ba
B_1(t) & \triangleq \|\nabla|\widetilde{\nabla}d(1,0)|^{2}\|_{L^{2}}^{2}
+ \|\nabla|\widetilde{\nabla}d(0,1)|^{2}\|_{L^{2}}^{2}
+ \|\nabla|\widetilde{\nabla}d(1,1)|^{2}\|_{L^{2}}^{2} \\
& \quad + \|\nabla|\widetilde{\nabla}d(1,-1)|^{2}\|_{L^{2}}^{2}.
\ea\ee
By Lemma~\ref{jzb}, we may write $d=(\cos\theta,\sin\theta)$.
Set $p\triangleq \nabla\theta = (p_1,p_2)$.
For $i=1,2$, we have
\be\la{ch1}\ba
\p_i d = (-\sin \theta,\cos \theta) \p_i \theta = (-\sin \theta,\cos \theta) p_i,
\ea\ee
which together with \eqref{cp36} shows that
\be\la{ch2}\ba
|\widetilde{\nabla}d(a_1,a_2)|^2=(a_1 p_1 + a_2 p_2)^2.
\ea\ee
Thus,
\be\la{ch3}\ba
B_1(t) = \| \na(p_1^2) \|_{L^2}^2 + \| \na(p_2^2) \|_{L^2}^2 + \| \na((p_1+p_2)^2) \|_{L^2}^2 + \| \na((p_1-p_2)^2) \|_{L^2}^2.
\ea\ee
A straightforward calculation yields
\be\ba\nonumber
C^{-1}\|\nabla(p\otimes p)\|_{L^2}^2
\le B_1(t) \le C\|\nabla(p\otimes p)\|_{L^2}^2.
\ea\ee
Moreover, since
\be\ba\nonumber
\partial_k(p\otimes p)=\partial_kp\otimes p+p\otimes\partial_kp,
\quad
|\partial_k(p\otimes p)|^2
=2|p|^2|\partial_kp|^2+2|p\cdot\partial_kp|^2,
\ea\ee
we obtain
\be\la{ch4}\ba
C^{-1}\int |p|^2|\nabla p|^2dx
\le B_1(t) \le C\int |p|^2|\nabla p|^2dx.
\ea\ee
On the other hand, a direct computation gives
\be\ba\nonumber
|\nabla d|^2=|p|^2, \quad
|\nabla^2d|^2=|\nabla p|^2+|p|^4,
\ea\ee
which implies that
\be\la{ch5}\ba
\||\nabla^2 d||\nabla d|\|_{L^2}^2=\int (|p|^2|\nabla p|^2+|p|^6) dx.
\ea\ee
By the Gagliardo--Nirenberg inequality and \eqref{cp01}, one gets
\be\la{ch6}\ba
\int |p|^6dx
&\le C\||p|^2\|_{L^1}\|\nabla|p|^2\|_{L^2}^2
\le C\|\nabla d\|_{L^2}^2 B_1(t) \le C B_1(t).
\ea\ee
As a consequence of \eqref{ch4}, \eqref{ch5} and \eqref{ch6}, we obtain \eqref{cp311}. 

Combining \eqref{cp38} with \eqref{cp39}, and \eqref{cp310}, we conclude that
\be\la{cp312}\ba
&\frac{d}{dt} B_1(t) + \| |\Delta \nabla d| |\nabla d| \|^2_{L^{2}}
\leq C ( \|\nabla u\|_{L^{4}}^{4} + \| \nabla d \|_{L^{8}}^{8} + \| \na^2 d \|^4_{L^4} ).
\ea\ee
We set
\be\la{cp313}\ba
B_2(t) \triangleq \frac{1}{2} \| \sqrt{\rho} \dot{u} \|_{L^{2}}^{2}
- \int P\partial_{j} u^{i}\partial_{i}u^{j} dx
+ ( \hat{C}_1 + 1 ) B_1(t).
\ea\ee
By virtue of \eqref{qkjste}, \eqref{cp311}, and Lemma~\ref{hm}, we have
\be\la{cp314}\ba
\left|\int P \partial_{j} u^{i} \partial_{i} u^{j} dx\right|
\leq & C \|P\|_{\mathcal{BMO}} \|\partial_{i}u\cdot\nabla u^{i}\|_{\mathcal{H}^{1}}
\leq C\|\nabla P\|_{L^{2}}\|\nabla u\|_{L^{2}}^{2} \\
\leq & C \left( \| \sqrt{\n} \dot{u}\|_{L^{2}} 
+\||\nabla^{2}d||\nabla d|\|_{L^{2}} \right) \|\nabla u\|_{L^{2}}^{2} \\
\leq & \frac{1}{4}\| \sqrt{\n} \dot{u}\|_{L^{2}}^{2} 
+ \frac{1}{4} B_1(t) + C\|\nabla u\|_{L^{2}}^{4},
\ea\ee
which together with \eqref{cp313} yields
\be\la{cp315}\ba
\frac{1}{4} \left( \| \sqrt{\rho} \dot{u} \|_{L^{2}}^{2}+B_1(t) \right) - C \| \na u \|^4_{L^2}
\le B_2(t) \le C \left( \| \sqrt{\rho} \dot{u} \|_{L^{2}}^{2}+B_1(t) + \| \na u \|^4_{L^2} \right).
\ea\ee
Multiplying \eqref{cp312} by $\hat{C}_1 + 1$ and adding the result to \eqref{cp35}, we arrive at
\be\la{cp316}\ba
& \frac{d}{dt} B_2(t) + \frac{1}{2} \|\nabla \dot{u}\|_{L^{2}}^{2}
+ \| |\Delta \nabla d| |\nabla d| \|^2_{L^{2}} \\
& \le C \left( \|P\|_{L^{4}}^{4}+\|\nabla u\|_{L^{4}}^{4}
+ \| \nabla d \|_{L^{8}}^{8} + \| \na^2 d \|^4_{L^4} \right) \\
& \le C \left( \| \sqrt{\n} \dot{u}\|_{L^{2}}^{2} + \|\nabla^{2} d\|_{L^{2}}^{2} \right)
\left( B_2(t) + C\|\nabla u\|_{L^{2}}^{4} \right)
+ C \|\nabla^{2}d\|_{L^{2}}^{2}\|\nabla^{3} d\|_{L^{2}}^{2},
\ea\ee
where in the second inequality we have used the following estimates:
\be\la{cp317}\ba
\|P\|_{L^{4}}^{4} + \|\nabla u\|_{L^{4}}^{4}
\leq & C (\|\nabla P\|_{L^{\frac{4}{3}}}^{4}
+ \|\nabla^{2} u\|_{L^{\frac{4}{3}}}^{4}) \\
\leq & C (\| \rho \dot{u} \|_{L^{\frac{4}{3}}}^{4}
+ \| |\nabla d| |\nabla^{2} d| \|_{L^{\frac{4}{3}}}^{4}) \\
\leq & C \| \rho \|_{L^{2}}^{2} \| \sqrt{\n} \dot{u}\|_{L^{2}}^{4}
+ C \|\nabla d\|_{L^{2}}^{4}\|\nabla^{2} d\|_{L^{4}}^{4} \\
\leq & C \| \sqrt{\n} \dot{u}\|_{L^{2}}^{2} \left( B_2(t) + C\|\nabla u\|_{L^{2}}^{4} \right)
+ C \|\nabla^{2} d\|_{L^{2}}^{2}\|\nabla^{3} d\|_{L^{2}}^{2},
\ea\ee
and
\be\la{cp318}\ba
\| \nabla d \|_{L^{8}}^{8} + \| \na^2 d \|^4_{L^4}
= & \| |\na d|^2 \|_{L^4}^4 + \| \na^2 d \|^4_{L^4} \\
\leq & C \| |\na d|^2 \|_{L^2}^2 \| \na |\na d|^2 \|_{L^2}^2
+ C \|\nabla^{2}d\|_{L^{2}}^{2}\|\nabla^{3} d\|_{L^{2}}^{2} \\
\leq & C\|\nabla d\|_{L^{2}}^{2} \|\nabla^{2} d\|_{L^{2}}^{2} \||\nabla d||\nabla^{2}d|\|_{L^{2}}^{2}
+ C \|\nabla^{2}d\|_{L^{2}}^{2}\|\nabla^{3} d\|_{L^{2}}^{2} \\
\leq & C \|\nabla^{2} d\|_{L^{2}}^{2} \left( B_2(t) + C\|\nabla u\|_{L^{2}}^{4} \right)
+ C \|\nabla^{2}d\|_{L^{2}}^{2}\|\nabla^{3} d\|_{L^{2}}^{2},
\ea\ee
due to \eqref{gn11}, \eqref{qkjste}, \eqref{cp315}, \eqref{cp02}, and H\"older's inequality.

Multiplying \eqref{cp316} by $t^m$ for $m=1,2$ yields
\be\la{cp319}\ba
& \frac{d}{dt} ( t^m B_2(t) ) + t^m \left( \frac{1}{2} \|\nabla \dot{u}\|_{L^{2}}^{2}
+ \| |\Delta \nabla d| |\nabla d| \|^2_{L^{2}} \right) \\
& \le m t^{m-1} B_2(t)
+ C t^m B_2(t) \left( \| \sqrt{\n} \dot{u}\|_{L^{2}}^{2} + \|\nabla^{2} d\|_{L^{2}}^{2} \right) \\
& \quad + C t^m \|\nabla u\|_{L^{2}}^{4}
\left( \| \sqrt{\n} \dot{u}\|_{L^{2}}^{2} + \|\nabla^{2} d\|_{L^{2}}^{2} \right)
+ C t^m \|\nabla^{2}d\|_{L^{2}}^{2}\|\nabla^{3} d\|_{L^{2}}^{2}.
\ea\ee

Moreover, we deduce from \eqref{cp315}, \eqref{cp311}, \eqref{gn11}, \eqref{cp02}, and H\"older's inequality that
\be\la{cp320}\ba
B_2(t) & \le C \left( \| \sqrt{\rho} \dot{u} \|_{L^{2}}^{2}
+ \| |\na^2 d| |\na d| \|^2_{L^2} + \| \na u \|^4_{L^2} \right) \\
& \le C \left( \| \sqrt{\rho} \dot{u} \|_{L^{2}}^{2}
+ \| \na^2 d \|^2_{L^4}  \| \na d \|^2_{L^4} + \| \na u \|^4_{L^2} \right) \\
& \le C \left( \| \sqrt{\rho} \dot{u} \|_{L^{2}}^{2}
+ \| \na^2 d \|_{L^2} \| \na^3 d \|_{L^2} \| \na d \|_{L^2}\| \na^2 d \|_{L^2} + \| \na u \|^4_{L^2} \right) \\
& \le C \left( \| \sqrt{\rho} \dot{u} \|_{L^{2}}^{2}
+ \| \na^2 d \|^4_{L^2} + \| \na^3 d \|^2_{L^2} + \| \na u \|^4_{L^2} \right).
\ea\ee
Applying Gr\"onwall's inequality to \eqref{cp319} and combining
\eqref{cp02}, \eqref{cp315} and \eqref{cp320}, we get \eqref{cp03}.

Finally, recalling from \eqref{qkjste} that
\be\la{cp321}\ba
\| \na^2 u \|^2_{L^2} + \| \na P \|^2_{L^2}
\le C \left( \| \sqrt{\n} \dot{u}\|_{L^2}^2 + \||\nabla^2 d||\nabla d|\|_{L^2}^2 \right).
\ea\ee
This, together with \eqref{cp03}, yields \eqref{cp003} and completes the proof of Lemma~\ref{cl3}.
\end{proof}

\begin{lemma}\la{cl4}
There exists a positive constant $C$ depending only on $T$,
$a$, $M_0$, $N_0$, $\|\n_0\|_{L^1\cap L^\infty}$,
$\| \sqrt{\n_0} u_0 \|_{L^2}$, $\| {\bar{x}}^a \rho_0 \|_{L^1}$,
$\| \na u_0 \|_{L^2}$, and $\| \na d_0 \|_{H^1}$ such that
\be\ba\la{cp04}
& \sup_{0\leq t\leq T} \| \rho \bar{x}^{a} \|_{L^{1}} \leq C.
\ea\ee
\end{lemma}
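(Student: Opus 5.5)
Since $\rho$ is transported by the divergence-free field $u$, I will multiply $\eqref{nlc}_1$ by $\bar{x}^{a}$ and integrate over $\rr$. Integrating by parts and using $\div u=0$ gives
\be\nonumber\ba
\frac{d}{dt}\int \rho \bar{x}^{a} dx = \int \rho u\cdot \na \bar{x}^{a} dx .
\ea\ee
From the definition \eqref{cpsol2} we have $|\na \bar{x}| \le C \log^2(e+|x|^2)\le C\bar{x}^{\delta}$ for any small $\delta>0$, hence $|\na \bar{x}^{a}|\le C\bar{x}^{a-1+\delta}\le C \bar{x}^{a}\,\bar{x}^{-1+\delta}$. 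Consequently
\be\nonumber\ba
\frac{d}{dt}\|\rho\bar{x}^a\|_{L^1}\le C\int \rho\bar{x}^{a-1+\delta}|u|dx
\le C\|\rho \bar{x}^{a}\|_{L^1}^{1-\frac{1}{p}}\,\|\rho\|_{L^\infty}^{\frac1p}\,\|u\bar{x}^{-\eta}\|_{L^{p}},
\ea\ee
where H\"older's inequality is applied to $\rho^{1-1/p}\bar{x}^{a(1-1/p)}\cdot \rho^{1/p}\bar{x}^{a/p-1+\delta}|u|$. Choosing $p$ large (e.g.\ $p=8$) and $\delta$ small so that $\eta\triangleq 1-\delta-a/p>0$, the remaining weight is $\bar{x}^{-\eta}$.

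The main obstacle is to control $\|u\bar{x}^{-\eta}\|_{L^p}$, and for this I will invoke Lemma~\ref{jqgj}. This requires the lower bound $\int_{B_{N_1}}\rho\,dx\ge M_0/4$ uniformly on $[0,T]$, i.e.\ \eqref{cpsol4}. To obtain it, I take a cutoff $\varphi_N(x)=\varphi(|x|/N)$ with $\varphi=1$ on $[0,1]$ and $\varphi=0$ on $[2,\infty)$. Multiplying $\eqref{nlc}_1$ by $\varphi_N$ gives
\be\nonumber\ba
\left|\frac{d}{dt}\int\rho\varphi_N dx\right| = \left|\int \rho u\cdot\na\varphi_N dx\right| \le CN^{-1}\|\rho\|_{L^1}^{1/2}\|\sqrt\rho u\|_{L^2}\le CN^{-1}
\ea\ee
by \eqref{cp01}. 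Hence $\int\rho\varphi_N\,dx\ge \int\rho_0\varphi_N\,dx - CTN^{-1}$, and taking $N=N_0$ in the initial term via \eqref{rho00}, together with $N_1=2N$ for $N\ge N_0$ chosen so that $CT/N\le M_0/4$, yields $\int_{B_{N_1}}\rho\,dx\ge M_0/4$ for $t\in[0,T]$.

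With this lower bound and $\|\rho\|_{L^1\cap L^\infty}\le C$ from \eqref{cp01}, Lemma~\ref{jqgj} (with $(2+\ep)/\tilde\eta=p$, after adjusting $\ep$) gives
\be\nonumber\ba
\|u\bar{x}^{-\eta}\|_{L^p}\le C\left(\|\sqrt\rho u\|_{L^2}+\|\na u\|_{L^2}\right)\le C\left(1+\|\na u\|_{L^2}\right).
\ea\ee
Substituting this, I obtain $\frac{d}{dt}f\le C(1+\|\na u\|_{L^2})f^{1-1/p}$ for $f\triangleq \|\rho\bar{x}^a\|_{L^1}$, i.e.\ $\frac{d}{dt}f^{1/p}\le C(1+\|\na u\|_{L^2})$. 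Integrating over $(0,T)$ and using $\int_0^T\|\na u\|_{L^2}dt\le T^{1/2}C$ from \eqref{cp01} yields \eqref{cp04}, with constants depending on $T$, $N_0$, $M_0$ and the listed norms.
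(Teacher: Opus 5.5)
Your proposal is correct and follows essentially the paper's proof. The shared steps are: the cutoff argument giving $\int_{B_{N_1}}\rho\,dx\ge M_0/4$; a H\"older splitting of $\rho|u||\nabla\bar{x}^a|$ into a weighted density factor and $\|u\bar{x}^{-\eta}\|_{L^p}$, the latter controlled by Lemma~\ref{jqgj}; and integration in time. The paper takes $p=8+a$, $\delta=\eta=4/(8+a)$, bounds the density factor by $C(1+\|\rho\bar{x}^a\|_{L^1})$ and closes with a linear Gr\"onwall, whereas you integrate $f^{1/p}$ directly and use only the energy bound \eqref{cp01}.

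One small correction: Lemma~\ref{jqgj} applies only when $p\eta>2$, i.e.\ $p(1-\delta)>a+2$, not merely $\eta>0$. So $p$ must be chosen depending on $a$, and your example $p=8$ fails as soon as $a\ge 6$.
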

\begin{proof}
First, fix a smooth non-increasing function $\varphi:[0,\infty)\to[0,1]$ such that $\varphi=1$ on $[0,1]$, $\varphi=0$ on $[2,\infty)$, and $|\varphi'|\le 2$. For $N>1$, we set $\varphi_N(x)=\varphi(|x|/N)$. It is easy to see that $\varphi_{N'}\geq\varphi_N$ whenever $N'\geq N$, and that $\varphi_N(x)$ satisfies the following conditions:
\be\la{cp41}\ba
0 \le \varphi_N \le 1, \quad \varphi_N=
\begin{cases}
1,\quad &\mathrm{ if \  } |x| \le N,\\
0,\quad &\mathrm{ if \  } |x| \ge 2N,
\end{cases}
\quad |\na \varphi_N | \le 2 N^{-1}.
\ea\ee
	Multiplying $\eqref{nlc}_1$ by $\varphi_N$, integrating over $\rr$,
	and applying \eqref{cp01} and \eqref{cp41}, we obtain
	\be\ba\nonumber
	\frac{d}{dt} \int \n \varphi_N dx
	=\int \n u \cdot \na \varphi_N dx
	&\geq -2 N^{-1} M_0^{\frac{1}{2}}
	\left( \int \n |u|^2 dx \right)^{\frac{1}{2}}
	\geq -2 \hat{C} N^{-1},
	\ea\ee
	where the positive constant $\hat{C}$ depends only on
	$M_0$, $\| \sqrt{\n_0} u_0 \|_{L^2}$, and $\| \na d_0 \|_{L^2}$.
	
	Choose
	\be\ba\nonumber
	N_1\triangleq 2\left(1+N_0+8M_0^{-1}\hat{C}T\right),
	\ea\ee
	where $N_0$ is given by \eqref{rho00}. Since $N_1/2>N_0$, it follows from the above construction that $\varphi_{N_1/2}\ge\varphi_{N_0}$, and thus
	\be\la{cp43}\ba
	\inf_{0 \le t \le T} \int_{B_{N_1}} \n dx
	\ge \inf_{0 \le t \le T} \int \n \varphi_{{N_1}/2} dx
	& \ge \int \n_0 \varphi_{{N_1}/2} dx - 4 \hat{C} N_1^{-1}T \\
	& \ge \int \n_0 \varphi_{N_0} dx - 4 \hat{C} N_1^{-1}T \\
	& \ge \int_{B_{N_0}} \n_0 dx - 4 \hat{C} N_1^{-1}T
	\ge \frac{1}{4}M_0.
	\ea\ee
	Thus, $N_1$ depends only on $T$, $N_0$, $M_0$,
	$\|\bar{x}^{a}\n_0\|_{L^1}$,
	$\| \sqrt{\n_0} u_0 \|_{L^2}$, and $\| \na d_0 \|_{L^2}$.
Therefore, we deduce from \eqref{cp01}, \eqref{cp43} and \eqref{jqgj2}
that for any $v \in D^1(\rr)$,
\be\la{cp44}\ba
\| v \bar{x}^{-\eta} \|_{ L^{\frac{s}{\eta}} }
\le C \| \sqrt{\n} v \|_{L^2} + C \| \na v \|_{L^2},
\ea\ee
where $\eta \in (0,1]$ and $s>2$.

Next, multiplying $\eqref{nlc}_1$ by ${\bar{x}}^a$ and integrating over $\rr$, we use \eqref{cp01}, \eqref{cp02}, \eqref{cp44}, and H\"older's inequality to derive
\be\ba\nonumber
\frac{d}{dt} \int \n {\bar{x}}^a dx 
& \le C \int \n |u| {\bar{x}}^{a-1} \log^2(e+|x|^2) dx \\
& \le C \| \n \bar{x}^{a-1+\frac{8}{8+a}} \|_{ L^{\frac{8+a}{7+a}} }
\| u \bar{x}^{- \frac{4}{8+a} } \|_{L^{8+a}} \\
& \le C \left( 1 + \| \n \bar{x}^a \|_{L^1} \right)
\left( \| \sqrt{\n} u \|_{L^2} + \| \na u \|_{L^2} \right) \\
& \le C + C \| \n \bar{x}^a \|_{L^1},
\ea\ee
where, in the third inequality, we have used \eqref{cp44} and taken $s=4$ and $\eta=4/(8+a)$. Then, Gr\"onwall's inequality yields \eqref{cp04} and completes the proof of Lemma~\ref{cl4}.
\end{proof}

\begin{lemma}\la{cl5}
There exists a positive constant $C$ depending only on $T$,
$a$, $M_0$, $N_0$, $\|\n_0\|_{L^1\cap L^\infty}$, $\|\bar{x}^a\n_0\|_{L^1}$,
$\| \sqrt{\n_0} u_0 \|_{L^2}$, $\| \rho_0 \|_{H^{1}\cap W^{1,q}}$,
$\| \na u_0 \|_{L^2}$, and $\| \na d_0 \|_{H^1}$ such that
\be\ba\la{cp05}
\sup_{0\leq t\leq T} \| \rho \|_{H^{1}\cap W^{1,q}}
& + \int_0^T \left( \|\nabla^{2} u\|_{L^{2}}^{2} + \|\nabla P\|_{L^{2}}^{2}
+ \|\nabla^{2} u\|_{L^{q}}^{\frac{q+1}{q}} + \|\nabla P\|_{L^{q}}^{\frac{q+1}{q}} \right) dt \\
& + \int_0^T t \left( \|\nabla^{2} u\|_{L^{q}}^{2} + \|\nabla P\|_{L^{q}}^{2} \right) dt
\leq C.
\ea\ee
\end{lemma}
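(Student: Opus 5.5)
The density bound will come from the transport structure, and the integrability in time of $\|\nabla u\|_{L^\infty}$ will be supplied by the Stokes estimate \eqref{qkjste} together with the time-weighted bounds of Lemmas~\ref{cl2}--\ref{cl3}. The weighted estimate \eqref{cp44} of Lemma~\ref{cl4} will compensate for the fact that $u$ is not in $L^p$ when there is vacuum at infinity.

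\textbf{Density gradients.} Since $\div u=0$, for $p\in\{2,q\}$ the transport equation gives
\[
\frac{d}{dt}\|\nabla\rho\|_{L^p}\le C\|\nabla u\|_{L^\infty}\|\nabla\rho\|_{L^p}.
\]
By Gr\"onwall it suffices to show $\int_0^T\|\nabla u\|_{L^\infty}dt\le C$.

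\textbf{Second-order bounds in $L^2$.} From \eqref{cp321} we have $\|\nabla^2u\|_{L^2}+\|\nabla P\|_{L^2}\le C(\|\sqrt\rho\dot u\|_{L^2}+\||\nabla d||\nabla^2 d|\|_{L^2})$. The first term is in $L^2(0,T)$ by \eqref{cp02} with $m=0$. For the second, Gagliardo--Nirenberg gives $\||\nabla d||\nabla^2 d|\|_{L^2}\le\|\nabla d\|_{L^4}\|\nabla^2 d\|_{L^4}\le C\|\nabla^2 d\|_{L^2}\|\nabla^3 d\|_{L^2}^{1/2}$, which is square integrable by \eqref{cp02}. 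This yields the $L^2$ part of \eqref{cp05}.

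\textbf{Second-order bounds in $L^q$.} Apply \eqref{qkjste} with $p=q$:
\[
\|\nabla^2 u\|_{L^q}+\|\nabla P\|_{L^q}\le C\big(\|\rho\dot u\|_{L^q}+\||\nabla d||\nabla^2 d|\|_{L^q}\big).
\]
The main obstacle is $\|\rho\dot u\|_{L^q}$, because $\dot u\notin L^2$. I would first use \eqref{cp44} with $\eta$ small and the bound on $\rho\bar x^{a}$ from Lemma~\ref{cl4} (as in \cite{LSZ}). Concretely, write $\rho\dot u=(\rho\bar x^{\eta})(\dot u\bar x^{-\eta})$. H\"older and \eqref{cp44} then give
\[
\|\rho\dot u\|_{L^q}\le C\|\rho\dot u\|_{L^2}^{\frac{2(q-1)}{q^2-2}}\big(\|\sqrt\rho\dot u\|_{L^2}+\|\nabla\dot u\|_{L^2}\big)^{1-\frac{2(q-1)}{q^2-2}}.
\]
Here $\rho\bar x^{\eta}$ is bounded in every $L^r$, $r\ge1$, by interpolating \eqref{cp04} with $\rho\le C$. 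Combining with $\int_0^T t\|\nabla\dot u\|_{L^2}^2dt\le C$ and $\sup_t t\|\sqrt\rho\dot u\|_{L^2}^2\le C$ from \eqref{cp03}, we get $t^{1/2}\|\rho\dot u\|_{L^q}\in L^2(0,T)$. Splitting $t^{-1/2}$ times this quantity and using H\"older in time then gives integrability of power $(q+1)/q$ near $t=0$; that exponent is exactly what the interpolation allows. The term $\||\nabla d||\nabla^2 d|\|_{L^q}$ is treated in the same way via \eqref{cp03}, using $\|\nabla d\|_{L^\infty}\lesssim\|\nabla d\|_{L^2}^{1/2}\|\nabla^3 d\|_{L^2}^{1/2}$-type bounds together with $\|\nabla^2 d\|_{L^q}\le C\|\nabla^2 d\|_{L^2}^{2/q}\|\nabla^3 d\|_{L^2}^{1-2/q}$. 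This yields both the $L^{(q+1)/q}$ bound and the $t$-weighted $L^2$ bound in \eqref{cp05}.

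\textbf{Conclusion.} The Brezis--Wainger/Beale--Kato--Majda type inequality, or simply $\|\nabla u\|_{L^\infty}\le C\|\nabla u\|_{L^2}^{\theta}\|\nabla^2u\|_{L^q}^{1-\theta}$ with $\theta=(q-2)/(2q-2)$, shows that $\nabla u\in L^1(0,T;L^\infty)$. This closes the Gr\"onwall argument for $\|\nabla\rho\|_{L^2\cap L^q}$. Together with $\|\rho\|_{L^2\cap L^\infty}\le C$ from \eqref{cp01}, we obtain $\sup_t\|\rho\|_{H^1\cap W^{1,q}}\le C$.
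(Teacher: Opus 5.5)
\textbf{Gap in the time integration.} Your architecture matches the paper's: Gr\"onwall on the transport equation, \eqref{qkjste} with $p=q$, the weighted interpolation $\|\rho\dot u\|_{L^q}\le C\|\sqrt{\rho}\dot u\|_{L^2}^{\alpha}(\|\sqrt{\rho}\dot u\|_{L^2}+\|\nabla\dot u\|_{L^2})^{1-\alpha}$ with $\alpha=\frac{2(q-1)}{q^2-2}$, and Gagliardo--Nirenberg for $\|\nabla u\|_{L^\infty}$. The problem is that you reduce everything to the single fact $t^{1/2}\|\rho\dot u\|_{L^q}\in L^2(0,T)$. H\"older in time then gives
\[
\int_0^T\|\rho\dot u\|_{L^q}^p\,dt\le\Big(\int_0^T t^{-\frac{p}{2-p}}\,dt\Big)^{\frac{2-p}{2}}\Big(\int_0^T t\|\rho\dot u\|_{L^q}^2\,dt\Big)^{\frac p2},
\]
which is finite only for $p<1$. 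Nothing better follows from that information alone: $f(t)=t^{-1}|\log t|^{-1}$ near $t=0$ has $t^{1/2}f\in L^2$ but $f\notin L^1$. So the claimed bound $\int_0^T(\|\nabla^2u\|_{L^q}^{(q+1)/q}+\|\nabla P\|_{L^q}^{(q+1)/q})\,dt\le C$ does not follow from what you wrote.

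The repair, as in \eqref{cp55}, is to keep the product structure of the interpolation. Put the unweighted bound $\int_0^T\|\sqrt{\rho}\dot u\|_{L^2}^2\,dt\le C$ (from \eqref{cp02} with $m=0$) on the factor $\|\sqrt{\rho}\dot u\|_{L^2}^{\alpha(q+1)/q}$. Put $\int_0^T t\|\nabla\dot u\|_{L^2}^2\,dt\le C$ on the factor $\|\nabla\dot u\|_{L^2}^{(1-\alpha)(q+1)/q}$. Young's inequality with exponents $\frac{2q}{\alpha(q+1)}$, $\frac{2q}{(1-\alpha)(q+1)}$, $\frac{2q}{q-1}$ then leaves $t^{-\frac{q^3-q^2-2q}{q^3-q^2-2q+2}}$, which is integrable near $0$. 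Your cruder argument does still give $\int_0^T\|\nabla u\|_{L^\infty}\,dt\le C$, because only the power $\frac{q}{2(q-1)}<1$ of $\|\nabla^2u\|_{L^q}$ enters there. So the density bound itself is unaffected.

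\textbf{The director term.} With the exponents you propose, $\||\nabla d||\nabla^2d|\|_{L^q}\le\|\nabla d\|_{L^\infty}\|\nabla^2d\|_{L^q}\le C\|\nabla^3d\|_{L^2}^{3/2-2/q}$. Its square $\|\nabla^3d\|_{L^2}^{3-4/q}$ is not controlled for $q>4$ by $\int_0^T t\|\nabla^3d\|_{L^2}^2\,dt\le C$. A bound on $\sup_t t\|\nabla^3d\|_{L^2}^2$ only appears later, in Lemma~\ref{cl8}, which itself uses the present lemma. The paper instead uses $\||\nabla d||\nabla^2d|\|_{L^q}\le\|\nabla d\|_{L^{2q}}\|\nabla^2d\|_{L^{2q}}\le C\|\nabla d\|_{H^1}\|\nabla^2 d\|_{H^1}$. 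By \eqref{cp01}--\eqref{cp02} this lies in $L^2(0,T)$, which gives both the $L^{(q+1)/q}$ bound and the $t$-weighted $L^2$ bound at once.

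Finally, $\rho\bar x^{\eta}$ is bounded in $L^r$ only when $\eta r\le a$. So $\eta$ must be chosen depending on $q$ and $a$, not uniformly for every $r$.
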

\begin{proof}
First, the mass equation $\eqref{nlc}_1$ implies that $\na \n$ satisfies for any $p \ge2$,
\be\la{cp51}\ba
\frac{d}{dt} \|\nabla \rho\|_{L^{p}}\leq C\|\nabla u\|_{L^{\infty}} \|\nabla \rho\|_{L^{p}}.
\ea\ee
Using \eqref{gn11}, \eqref{cp02} and \eqref{qkjste}, we have
\be\la{cp52}\ba
\|\nabla u\|_{L^{\infty}}\leq & C\|\nabla u\|_{L^{2}}^{\frac{q-2}{2(q-1)}} \|\nabla^{2} u\|_{L^{q}}^{\frac{q}{2(q-1)}}
\leq C\left( \|\rho\dot{u}\|_{L^{q}}^{\frac{q}{2(q-1)}}+\||\nabla d||\nabla^{2}d|\|_{L^{q}}^{\frac{q}{2(q-1)}}\right).
\ea\ee
Moreover, it follows from \eqref{cp01}, \eqref{cp04} and \eqref{cp44}
that for any $\eta \in (0,1]$, $s>2$ and $v \in D^1(\rr)$,
\be\la{cp53}\ba
\| \n^\eta v \|_{L^{\frac{s}{\eta}}}
& \le C \| \n^\eta \bar{x}^{ \frac{3\eta a}{4s} } \|_{L^{ \frac{4s}{3\eta} }}
\| v \bar{x}^{-\frac{3\eta a}{4s}} \|_{L^{ \frac{4s}{\eta} }} \\
& \le C \| \n \|_{L^\infty}^{ \frac{(4s-3)\eta}{4s} }
\| \n \bar{x}^a \|_{L^1}^{ \frac{3\eta}{4s} }
\left( \| \sqrt{\n} v \|_{L^2} + \| \na v \|_{L^2} \right) \\
& \le C \left( \| \sqrt{\n} v \|_{L^2} + \| \na v \|_{L^2} \right),
\ea\ee
which together with H\"older's inequality yields
\be\la{cp54}\ba
\| \rho \dot u\|_{L^q} 
& \le C\| \rho \dot u\|_{L^2}^{2(q-1)/(q^2-2)}
\| \n \dot{u} \|_{L^{q^2}}^{q(q-2)/(q^2-2)} \\ 
& \le C\| \rho \dot u\|_{L^2}^{2(q-1)/(q^2-2)} 
\left( \| \sqrt{\n} \dot u\|_{L^2}+\| \na \dot u\|_{L^2} \right)^{q(q-2)/(q^2-2)} \\ 
& \le C\| \sqrt{\n}  \dot u\|_{L^2} + C\| \sqrt{\n} \dot u\|_{L^2}^{2(q-1)/(q^2-2)}
\|\na \dot u\|_{L^2}^{q(q-2)/(q^2-2)}.
\ea\ee
Hence, we conclude from \eqref{cp03}, \eqref{cp54} and Young's inequality that
\be\la{cp55}\ba
& \int_0^T \left( \|\rho \dot u\|^{1+1 /q}_{L^q}+t\| \n \dot u\|^2_{L^q} \right) dt \\
& \le C+C \int_0^T\left( \| \sqrt{\n}  \dot u\|_{L^2}^2 +  t\|\na \dot u\|_{L^2}^2+ 
t^{-(q^3-q^2-2q)/(q^3-q^2-2q+2)} + 1 \right)dt \\ 
& \le C.
\ea\ee
On the other hand, by \eqref{gn11}, \eqref{cp01}, \eqref{cp02}, and H\"older's inequality, we derive
\be\la{cp56}\ba
\int_{0}^{T} \||\nabla d||\nabla^{2}d|\|_{L^{q}}^{2} dt
& \leq \int_{0}^{T} \|\nabla d\|^2_{L^{2q}} \|\nabla^{2}d\|^2_{L^{2q}} dt \\
& \le C \int_{0}^{T} \|\nabla d\|^2_{H^1} \|\nabla^{2}d\|^2_{H^1} dt \\
& \leq C + C \int_{0}^{T} \|\nabla^{3} d\|_{L^{2}}^{2} dt \leq C.
\ea\ee
The combination of \eqref{cp52}, \eqref{cp55}, \eqref{cp56}, and Young's inequality shows
\be\la{cp57}\ba
\int_0^T \| \na u \|_{L^\infty} dt \le C,
\ea\ee
which together with \eqref{cp51} and Gr\"onwall's inequality implies
\be\la{cp58}\ba
\sup_{0 \le t \le T} \| \na \n \|_{L^2 \cap L^q} \le C.
\ea\ee
Finally, it follows from \eqref{qkjste}, \eqref{cp01}, \eqref{cp02}, \eqref{cp55}, \eqref{cp56},
and H\"older's inequality that
\be\la{cp59}\ba
& \int_0^T \left( \|\nabla^{2} u\|_{L^{2}}^{2} + \|\nabla P\|_{L^{2}}^{2}
+ \|\nabla^{2} u\|_{L^{q}}^{\frac{q+1}{q}} + \|\nabla P\|_{L^{q}}^{\frac{q+1}{q}}
+ t \left( \|\nabla^{2} u\|_{L^{q}}^{2} + \|\nabla P\|_{L^{q}}^{2} \right)  \right) dt \\
& \leq C + C \int_0^T \left( \|\rho \dot u\|^{1+1 /q}_{L^q}+t\| \n \dot u\|^2_{L^q}
+ \| \na^3 d \|^2_{L^2} \right) dt \le C.
\ea\ee
Combining this with \eqref{cp01} and \eqref{cp58} gives \eqref{cp05},
thereby completing the proof of Lemma~\ref{cl5}.
\end{proof}

By virtue of \eqref{cp04} and \eqref{cp05}, and arguing as in
\cite[Lemma 3.6]{LSZ} (see also \cite{LX}), we can derive the
following spatial weighted estimate on the gradient of the density.
\begin{lemma}\la{cl6}
{\sloppy
There exists a positive constant $C$ depending only on $T$, $a$, $M_0$, $N_0$, $\| \sqrt{\n_0} u_0 \|_{L^2}$, $\| \na u_0 \|_{L^2}$, $\| \bar{x}^a \n_0 \|_{L^1 \cap H^{1}\cap W^{1,q}}$, and $\| \na d_0 \|_{H^1}$ such that
\par}
\be\ba\la{cp06}
\sup_{0\leq t\leq T} \| \rho \bar{x}^a \|_{L^1 \cap H^{1}\cap W^{1,q}} \leq C.
\ea\ee
\end{lemma}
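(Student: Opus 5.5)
The plan follows \cite[Lemma 3.6]{LSZ}: derive a transport-type inequality for the weighted density and its gradient, and close it with Gr\"onwall's inequality using the time integrability of $\|\na u\|_{L^\infty}$ from \eqref{cp57}. The $L^1$ part, $\sup_t\|\n\bar{x}^a\|_{L^1}\le C$, is already \eqref{cp04}. The $L^2$ and $L^q$ bounds on $\n\bar{x}^a$ follow from it together with $\|\n\|_{L^\infty}\le C$ and the gradient bounds below, via $\|\n\bar x^a\|_{L^p}^p\le \|\n\|_{L^\infty}^{p-1}\|\n\bar x^{ap}\|_{L^1}$. A cleaner route is to treat $f\triangleq\n\bar x^a$ directly. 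So the main work is estimating $\na(\n\bar{x}^a)$ in $L^p$ for $p\in\{2,q\}$.

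Set $f=\n\bar x^a$. From $\eqref{nlc}_1$ and $\div u=0$,
\[
f_t+u\cdot\na f = a\, f\, u\cdot\na\log\bar x .
\]
Applying $\na$ and testing with $|\na f|^{p-2}\na f$ gives
\[
\frac{d}{dt}\|\na f\|_{L^p}\le C\big(\|\na u\|_{L^\infty}+\|u\cdot\na\log\bar x\|_{L^\infty}\big)\|\na f\|_{L^p}+C\big\| f\,|\na u|\,|\na\log\bar x|\big\|_{L^p}+C\big\| f\,|u|\,|\na^2\log\bar x|\big\|_{L^p}.
\]
Since $|\na\log\bar x|+|\na^2\log\bar x|\le C\bar x^{-1}\log^{2}(e+|x|^2)$, the bad factor is $u$ multiplied by a decaying weight. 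I would bound $\|u\bar x^{-\delta}\|_{L^\infty}$ for small $\delta>0$ by splitting into the regions $|x|\le 1$ and $|x|\ge1$ and using \eqref{cp44}, which gives $\|u\bar x^{-\delta}\|_{L^{s}}\le C(\|\sqrt\n u\|_{L^2}+\|\na u\|_{L^2})$, combined with $\|\na u\|_{L^q}$ and a Gagliardo--Nirenberg/Morrey type estimate. This yields
\[
\|u\bar x^{-\delta}\|_{L^\infty}\le C\big(1+\|\na u\|_{W^{1,q}}\big)
\]
for a suitable $\delta<1$. The extra factor $\bar x^{\delta-1}\log^2$ is then bounded. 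For the term $f\,|\na u|\,|\na\log\bar x|$, use $|\na\log\bar x|\le C$ to get $\|f\na u\|_{L^p}\le \|\na u\|_{L^\infty}\|f\|_{L^p}$. For $\|f\|_{L^p}$, interpolate $\|f\|_{L^p}\le C\|f\|_{L^1}^{\alpha}\|\na f\|_{L^p}^{1-\alpha}+C\|f\|_{L^1}$, or simply carry $\|f\|_{L^p}$ in the Gr\"onwall quantity together with its own easy transport estimate, $\frac{d}{dt}\|f\|_{L^p}\le C\|u\cdot\na\log\bar x\|_{L^\infty}\|f\|_{L^p}$.

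Collecting these bounds, $Y(t)\triangleq\|f\|_{L^2\cap L^q}+\|\na f\|_{L^2\cap L^q}$ satisfies
\[
Y'\le C\big(1+\|\na u\|_{L^\infty}+\|\na^2 u\|_{L^2}+\|\na^2u\|_{L^q}\big)\,Y .
\]
The coefficient is integrable on $(0,T)$ by \eqref{cp57} and \eqref{cp05}, using $\|\na^2u\|_{L^q}\in L^{(q+1)/q}(0,T)$. Gr\"onwall's inequality then gives \eqref{cp06}, with $Y(0)$ controlled by $\|\bar x^a\n_0\|_{L^1\cap H^1\cap W^{1,q}}$.

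The main obstacle is the weighted $L^\infty$ bound on $u$. Because of the vacuum at infinity, $u$ itself is not in $L^\infty$ or $L^2$, so one must use \eqref{cp44} and the mass lower bound \eqref{cp43} to control $u$ against a weight $\bar x^{-\delta}$ small enough to be absorbed by the $\bar x^{-1}\log^2$ decay of the derivatives of $\log\bar x$. I would first try a local Morrey estimate on unit balls $B_1(x_0)$:
\[
|u(x_0)|\le C\|u\|_{L^{s}(B_1(x_0))}+C\|\na u\|_{L^q},
\]
and then multiply by $\bar x(x_0)^{-\delta}$, using that $\bar x$ is comparable to $\bar x(x_0)$ on $B_1(x_0)$.
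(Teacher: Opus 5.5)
Your argument is correct and is essentially the standard argument the paper invokes when it cites Lemma~3.6 of L\"u--Shi--Zhong. That argument derives a transport equation for $f=\rho\bar x^a$, makes an $L^p$ estimate of $\nabla f$ in which $u$ enters only through $\|u\bar x^{-\delta}\|_{L^\infty}\le C(1+\|\nabla u\|_{W^{1,q}})$ (obtained from \eqref{cp44}), and closes by Gr\"onwall using $\int_0^T(\|\nabla u\|_{L^\infty}+\|\nabla u\|_{W^{1,q}})\,dt\le C$ from \eqref{cp57} and \eqref{cp05}. Your variations, namely putting $\|\nabla u\|_{L^\infty}\|f\|_{L^p}$ on the $f\,\nabla u\cdot\nabla\log\bar x$ term, carrying $\|f\|_{L^2\cap L^q}$ inside the Gr\"onwall quantity, and using a local Morrey estimate for the weighted sup of $u$, do not change the structure.

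Drop your first suggestion $\|\rho\bar x^a\|_{L^p}^p\le\|\rho\|_{L^\infty}^{p-1}\|\rho\bar x^{ap}\|_{L^1}$. The stronger weighted norm $\|\rho\bar x^{ap}\|_{L^1}$ is not controlled by \eqref{cp04}. What actually closes the $L^2\cap L^q$ part is your direct transport estimate for $\|f\|_{L^p}$, or alternatively interpolation with $\|f\|_{L^1}$.
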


\begin{lemma}\la{cl7}
There exists a positive constant $C$ depending only on $T$,
$a$, $M_0$, $N_0$, $\|\n_0\|_{L^1\cap L^\infty}$, $\|\bar{x}^a\n_0\|_{L^1}$,
$\| \sqrt{\n_0} u_0 \|_{L^2}$, $\| \na u_0 \|_{L^2}$, $\| \bar{x}^{\frac{a}{2}} \na d_0 \|_{L^2}$,
and $\| \na^2 d_0 \|_{L^2}$ such that
\be\ba\la{cp07}
\sup_{0\leq t\leq T} \|\nabla d \bar{x}^{\frac{a}{2}} \|_{L^{2}}^{2}
+ \int_{0}^{T} \|\nabla^{2} d\bar{x}^{\frac{a}{2}}\|_{L^{2}}^{2} dt
\leq C,
\ea\ee
\be\ba\la{cp007}
\sup_{0\leq t\leq T} t \|\nabla^{2} d\bar{x}^{\frac{a}{2}}\|_{L^{2}}^{2}
+ \int_{0}^{T} t\|\nabla^{3} d\bar{x}^{\frac{a}{2}}\|_{L^{2}}^{2} dt
\leq C.
\ea\ee
\end{lemma}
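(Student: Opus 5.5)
We prove Lemma~\ref{cl7} by weighted energy estimates on the gradient equation \eqref{cp25}, tested against $\nabla d\,\bar{x}^{a}$ for \eqref{cp07} and against $\nabla\Delta d\,\bar{x}^{a}$ (equivalently, $\nabla^2 d$ differentiated once more) for \eqref{cp007}. The only new ingredient compared with the unweighted bounds is the weight derivative. We will use
\[
|\nabla \bar{x}^{a}| \le C \bar{x}^{a}\,\bar{x}^{-1}\log^{2}(e+|x|^2)\le C\bar{x}^{a},
\]
which holds since $\bar{x}^{-1}\log^2(e+|x|^2)$ is bounded.

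\textbf{Step 1: the bound \eqref{cp07}.} Multiply \eqref{cp25} by $\nabla d\,\bar{x}^{a}$ and integrate over $\rr$. This gives
\[
\frac12\frac{d}{dt}\|\nabla d\bar{x}^{a/2}\|_{L^2}^2+\|\nabla^2 d\bar{x}^{a/2}\|_{L^2}^2 .
\]
On the right-hand side, the Laplacian contributes a cross term $\int|\nabla^2 d||\nabla d||\nabla\bar{x}^a|$, which is at most $\frac14\|\nabla^2 d\bar{x}^{a/2}\|^2_{L^2}+C\|\nabla d\bar{x}^{a/2}\|^2_{L^2}$. The nonlinear terms are handled as follows.
\begin{itemize}
\item For the convection term $\nabla(u\cdot\nabla d)$, use $\div u=0$ to write $\int u\cdot\nabla(|\nabla d|^2/2)\bar{x}^a=-\frac12\int |\nabla d|^2u\cdot\nabla\bar{x}^a$. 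Bound this by $C\int|\nabla d|^2\bar{x}^{a}|u|\bar{x}^{-1}\log^2(e+|x|^2)$.
\item The term $\nabla u\cdot\nabla d$ is bounded by $\|\nabla u\|_{L^2}\|\nabla d\bar{x}^{a/2}\|_{L^4}^2$.
\item The term $\nabla(|\nabla d|^2 d)$ is bounded using $|\nabla d|^2|\nabla^2 d|$.
\end{itemize}
For the $u$-weight term, apply \eqref{cp44} with small $\eta$, giving $|u|\bar{x}^{-\eta}\in L^{s/\eta}$, bounded by $\|\sqrt\rho u\|_{L^2}+\|\nabla u\|_{L^2}$. Then absorb into Gagliardo–Nirenberg for $f=\nabla d\bar{x}^{a/2}$:
\[
\|f\|_{L^4}^2\le C\|f\|_{L^2}\|\nabla f\|_{L^2}\le C\|f\|_{L^2}\bigl(\|\nabla^2 d\bar{x}^{a/2}\|_{L^2}+\|f\|_{L^2}\bigr).
\]
Young's inequality then yields
\[
\frac{d}{dt}\|f\|_{L^2}^2+\|\nabla^2 d\bar{x}^{a/2}\|_{L^2}^2\le C\bigl(1+\|\nabla u\|_{L^2}^2+\|\nabla^2 d\|_{L^2}^2+\|\nabla d\|_{L^\infty}^2\bigr)\|f\|_{L^2}^2 .
\]
The coefficient is integrable in time by \eqref{cp01}, \eqref{cp02}, and $\|\nabla d\|_{L^\infty}^2\le C\|\nabla d\|_{L^2}\|\nabla^3 d\|_{L^2}$. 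Gronwall's inequality then gives \eqref{cp07}, with the $T$-dependence entering through \eqref{cp44} and Lemma~\ref{cl4}.

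\textbf{Step 2: the bound \eqref{cp007}.} Differentiate once more, applying $\nabla^2$ to $\eqref{nlc}_3$, and test with $\nabla^2 d\,\bar{x}^a$; then multiply by $t$. This produces $\frac{d}{dt}\|\nabla^2 d\bar{x}^{a/2}\|^2+\|\nabla^3 d\bar{x}^{a/2}\|^2$ plus error terms.
\begin{itemize}
\item The error terms involving $\nabla^2(u\cdot\nabla d)$ are $|\nabla^2u||\nabla d|$, $|\nabla u||\nabla^2 d|$, and the convective part. After integration by parts, the convective part again carries only $|u||\nabla\bar{x}^a|$.
\item The error terms from $\nabla^2(|\nabla d|^2 d)$ are bounded by $|\nabla^2 d|^2+|\nabla d||\nabla^3 d|+|\nabla d|^2|\nabla^2 d|+|\nabla d|^4$ times $|\nabla d|$, all carrying the weight.
\end{itemize}
The worst term is $\int|\nabla^2 u||\nabla d||\nabla^2 d|\bar{x}^a$. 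We bound it by $\|\nabla^2u\|_{L^2}\|\nabla d\|_{L^\infty}\|\nabla^2 d\bar{x}^{a}\|$. Because the weight is $\bar{x}^a$ rather than $\bar{x}^{a/2}$, we instead split it as $\|\nabla^2 u\|_{L^2}\|\nabla d\bar{x}^{a/2}\|_{L^\infty}\|\nabla^2 d\bar{x}^{a/2}\|_{L^2}$. The factor $\|\nabla d\bar{x}^{a/2}\|_{L^\infty}$ is controlled by Gagliardo–Nirenberg from $\|\nabla d\bar{x}^{a/2}\|_{L^2}$, $\|\nabla^2 d\bar{x}^{a/2}\|_{L^2}$ and $\|\nabla^3 d\bar{x}^{a/2}\|_{L^2}$. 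After Young's inequality this leaves
\[
C\|\nabla^2u\|_{L^2}^2\|\nabla^2 d\bar{x}^{a/2}\|^2+\text{absorbable terms},
\]
where $\|\nabla^2u\|_{L^2}^2$ is time-integrable by Lemma~\ref{cl5}. The other terms are treated similarly using \eqref{cp02}, \eqref{cp05} and Step 1. The weight $t$ kills the initial term, and the leftover $\int_0^T\|\nabla^2 d\bar{x}^{a/2}\|^2dt$ is finite by \eqref{cp07}. Gronwall's inequality gives \eqref{cp007}.

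\textbf{Main obstacle.} The main obstacle is the $u$-weight terms. Since $u\notin L^2$ under vacuum at infinity, every appearance of $u$ must be paired with a decaying factor from $\nabla\bar{x}^a$ via \eqref{cp44}. This must be done while absorbing the resulting $L^p$ norms of weighted $\nabla d$ or $\nabla^2 d$ through Gagliardo–Nirenberg interpolation at the cost of only $\epsilon$ times the dissipation. We would first try choosing $\eta$ small so that the Hölder exponents close.
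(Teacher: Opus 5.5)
Your proposal is correct and follows essentially the paper's argument. It uses the same two weighted energy estimates on \eqref{cp25}: first tested against $\nabla d\,\bar{x}^{a}$, then a $t$-weighted second-order estimate. As in the paper, every $u$-term is paired against the decay of $\nabla\bar{x}^{a}$ via \eqref{cp44}, the weighted $L^4$ norms are absorbed by Gagliardo--Nirenberg, and the second step is closed using \eqref{cp07} and the time-integrability of $\|\nabla^2 u\|_{L^2}^2$. That integrability already follows from \eqref{cp02} and \eqref{qkjste}, so you need not invoke Lemma~\ref{cl5}, which would bring in a dependence on $\|\rho_0\|_{H^1\cap W^{1,q}}$ that the statement excludes.

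The one difference is in Step 2. The paper tests the once-differentiated equation against $-\Delta\nabla d\,\bar{x}^{a}$, so only $\nabla u$ enters; the $L^4$ bounds then give the linear inequality \eqref{cp715}, which is integrated directly after multiplying by $t$. Your placement of derivatives produces $\nabla^2 u\cdot\nabla d$ instead. This costs an extra $L^\infty$ bound on $\nabla d\,\bar{x}^{a/2}$ and a Gronwall step with coefficient $1+\|\nabla^2 u\|_{L^2}^2$, but it also closes.
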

\begin{proof}
First, multiplying \eqref{cp25} by $\nabla d \bar{x}^{a}$ and integrating by parts over $\rr$, we derive
\be\la{cp71}\ba
\frac{1}{2}\frac{d}{dt} \|\nabla d \bar{x}^{\frac{a}{2}}\|_{L^{2}}^{2} 
+ \|\nabla^{2} d \bar{x}^{\frac{a}{2}}\|_{L^{2}}^{2}
\leq & C \int |\nabla d| |\nabla^{2} d| |\nabla \bar{x}^{a}| dx
+ C \int |\nabla u||\nabla d|^{2} \bar{x}^{a} dx \\
& + C \int |u||\nabla d|^{2} |\nabla \bar{x}^{a}| dx
+ C \int |\nabla d|^{2} |\nabla^{2} d| \bar{x}^{a}dx \\
& + C \int |\nabla d|^{4} \bar{x}^{a} dx
\triangleq \sum_{i=1}^{5}I_i.
\ea\ee
Next, we will estimate each term $I_i$ on the right-hand side of \eqref{cp71} as follows:

By applying Young's inequality, we obtain
\be\la{cp72}\ba
I_{1}\leq C \int |\nabla d| |\nabla^{2} d| \bar{x}^{a}dx\leq \frac{1}{10} \|\nabla^{2} d \bar{x}^{\frac{a}{2}}\|_{L^{2}}^{2}
+C\|\nabla d \bar{x}^{\frac{a}{2}}\|_{L^{2}}^{2}.
\ea\ee
From \eqref{gn11} and Young's inequality, we deduce that for any $\ep>0$,
\be\la{cp73}\ba
\|\nabla d \bar{x}^{\frac{a}{2}}\|_{L^{4}}^{2}
& \le C \|\nabla d \bar{x}^{\frac{a}{2}}\|_{L^{2}} \|\nabla (\nabla d \bar{x}^{\frac{a}{2}})\|_{L^{2}} \\
& \le C \|\nabla d \bar{x}^{\frac{a}{2}}\|_{L^{2}}
\left( \|\nabla^{2}d \bar{x}^{\frac{a}{2}}\|_{L^{2}}
+ \|\nabla d \nabla \bar{x}^{\frac{a}{2}}\|_{L^{2}} \right) \\
& \le \ep \|\nabla^{2}d \bar{x}^{\frac{a}{2}}\|^2_{L^{2}}
+ C(\ep) \|\nabla d \bar{x}^{\frac{a}{2}}\|^2_{L^{2}}.
\ea\ee
Combining this with H\"older's inequality yields
\be\la{cp74}\ba
I_{2} \leq C \|\nabla u\|_{L^{2}} \|\nabla d \bar{x}^{\frac{a}{2}}\|_{L^{4}}^{2}
\leq C \|\nabla d \bar{x}^{\frac{a}{2}}\|_{L^{4}}^{2}
\leq \frac{1}{10} \|\nabla^{2} d \bar{x}^{\frac{a}{2}}\|_{L^{2}}^{2}
+ C \|\nabla d \bar{x}^{\frac{a}{2}}\|_{L^{2}}^{2}.
\ea\ee
In addition, it follows from \eqref{cp44}, \eqref{cp73}, \eqref{cp01} and \eqref{cp02} that
\be\la{cp75}\ba
I_{3} \leq C\int |u||\nabla d|^2\bar{x}^{a-\frac{3}{4}}dx
& \leq C \|\nabla d \bar{x}^{\frac{a}{2}}\|_{L^{4}} \|\nabla d \bar{x}^{\frac{a}{2}}\|_{L^{2}}\|u\bar{x}^{-\frac{3}{4}}\|_{L^{4}} \\
& \leq C\|\nabla d \bar{x}^{\frac{a}{2}}\|_{L^{4}}^{2}
+ C (\| \sqrt{\n} u \|_{L^{2}}^{2}
+ \|\nabla u\|_{L^{2}}^{2})\|\nabla d \bar{x}^{\frac{a}{2}}\|_{L^{2}}^{2} \\
& \leq \frac{1}{10} \|\nabla^{2} d \bar{x}^{\frac{a}{2}}\|_{L^{2}}^{2}
+ C \|\nabla d\bar{x}^{\frac{a}{2}}\|_{L^{2}}^{2}.
\ea\ee
By virtue of \eqref{cp73}, \eqref{cp01}, \eqref{cp02}, and H\"older's inequality, it holds that
\be\la{cp76}\ba
I_{4} + I_5 \leq & C \|\nabla^{2} d \bar{x}^{\frac{a}{2}}\|_{L^{2}}
\|\nabla d\bar{x}^{\frac{a}{2}}\|_{L^{4}} \|\nabla d\|_{L^{4}}
+ \|\nabla d\|^2_{L^{4}} \|\nabla d \bar{x}^{\frac{a}{2}}\|_{L^{4}}^{2} \\
\leq & \frac{1}{20}\|\nabla^{2} d \bar{x}^{\frac{a}{2}}\|_{L^{2}}^{2}
+ C \|\nabla d\|_{H^1}^{2}\|\nabla d\bar{x}^{\frac{a}{2}}\|_{L^{4}}^{2}\\
\leq & \frac{1}{10} \|\nabla^{2} d \bar{x}^{\frac{a}{2}}\|_{L^{2}}^{2}
+ C \|\nabla d\bar{x}^{\frac{a}{2}}\|_{L^{2}}^{2}.
\ea\ee
Substituting \eqref{cp72}, \eqref{cp74}, \eqref{cp75}, and \eqref{cp76} into \eqref{cp71}, we obtain
\be\la{cp77}\ba
\frac{d}{dt} \|\nabla d \bar{x}^{\frac{a}{2}}\|_{L^{2}}^{2} 
+ \|\nabla^{2} d \bar{x}^{\frac{a}{2}}\|_{L^{2}}^{2}
\leq C \|\nabla d\bar{x}^{\frac{a}{2}}\|_{L^{2}}^{2},
\ea\ee
which together with Gr\"onwall's inequality yields \eqref{cp07}.

Furthermore, we conclude from \eqref{cp73} and \eqref{cp07} that
\be\la{cp78}\ba
\|\nabla d \bar{x}^{\frac{a}{2}}\|_{L^{4}}^{2}
& \le C \|\nabla d \bar{x}^{\frac{a}{2}}\|_{L^{2}}
\left( \|\nabla^{2}d \bar{x}^{\frac{a}{2}}\|_{L^{2}}
+ \|\nabla d \nabla \bar{x}^{\frac{a}{2}}\|_{L^{2}} \right) \\
& \le C ( 1 + \|\nabla^{2}d \bar{x}^{\frac{a}{2}}\|_{L^{2}} ).
\ea\ee

On the other hand, multiplying \eqref{cp25} by $-\Delta \nabla d \bar{x}^{a}$ and integrating by parts over $\rr$, we arrive at
\be\la{cp79}\ba
& \frac{1}{2}\frac{d}{dt} \|\nabla^{2}d\bar{x}^{\frac{a}{2}}\|_{L^{2}}^{2}
+ \|\nabla^{3} d\bar{x}^{\frac{a}{2}}\|_{L^{2}}^{2} \\
\le & -\int \nabla d_{t}\nabla^{2} d\nabla \bar{x}^{a} dx
+ \int \nabla(u\cdot\nabla d)\nabla \Delta d\bar{x}^{a} dx
-\int \nabla(|\nabla d|^{2}d)\nabla \Delta d\bar{x}^{a} dx \\
& + C \int |\nabla^{3}d| |\nabla^{2}d| |\nabla\bar{x}^{a}| dx \\
\leq & C \int |\nabla^{3}d| |\nabla^{2}d| |\nabla\bar{x}^{a}| dx
+ C \int |u| |\nabla^{2}d|^{2} |\nabla\bar{x}^{a}| dx
+ C \int |\na u| |\nabla d| |\nabla^{2} d| |\nabla\bar{x}^{a}| dx \\
& + C \int |\nabla d||\nabla^{2} d|^{2} |\nabla \bar{x}^{a}| dx
+ C \int |\nabla d|^{3}|\nabla^{2} d| |\nabla \bar{x}^{a}| dx
+ C \int |\na u| |\nabla d| |\nabla^{3} d| \bar{x}^{a} dx \\
& + C \int |\na u| |\nabla^2 d|^2 \bar{x}^{a} dx
+ C \int |\nabla d| |\nabla^2 d| |\nabla^3 d| \bar{x}^{a} dx
+ C \int |\nabla d|^{3} |\nabla^{3} d| \bar{x}^{a} dx \\
\triangleq & \sum_{i=1}^{9} J_i.
\ea\ee

Now, we will use \eqref{gn11}, \eqref{cp01}, \eqref{cp03}, and H\"older's 
inequality to estimate each term on the right-hand side of \eqref{cp79} as follows:

First, Young's inequality gives
\be\la{cp710}\ba
J_1 \le C \int |\nabla^{3}d| |\nabla^{2}d| \bar{x}^{a} dx
\le \frac{1}{8} \|\nabla^{3} d\bar{x}^{\frac{a}{2}}\|_{L^{2}}^{2}
+ C \|\nabla^{2} d \bar{x}^{\frac{a}{2}}\|_{L^{2}}^{2}.
\ea\ee
Then, using \eqref{gn11} and Young's inequality, we get for any $\ep>0$,
\be\la{cp711}\ba
\|\nabla^2 d \bar{x}^{\frac{a}{2}}\|_{L^{4}}^{2}
& \le C \|\nabla^2 d \bar{x}^{\frac{a}{2}}\|_{L^{2}} \|\nabla (\nabla^2 d \bar{x}^{\frac{a}{2}})\|_{L^{2}} \\
& \le C \|\nabla^2 d \bar{x}^{\frac{a}{2}}\|_{L^{2}}
\left( \|\nabla^{3}d \bar{x}^{\frac{a}{2}}\|_{L^{2}}
+ \|\nabla^2 d \nabla \bar{x}^{\frac{a}{2}}\|_{L^{2}} \right) \\
& \le \ep \|\nabla^{3}d \bar{x}^{\frac{a}{2}}\|^2_{L^{2}}
+ C(\ep) \|\nabla^2 d \bar{x}^{\frac{a}{2}}\|^2_{L^{2}}.
\ea\ee
Similar to \eqref{cp75}, it follows from \eqref{cp711}, \eqref{cp44}, \eqref{cp01}, \eqref{cp02}, and H\"older's inequality that
\be\la{cp712}\ba
J_{2}\leq C \int |u| |\nabla^2 d|^2 \bar{x}^{a-\frac{3}{4}}dx
\leq & C \|\nabla^2 d \bar{x}^{\frac{a}{2}}\|_{L^{4}}
\|\nabla^2 d \bar{x}^{\frac{a}{2}}\|_{L^{2}}
\|u\bar{x}^{-\frac{3}{4}}\|_{L^{4}} \\
\leq & C\|\nabla^2 d \bar{x}^{\frac{a}{2}}\|_{L^{4}}^{2}
+ C \left( \| \sqrt{\n} u \|_{L^{2}}^{2} + \|\nabla u\|_{L^{2}}^{2} \right)
\|\nabla^2 d \bar{x}^{\frac{a}{2}}\|_{L^{2}}^{2} \\
\leq & \frac{1}{8} \|\nabla^{3} d \bar{x}^{\frac{a}{2}}\|_{L^{2}}^{2}
+ C \|\nabla^2 d\bar{x}^{\frac{a}{2}}\|_{L^{2}}^{2}.
\ea\ee
Moreover, we deduce from \eqref{gn11}, \eqref{cp01}, \eqref{cp02}, \eqref{cp78}, and \eqref{cp711} that
\be\la{cp713}\ba
J_3 + J_6 + J_7
& \le C \int |\na u| |\nabla d| |\nabla^{2} d| \bar{x}^{a}
+ |\na u| |\nabla d| |\nabla^{3} d| \bar{x}^{a}
+ |\na u| |\nabla^2 d|^2 \bar{x}^{a} dx \\
& \le C \| \na d \bar{x}^{\frac{a}{2}} \|_{L^4}
\left( \| \na u \|_{L^2} \| \na^2 d \bar{x}^{\frac{a}{2}} \|_{L^4}
+ \| \na u \|_{L^4} \| \na^3 d \bar{x}^{\frac{a}{2}} \|_{L^{2}} \right) \\
& \quad + C \| \na u \|_{L^2} \| \na^2 d \bar{x}^{\frac{a}{2}} \|^2_{L^{4}} \\
& \le \frac{1}{8} \|\nabla^{3} d \bar{x}^{\frac{a}{2}}\|_{L^{2}}^{2}
+ C \left( 1 + \| \na^2 u \|^2_{L^2} + \|\nabla^2 d\bar{x}^{\frac{a}{2}}\|_{L^{2}}^{2} \right),
\ea\ee
and that
\be\la{cp714}\ba
J_4 + J_5 + J_8 + J_9
& \le C \int |\nabla d| |\nabla^{2} d| \left( |\nabla^{2} d| + |\nabla^{3} d| \right) \bar{x}^{a}
+ |\nabla d|^{3} \left( |\nabla^{2} d| + |\nabla^{3} d| \right) \bar{x}^{a} dx \\
& \le C \| \na d \|_{L^2} \| \na^2 d \bar{x}^{\frac{a}{2}} \|^2_{L^4}
+ C \| \na d \|_{L^4} \|\nabla^2 d \bar{x}^{\frac{a}{2}}\|_{L^{4}}
\|\nabla^3 d \bar{x}^{\frac{a}{2}}\|_{L^{2}} \\
& \quad + C \| \na d \|^2_{L^8} \| \na d \bar{x}^{\frac{a}{2}} \|_{L^4}
\left( \| \na^2 d \bar{x}^{\frac{a}{2}} \|_{L^2} + \| \na^3 d \bar{x}^{\frac{a}{2}} \|_{L^2} \right) \\
& \le \frac{1}{8} \|\nabla^{3} d\bar{x}^{\frac{a}{2}}\|_{L^{2}}^{2}
+ C \left( 1 + \|\nabla^{2} d \bar{x}^{\frac{a}{2}}\|_{L^{2}}^{2} \right).
\ea\ee
Substituting \eqref{cp710}, \eqref{cp712}, \eqref{cp713}, and \eqref{cp714}
into \eqref{cp79} yields
\be\la{cp715}\ba
\frac{d}{dt} \|\nabla^{2}d\bar{x}^{\frac{a}{2}}\|_{L^{2}}^{2}
+ \|\nabla^{3} d\bar{x}^{\frac{a}{2}}\|_{L^{2}}^{2}
\le C \left( 1 + \| \na^2 u \|^2_{L^2} + \|\nabla^2 d\bar{x}^{\frac{a}{2}}\|_{L^{2}}^{2} \right).
\ea\ee

Finally, multiplying \eqref{cp715} by $t$ and integrating over $(0,T)$,
we obtain \eqref{cp007} after using \eqref{cp07}, which completes the proof of Lemma~\ref{cl7}.
\end{proof}

\begin{lemma}\la{cl8}
There exists a positive constant $C$ depending only on $T$,
$a$, $M_0$, $N_0$, $\|\n_0\|_{L^1\cap L^\infty}$, $\|\n_0\|_{H^1\cap W^{1,q}}$, $\|\bar{x}^a\n_0\|_{L^1}$,
$\| \sqrt{\n_0} u_0 \|_{L^2}$, $\| \na u_0 \|_{L^2}$, $\| \bar{x}^{\frac{a}{2}} \na d_0 \|_{L^2}$,
and $\| \na^2 d_0 \|_{L^2}$ such that
\be\ba\la{cp08}
\sup_{0\leq t\leq T} t \left( \| \sqrt{\n} u_{t} \|_{L^{2}}^{2}
+ \|d_{t}\|_{H^{1}}^{2} + \|\nabla^{3} d\|_{L^{2}}^{2}\right)
+ \int_{0}^{T} t \left( \|\nabla u_{t}\|_{L^{2}}^{2} + \|\nabla d_{t}\|_{H^{1}}^{2} \right) dt
\leq C.
\ea\ee
\end{lemma}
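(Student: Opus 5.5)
The plan is to convert the material-derivative bounds of Lemma~\ref{cl3} into bounds on $u_t$, and to obtain the $d_t$ bounds from the director equation together with the weighted estimates of Lemma~\ref{cl7}. Two ingredients are used throughout: the weighted embedding \eqref{cp44}, which controls $u\bar{x}^{-\eta}$ in $L^{s/\eta}$ by $\|\sqrt{\n}u\|_{L^2}+\|\na u\|_{L^2}$, and the density-weighted bound \eqref{cp53}.

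\textbf{Velocity part.} Write $u_t=\dot u-u\cdot\na u$. Then
\[
\|\sqrt{\n}u_t\|_{L^2}\le \|\sqrt{\n}\dot u\|_{L^2}+\|\sqrt{\n}u\|_{L^4}\|\na u\|_{L^4}.
\]
Applying \eqref{cp53} with $\eta=1/2$ and $s=2$ bounds $\|\sqrt{\n}u\|_{L^4}$ by a constant, using \eqref{cp01}--\eqref{cp02}. The factor $t\|\na u\|_{L^4}^2$ is bounded by $\|\na u\|_{L^2}\,t\|\na^2u\|_{L^2}$, which is controlled by \eqref{cp02}--\eqref{cp003}. For the time integral, write $\na u_t=\na\dot u-\na(u\cdot\na u)$. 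The term $\na u\cdot\na u$ is handled by $\|\na u\|_{L^4}^2\le C\|\na u\|_{L^2}\|\na^2 u\|_{L^2}$. The term $u\cdot\na^2 u$ is the delicate one, since $u$ itself is not in any $L^p$ because of the far-field vacuum. Split it as
\[
\|u\cdot\na^2u\|_{L^2}\le \|u\bar{x}^{-\eta}\|_{L^{2q/(q-2)}}\,\|\bar{x}^{\eta}\na^2 u\|_{L^q},
\]
or, more simply, estimate $\|u\|_{L^\infty(B_R)}$ locally. I expect this weight bookkeeping to be the main obstacle. My first choice is to avoid it entirely: test the time-differentiated momentum equation (analogous to \eqref{np41}) against $u_t$ and rerun the computation of Lemma~\ref{Nl4}, with \eqref{cp44} replacing Poincar\'e's inequality. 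The terms $\int\n|u||u_t||\na u_t|$ and similar ones are then controlled by $\|\n^{1/4}u\|_{L^8}$-type norms via \eqref{cp53}, and the term $\int\n_t|u_t|^2=-\int\n u\cdot\na|u_t|^2$ is treated the same way. Multiplying the resulting inequality by $t$ and integrating, the initial layer is absorbed by $\int_0^T\|\sqrt{\n}u_t\|_{L^2}^2\,dt<\infty$, which follows from the decomposition above together with \eqref{cp02}.

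\textbf{Director part.} From $d_t=\Delta d-u\cdot\na d+|\na d|^2d$,
\[
\|d_t\|_{L^2}\le \|\na^2 d\|_{L^2}+\|u\bar{x}^{-a/2}\|_{L^\infty\text{-type}}\,\|\na d\,\bar{x}^{a/2}\|_{L^2}+\|\na d\|_{L^4}^2 .
\]
This is precisely where \eqref{cp07} is needed: I would pair $\|u\bar{x}^{-a/2}\|_{L^4}$ from \eqref{cp44} with $\|\na d\,\bar{x}^{a/2}\|_{L^4}$ from \eqref{cp78}. Similarly, $\|\na d_t\|_{L^2}$ is bounded by $\|\na^3 d\|_{L^2}$ plus $\|\na u\|_{L^4}\|\na d\|_{L^4}$ plus the weighted product $\|u\bar{x}^{-a/2}\|_{L^4}\|\na^2 d\,\bar{x}^{a/2}\|_{L^4}$ plus cubic terms. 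Multiplied by $t$, the weighted product is controlled by \eqref{cp007} and \eqref{cp711}.

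\textbf{Higher-order terms.} For $t\|\na^3 d\|_{L^2}^2$, use the elliptic identity \eqref{np413} (proved verbatim on $\rr$) to bound it by $t\|\na d_t\|_{L^2}^2$ plus lower-order terms. For the $L^2_t$ bound on $t\|\na d_t\|_{H^1}^2$, apply $\na$ and then $\p_t$ to \eqref{cp25} and test with $\na d_t$, following \eqref{np48}--\eqref{np410}. This produces $\|\na u_t\|_{L^2}^2$ on the right-hand side, which is absorbed by combining with the velocity inequality multiplied by a large constant $K$, as in \eqref{np411}. Any term containing $u_t$ without a gradient, such as $\int|u_t||\na d||\na^2 d_t|$, is handled by weights: bound $\|u_t\bar{x}^{-\eta}\|_{L^4}$ by $\|\sqrt{\n}u_t\|_{L^2}+\|\na u_t\|_{L^2}$ using \eqref{cp44}, and control $\|\na d\,\bar{x}^{\eta}\|_{L^4}$ by \eqref{cp78}. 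Finally, Gr\"onwall's inequality applied to the $t$-weighted combined functional yields \eqref{cp08}.
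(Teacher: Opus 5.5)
Your plan follows the paper's overall structure (differentiate in time, test with $u_t$ and $\na d_t$, combine with a large constant, then a $t$-weighted Gr\"onwall argument). It fails, however, in exactly the two places where the far-field vacuum forces weights onto $d$. In both places you put the weight $\bar{x}^{a/2}$ on a norm of $d$ that is \emph{not} bounded uniformly in time.

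\textbf{The term $\int|u_t||\na d||\na^2 d_t|\,dx$ in the $\na d_t$ inequality.} You bound it by $\|u_t\bar{x}^{-\eta}\|_{L^4}\|\na d\,\bar{x}^{\eta}\|_{L^4}\|\na^2 d_t\|_{L^2}$ and control the middle factor by \eqref{cp78}. By \eqref{cp44}, an $L^4$ norm forces $\eta>1/2$. Then \eqref{cp78} only gives $\|\na d\,\bar{x}^{a/2}\|_{L^4}^2\le C(1+\|\na^2 d\,\bar{x}^{a/2}\|_{L^2})$. The data give $\bar{x}^{a/2}\na d_0\in L^2$ but not $\bar{x}^{a/2}\na^2 d_0\in L^2$, so by \eqref{cp007} this factor is only known to be $O(t^{-1/2})$. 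After Young's inequality, an unbounded factor of this size multiplies either $\|\na u_t\|_{L^2}^2$ or $\|\na^2 d_t\|_{L^2}^2$. No fixed multiple $K$ of the velocity inequality can absorb it. Multiplying by $t$ does not help either, since you are left comparing $t^{1/2}\|\na u_t\|^2_{L^2}$ with $t\|\na u_t\|_{L^2}^2$ near $t=0$. The missing idea is to let only time-uniformly bounded norms of $\na d$ appear. Write
$|\na d|=\bar{x}^{-\frac{2a-1}{4}}|\na d\,\bar{x}^{a/2}|^{\frac{2a-1}{2a}}|\na d|^{\frac{1}{2a}}$
and put $u_t\bar{x}^{-(2a-1)/4}$ in $L^{8a}$ via \eqref{cp44}. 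Then only $\|\na d\,\bar{x}^{a/2}\|_{L^2}$ (bounded by \eqref{cp07}) and $\|\na d\|_{L^4}$ enter, as in \eqref{cp817}.

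\textbf{The term $u\cdot\na^2 d$ in $\na d_t$ and $\na^3 d$.} Your symmetric pairing $\|u\bar{x}^{-a/2}\|_{L^4}\|\na^2 d\,\bar{x}^{a/2}\|_{L^4}$ needs $\|\na^3 d\,\bar{x}^{a/2}\|_{L^2}$ via \eqref{cp711}, and that quantity is known only in $L^2(0,T;t\,dt)$. This yields $\int_0^T t\|\na d_t\|_{L^2}^2dt$, but not the two bounds you actually need:
\begin{itemize}
\item the unweighted $\int_0^T\|\na d_t\|^2_{L^2}dt$, which is the initial-layer term produced when you multiply the combined $(u_t,\na d_t)$ inequality by $t$ (you only account for $\int_0^T\|\sqrt{\rho}u_t\|_{L^2}^2dt$);
\item a pointwise bound on $t\|\na^3 d\|_{L^2}^2$.
\end{itemize}
Moreover, \eqref{np413} does not carry over verbatim to $\rr$. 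Its proof uses $\|u\|_{L^4}\|\na^2 d\|_{L^4}$, and, as you note yourself, $u$ is not in $L^4$ here. The fix is the asymmetric split
$\||u||\na^2 d|\|_{L^2}^2\le\|u\bar{x}^{-a/4}\|_{L^8}^2\|\na^2 d\,\bar{x}^{a/2}\|_{L^2}\|\na^2 d\|_{L^4}$.
This involves only $\|\na^2 d\,\bar{x}^{a/2}\|_{L^2}$, which lies in $L^2(0,T)$ by \eqref{cp07} and has $t$ times its square bounded by \eqref{cp007}. It also involves the unweighted $\|\na^2 d\|_{L^4}^2\le C\|\na^2 d\|_{L^2}\|\na^3 d\|_{L^2}$. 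This gives $\int_0^T\|d_t\|_{H^1}^2dt\le C$ and the elliptic bound \eqref{cp822}. With these two repairs, the rest of your outline matches the paper's argument.
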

\begin{proof}
First, it follows from \eqref{gn11}, \eqref{cp01}, \eqref{cp02}, \eqref{cp53}, and H\"older's inequality that
\be\la{cp81}\ba
\|\sqrt{\n} u_{t}\|_{L^{2}}^{2}
\leq & C\|\sqrt{\n}\dot{u}\|_{L^{2}}^{2}+C\|\sqrt{\n}|u||\nabla u|\|_{L^{2}}^{2} \\
\leq & C\|\sqrt{\n}\dot{u}\|_{L^{2}}^{2}+C\|\sqrt{\n} u\|_{L^{6}}^{2} \|\nabla u\|_{L^{3}}^{2} \\
\leq & C\|\sqrt{\n}\dot{u}\|_{L^{2}}^{2}+C(\|\sqrt{\n} u\|_{L^{2}}^{2}
+ \|\nabla u\|_{L^{2}}^{2}) \|\nabla u\|_{L^{2}}^{\frac{4}{3}} \|\nabla^{2} u\|_{L^{2}}^{\frac{2}{3}} \\
\leq & C\|\sqrt{\n}\dot{u}\|_{L^{2}}^{2}+C(1+ \|\nabla^{2} u\|_{L^{2}}^{2}).
\ea\ee
By virtue of \eqref{gn11}, \eqref{cp01}, \eqref{cp02}, \eqref{cp44}, and \eqref{cp78}, it holds that
\be\la{cp082}\ba
\| d_t \|^2_{L^2} & \le C \left( \| u \cdot \na d \|^2_{L^2}
+ \| \na^2 d \|^2_{L^2} + \| \na d \|^4_{L^4} \right) \\
& \le C \left( \| u \bar{x}^{-\frac{a}{2}} \|^2_{L^4} \| \na d \bar{x}^{\frac{a}{2}}\|^2_{L^4} + 1 \right) \\
& \le C \left( \left( \|\sqrt{\n} u\|_{L^{2}}^{2} + \|\nabla u\|_{L^{2}}^{2} \right) \|\nabla^{2}d \bar{x}^{\frac{a}{2}}\|_{L^{2}} + 1 \right) \\
& \le C \left( 1 + \|\nabla^{2}d \bar{x}^{\frac{a}{2}}\|^2_{L^{2}} \right).
\ea\ee
On the other hand, combining \eqref{gn11}, \eqref{cp01}, \eqref{cp02}, and \eqref{cp25}, we obtain
\be\la{cp82}\ba
\|\nabla d_{t}\|_{L^{2}}^{2}
\leq & C \left( \|\nabla^{3} d\|_{L^{2}}^{2}+\||\nabla u| |\nabla d|\|_{L^{2}}^{2}
+ \||u||\nabla^{2}d|\|_{L^{2}}^{2} + \||\nabla d|^{3}\|_{L^{2}}^{2}
+ \||\nabla d||\nabla^{2}d|\|_{L^{2}}^{2} \right) \\
\leq & C \left( \|\nabla^{3} d\|_{L^{2}}^{2}+\|\nabla u\|_{L^{4}}^{2} \|\nabla d\|_{L^{4}}^{2}
+ \|u\bar{x}^{-\frac{a}{4}}\|_{L^{8}}^{2} \|\nabla^{2}d\bar{x}^{\frac{a}{2}}\|_{L^{2}}\|\nabla^{2}d\|_{L^{4}} \right) \\
& + C \left( \|\nabla d\|_{L^{6}}^{6} + \||\nabla d||\nabla^{2}d|\|_{L^{2}}^{2} \right) \\
\leq & C \left( \|\nabla^{3} d\|_{L^{2}}^{2} + \|\nabla^{2} u\|_{L^{2}}^{2}
+ \|\nabla^{2}d\bar{x}^{\frac{a}{2}}\|_{L^{2}}^{2} + 1 \right),
\ea\ee
where in the last inequality we have used the following estimate:
\be\la{cp83}\ba
\| u \bar{x}^{-\frac{a}{4}} \|_{L^{8}}^{4}
\le C \left( \|\sqrt{\n} u\|_{L^{2}}^{2}+\|\nabla u\|_{L^{2}}^{2} \right)^2 \le C,
\ea\ee
owing to \eqref{cp44}, \eqref{cp01} and \eqref{cp02}.

Therefore, we conclude from \eqref{cp81}, \eqref{cp082}, \eqref{cp82}, \eqref{cp02}, \eqref{cp05}, \eqref{cp07}, and \eqref{cp007} that
\be\la{cp84}\ba
\sup_{0\leq t\leq T} \left( t \|d_{t}\|_{L^{2}}^{2} \right)
+ \int_{0}^{T} \left(\|\sqrt{\n} u_{t}\|_{L^{2}}^{2}+\| d_{t}\|_{H^{1}}^{2}\right)dt \leq C.
\ea\ee
Next, differentiating $\eqref{nlc}_2$ with respect to $t$ shows that
\be\la{cp85}\ba
\n u_{tt} +\n u\cdot\nabla u_{t}-\Delta u_{t}= -\n_{t}(u_{t}+u\cdot\nabla u)-\n u_{t}\cdot\nabla u -\nabla P_{t} -\operatorname{div}(\nabla d\odot \nabla d)_{t}.
\ea\ee
Multiplying \eqref{cp85} by $u_t$ and integrating by parts over $\rr$, we derive after using $\eqref{nlc}_1$ that
\be\la{cp86}\ba
\frac{1}{2}\frac{d}{dt} \|\sqrt{\n} u_{t}\|_{L^{2}}^{2} + \|\nabla u_{t}\|_{L^{2}}^{2}
& \leq C \int \n |u| |u_{t}| \left( |\nabla u_{t}|+ |\nabla u |^{2} +|u||\nabla^{2} u|\right) dx \\
& \quad + C\int \n |u|^{2}|\nabla u| |\nabla u_{t}| dx
+ C \int \n |u_{t}|^{2}|\nabla u| dx \\
& \quad + C\int|\nabla d||\nabla d_{t}| |\nabla u_{t}| dx
\triangleq L_{1} + L_{2} + L_{3} + L_{4}.
\ea\ee
Now, we use \eqref{gn11}, \eqref{cp53}, \eqref{cp01}, \eqref{cp02}, and Young's inequality to estimate each term on the right-hand side of \eqref{cp86} as follows:
\be\la{cp87}\ba
L_{1} \leq & C\|\sqrt{\n} u\|_{L^{6}} \|\sqrt{\n} u_{t}\|_{L^{2}}^{\frac{1}{2}}
\|\sqrt{\n} u_{t}\|_{L^{6}}^{\frac{1}{2}} \left( \|\nabla u_{t}\|_{L^{2}}+\|\nabla u\|_{L^{4}}^{2} \right) \\
& + C\|\n^{\frac{1}{4}} u\|_{L^{12}}^{2} \|\sqrt{\n} u_{t}\|_{L^{2}}^{\frac{1}{2}}
\|\sqrt{\n} u_{t}\|_{L^{6}}^{\frac{1}{2}} \|\nabla^{2} u\|_{L^{2}}\\
\leq & C \left( 1 + \|\nabla u\|_{L^{2}} \right) \|\sqrt{\n} u_{t}\|_{L^{2}}^{\frac{1}{2}}
\left( \|\sqrt{\n} u_{t}\|_{L^{2}} +\|\nabla u_{t}\|_{L^{2}} \right)^{\frac{1}{2}}
\left( \|\nabla u_{t}\|_{L^{2}} + \|\nabla^{2}u\|_{L^{2}} \right) \\
\leq & \frac{1}{6} \|\nabla u_{t}\|_{L^{2}}^{2}
+ C \left( \|\sqrt{\n} u_{t}\|_{L^{2}}^{2} + \|\nabla^{2} u\|_{L^{2}}^{2} \right),
\ea\ee

\be\la{cp88}\ba
L_{2} + L_{3} & \leq C \|\sqrt{\n} u\|_{L^{8}}^{2} \|\nabla u\|_{L^{4}}\|\nabla u_{t}\|_{L^{2}}
+ C \|\sqrt{\n} u_{t}\|_{L^{6}}^{\frac{3}{2}} \|\sqrt{\n} u_{t}\|_{L^{2}}^{\frac{1}{2}}\|\nabla u\|_{L^{2}} \\
& \leq C \|\nabla^2 u\|^{\frac{1}{2}}_{L^{2}} \|\nabla u_{t}\|_{L^{2}}
+ C \left( \|\sqrt{\n} u_{t}\|_{L^{2}} +\|\nabla u_{t}\|_{L^{2}} \right)^{\frac{3}{2}}
\|\sqrt{\n} u_{t}\|_{L^{2}}^{\frac{1}{2}} \\
& \leq \frac{1}{6} \|\nabla u_{t}\|_{L^{2}}^{2}
+ C \left( 1+\|\sqrt{\n} u_{t}\|_{L^{2}}^{2} + \|\nabla^{2} u\|_{L^{2}}^{2} \right),
\ea\ee
and that for any $\ep>0$,
\be\la{cp89}\ba
L_{4}\leq C \|\nabla d\|_{L^{4}} \|\nabla d_{t}\|_{L^{4}} \|\nabla u_{t}\|_{L^{2}}
& \leq \frac{1}{6} \|\nabla u_{t}\|_{L^{2}}^{2}
+ C \|\nabla^{2} d\|_{L^{2}} \|\nabla d_{t}\|_{L^{2}}\|\nabla^{2} d_{t}\|_{L^{2}} \\
& \leq \frac{1}{6} \|\nabla u_{t}\|_{L^{2}}^{2}
+ \ep \|\nabla^{2} d_{t}\|_{L^{2}}^{2} + C(\ep) \|\nabla d_{t}\|_{L^{2}}^{2}.
\ea\ee
Substituting \eqref{cp87}--\eqref{cp89} into \eqref{cp86} leads to
\be\la{cp810}\ba
\frac{d}{dt} \|\sqrt{\n} u_{t}\|_{L^{2}}^{2} + \|\nabla u_{t}\|_{L^{2}}^{2}
& \leq \ep \|\nabla^{2} d_{t}\|_{L^{2}}^{2}
+ C(\ep) \left( 1+\|\sqrt{\n} u_{t}\|_{L^{2}}^{2} + \|\nabla^{2} u\|_{L^{2}}^{2} + \|\nabla d_{t}\|_{L^{2}}^{2} \right).
\ea\ee
Differentiating \eqref{cp25} with respect to $t$ gives
\be\la{cp814}\ba
\nabla d_{tt}-\Delta\nabla d_{t} =-\nabla (u\cdot \nabla d)_{t}+\nabla (|\nabla d|^{2}d)_{t}.
\ea\ee
Multiplying \eqref{cp814} by $\na d_t$ and integrating over $\rr$, we obtain
\be\la{cp815}\ba
\frac{1}{2}\frac{d}{dt}\|\nabla d_{t}\|_{L^{2}}^{2} + \|\nabla^{2} d_{t}\|_{L^{2}}^{2}
& \leq C \int |\nabla u_{t}||\nabla d||\nabla d_{t}| dx
+ C \int |\nabla u||\nabla d_{t}|^{2} dx \\
& \quad + C \int |u_{t}||\nabla d| |\nabla^{2} d_{t}| dx
+ C \int |\nabla d|^{2}|d_{t}||\nabla^{2} d_{t}| dx \\
& \quad + C \int |\nabla d||\nabla d_{t}| |\nabla^{2} d_{t}| dx
\triangleq \sum_{i=1}^{5} J_i.
\ea\ee
From \eqref{gn11}, \eqref{cp01}, \eqref{cp02}, \eqref{cp07}, \eqref{cp44}, and Young's inequality, we deduce that
\be\la{cp816}\ba
& J_1+J_2+J_4+J_5 \\
& \le C \|\nabla u_{t}\|_{L^{2}}\|\nabla d_{t}\|_{L^{4}}\|\nabla d\|_{L^{4}}
+ C \|\nabla u\|_{L^{2}}\|\nabla d_{t}\|_{L^{4}}^{2} \\
& \quad + C \| \na^2 d_t \|_{L^2} \| \na d \|^2_{L^8} \| d_t \|_{L^4}
+ C \| \na^2 d_t \|_{L^2} \| \na d \|_{L^4} \| \na d_t \|_{L^4} \\
& \le \frac{1}{2} \|\nabla u_{t}\|_{L^{2}}^{2}
+ \frac{1}{5} \|\nabla^2 d_{t}\|_{L^{2}}^{2}
+ C \| \na d_t \|^2_{L^4} + C \| d_t \|^2_{L^4} \\
& \le \frac{1}{2} \|\nabla u_{t}\|_{L^{2}}^{2}
+ \frac{1}{5} \|\nabla^2 d_{t}\|_{L^{2}}^{2}
+ C \| \na d_t \|_{L^2} \| \na^2 d_t \|_{L^2}
+ C \| d_t \|_{L^2} \| \na d_t \|_{L^2} \\
& \le \frac{1}{2} \|\nabla u_{t}\|_{L^{2}}^{2}
+ \frac{1}{4} \|\nabla^2 d_{t}\|_{L^{2}}^{2}
+ C \| \na d_t \|^2_{L^2} + C \| d_t \|^2_{L^2},
\ea\ee
and
\be\la{cp817}\ba
J_{3} & \leq C \int |u_{t}\bar{x}^{-\frac{2a-1}{4}}| |\nabla d \bar{x}^{\frac{a}{2}}|^{\frac{2a-1}{2a}}
|\nabla d|^{\frac{1}{2a}}|\nabla^{2} d_{t}| dx \\
& \leq C\|u_{t}\bar{x}^{-\frac{2a-1}{4}}\|_{L^{8a}}
\|\nabla d\bar{x}^{\frac{a}{2}}\|_{L^{2}}^{\frac{2a-1}{2a}}
\|\nabla d\|_{L^{4}}^{\frac{1}{2a}} \|\nabla^{2} d_{t}\|_{L^{2}} \\
& \leq \frac{1}{4} \|\nabla^{2}d_{t}\|_{L^{2}}^{2}
+ C \|u_{t}\bar{x}^{-\frac{2a-1}{4}}\|_{L^{8a}}^{2} \\
& \leq \frac{1}{4} \|\nabla^{2}d_{t}\|_{L^{2}}^{2}
+ C \| \sqrt{\n} u_{t} \|^2_{L^{2}}
+ \frac{\hat{C_0}}{2} \|\nabla u_{t}\|_{L^{2}}^{2}.
\ea\ee
Substituting \eqref{cp816} and \eqref{cp817} into \eqref{cp815} shows that
\be\la{cp818}\ba
\frac{d}{dt}\|\nabla d_{t}\|_{L^{2}}^{2} + \|\nabla^{2} d_{t}\|_{L^{2}}^{2}
\le ( \hat{C_0}+1 ) \|\nabla u_{t}\|_{L^{2}}^{2}
+ C \left( \| d_t \|^2_{H^1} + \| \sqrt{\n} u_{t} \|^2_{L^{2}} \right).
\ea\ee
Consequently, multiplying \eqref{cp810} by $\hat{C_0} + 2$
and adding it to \eqref{cp818}, and then choosing $\ep$ sufficiently small, we derive
\be\la{cp819}\ba
& \frac{d}{dt} \left( (\hat{C_0}+2) \|\sqrt{\n} u_{t}\|_{L^{2}}^{2}
+ \| \na d_t \|^2_{L^2} \right)
+ \|\nabla u_{t}\|_{L^{2}}^{2} + \frac{1}{2} \| \na^2 d_t \|^2_{L^2} \\
& \leq C \left( 1+\|\sqrt{\n} u_{t}\|_{L^{2}}^{2} + \|\nabla^{2} u\|_{L^{2}}^{2} + \| d_t \|^2_{H^1} \right).
\ea\ee
Multiplying the above inequality by $t$, integrating over $(0,T)$ and using \eqref{cp05} and \eqref{cp84}, we arrive at
\be\ba\la{cp820}
\sup_{0\leq t\leq T} t \left( \| \sqrt{\n} u_{t} \|_{L^{2}}^{2}
+ \|d_{t}\|_{H^{1}}^{2} \right)
+ \int_{0}^{T} t \left( \|\nabla u_{t}\|_{L^{2}}^{2} + \|\nabla d_{t}\|_{H^{1}}^{2} \right) dt
\leq C.
\ea\ee
Finally, we apply the standard elliptic estimate to \eqref{cp25}
and use \eqref{gn11}, \eqref{cp01}, \eqref{cp02}, \eqref{cp07},
\eqref{cp83}, and H\"older's inequality to obtain
\be\la{cp821}\ba
\|\nabla^{3}d\|_{L^{2}}^{2}
\leq & C \left( \|\nabla d_{t}\|_{L^{2}}^{2} + \||\nabla u||\nabla d|\|_{L^{2}}^{2}
+ \||u||\nabla^{2}d|\|_{L^{2}}^{2} + \||\nabla d|^{3}\|_{L^{2}}^{2}
+ \||\nabla^{2}d||\nabla d|\|_{L^{2}}^{2} \right) \\
\leq & C \left( \|\nabla d_{t}\|_{L^{2}}^{2} + \|\nabla u\|_{L^{4}}^{2} \|\nabla d\|_{L^{4}}^{2}
+ \|u\bar{x}^{-\frac{a}{4}}\|_{L^{8}}^{2} \|\nabla^{2}d\bar{x}^{\frac{a}{2}}\|_{L^{2}}\|\nabla^{2}d\|_{L^{4}} \right) \\
& + C \left( \|\nabla d\|_{L^{6}}^{6} + \| \na d \|^2_{L^4} \| \na^2 d \|^2_{L^4} \right) \\
\leq & C \left( \|\nabla d_{t}\|_{L^{2}}^{2}+ \|\nabla^{2} u\|_{L^{2}}^{2}
+ \|\nabla^{2}d\bar{x}^{\frac{a}{2}}\|_{L^{2}}^{2}
+ \|\nabla^{2}d\|_{L^{4}}^{2} + \| \na^3 d \|_{L^2} + 1 \right) \\
\leq & C \left( \|\nabla d_{t}\|_{L^{2}}^{2}+ \|\nabla^{2} u\|_{L^{2}}^{2}
+ \|\nabla^{2}d\bar{x}^{\frac{a}{2}}\|_{L^{2}}^{2} + \| \na^3 d \|_{L^2} + 1 \right) \\
\leq & \frac{1}{2} \|\nabla^{3}d\|_{L^{2}}^{2}
+ C \left( \|\nabla d_{t}\|_{L^{2}}^{2}+ \|\nabla^{2} u\|_{L^{2}}^{2}
+ \|\nabla^{2}d\bar{x}^{\frac{a}{2}}\|_{L^{2}}^{2} + 1 \right),
\ea\ee
which implies
\be\la{cp822}\ba
\|\nabla^{3}d\|_{L^{2}}^{2}
\le C \left( \|\nabla d_{t}\|_{L^{2}}^{2}+ \|\nabla^{2} u\|_{L^{2}}^{2}
+ \|\nabla^{2}d\bar{x}^{\frac{a}{2}}\|_{L^{2}}^{2} + 1 \right).
\ea\ee
This combined with \eqref{cp003}, \eqref{cp007}, and \eqref{cp820} leads to
\be\la{cp823}\ba
\sup_{0\leq t\leq T} t \|\nabla^{3}d\|_{L^{2}}^{2} \leq C,
\ea\ee
which together with \eqref{cp820} yields \eqref{cp08} and completes the proof of Lemma~\ref{cl8}.
\end{proof}

\section{A priori estimates for the Cauchy problem with non-vacuum far-field density}

In this section, for initial data $(\n_0,u_0,d_0)$ satisfying \eqref{2cpsol1}, we derive some a priori estimates for the strong solutions to the Cauchy problem \eqref{nlc}, \eqref{cztj}, and \eqref{bjtj3} with $\tilde{\n}>0$.
Without loss of generality, we assume that $\mu=1$.

\begin{lemma}\la{bl1}
There exists a positive constant $C$ depending only on $\tilde{\n}$, $s$,
$\| \n_0 - \tilde{\n} \|_{L^s \cap L^\infty}$,
$\| u_0 \|_{H^1}$, $\|d_0-\mathbf{1}\|_{L^2}$, and $\| \na d_0 \|_{H^1}$ such that
\be\la{2cp01}\ba
&\sup_{0\leq t\leq T} \left( \| \n - \tilde{\n} \|_{L^s \cap L^\infty}
+ \| \sqrt{\n} u \|^2_{L^2} + \| \na d \|^2_{L^2} + \|d-\mathbf{1}\|^2_{L^2} \right)\\
&\quad
+ \int_0^T \left( \| \na u \|^2_{L^2} + \| \na^2 d \|^2_{L^2} \right) dt
\leq C,
\ea\ee
and
\be\la{2cp02}\ba
\| u \|_{H^1} \le C \left( 1 + \| \na u \|_{L^2} \right), \quad t \in [0,T].
\ea\ee
\end{lemma}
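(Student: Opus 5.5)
The plan is to follow Lemma~\ref{Nl1} and Lemma~\ref{cl1} almost verbatim, and then prove the Poincar\'e-type bound \eqref{2cp02} separately using the integrability of $\n-\tilde{\n}$.

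\textbf{Density and energy bounds.} Since $\div u=0$, the quantity $\n-\tilde{\n}$ is transported by $u$. Hence $\|\n(t)-\tilde{\n}\|_{L^p}=\|\n_0-\tilde{\n}\|_{L^p}$ for every $p\in[s,\infty]$, and also $0\le\n\le\|\n_0\|_{L^\infty}$.

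Next I multiply $\eqref{nlc}_2$ by $u$ and $\eqref{nlc}_3$ by $-(\Delta d+|\na d|^2 d)$ and add. This gives the energy identity \eqref{npe11} with $\mu=1$, so $\sup_t(\|\sqrt\n u\|_{L^2}^2+\|\na d\|_{L^2}^2)$ and $\int_0^T(\|\na u\|_{L^2}^2+\|\Delta\theta\|_{L^2}^2)\,dt$ are bounded.

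To convert the $\Delta\theta$ bound into a bound on $\na^2 d$, I use Lemma~\ref{jzb} on $\rr$ (which is simply connected). On the whole space, $\|\na^2\theta\|_{L^2}=\|\Delta\theta\|_{L^2}$ by Plancherel, and \eqref{np115} together with \eqref{gn11} and $\|\na\theta\|_{L^2}=\|\na d\|_{L^2}\le C$ gives $\|\na^2 d\|_{L^2}^2\le C\|\Delta\theta\|_{L^2}^2$.

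The bound on $\|d-\mathbf 1\|_{L^2}$ follows exactly as in the proof of \eqref{cpd}: test $\eqref{nlc}_3$ with $d-\mathbf 1$ and apply Gr\"onwall's inequality. This proves \eqref{2cp01}.

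\textbf{The bound \eqref{2cp02}.} This is the main new point. I need to control $\|u\|_{L^2}$ by $\|\sqrt\n u\|_{L^2}$ and $\|\na u\|_{L^2}$ without a lower bound on $\n$. Write
\begin{equation*}
\tilde{\n}\|u\|_{L^2}^2=\int\n|u|^2\,dx+\int(\tilde{\n}-\n)|u|^2\,dx .
\end{equation*}
The second term is bounded by $\|\n-\tilde{\n}\|_{L^{s'}}\|u\|_{L^{2p}}^2$, where I choose $s'\in[s,\infty)$ and $p=s'/(s'-1)$ so that this is a H\"older pairing.

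Lemma~\ref{gn1} with $r=2$ gives $\|u\|_{L^{2p}}^2\le C\|u\|_{L^2}^{2/p}\|\na u\|_{L^2}^{2-2/p}$. Since $2/p<2$, Young's inequality yields
\begin{equation*}
\int(\tilde{\n}-\n)|u|^2\,dx\le\frac{\tilde{\n}}{2}\|u\|_{L^2}^2+C\|\n-\tilde{\n}\|_{L^{s'}}^{s'}\|\na u\|_{L^2}^2 .
\end{equation*}
This requires $u\in L^2\cap D^1$, which holds for the strong solution by \eqref{2cpsol3}. Absorbing the first term and using $\|\sqrt\n u\|_{L^2}\le C$, I get $\|u\|_{L^2}\le C(1+\|\na u\|_{L^2})$, and adding $\|\na u\|_{L^2}$ gives \eqref{2cp02}.

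The only delicate points are choosing the exponent so that the H\"older–Gagliardo–Nirenberg pairing is admissible for a general $s\ge1$, and making sure the whole-space $\na^2 d$ estimate does not need a boundary condition. Taking $s'$ large, which is allowed since $\n-\tilde{\n}\in L^s\cap L^\infty$, handles the first point.
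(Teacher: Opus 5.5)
Your proposal is correct and follows the paper's proof essentially step for step. The steps match: transport of $\n-\tilde{\n}$, the energy argument of Lemmas~\ref{Nl1} and \ref{cl1}, and the same H\"older--Gagliardo--Nirenberg--Young absorption for \eqref{2cp02}; your use of Plancherel, $\|\na^2\theta\|_{L^2}=\|\Delta\theta\|_{L^2}$ on $\rr$, is the natural replacement for the Neumann estimate of Lemma~\ref{tygj}, which the paper's phrase ``adapting the arguments'' leaves implicit. The only other difference is that the paper fixes your exponent as $s'=s+1$, which automatically gives $s'>1$ and hence $2p=2(s+1)/s<\infty$.
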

\begin{proof}
First, from the mass equation $\eqref{nlc}_1$ and the divergence-free condition $\div u = 0$, we conclude that $\n - \tilde{\n}$ satisfies
\be\la{2cp11}\ba
(\n - \tilde{\n})_t + \div ((\n - \tilde{\n}) u) = 0.
\ea\ee
The standard argument in \cite{L1} yields
\be\la{2cp12}\ba
\| \n(t) - \tilde{\n} \|_{L^p} = \| \n_0 - \tilde{\n} \|_{L^p} \quad \text{ for all } p \in [s,\infty]
\text{ and } t \in [0,T].
\ea\ee
Furthermore, by adapting the arguments in Lemmas \ref{Nl1} and \ref{cl1}, we have
\be\la{2cp13}\ba
\sup_{0\leq t\leq T} \left( \| \sqrt{\n} u \|^2_{L^2} + \| \na d \|^2_{L^2} + \|d-\mathbf{1}\|^2_{L^2} \right)
+ \int_0^T \left( \| \na u \|^2_{L^2} + \| \na^2 d \|^2_{L^2} \right) dt
\leq C,
\ea\ee
which together with \eqref{2cp12} gives \eqref{2cp01}.

Next, for any $v \in H^1(\rr)$, we use \eqref{2cp12}, \eqref{gn11}, and Young's inequality to obtain
\be\la{2cp14}\ba
\int |v|^2 dx & \le C \int \n |v|^2 dx + C \int |\n - \tilde{\n}| |v|^2 dx \\
& \le C \| \sqrt{\n} v \|^2_{L^2} + C \| \n - \tilde{\n} \|_{L^{s+1}} \| v \|^2_{ L^{\frac{2(s+1)}{s}} } \\
& \le C \| \sqrt{\n} v \|^2_{L^2}
+ C \| v \|^{\frac{2s}{s+1}}_{L^2} \| \na v \|^{\frac{2}{s+1}}_{L^2} \\
& \le \frac{1}{2} \| v \|^2_{L^2} + C \| \sqrt{\n} v \|^2_{L^2} + C \| \na v \|^2_{L^2}.
\ea\ee
This implies that
\be\la{b6}\ba
\|v\|_{L^2} \leq C\|\sqrt{\rho} v\|_{L^2}+C\|\nabla v\|_{L^2}.
\ea\ee
Combining \eqref{b6} with \eqref{2cp13} leads to \eqref{2cp02} and completes the proof of Lemma~\ref{bl1}.
\end{proof}

By virtue of \eqref{2cp01}, the following estimates can be derived by an argument similar to that in Lemma~\ref{cl2}.

\begin{lemma}\la{bl11}
There exists a positive constant $C$ depending only on $\tilde{\n}$, $s$,
$\| \n_0 - \tilde{\n} \|_{L^s \cap L^\infty}$,
$\| u_0 \|_{H^1}$, and $\| \na d_0 \|_{H^1}$ such that for $m=0,1$,
\be\la{b2}\ba
\sup_{0\leq t\leq T} t^m \left( \| \na u \|^2_{L^2} + \| \na^2 d \|^2_{L^2} \right)
+ \int_0^T t^m \left( \| \sqrt{\n} \dot{u} \|^2_{L^2} + \| \na^3 d \|^2_{L^2} \right) dt \leq C.
\ea\ee
\end{lemma}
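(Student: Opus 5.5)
We follow the proof of Lemma~\ref{cl2} essentially line by line; the only changes come from the different far field. We multiply $\eqref{nlc}_2$ by $\dot u$ and split the result into $I_1,I_2,I_3$ as in \eqref{cp21}. For $I_1$, integration by parts together with \eqref{gn11} gives $\|\nabla u\|_{L^3}^3\le C\|\nabla u\|_{L^2}^2\|\nabla^2u\|_{L^2}$; this inequality only needs $\nabla u\in L^2$, which holds since $u\in H^1$. For $I_2$ we use the $\mathcal H^1$--$\mathcal{BMO}$ duality of Lemma~\ref{hm}, exactly as in \eqref{cp23}; this depends only on $\div u=0$ and $P\in D^1$. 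The term $I_3$ is decomposed as in \eqref{cp24}--\eqref{cp29}. That argument relies only on Gagliardo--Nirenberg bounds for $\nabla d$ and on the bound $\|\nabla d\|_{L^2}\le C$ from \eqref{2cp01}. We then use the Stokes estimate \eqref{qkjste}, which still holds because the system is $-\Delta u+\nabla P=-\rho\dot u-\div(\nabla d\odot\nabla d)$ on $\rr$. Since $\rho\le C$ by \eqref{2cp01}, we have $\|\rho\dot u\|_{L^2}\le C\|\sqrt\rho\dot u\|_{L^2}$. Combining these yields \eqref{cp210} with $A(t)$ defined as in \eqref{cp211}.

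Next we treat the director. We multiply \eqref{cp25} by $-\nabla\Delta d$ and estimate as in \eqref{cp214}, which gives the analogous inequality with $\varepsilon\|\nabla^3d\|^2_{L^2}+\varepsilon\|\sqrt\rho\dot u\|_{L^2}^2$ on the right. The coercivity bound \eqref{cp213} carries over: \eqref{cp212} uses only $\|\nabla d\|_{L^2}\le C$, and on $\rr$ we have $\|\nabla^2 d\|_{L^2}=\|\Delta d\|_{L^2}$. Adding the two inequalities with weight $(1+C_0)$ gives the differential inequality \eqref{cp215}, i.e.
\[
\frac{d}{dt}\Phi+\tfrac12\big(\|\sqrt\rho\dot u\|_{L^2}^2+\|\nabla\Delta d\|_{L^2}^2\big)\le C\big(\|\nabla u\|_{L^2}^2+\|\nabla^2 d\|_{L^2}^2\big)\Phi,
\]
where $\Phi=A+(1+C_0)\|\Delta d\|_{L^2}^2$ is equivalent to $\|\nabla u\|_{L^2}^2+\|\nabla^2d\|_{L^2}^2$.

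By \eqref{2cp01}, the coefficient $\|\nabla u\|_{L^2}^2+\|\nabla^2 d\|_{L^2}^2$ is integrable in time. Gr\"onwall's inequality then gives the case $m=0$, with the initial value $\Phi(0)$ controlled by $\|u_0\|_{H^1}$ and $\|\nabla d_0\|_{H^1}$. For $m=1$ we multiply by $t$. The resulting extra term $\Phi$ on the right is integrable by \eqref{2cp01}, and a second application of Gr\"onwall's inequality finishes the proof. Finally, $\|\nabla^3 d\|_{L^2}=\|\nabla\Delta d\|_{L^2}$ on $\rr$.

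The only point requiring care is to confirm that nothing in Lemma~\ref{cl2} used $\rho\in L^1$ or the vacuum far field. The estimate $\|\rho\|_{L^2}$ appears only later, in \eqref{cp317}, and not in this lemma. Everything used here depends on $\|\rho\|_{L^\infty}$, $\|\sqrt\rho u\|_{L^2}$, $\|\nabla d\|_{L^2}$ and the time integrals in \eqref{2cp01}. Consequently the constants depend only on the listed quantities.
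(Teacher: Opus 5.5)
Your proposal is correct and takes the same route as the paper, which simply asserts that \eqref{b2} follows from \eqref{2cp01} by the argument of Lemma~\ref{cl2}. The paper leaves implicit that every step of that argument uses only $\|\rho\|_{L^\infty}$ and the energy bounds, never $\rho\in L^1$; you checked this for the $\mathcal{H}^1$--$\mathcal{BMO}$ bound on $I_2$, for the Stokes estimate \eqref{qkjste} with $\|\rho\dot u\|_{L^2}\le C\|\sqrt{\rho}\dot u\|_{L^2}$, for the coercivity \eqref{cp213}, and for the two Gr\"onwall steps.
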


\begin{lemma}\la{bl2}
There exists a positive constant $C$ depending only on $\tilde{\n}$, $s$,
$\| \n_0 - \tilde{\n} \|_{L^s \cap L^\infty}$,
$\| u_0 \|_{H^1}$, and $\| \na d_0 \|_{H^1}$ such that for $m=1,2$,
\be\ba\la{b3}
\sup_{0\leq t\leq T} t^m \left( \| \sqrt{\n} \dot{u} \|^2_{L^2}
+ \| |\na^2 d| |\na d| \|^2_{L^2} \right)
+ \int_0^T t^m \left( \| \na \dot{u} \|^2_{L^2} + \| |\Delta \na d| |\na d| \|^2_{L^2} \right) dt
\leq C,
\ea\ee
and
\be\ba\la{b4}
\sup_{0\leq t\leq T} t^m \left( \| \na^2 u \|^2_{L^2}
+ \| \na P \|^2_{L^2} \right) \leq C.
\ea\ee
\end{lemma}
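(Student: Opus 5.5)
We will follow the proof of Lemma~\ref{cl3} essentially line by line. The only genuinely new ingredients are the bounds on $\|P\|_{L^4}$ and $\|\nabla u\|_{L^4}$. In the vacuum case these relied on $\|\rho\|_{L^2}\le C$, which now fails because $\rho\to\tilde{\rho}>0$ at infinity.

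\emph{Step 1: the differential inequalities.} We apply $\partial_t+u\cdot\nabla$ to $\eqref{nlc}_2$ to obtain \eqref{cp31}, multiply by $\dot u^j$, and treat $N_1+N_2$ as in \cite{LSZ} and $N_3$ as in \eqref{cp34}. This gives \eqref{cp35} verbatim; only integration by parts, Lemma~\ref{hm}, and $\mathrm{div}\,u=0$ are used there. Next we form $\widetilde{\nabla}d(a_1,a_2)$ and repeat the computation leading to \eqref{cp38}. Using the polar representation from Lemma~\ref{jzb}, valid on $\mathbb{R}^2$, we recover the equivalence \eqref{cp311} between $B_1(t)$ and $\||\nabla^2 d||\nabla d|\|_{L^2}^2$. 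The bound \eqref{ch6} needs only $\|\nabla d\|_{L^2}\le C$, which \eqref{2cp01} supplies. Setting $B_2(t)$ as in \eqref{cp313}, we use the Stokes estimate \eqref{qkjste}, valid for any $\rho\in L^\infty$, to get the two-sided bound \eqref{cp315}. It relies only on $\|\nabla u\|_{L^2}$, which \eqref{b2} controls.

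\emph{Step 2: the main obstacle.} We must replace \eqref{cp317}. There, $\|\rho\dot u\|_{L^{4/3}}\le\|\rho\|_{L^2}\|\sqrt\rho\dot u\|_{L^2}$ was used, and this is unavailable now. We plan to estimate instead
\begin{equation*}
\|P\|_{L^4}^4+\|\nabla u\|_{L^4}^4\le C\big(\|P\|_{L^2}^2+\|\nabla u\|_{L^2}^2\big)\big(\|\nabla P\|_{L^2}^2+\|\nabla^2 u\|_{L^2}^2\big),
\end{equation*}
using \eqref{gn11} together with a bound on $\|P\|_{L^2}$. For that bound, take the divergence of the momentum equation: $\Delta P=\mathrm{div}(-\rho\dot u-\mathrm{div}(\nabla d\odot\nabla d))$. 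This gives $\|P\|_{L^2}\le C\|\rho\dot u\|_{H^{-1}}+C\|\nabla d\|_{L^4}^2$. We bound $\|\rho\dot u\|_{H^{-1}}$ via $\rho\dot u=(\rho u)_t+\mathrm{div}(\rho u\otimes u)$. A simpler alternative is the identity $\|\nabla u\|_{L^2}+\|P\|_{L^2}\le C(\|\rho\dot u\|_{L^{1}\cap L^2}^{\cdot}+\cdots)$, but we will first try the cleaner route of writing $P=(-\Delta)^{-1}\mathrm{div}(\rho\dot u)+R_iR_j(\partial_i d\cdot\partial_j d)$ and using Lemma~\ref{hm}(ii). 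The $\mathcal{BMO}$ and $\mathcal{H}^1$ pairing in \eqref{cp314} only needs $\|\nabla P\|_{L^2}$. So we keep $\|P\|_{L^4}\le C\|P\|_{L^2}^{1/2}\|\nabla P\|_{L^2}^{1/2}$, where $\|P\|_{L^2}$ is controlled by $\|\nabla u\|_{L^2}$, $\|\nabla d\|_{L^4}^2$, and the energy.

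Should the $H^{-1}$ bound prove awkward, we instead use the Poincar\'e-type inequality \eqref{b6}. We apply it to $\dot u$: $\|\dot u\|_{L^2}\le C(\|\sqrt\rho\dot u\|_{L^2}+\|\nabla\dot u\|_{L^2})$. Then $\|\rho\dot u\|_{L^{4/3}}$ can be replaced by $\|\rho\dot u\|_{L^2}$ together with the $L^2$-Stokes estimate for the $\nabla u$ term. The extra $\|\nabla\dot u\|_{L^2}$ contribution is absorbed by the dissipation after multiplying by a small constant. In either route the target is an inequality of the form \eqref{cp316}. Its right-hand side must be $C(\|\sqrt\rho\dot u\|_{L^2}^2+\|\nabla u\|_{L^2}^2+\|\nabla^2 d\|_{L^2}^2)(B_2+C\|\nabla u\|_{L^2}^4)+C\|\nabla^2 d\|_{L^2}^2\|\nabla^3 d\|_{L^2}^2$, so that all the coefficients are integrable in time by \eqref{b2} and \eqref{2cp01}.

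\emph{Step 3: closing.} We multiply by $t^m$ for $m=1,2$ and estimate $mt^{m-1}B_2$ via \eqref{cp320} and \eqref{b2}. Gr\"onwall's inequality then yields \eqref{b3}. Finally, \eqref{qkjste} gives \eqref{cp321}, and hence \eqref{b4}.
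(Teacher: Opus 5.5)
\textbf{Gap in Step 2.} You correctly isolate the obstacle: the term $C\|P\|_{L^4}^4$ that the \cite{LSZ}-type treatment of $N_1+N_2$ in \eqref{cp33} produces. It comes from $\int P\,\partial_j\dot u^i\partial_iu^j\,dx$ and $\int P\,\partial_ju^k\partial_ku^i\partial_iu^j\,dx$. Neither of your fixes controls it.

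Route A rests on the claim that $\|P\|_{L^2}$ is bounded by $\|\na u\|_{L^2}$, $\|\na d\|_{L^4}^2$ and the energy, and this is not true. After the Leray projection removes $\tilde\rho u_t$, the pressure still contains the piece $\Delta^{-1}\mathrm{div}\big((\rho-\tilde\rho)\dot u\big)$, which depends on the acceleration. Its $L^2$ norm is essentially an $\dot H^{-1}(\rr)$ norm of $(\rho-\tilde\rho)\dot u$. In two dimensions no $L^p$ norm controls this: by scaling, the order $-1$ operator $\Delta^{-1}\mathrm{div}$ could map $L^p\to L^2$ only at $p=1$, and that endpoint fails. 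The identity $\rho\dot u=(\rho u)_t+\mathrm{div}(\rho u\otimes u)$ does not help at a fixed time because of $(\rho u)_t$. Lemma~\ref{hm}(ii) gives only $\mathcal{BMO}$, not $L^2$ or $L^4$.

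Route B fails too. \eqref{b6} controls $\|\dot u\|_{L^2}$, but $L^2(\rr)\not\subset L^{4/3}(\rr)$, so $\|\rho\dot u\|_{L^{4/3}}$ (needed for $\|P\|_{L^4}\le C\|\na P\|_{L^{4/3}}$) stays uncontrolled. The $L^2$-Stokes estimate only gives $\|\na P\|_{L^2}$, which again needs $\|P\|_{L^2}$ to reach $L^4$. Moreover, any bound of the form $\|P\|_{L^4}\lesssim\cdots+\|\na\dot u\|_{L^2}$ enters $\int P\,\partial_j\dot u^i\partial_iu^j\,dx$ as $\|\na\dot u\|_{L^2}^2\|\na u\|_{L^4}$. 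That cannot be absorbed into $\frac12\|\na\dot u\|_{L^2}^2$ without smallness. Interior vacuum is also allowed, so $\sqrt\rho\,\dot u$ gives no local control of $\dot u$.

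\textbf{The missing idea is to never produce $\|P\|_{L^4}$.} Write
$M_2=-\int\big(u^j_t\partial_t\partial_jP+u^i\partial_iu^j\partial_t\partial_jP+\dot u^ju^i\partial_i\partial_jP\big)\,dx$.
Take the time derivative off $P$ to get $\frac{d}{dt}\int P\,\partial_ju^i\partial_iu^j\,dx$. This goes into $B_2$ and is controlled as in \eqref{cp314}, which needs only $\|\na P\|_{L^2}$. In the remaining terms, integrate by parts so that only $\na P$ appears.

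\begin{itemize}
\item By Lemma~\ref{hm}, $\int\dot u^i\partial_iu^j\partial_jP\,dx\le C\|\dot u\|_{\mathcal{BMO}}\|\partial_iu\cdot\na P\|_{\mathcal{H}^1}\le C\|\na\dot u\|_{L^2}\|\na u\|_{L^2}\|\na P\|_{L^2}$.
\item The cubic terms are bounded by $\|u\|_{L^6}\|\na u\|_{L^6}^2\|\na P\|_{L^2}$, using $\|u\|_{L^2}\le C$ from \eqref{b6}. This is where the non-vacuum far field is genuinely used.
\end{itemize}

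The price is a superlinear term $C\|\na u\|_{L^2}^{2/3}\big(B_2+\tilde C\|\na u\|_{L^2}^4\big)^{4/3}$ in the differential inequality. So Step 3 cannot be a plain Gr\"onwall argument. After multiplying by $t^m$, H\"older's inequality and \eqref{b2} bound the time integral of this term by $C\sup_t\big(t^m(B_2+\tilde C\|\na u\|_{L^2}^4)\big)^{2/3}$. That quantity is then absorbed into the left-hand side, which also keeps the constant independent of $T$.
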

\begin{proof}
First, applying the operator $\dot{u}^j[\p_t + u \cdot \na]$ to $\eqref{nlc}^{j}_{2}$, summing with respect to $j$, and integrating the resulting equation over $\rr,$ one obtains
\be\la{b10}\ba
& \frac{1}{2}\frac{d}{dt} \| \sqrt{\n} \dot{u} \|_{L^{2}}^{2}
+ \|\nabla \dot{u}\|_{L^{2}}^{2} \\
& = - \int \dot{u}^{j} \left[\partial_{i}(\partial_{i} u\cdot\nabla u^{j})
+ \operatorname{div} (\partial_{i} u\partial_{i} u^{j})\right] dx
-\int \dot{u}^{j}(\partial_{t}\partial_{j} P+u\cdot\nabla\partial_{j} P) dx \\
& \quad - \left(\int \dot{u}^{j} \partial_{t}\partial_{i}(\partial_{i}d\cdot\partial_{j}d) dx
+\int \dot{u}^{j} u\cdot\nabla\partial_{i}(\partial_{i}d\cdot\partial_{j}d)  dx \right) \\
& \triangleq M_1 + M_2 + M_3.
\ea\ee
It follows from integration by parts and Young's inequality that
\begin{equation}\label{b11}
\begin{aligned}
M_1&\leq \frac{1}{8}\|\nabla \dot{u}\|^2_{L^2}+C\|\nabla u\|^4_{L^4}.
\end{aligned}
\end{equation}
By virtue of \eqref{qkjste} and \eqref{2cp01}, we have
\be\la{2cp21}\ba
\|\nabla^2 u\|_{L^2} + \|\nabla P\|_{L^2}
\leq & C \left( \|\n \dot{u}\|_{L^2} + \||\nabla d||\nabla^2 d|\|_{L^2} \right) \\
\leq & C \left( \| \sqrt{\n} \dot{u}\|_{L^2} + \||\nabla d||\nabla^2 d|\|_{L^2} \right).
\ea\ee
Integration by parts combined with \eqref{2cp21} leads to
\be\la{2cp22}\ba
M_2= & -\int\left(u_t^j \partial_t \partial_j P+u^i \partial_i u^j \partial_t \partial_j P+\dot{u}^j u^i \partial_i \partial_j P\right) d x \\
= & -\frac{d}{d t} \int u^i \partial_i u^j \partial_j P d x+\int\left(\partial_t u^i \partial_i u^j \partial_j P+u^i \partial_t \partial_i u^j \partial_j P\right) d x \\
& +\int\left(\partial_j \dot{u}^j u^i \partial_i P+\dot{u}^j \partial_j u^i \partial_i P\right) d x \\
= & -\frac{d}{d t} \int u^i \partial_i u^j \partial_j P d x+2 \int\left(\dot{u}^i \partial_i u^j \partial_j P-u^l \partial_l u^i \partial_i u^j \partial_j P\right) d x \\
& +\int\left(\partial_j u^l \partial_l u^j u^i \partial_i P+\dot{u}^j \partial_j u^i \partial_i P\right) d x \\
\leq & \frac{d}{d t} \int P \partial_j u^i \partial_i u^j dx
+ C\|\dot{u}\|_{\mathcal{BMO}}\|\nabla u\|_{L^2}\|\nabla P\|_{L^2}
+ C \|u\|_{L^{6}}\|\nabla u\|_{L^6}^2\|\nabla P\|_{L^2} \\
\leq & \frac{d}{d t} \int P \partial_j u^i \partial_i u^j dx
+ C\|\na \dot{u}\|_{L^2}\|\nabla u\|_{L^2}\|\nabla P\|_{L^2}
+ C \|u\|_{L^{6}}\|\nabla u\|_{L^6}^2\|\nabla P\|_{L^2} \\
\leq & \frac{d}{dt} \int P \partial_j u^i \partial_i u^j dx
+ \varepsilon\|\nabla \dot{u}\|_{L^2}^2
+ C \|\nabla u\|_{L^2}^2 \left( \| \sqrt{\n} \dot{u}\|_{L^2} + \||\nabla d||\nabla^2 d|\|_{L^2} \right)^2 \\
& + C \| \na u \|_{L^2}^{\frac{2}{3}} \left( \| \sqrt{\n} \dot{u} \|_{L^2} + \| |\na d| |\na^2 d| \|_{L^2} \right)^{\frac{8}{3}},
\ea\ee
where we have used the following estimate:
\be\nonumber\ba
\|u\|_{L^{6}}\|\nabla u\|_{L^6}^2 \|\nabla P\|_{L^2}
& \le C \| u \|_{L^2}^{\frac{2}{3}} \| \na^2 u \|_{L^2}^{\frac{1}{3}}
\| \na u \|_{L^2}^{\frac{2}{3}} \| \na^2 u \|_{L^2}^{\frac{4}{3}} \|\nabla P\|_{L^2} \\
& \le C \| \na u \|_{L^2}^{\frac{2}{3}} \| \na^2 u \|_{L^2}^{\frac{5}{3}} \|\nabla P\|_{L^2} \\
& \le C \| \na u \|_{L^2}^{\frac{2}{3}} \left( \| \sqrt{\n} \dot{u} \|_{L^2} + \| |\na d| |\na^2 d| \|_{L^2} \right)^{\frac{8}{3}}.
\ea\ee
This follows from \eqref{2cp21} and the Gagliardo-Nirenberg inequality.

From \eqref{2cp22} and Young's inequality, we conclude that
\begin{equation}\label{b12}
\begin{aligned}
M_2\leq & \frac{d}{dt} \int \partial_j u^k \partial_k u^j  P dx
+ \frac{1}{8}\|\nabla \dot{u}\|_{L^2}^2
+ C \|\nabla u\|_{L^2}^6 \\
& + C \| \na u \|_{L^2}^{\frac{2}{3}} \left( \| \sqrt{\n} \dot{u} \|_{L^2} + \| |\na d| |\na^2 d| \|_{L^2} \right)^{\frac{8}{3}}.
\end{aligned}
\end{equation}
Similar to the derivation of \eqref{cp34}, we arrive at
\be\la{b13}\ba
M_3 \leq & \frac{1}{8} \|\nabla \dot{u}\|_{L^{2}}^{2}
+ \hat{C}_1 \| |\Delta \nabla d| |\nabla d| \|^2_{L^{2}}
+ C \|\nabla u\|_{L^{4}}^{4} + C \| \nabla d \|_{L^{8}}^{8}
+ C \| \na^2 d \|^4_{L^4}.
\ea\ee
Note that \eqref{gn11}, \eqref{2cp21}, and Young's inequality ensure
\be\la{2cp23}\ba
\|\nabla u\|^4_{L^4}\leq & C\| \nabla u\|^{2}_{L^2}\|\nabla^2 u\|_{L^2}^{2} \\
\leq & C\|\nabla u\|^{2}_{L^2} \left( \| \sqrt{\n} \dot{u} \|_{L^2} + \| |\na d| |\na^2 d| \|_{L^2} \right)^{2} \\
\le & C \| \na u \|_{L^2}^6 + C \| \na u \|_{L^2}^{\frac{2}{3}} \left( \| \sqrt{\n} \dot{u} \|_{L^2} + \| |\na d| |\na^2 d| \|_{L^2} \right)^{\frac{8}{3}}.
\ea\ee
Combining \eqref{b10}, \eqref{b11}, \eqref{2cp22}, \eqref{b12}, \eqref{b13}, and \eqref{2cp23}, we obtain
\be\la{b14}\ba
& \frac{d}{dt} \left( \frac{1}{2} \| \sqrt{\rho} \dot{u} \|_{L^{2}}^{2} 
- \int P\partial_{j} u^{i}\partial_{i}u^{j} dx \right)
+ \frac{1}{2} \|\nabla \dot{u}\|_{L^{2}}^{2}
- \hat{C}_1 \| |\Delta \nabla d| |\nabla d| \|^2_{L^{2}} \\
& \le C \| \nabla d \|_{L^{8}}^{8} + C\| \na^2 d \|^4_{L^4}
+ C \| \na u \|_{L^2}^6 + C \| \na u \|_{L^2}^{\frac{2}{3}} \left( \| \sqrt{\n} \dot{u} \|_{L^2} + \| |\na d| |\na^2 d| \|_{L^2} \right)^{\frac{8}{3}}.
\ea\ee
Moreover, following a procedure similar to \eqref{cp36}--\eqref{cp312}, we have
\be\la{b15}\ba
&\frac{d}{dt} B_1(t) + \| |\Delta \nabla d| |\nabla d| \|^2_{L^{2}}
\leq C ( \|\nabla u\|_{L^{4}}^{4} + \| \nabla d \|_{L^{8}}^{8} + \| \na^2 d \|^4_{L^4} ),
\ea\ee
where $B_1(t)$ is defined as in \eqref{cp310} and satisfies
\be\la{2cp24}\ba
C^{-1}\||\nabla^2 d||\nabla d|\|^2_{L^2}
\le B_1(t) \le C \||\nabla^2 d||\nabla d|\|^2_{L^2}.
\ea\ee
We set
\be\la{2cp25}\ba
D_1(t) \triangleq  \| \sqrt{\rho} \dot{u} \|_{L^{2}}^{2}+\||\nabla^2 d||\nabla d|\|^2_{L^2},
\ea\ee
and
\be\la{b16}\ba
D_2(t) \triangleq \frac{1}{2} \| \sqrt{\rho} \dot{u} \|_{L^{2}}^{2}
- \int P\partial_{j} u^{i}\partial_{i}u^{j} dx
+ ( \hat{C}_1 + 1 ) B_1(t).
\ea\ee
By virtue of $\eqref{qkjste}$, \eqref{2cp24}, Lemma~\ref{hm}, and Young's inequality, we have
\be\la{b17}\ba
\left|\int P \partial_{j} u^{i} \partial_{i} u^{j} dx\right|
\leq & C \|P\|_{\mathcal{BMO}} \|\partial_{i}u\cdot\nabla u^{i}\|_{\mathcal{H}^{1}}
\leq C\|\nabla P\|_{L^{2}}\|\nabla u\|_{L^{2}}^{2} \\
\leq & C \left( \| \sqrt{\n} \dot{u}\|_{L^{2}} 
+\||\nabla^{2}d||\nabla d|\|_{L^{2}} \right) \|\nabla u\|_{L^{2}}^{2} \\
\leq & \frac{1}{4}\| \sqrt{\n} \dot{u}\|_{L^{2}}^{2} 
+ \frac{1}{4} B_1(t) + \tilde{C} \|\nabla u\|_{L^{2}}^{4},
\ea\ee
which together with \eqref{b16} yields
\be\la{b18}\ba
C^{-1} D_1(t) - \tilde{C} \| \na u \|^4_{L^2}
\le D_2(t) \le C \left( D_1(t) + \| \na u \|^4_{L^2} \right).
\ea\ee
Next, multiplying \eqref{b15} by $\hat{C}_1 + 1$, adding the result to \eqref{b14}, and using \eqref{b18}, we obtain
\begin{equation}\label{b19}
\begin{aligned}
& \frac{d}{dt} D_2(t) + \frac{1}{2} \|\nabla \dot{u}\|_{L^{2}}^{2}
+ \| |\Delta \nabla d| |\nabla d| \|^2_{L^{2}} \\
& \le C \| \nabla d \|_{L^{8}}^{8} + C\| \na^2 d \|^4_{L^4}
+ C \| \na u \|_{L^2}^6 + C \| \na u \|_{L^2}^{\frac{2}{3}} \left( \| \sqrt{\n} \dot{u} \|_{L^2} + \| |\na d| |\na^2 d| \|_{L^2} \right)^{\frac{8}{3}} \\
& \le C \|\nabla^{2} d\|_{L^{2}}^{6}
+ C \|\nabla^{2}d\|_{L^{2}}^{2}\|\nabla^{3} d\|_{L^{2}}^{2}
+ C \| \na u \|_{L^2}^6 + C \| \na u \|_{L^2}^{\frac{2}{3}} \left( D_2(t) + \tilde{C} \| \na u \|_{L^2}^4 \right)^{\frac{4}{3}},
\end{aligned}
\end{equation}
where in the second inequality we have used the following estimates:
\be\nonumber\ba
\| \nabla d \|_{L^{8}}^{8} + \| \na^2 d \|^4_{L^4}
& \leq C \|\nabla d\|_{L^{2}}^{2} \|\nabla^{2} d\|_{L^{2}}^{6}
+ C \|\nabla^{2}d\|_{L^{2}}^{2}\|\nabla^{3} d\|_{L^{2}}^{2} \\
& \leq C \|\nabla^{2} d\|_{L^{2}}^{6}
+ C \|\nabla^{2}d\|_{L^{2}}^{2}\|\nabla^{3} d\|_{L^{2}}^{2},
\ea\ee
due to \eqref{gn11} and \eqref{2cp01}.

Multiplying \eqref{b19} by $t^m$ for $m=1,2$ and using \eqref{b2}, we arrive at
\be\la{b192}\ba
& \frac{d}{dt} ( t^m D_2(t) ) + t^m \left( \frac{1}{2} \|\nabla \dot{u}\|_{L^{2}}^{2}
+ \| |\Delta \nabla d| |\nabla d| \|^2_{L^{2}} \right) \\
& \le m t^{m-1} D_2(t) + C \| \na^2 d \|_{L^2}^2 + C t \| \na^3 d \|_{L^2}^2 + C \| \na u \|_{L^2}^2 \\
& \quad + C t^m \| \na u \|_{L^2}^{\frac{2}{3}} \left( D_2(t) + \tilde{C} \| \na u \|_{L^2}^4 \right)^{\frac{4}{3}}.
\ea\ee
By virtue of \eqref{gn11}, \eqref{b2}, \eqref{b18}, \eqref{2cp25}, and H\"older's inequality, it holds that
\be\la{b20}\ba
D_2(t) & \le C \left( \| \sqrt{\rho} \dot{u} \|_{L^{2}}^{2}
+ \| |\na^2 d| |\na d| \|^2_{L^2} + \| \na u \|^4_{L^2} \right) \\
& \le C \left( \| \sqrt{\rho} \dot{u} \|_{L^{2}}^{2}
+ \| \na^2 d \|^2_{L^4}  \| \na d \|^2_{L^4} + \| \na u \|^4_{L^2} \right) \\
& \le C \left( \| \sqrt{\rho} \dot{u} \|_{L^{2}}^{2}
+ \| \na^2 d \|^2_{L^2} \| \na^3 d \|_{L^2} \| \na d \|_{L^2}+ \| \na u \|^4_{L^2} \right) \\
& \le C \left( \| \sqrt{\rho} \dot{u} \|_{L^{2}}^{2}
+ \| \na^2 d \|^2_{L^2} + \| \na^3 d \|^2_{L^2} + \| \na u \|^4_{L^2} \right),
\ea\ee
which together with \eqref{2cp01} and \eqref{b2} gives
\be\la{2cp26}\ba
\int_0^T \left( D_2(t) + t D_2(t) \right) dt \le C.
\ea\ee
Integrating \eqref{b192} over $(0,T)$ and using \eqref{2cp26}, \eqref{2cp01}, and \eqref{b2}, we derive
\be\la{2cp27}\ba
& \sup_{0 \le t \le T} ( t^m D_2(t) ) + \int_0^T t^m \left( \frac{1}{2} \|\nabla \dot{u}\|_{L^{2}}^{2}
+ \| |\Delta \nabla d| |\nabla d| \|^2_{L^{2}} \right) dt \\
& \le C + C \int_0^T t^m \| \na u \|_{L^2}^{\frac{2}{3}} \left( D_2(t) + \tilde{C} \| \na u \|_{L^2}^4 \right)^{\frac{4}{3}} dt \\
& \le C + C \sup_{0 \le t \le T} \left( t^m \left( D_2(t) + \tilde{C} \| \na u \|_{L^2}^4 \right) \right)^{\frac{2}{3}}
\int_0^T t^\frac{m}{3} \| \na u \|_{L^2}^{\frac{2}{3}}
\left( D_2(t) + \tilde{C} \| \na u \|_{L^2}^4 \right)^{\frac{2}{3}} dt \\
& \le C + C \sup_{0 \le t \le T} \left( t^m \left( D_2(t) + \tilde{C} \| \na u \|_{L^2}^4 \right) \right)^{\frac{2}{3}} \\
& \le \frac{1}{2} \sup_{0 \le t \le T} \left( t^m D_2(t) \right) + C,
\ea\ee
where we have used the following estimate:
\be\la{2cp28}\ba
& \int_0^T t^\frac{m}{3} \| \na u \|_{L^2}^{\frac{2}{3}}
\left( D_2(t) + \tilde{C} \| \na u \|_{L^2}^4 \right)^{\frac{2}{3}} dt \\
& \le \left( \int_0^T \| \na u \|_{L^2}^{2} dt \right)^{\frac{1}{3}}
\left( \int_0^T t^\frac{m}{2}\left( D_2(t) + \tilde{C} \| \na u \|_{L^2}^4 \right) dt \right)^{\frac{2}{3}} \\
& \le C \left( \int_0^T (1+t)\left( D_2(t) + \tilde{C} \| \na u \|_{L^2}^4 \right) dt \right)^{\frac{2}{3}} \\
& \le C,
\ea\ee
owing to \eqref{b18}, \eqref{2cp01}, \eqref{b2}, and \eqref{2cp26}.
Hence, combining \eqref{2cp27}, \eqref{2cp25}, \eqref{b18}, and \eqref{b2} yields \eqref{b3}.

Furthermore, from \eqref{qkjste}, we have
\be\nonumber\ba
\| \na^2 u \|^2_{L^2} + \| \na P \|^2_{L^2}
\le C \left( \| \sqrt{\n} \dot{u}\|_{L^2}^2 + \||\nabla d||\nabla^{2}d|\|_{L^2}^2 \right),
\ea\ee
which together with \eqref{b3} implies \eqref{b4} and completes the proof of Lemma~\ref{bl2}.
\end{proof}

\begin{lemma}\la{bl3}
There exists a positive constant $C$ depending only on $T$, $\tilde{\n}$, $s$, $q$, $\| \n_0 - \tilde{\n} \|_{L^s \cap L^\infty}$,
$\| \na \rho_0 \|_{L^{q}}$,
$\| u_0 \|_{H^1}$, and $\| \na d_0 \|_{H^1}$ such that
\be\ba\la{b21}
\sup_{0\leq t\leq T} \| \na \rho \|_{L^{q}}
& + \int_0^T \left( \|\nabla^{2} u\|_{L^{2}}^{2} + \|\nabla P\|_{L^{2}}^{2}
+ \|\nabla^{2} u\|_{L^{q}}^{\frac{q+1}{q}} + \|\nabla P\|_{L^{q}}^{\frac{q+1}{q}} \right) dt \\
& + \int_0^T t \left( \|\nabla^{2} u\|_{L^{q}}^{2} + \|\nabla P\|_{L^{q}}^{2} \right) dt
\leq C.
\ea\ee
\end{lemma}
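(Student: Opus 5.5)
The argument follows the proof of Lemma~\ref{cl5}. The only change is that the weighted estimate \eqref{cp53} is replaced by the Poincar\'e-type inequality \eqref{b6}.

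\textbf{Density transport and the Lipschitz bound.} From $\eqref{nlc}_1$ and $\div u=0$, the gradient $\na\n=\na(\n-\tilde{\n})$ satisfies
\[
\frac{d}{dt}\|\na\n\|_{L^q}\le C\|\na u\|_{L^\infty}\|\na\n\|_{L^q}.
\]
So everything reduces to showing $\int_0^T\|\na u\|_{L^\infty}dt\le C$, followed by Gr\"onwall's inequality. As in \eqref{cp52}, the Gagliardo--Nirenberg inequality and the Stokes estimate \eqref{qkjste} give
\[
\|\na u\|_{L^\infty}\le C\|\na u\|_{L^2}^{\frac{q-2}{2(q-1)}}\|\na^2u\|_{L^q}^{\frac{q}{2(q-1)}},
\qquad
\|\na^2u\|_{L^q}+\|\na P\|_{L^q}\le C\big(\|\n\dot u\|_{L^q}+\||\na d||\na^2 d|\|_{L^q}\big).
\]
Here $\|\na u\|_{L^2}$ is bounded by \eqref{b2}.

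\textbf{Bounding $\|\n\dot u\|_{L^q}$.} Since $\n\le C$ by \eqref{2cp01}, and $\dot u\in H^1$ because \eqref{b6} yields $\|\dot u\|_{L^2}\le C(\|\sqrt{\n}\dot u\|_{L^2}+\|\na\dot u\|_{L^2})$, the two-dimensional Gagliardo--Nirenberg inequality gives
\[
\|\n\dot u\|_{L^q}\le C\|\dot u\|_{L^2}^{2/q}\|\na\dot u\|_{L^2}^{1-2/q}
\le C\|\sqrt{\n}\dot u\|_{L^2}+C\|\sqrt{\n}\dot u\|_{L^2}^{2/q}\|\na\dot u\|_{L^2}^{1-2/q}.
\]
We then integrate in time, using $\int_0^T\|\sqrt{\n}\dot u\|_{L^2}^2dt\le C$ from \eqref{b2} together with $\sup_t t\|\sqrt{\n}\dot u\|_{L^2}^2\le C$ and $\int_0^T t\|\na\dot u\|_{L^2}^2dt\le C$ from \eqref{b3} with $m=1$. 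Inserting the weights $t^{\pm}$ and applying Young's inequality, exactly as in \eqref{cp55}, yields
\[
\int_0^T\big(\|\n\dot u\|_{L^q}^{(q+1)/q}+t\|\n\dot u\|_{L^q}^2\big)dt\le C.
\]
The leftover power of $t$ has exponent greater than $-1$ and is therefore integrable. This weight bookkeeping is the main point to verify; it is identical to the vacuum case, and the choice of exponent $(q+1)/q$ is designed precisely to make $t^{-\alpha}$ integrable.

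\textbf{The orientation term and conclusion.} The term coming from $d$ is handled as in \eqref{cp56}:
\[
\||\na d||\na^2d|\|_{L^q}\le C\|\na d\|_{H^1}\|\na^2 d\|_{H^1},
\]
which is $L^2(0,T)$ by \eqref{2cp01}, \eqref{b2}, and the $L^2_tL^2_x$ bound on $\na^3 d$. Combining these bounds with Young's inequality gives $\int_0^T\|\na u\|_{L^\infty}dt\le C$, and hence $\sup_t\|\na\n\|_{L^q}\le C$. Finally, the $L^2$ and $L^q$ Stokes bounds for $\na^2u$ and $\na P$ follow from \eqref{2cp21}, \eqref{b2} and \eqref{b3}, together with the two integrals just established, with no further work.
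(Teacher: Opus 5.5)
You have the same overall structure as the paper: the transport estimate for $\nabla\rho$, the Lipschitz bound via Gagliardo--Nirenberg and \eqref{qkjste}, and the orientation term handled as in \eqref{cp56}. However, the key bound on $\|\rho\dot u\|_{L^q}$ does not follow from what you wrote. From $\|\rho\dot u\|_{L^q}\le C\|\dot u\|_{L^2}^{2/q}\|\nabla\dot u\|_{L^2}^{1-2/q}$ and $\|\dot u\|_{L^2}\le C(\|\sqrt{\rho}\dot u\|_{L^2}+\|\nabla\dot u\|_{L^2})$ you only obtain
\[
\|\rho\dot u\|_{L^q}\le C\|\sqrt{\rho}\dot u\|_{L^2}^{2/q}\|\nabla\dot u\|_{L^2}^{1-2/q}+C\|\nabla\dot u\|_{L^2}.
\]
So the extra term is $\|\nabla\dot u\|_{L^2}$ to the first power, not $\|\sqrt{\rho}\dot u\|_{L^2}$. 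Weight bookkeeping cannot repair this. Only $\int_0^T t\|\nabla\dot u\|_{L^2}^2dt$ is available from \eqref{b3}, since no compatibility condition is assumed. That bound controls $\int_0^T\|\nabla\dot u\|_{L^2}^{p}dt$ only for $p<1$, because H\"older leaves the weight $t^{-p/(2-p)}$. With $p=(q+1)/q>1$, the bound $\int_0^T\|\rho\dot u\|_{L^q}^{(q+1)/q}dt\le C$ fails, and with it the $L^{(q+1)/q}(0,T;L^q)$ bounds on $\nabla^2u$ and $\nabla P$ in \eqref{b21}. Your Lipschitz bound (hence $\sup_t\|\nabla\rho\|_{L^q}$) and the bound on $\int_0^T t\|\rho\dot u\|_{L^q}^2dt$ would in fact survive. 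They involve only $\|\nabla\dot u\|_{L^2}$ to powers at most $\frac{q}{2(q-1)}<1$, and $t\|\nabla\dot u\|_{L^2}^2$, respectively. The failure is confined to the $(q+1)/q$ integral.

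The loss comes from discarding $\rho$ too early. Bounding $\|\rho\dot u\|_{L^q}$ by $C\|\dot u\|_{L^q}$ forces you to pay for $\|\dot u\|_{L^2}$, which near vacuum costs a full $\|\nabla\dot u\|_{L^2}$ through \eqref{b6}. The paper avoids this in \eqref{b24} by interpolating $\rho\dot u$ itself between $L^2$ and $L^{q^2}$. The low factor is then $\|\rho\dot u\|_{L^2}\le C\|\sqrt{\rho}\dot u\|_{L^2}$ directly. The inequality \eqref{b6} with Gagliardo--Nirenberg is used only on the high factor, $\|\dot u\|_{L^{q^2}}\le C(\|\sqrt{\rho}\dot u\|_{L^2}+\|\nabla\dot u\|_{L^2})$. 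This gives
\[
\|\rho\dot u\|_{L^q}\le C\|\sqrt{\rho}\dot u\|_{L^2}+C\|\sqrt{\rho}\dot u\|_{L^2}^{\frac{2(q-1)}{q^2-2}}\|\nabla\dot u\|_{L^2}^{\frac{q(q-2)}{q^2-2}},
\]
in which $\|\nabla\dot u\|_{L^2}$ never appears alone. Raising this to the power $(q+1)/q$ and applying Young's inequality against $\|\sqrt{\rho}\dot u\|_{L^2}^2$ and $t\|\nabla\dot u\|_{L^2}^2$ leaves the integrable weight $t^{-(q^3-q^2-2q)/(q^3-q^2-2q+2)}$, which is what \eqref{b25} records.
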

\begin{proof}
First, it follows from the mass equation $\eqref{nlc}_1$ that $\na \n$ satisfies, for any $p \ge2$,
\be\la{b22}\ba
\frac{d}{dt} \|\nabla \rho\|_{L^{p}}\leq C\|\nabla u\|_{L^{\infty}} \|\nabla \rho\|_{L^{p}}.
\ea\ee
By virtue of \eqref{gn11}, \eqref{b2}, and \eqref{qkjste}, it holds that
\be\la{b23}\ba
\|\nabla u\|_{L^{\infty}}\leq & C\|\nabla u\|_{L^{2}}^{\frac{q-2}{2(q-1)}} \|\nabla^{2} u\|_{L^{q}}^{\frac{q}{2(q-1)}}
\leq C\left( \|\rho\dot{u}\|_{L^{q}}^{\frac{q}{2(q-1)}}+\||\nabla d||\nabla^{2}d|\|_{L^{q}}^{\frac{q}{2(q-1)}}\right).
\ea\ee
On the one hand, using \eqref{b6} and H\"older's inequality, we arrive at
\be\la{b24}\ba
\| \rho \dot u\|_{L^q} 
& \le C\| \rho \dot u\|_{L^2}^{\frac{2(q-1)}{q^2-2}}
\| \dot{u} \|_{L^{q^2}}^{\frac{q(q-2)}{q^2-2}} \\ 
& \le C\| \rho \dot u\|_{L^2}^{\frac{2(q-1)}{q^2-2}} 
\left( \| \sqrt{\n} \dot u\|_{L^2}+\| \na \dot u\|_{L^2} \right)^{\frac{q(q-2)}{q^2-2}} \\ 
& \le C\| \sqrt{\n}  \dot u\|_{L^2} + C\| \sqrt{\n} \dot u\|_{L^2}^{\frac{2(q-1)}{q^2-2}}
\|\na \dot u\|_{L^2}^{\frac{q(q-2)}{q^2-2}},
\ea\ee
which together with \eqref{b2}, \eqref{b3}, \eqref{b24}, and Young's inequality yields
\be\la{b25}\ba
& \int_0^T \left( \|\rho \dot u\|^{\frac{q+1}{q}}_{L^q}+t\| \n \dot u\|^2_{L^q} \right) dt \le C.
\ea\ee
On the other hand, in view of \eqref{gn11}, \eqref{b2}, \eqref{b3}, and H\"older's inequality, we have
\be\la{b26}\ba
\int_{0}^{T} \||\nabla d||\nabla^{2}d|\|_{L^{q}}^{2} dt
& \leq \int_{0}^{T} \|\nabla d\|^2_{L^{2q}} \|\nabla^{2}d\|^2_{L^{2q}} dt \\
& \le C \int_{0}^{T} \|\nabla d\|^2_{H^1} \|\nabla^{2}d\|^2_{H^1} dt \\
& \leq C.
\ea\ee
Combining \eqref{b23}, \eqref{b25}, and \eqref{b26} leads to
\be\la{b27}\ba
\int_0^T \| \na u \|_{L^\infty} dt \le C,
\ea\ee
which together with \eqref{b22} and Gr\"onwall's inequality gives
\be\la{b28}\ba
\sup_{0 \le t \le T} \| \na \n \|_{L^q} \le C.
\ea\ee
Finally, by \eqref{qkjste}, \eqref{b2}, \eqref{b3}, \eqref{b25}, \eqref{b26}, and H\"older's inequality, we obtain
\be\la{b29}\ba
& \int_0^T \left( \|\nabla^{2} u\|_{L^{2}}^{2} + \|\nabla P\|_{L^{2}}^{2}
+ \|\nabla^{2} u\|_{L^{q}}^{\frac{q+1}{q}} + \|\nabla P\|_{L^{q}}^{\frac{q+1}{q}}
+ t \left( \|\nabla^{2} u\|_{L^{q}}^{2} + \|\nabla P\|_{L^{q}}^{2} \right)  \right) dt \\
& \leq C+C \int_0^T \left( \|\rho \dot u\|^{\frac{q+1}{q}}_{L^q}+t\| \n \dot u\|^2_{L^q} \right) dt \le C.
\ea\ee
This, combined with \eqref{b28}, implies \eqref{b21} and completes the proof of Lemma~\ref{bl3}.
\end{proof}

\begin{lemma}\la{bl4}
There exists a positive constant $C$ depending only on $T$, $\tilde{\n}$, $s$, $q$, $\| \n_0 - \tilde{\n} \|_{L^s \cap L^\infty}$,
$\| \na \rho_0 \|_{L^{q}}$,
$\| u_0 \|_{H^1}$, and $\| \na d_0 \|_{H^1}$ such that
\be\ba\la{b30}
\sup_{0\leq t\leq T} t \left(\| \sqrt{\n} u_{t} \|_{L^{2}}^{2}
+ \|\na d_{t}\|_{L^2}^{2} + \|\nabla^{3} d\|_{L^{2}}^{2}\right)
+ \int_{0}^{T} t \left( \|\nabla u_{t}\|_{L^{2}}^{2} + \|\nabla d_{t}\|_{H^{1}}^{2} \right) dt
\leq C.
\ea\ee
\end{lemma}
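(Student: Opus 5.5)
The argument will mirror Lemma~\ref{cl8}. The essential simplification is that, with $\tilde{\n}>0$, we no longer need weights: the Poincar\'e-type inequality \eqref{b6} (together with $\|u\|_{H^1}\le C(1+\|\na u\|_{L^2})$ from \eqref{2cp02}) replaces the weighted bounds \eqref{cp44} and \eqref{cp53}. In particular, any $v$ with $\sqrt{\n}v,\na v\in L^2$ satisfies $\|v\|_{L^p}\le C(\|\sqrt{\n}v\|_{L^2}+\|\na v\|_{L^2})$ for every $p\in[2,\infty)$, by \eqref{b6} and \eqref{gn11}. We apply this to $v=u$ and $v=u_t$.

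\textbf{Step 1 (unweighted integrability).} We first show $\int_0^T(\|\sqrt{\n}u_t\|_{L^2}^2+\|\na d_t\|_{L^2}^2)\,dt\le C$. Writing $u_t=\dot u-u\cdot\na u$ and using $\|u\|_{L^\infty}\lesssim\|u\|_{H^1}^{1/2}\|u\|_{H^2}^{1/2}$ together with \eqref{2cp01}, \eqref{b2} and \eqref{b21}, we get $\|\sqrt{\n}u_t\|_{L^2}^2\le C(\|\sqrt{\n}\dot u\|_{L^2}^2+1+\|\na^2u\|_{L^2}^2)$. This is integrable by \eqref{b2} and \eqref{b21}. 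For $\na d_t$ we use \eqref{cp25}:
\[
\|\na d_t\|_{L^2}^2\le C\bigl(\|\na^3d\|_{L^2}^2+\|\na u\|_{L^4}^2\|\na d\|_{L^4}^2+\|u\|_{L^\infty}^2\|\na^2 d\|_{L^2}^2+\|\na d\|_{L^6}^6+\|\na d\|_{L^4}^2\|\na^2 d\|_{L^4}^2\bigr).
\]
Each term is bounded by $C(1+\|\na^3d\|_{L^2}^2+\|\na^2u\|_{L^2}^2)$, which is integrable. Note that the statement only controls $\na d_t$, not $d_t$ in $L^2$. This is consistent with avoiding \eqref{cp082}, so we will avoid needing $\|d_t\|_{L^2}$ anywhere.

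\textbf{Step 2 (time-differentiated energy inequalities).} We repeat \eqref{cp85}--\eqref{cp810}. The terms $L_1$--$L_3$ are estimated with $\|u\|_{L^p}$ and $\|u_t\|_{L^p}\le C(\|\sqrt{\n}u_t\|_{L^2}+\|\na u_t\|_{L^2})$ from \eqref{b6} in place of \eqref{cp53}; the powers match exactly as before. The term $L_4$ is handled as in \eqref{cp89}. For $\na d_t$ we multiply \eqref{cp814} by $\na d_t$, as in \eqref{cp815}. The delicate terms are $J_3$ and $J_4$:
\begin{itemize}
\item For $J_3$ we estimate $\int|u_t||\na d||\na^2 d_t|\le\|u_t\|_{L^4}\|\na d\|_{L^4}\|\na^2d_t\|_{L^2}$. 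Then \eqref{b6} gives a bound by $\tfrac14\|\na^2d_t\|_{L^2}^2+C\|\sqrt{\n}u_t\|_{L^2}^2+\hat C_0\|\na u_t\|_{L^2}^2$. This is simpler than \eqref{cp817}.
\item For $J_4$, which contains $d_t$ itself, we integrate by parts differently. Instead of bounding $\|d_t\|_{L^4}$, we write $(|\na d|^2 d)_t=2(\na d:\na d_t)d+|\na d|^2 d_t$ and use $|d_t|$ only through $\na d_t$ and $d\cdot d_t=0$. The safest route is to estimate $\int|\na d|^2|d_t||\na^2d_t|$ via $\|d_t\|_{L^\infty}$. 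Alternatively, we substitute $d_t=\Delta d+|\na d|^2d-u\cdot\na d$ pointwise, so that $\||\na d|^2 d_t\|_{L^2}\le C\|\na d\|_{L^8}^2(\|\na^2 d\|_{L^4}+\|\na d\|_{L^8}^2+\|u\|_{L^\infty}\|\na d\|_{L^4})\le C(1+\|\na^3 d\|_{L^2}+\|\na^2u\|_{L^2})$, which is square-integrable.
\end{itemize}
Combining with a large multiple of the $u_t$ inequality gives the analogue of \eqref{cp819}, with right-hand side $C(1+\|\sqrt{\n}u_t\|_{L^2}^2+\|\na d_t\|_{L^2}^2+\|\na^2u\|_{L^2}^2+\|\na^3d\|_{L^2}^2)$.

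\textbf{Step 3 (weighting and closing).} We multiply by $t$, integrate, and use Step~1, \eqref{b2} and \eqref{b21}. This yields the bounds on $t\|\sqrt{\n}u_t\|_{L^2}^2$, $t\|\na d_t\|_{L^2}^2$ and $\int_0^T t(\|\na u_t\|_{L^2}^2+\|\na^2 d_t\|_{L^2}^2)\,dt$. The $\int_0^T t\|\na d_t\|_{L^2}^2\,dt$ part follows from Step~1. Finally, the elliptic estimate \eqref{cp821}, with \eqref{b6} replacing the weighted bound on $u$, gives $\|\na^3d\|_{L^2}^2\le C(\|\na d_t\|_{L^2}^2+\|\na^2u\|_{L^2}^2+1)$. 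Then \eqref{b4} yields $\sup_t t\|\na^3d\|_{L^2}^2\le C$.

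\textbf{Main obstacle.} The main obstacle is the lower-order $d_t$ term in $J_4$ and the $u$-transport terms. These lack the weighted control available in Section~4, and must be absorbed using only \eqref{b6}, the $H^1$ bound \eqref{2cp02} on $u$, and the substitution for $d_t$ from the director equation, so that no $L^2$ bound on $d_t$ is required.
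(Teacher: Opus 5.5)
Your proposal is correct and follows the paper's proof essentially step by step: a time-integrability bound from \eqref{b6} and \eqref{2cp02}, the $u_t$ and $\nabla d_t$ energy inequalities combined using a large multiple of the $u_t$ inequality, multiplication by $t$, and the elliptic estimate for $\nabla^3 d$ with \eqref{b4}. The only deviation is the $J_4$ term: the paper simply uses $\|d_t\|_{L^4}^2\le C\|d_t\|_{H^1}^2$, which is harmless here because $\|d_t\|_{L^2}\le C(\|u\|_{L^4}\|\nabla d\|_{L^4}+\|\nabla^2 d\|_{L^2}+\|\nabla d\|_{L^4}^2)\le C$ follows at once from \eqref{2cp02}; your substitution of the director equation works equally well, so your concern about avoiding $\|d_t\|_{L^2}$ (and the $L^\infty$ detour) is unnecessary.
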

\begin{proof}
First, we use \eqref{gn11}, \eqref{2cp01}, \eqref{2cp02}, \eqref{b2}, and H\"older's inequality to derive
\be\la{b31}\ba
\|\sqrt{\n} u_{t}\|_{L^{2}}^{2}
\leq & C\|\sqrt{\n}\dot{u}\|_{L^{2}}^{2}+C\|\sqrt{\n}|u||\nabla u|\|_{L^{2}}^{2} \\
\leq & C\|\sqrt{\n}\dot{u}\|_{L^{2}}^{2}+C\|\n\|_{L^\infty}\| u\|_{L^{6}}^{2} \|\nabla u\|_{L^{3}}^{2} \\
\leq & C\|\sqrt{\n}\dot{u}\|_{L^{2}}^{2}+C(1+ \|\nabla^{2} u\|_{L^{2}}^{2}),
\ea\ee
and
\be\la{b33}\ba
\|\nabla d_{t}\|_{L^{2}}^{2}
\leq & C \left( \|\nabla^{3} d\|_{L^{2}}^{2}+\||\nabla u| |\nabla d|\|_{L^{2}}^{2}
+ \||u||\nabla^{2}d|\|_{L^{2}}^{2} + \||\nabla d|^{3}\|_{L^{2}}^{2}
+ \||\nabla d||\nabla^{2}d|\|_{L^{2}}^{2} \right) \\
\leq & C \left( \|\nabla^{3} d\|_{L^{2}}^{2}+\|\nabla u\|_{L^{4}}^{2} \|\nabla d\|_{L^{4}}^{2}
+ \|u\|_{L^{8}}^{2} \|\nabla^{2}d\|_{L^{2}}\|\nabla^{2}d\|_{L^{4}} \right) \\
& + C \left( \|\nabla d\|_{L^{6}}^{6} + \||\nabla d||\nabla^{2}d|\|_{L^{2}}^{2} \right) \\
\leq & C \left( \|\nabla^{3} d\|_{L^{2}}^{2} + \|\nabla^{2} u\|_{L^{2}}^{2} + 1 \right),
\ea\ee
which together with \eqref{b2} and \eqref{b21} yields
\be\la{b34}\ba
\int_{0}^{T} \left(\|\sqrt{\n} u_{t}\|_{L^{2}}^{2}+\| d_{t}\|_{H^{1}}^{2}\right)dt \leq C.
\ea\ee
Differentiating $\eqref{nlc}_2$ with respect to $t$ leads to
\be\la{b35}\ba
\n u_{tt} +\n u\cdot\nabla u_{t}-\Delta u_{t}= -\n_{t}(u_{t}+u\cdot\nabla u)-\n u_{t}\cdot\nabla u -\nabla P_{t} -\operatorname{div}(\nabla d\odot \nabla d)_{t}.
\ea\ee
Multiplying \eqref{b35} by $u_t$, integrating by parts over $\rr$, and using \eqref{gn11}, \eqref{2cp01}, \eqref{2cp02}, and \eqref{b2}, we obtain
\be\la{b36}\ba
& \frac{1}{2}\frac{d}{dt} \|\sqrt{\n} u_{t}\|_{L^{2}}^{2} + \|\nabla u_{t}\|_{L^{2}}^{2} \\
& \leq C \int \n |u| |u_{t}| \left( |\nabla u_{t}|+ |\nabla u |^{2} +|u||\nabla^{2} u|\right) dx + C\int \n |u|^{2}|\nabla u| |\nabla u_{t}| dx \\
& \quad + C \int \n |u_{t}|^{2}|\nabla u| dx + C\int|\nabla d||\nabla d_{t}| |\nabla u_{t}| dx \\
& \le C \|\n\|_{L^{\infty}}^\frac{3}{4} \|u\|_{L^{8}} \| \sqrt{\n} u_t \|_{L^2}^\frac{1}{2} \| u_{t}\|_{L^{4}}^\frac{1}{2} \|\nabla u_{t}\|_{L^{2}}
+ C \|\n\|_{L^{\infty}} \| u \|_{L^{4}} \| u_t \|_{L^{4}} \|\nabla u\|_{L^{4}}^{2} \\
& \quad + C\|\rho\|_{L^\infty}\|u\|_{L^{12}}^{2} \|u_{t}\|_{L^{3}} \|\nabla^{2} u\|_{L^{2}} + C\|\n\|_{L^\infty} \|u\|_{L^{8}}^{2} \|\nabla u\|_{L^{4}}\|\nabla u_{t}\|_{L^{2}} \\
&\quad + C\|\n\|^{\frac{3}{4}}_{L^\infty} \| u_{t}\|_{L^{6}}^{\frac{3}{2}} \|\sqrt{\n} u_{t}\|_{L^{2}}^{\frac{1}{2}}\|\nabla u\|_{L^{2}} + C \|\nabla d\|_{L^{4}} \|\nabla d_{t}\|_{L^{4}} \|\nabla u_{t}\|_{L^{2}} \\
& \le C \left( \|\sqrt{\n} u_{t}\|_{L^{2}} +\|\nabla u_{t}\|_{L^{2}} \right)^{\frac{3}{2}} \|\sqrt{\n} u_{t}\|_{L^{2}}^{\frac{1}{2}}
+ C \left( \|\sqrt{\n} u_{t}\|_{L^{2}} + \|\nabla u_{t}\|_{L^{2}} \right)
\left( \|\nabla^{2}u\|_{L^{2}} + 1 \right) \\
& \quad + \frac{1}{6} \|\nabla u_{t}\|_{L^{2}}^{2}
+ C \|\nabla^{2} d\|_{L^{2}} \|\nabla d_{t}\|_{L^{2}} \|\nabla^{2} d_{t}\|_{L^{2}} \\
& \le \frac{1}{2} \|\nabla u_{t}\|_{L^{2}}^{2} + \frac{\ep}{2} \|\nabla^{2} d_{t}\|_{L^{2}}^{2}
+ C(\ep) \left( 1+\|\sqrt{\n} u_{t}\|_{L^{2}}^{2} + \|\nabla^{2} u\|_{L^{2}}^{2} + \|\nabla d_{t}\|^2_{L^{2}} \right),
\ea\ee
which gives
\be\la{b40}\ba
\frac{d}{dt} \|\sqrt{\n} u_{t}\|_{L^{2}}^{2} + \|\nabla u_{t}\|_{L^{2}}^{2}
& \leq \ep \|\nabla^{2} d_{t}\|_{L^{2}}^{2}
+ C(\ep) \left( 1+\|\sqrt{\n} u_{t}\|_{L^{2}}^{2} + \|\nabla^{2} u\|_{L^{2}}^{2} + \|\nabla d_{t}\|^2_{L^{2}} \right).
\ea\ee
On the other hand, differentiating $\eqref{nlc}_3$ with respect to $t$ and then applying $\na$, we see that $\na d_t$ satisfies
\be\la{2cp51}\ba
\nabla d_{tt}-\Delta\nabla d_{t} =-\nabla (u\cdot \nabla d)_{t}+\nabla (|\nabla d|^{2}d)_{t}.
\ea\ee
Multiplying \eqref{2cp51} by $\na d_t$, integrating by parts over $\rr$, and using \eqref{gn11}, \eqref{2cp01}, \eqref{2cp02}, \eqref{b2}, and Young's inequality, we arrive at
\be\la{b41}\ba
& \frac{1}{2}\frac{d}{dt}\|\nabla d_{t}\|_{L^{2}}^{2} + \|\nabla^{2} d_{t}\|_{L^{2}}^{2} \\
& \leq C \int |\nabla u_{t}||\nabla d||\nabla d_{t}| dx
+ C \int |\nabla u||\nabla d_{t}|^{2} dx + C \int |u_{t}||\nabla d| |\nabla^{2} d_{t}| dx \\
& \quad + C \int |\nabla d|^{2}|d_{t}||\nabla^{2} d_{t}| dx + C \int |\nabla d||\nabla d_{t}| |\nabla^{2} d_{t}| dx \\
& \le C \|\nabla u_{t}\|_{L^{2}}\|\nabla d_{t}\|_{L^{4}}\|\nabla d\|_{L^{4}}
+ C \|\nabla u\|_{L^{2}}\|\nabla d_{t}\|_{L^{4}}^{2}
+ C \|u_t\|_{L^4}\|\nabla d\|_{L^4}\|\nabla^2 d_t\|_{L^2} \\
& \quad + C \| \na^2 d_t \|_{L^2} \| \na d \|^2_{L^8} \| d_t \|_{L^4}
+ C \| \na^2 d_t \|_{L^2} \| \na d \|_{L^4} \| \na d_t \|_{L^4} \\
& \le \frac{1}{4} \|\nabla^2 d_{t}\|_{L^{2}}^{2}
+ C \| \na d_t \|^2_{L^4} + C \| d_t \|^2_{L^4} + C \| \sqrt{\n} u_{t} \|^2_{L^{2}} + C \|\nabla u_{t}\|_{L^{2}}^{2} \\
& \le \frac{1}{4} \|\nabla^2 d_{t}\|_{L^{2}}^{2}
+ C \| \na d_t \|_{L^2} \| \na^2 d_t \|_{L^2} + C \| d_t \|^2_{H^1} + C \| \sqrt{\n} u_{t} \|^2_{L^{2}} + C \|\nabla u_{t}\|_{L^{2}}^{2} \\
& \le \frac{1}{2} \|\nabla^2 d_{t}\|_{L^{2}}^{2}
+ C \| d_t \|^2_{H^1} + C \| \sqrt{\n} u_{t} \|^2_{L^{2}} + \frac{\hat{C}_3}{2} \|\nabla u_{t}\|_{L^{2}}^{2},
\ea\ee
which implies
\be\la{b44}\ba
\frac{d}{dt}\|\nabla d_{t}\|_{L^{2}}^{2} + \|\nabla^{2} d_{t}\|_{L^{2}}^{2}
\le \hat{C}_3 \|\nabla u_{t}\|_{L^{2}}^{2}
+ C \left( \| d_t \|^2_{H^1} + \| \sqrt{\n} u_{t} \|^2_{L^{2}} \right).
\ea\ee
Multiplying \eqref{b40} by $\hat{C}_3 + 1$,
adding the resulting equation to \eqref{b44}, and taking $\ep$ sufficiently small, we obtain
\be\la{b45}\ba
& \frac{d}{dt} \left( (\hat{C}_3+1) \|\sqrt{\n} u_{t}\|_{L^{2}}^{2}
+ \| \na d_t \|^2_{L^2} \right)
+ \|\nabla u_{t}\|_{L^{2}}^{2} + \frac{1}{2} \| \na^2 d_t \|^2_{L^2} \\
& \leq C \left( 1+\|\sqrt{\n} u_{t}\|_{L^{2}}^{2} + \|\nabla^{2} u\|_{L^{2}}^{2} + \| d_t \|^2_{H^1} \right).
\ea\ee
Multiplying \eqref{b45} by $t$, integrating over $(0,T)$ and using \eqref{b21} and \eqref{b34} lead to
\be\ba\la{b46}
\sup_{0\leq t\leq T} t \left( \| \sqrt{\n} u_{t} \|_{L^{2}}^{2}
+ \|d_{t}\|_{H^{1}}^{2} \right)
+ \int_{0}^{T} t \left( \|\nabla u_{t}\|_{L^{2}}^{2} + \|\nabla d_{t}\|_{H^{1}}^{2} \right) dt
\leq C.
\ea\ee
Finally, applying the standard elliptic estimate to \eqref{cp25}
and using \eqref{gn11}, \eqref{2cp01}, \eqref{2cp02}, \eqref{b2}, and Young's inequality, we derive
\be\la{b47}\ba
& \|\nabla^{3}d\|_{L^{2}}^{2} \\
& \leq C \left( \|\nabla d_{t}\|_{L^{2}}^{2} + \||\nabla u||\nabla d|\|_{L^{2}}^{2}
+ \||u||\nabla^{2}d|\|_{L^{2}}^{2} + \||\nabla d|^{3}\|_{L^{2}}^{2}
+ \||\nabla^{2}d||\nabla d|\|_{L^{2}}^{2} \right) \\
& \leq C \left( \|\nabla d_{t}\|_{L^{2}}^{2} + \|\nabla u\|_{L^{4}}^{2} \|\nabla d\|_{L^{4}}^{2}
+ \|u\|_{L^{4}}^{2} \|\nabla^{2}d\|^2_{L^{4}} + \|\nabla d\|_{L^{6}}^{6} + \| \na d \|^2_{L^4} \| \na^2 d \|^2_{L^4} \right) \\
& \leq C \left( \|\nabla d_{t}\|_{L^{2}}^{2}+ \|\nabla^{2} u\|_{L^{2}}^{2}
+ \|\nabla^{2}d\|_{L^{2}}^{2}
+ \|\nabla^{2} d\|_{L^{2}} \| \na^3 d \|_{L^2} + 1 \right) \\
& \leq \frac{1}{2} \|\nabla^{3}d\|_{L^{2}}^{2}
+ C \left( \|\nabla d_{t}\|_{L^{2}}^{2}+ \|\nabla^{2} u\|_{L^{2}}^{2}
+ 1 \right),
\ea\ee
which gives
\be\la{b48}\ba
\|\nabla^{3}d\|_{L^{2}}^{2}
\le C \left( \|\nabla d_{t}\|_{L^{2}}^{2}+ \|\nabla^{2} u\|_{L^{2}}^{2}
+ 1 \right).
\ea\ee
From \eqref{b4}, \eqref{b46}, and \eqref{b48}, we conclude that
\be\la{b49}\ba
\sup_{0\leq t\leq T} t \|\nabla^{3}d\|_{L^{2}}^{2} \leq C.
\ea\ee
Combining \eqref{b46} and \eqref{b49} implies \eqref{b30} and completes the proof of Lemma~\ref{bl4}.
\end{proof}

\section{Proofs of Theorems \ref{thnp1}--\ref{th2cp1}}
In this section, with all the a priori estimates in Sections 3--5 at hand, we are ready to prove the main results of this paper, Theorems \ref{thnp1}--\ref{th2cp1}.

Proof of Theorem~\ref{thnp1}.
According to the local existence result (Lemma~\ref{lct}), there exists a $T_*>0$ such that the system \eqref{nlc}--\eqref{bjtj1}
admits a unique local strong solution $(\n,u,P,d)$ on $\OM \times (0,T_*]$. We now extend this local strong solution globally in time. It follows from \eqref{npsol01} that there exists a $T_1 \in (0,T_*] $ such that \eqref{n2} holds for $T=T_1$.

Next, we set
\be\la{y522}\ba
T^* \triangleq \sup\{T\,| (\n,u,P,d) \text{ is a strong solution on } \OM \times (0,T] \text{ and } \,\eqref{n2} \  \text{holds} \}.
\ea\ee
Then $T^*\ge T_1>0$. Hence, for any $0<\tau<T\leq T^*$ with $T$ finite, we deduce
from \eqref{np03} and \eqref{np04} that, for any $2 \le r <\infty$,
\be\la{y523}\ba
\na u,\ \na^2 d,\ \na d,\ d \in C \left([\tau ,T];L^r\right),
\ea\ee
where we have used the standard embedding
\be\nonumber\ba
L^\infty(\tau,T;H^1) \cap H^1(\tau,T;L^2) \hookrightarrow
C\left([\tau,T];L^r\right),\ \text{ for any } r \in [2,\infty).
\ea\ee
Furthermore, combining $\eqref{nlc}_1$ with \eqref{np05} and applying the standard argument in \cite[Lemma 2.3]{L1}, we obtain
\be\la{y524}\ba
\n \in C \left([0,T];W^{1,q} \right).
\ea\ee

Finally, we claim that
\be\la{y527}\ba
T^*=\infty.
\ea\ee
Otherwise, $T^*<\infty$.
Then Proposition~\ref{n1} implies that \eqref{np552} holds for $T=T^*$.
From \eqref{y523} and \eqref{y524}, we conclude that
\be\nonumber\ba
(\n,u,d)(x,T^*) = \lim_{t \to T^*}(\n,u,d)(x,t)
\ea\ee
satisfies \eqref{npsol1}.
Hence, we may take $(\n,u,d)(x,T^*)$ as the initial data, and Lemma~\ref{lct} shows that there exists some $T^{**}>T^*$ such that
\eqref{n2} holds for $T=T^{**}$.
This contradicts \eqref{y522}, thereby proving \eqref{y527}.
The uniqueness of $(\n,u,P,d)$ satisfying \eqref{npsol2} follows from arguments similar to those in \cite{LLTZ}.
This completes the proof of Theorem~\ref{thnp1}.

Theorems \ref{thcp1} and \ref{th2cp1} can be proved by combining the a priori estimates for the Cauchy problem established in Sections 4 and 5 with standard arguments.
Since the proofs are similar to those in \cite{LSZ,LLTZ}, the details are omitted.

\bigskip

\noindent\textbf{Data availability.} No data was used for the research described in the article.

\bigskip

\noindent\textbf{Conflict of interest.} The authors declare that they have no conflict of interest.

\end{document}